\newcommand\myurl[1]{\url{#1}}
\def\MR#1{\quad \href{http://www.ams.org/mathscinet-getitem?mr=#1}{MR#1}}
\renewcommand{\eprint}[1]{#1}
\newtheorem{thrm}{Theorem}[section]
\newtheorem{prop}[thrm]{Proposition}
\newtheorem{coro}[thrm]{Corollary}
\newtheorem{lemm}[thrm]{Lemma}
\theoremstyle{definition}
\newtheorem{defn}[thrm]{Definition}
\newtheorem{exam}[thrm]{Example}
\newtheorem{rema}[thrm]{Remark}
\newtheorem*{defn*}{Definition}
\newtheorem*{thrm*}{Theorem}
\renewcommand{\bar}[1]{\overline{#1}}
\renewcommand{\tilde}[1]{\widetilde{#1}}
\renewcommand{\epsilon}{\varepsilon}
\newcommand{\aE}{\acute{E}}
\newcommand{\aF}{\acute{F}}
\newcommand{\ah}{\acute{\h}}
\newcommand{\aK}{\acute{K}}
\newcommand{\Z}{\mathbb{Z}}
\newcommand{\Q}{\mathbb{Q}}
\newcommand{\R}{\mathbb{R}}
\newcommand{\C}{\mathbb{C}}
\renewcommand{\k}{\mathfrak{k}}
\newcommand{\frb}{\mathfrak{b}}
\newcommand{\g}{\mathfrak{g}}
\newcommand{\h}{\mathfrak{h}}
\newcommand{\frl}{\mathfrak{l}}
\newcommand{\m}{\mathfrak{m}}
\newcommand{\n}{\mathfrak{n}}
\renewcommand{\P}{\mathbb{P}}
\newcommand{\frp}{\mathfrak{p}}
\renewcommand{\t}{\mathfrak{t}}
\newcommand{\fru}{\mathfrak{u}}
\newcommand{\SU}{\mathrm{SU}}
\newcommand{\fsu}{\mathfrak{su}}
\newcommand{\SL}{\mathrm{SL}}
\newcommand{\fsl}{\mathfrak{sl}}
\newcommand{\lsbg}{L_S\backslash G}
\renewcommand{\S}{\mathfrak{S}}
\newcommand{\eps}{\varepsilon}
\renewcommand{\phi}{\varphi}
\newcommand{\lbd}{\lambda}
\newcommand{\std}{\mathrm{std}}
\newcommand{\zero}{\mathrm{zero}}
\newcommand{\catO}[1]{\cl{O}_{q,#1}}
\newcommand{\catOs}[1]{\cl{O}^S_{q,#1}}
\newcommand{\intO}[1]{\cl{O}_{q,#1}^{\mathrm{int}}}
\newcommand{\uniO}[1]{\mathrm{C}^*\cl{O}_{q,#1}}
\newcommand{\uniintO}[1]{\mathrm{C}^*\cl{O}_{q,#1}^{\mathrm{int}}}
\newcommand{\fssorb}{H\backslash G}
\newcommand{\fssorba}{H\backslash \SL_n}
\newcommand{\fflag}{T\backslash K}
\newcommand{\fflaga}{T\backslash \SU(n)}
\newcommand{\map}[3]{#1\colon#2 \longrightarrow #3}
\newcommand{\abs}[1]{\lvert #1 \rvert}
\newcommand{\nor}[1]{\lVert #1 \rVert}
\newcommand{\ip}[1]{\left\langle #1 \right\rangle}
\newcommand{\cl}[1]{\mathcal{#1}}
\newcommand{\bs}[1]{\boldsymbol{#1}}
\newcommand{\qbin}[3]{\begin{bmatrix} #1 \\ #2 \end{bmatrix}_{#3}}
\newcommand{\tensor}{\otimes}
\newcommand{\fin}{\mathrm{f}}
\newcommand{\quot}{\mathrm{quot}}
\renewcommand{\star}{$\ast$}
\newcommand{\cstar}{C*}
\newcommand{\tend}{\textendash}
\renewcommand{\mod}[1]{#1\text{-}\mathrm{Mod}}
\newcommand{\roots}{R}
\newcommand{\simples}{\Delta}
\newcommand{\dqpind}[1]{\mathrm{ind}_{\frp_S,q}^{\g,\chi} #1}
\newcommand{\dqind}[1]{\mathrm{ind}_{\frb,q}^{\g,\chi} #1}
\newcommand{\bdqind}[1]{\mathrm{ind}_{\frb,q}^{\g,\chi} #1}
\newcommand{\qind}[1]{\mathrm{ind}_{\frb,q}^{\g} #1}
\newcommand{\deff}{\overset{\text{def}}{\Longleftrightarrow}}
\DeclareMathOperator{\Ch}{\mathrm{Ch}}
\DeclareMathOperator{\support}{\mathrm{supp}}
\DeclareMathOperator{\id}{\mathrm{id}}
\DeclareMathOperator{\wt}{\mathrm{wt}}
\DeclareMathOperator{\aspan}{\mathrm{span}}
\DeclareMathOperator{\rep}{\mathrm{Rep}}
\DeclareMathOperator{\irr}{\mathrm{Irr}}
\DeclareMathOperator{\End}{\mathrm{End}}
\title[ss module categories with fusion rules of
the cpt fflag mfd type]{Semisimple module categories with fusion rules of
the compact full flag manifold type}
\author{Mao Hoshino}
\address{Department of Mathematical Sciences, The University of Tokyo\\
Komaba 3-8-1, Tokyo \mbox{153-8914}, Japan}
\email{mhoshino@ms.u-tokyo.ac.jp}
\subjclass[2020]{Primary~17B37, Secondary~17B10, 46L65, 46L67}
\keywords{quantum group, deformation quantization, representation theory,
toric variety}
\thanks{This work was supported by JSPS KAKENHI Grant Number JP23KJ0695 and WINGS-FoPM Program at the University of Tokyo.}
\begin{document}

\begin{abstract}
We classify semisimple left module categories over the representation category of a type A quantum group whose fusion rules arise from the maximal torus. The classification is connected to equivariant Poisson structures on compact full flag manifolds in the operator-algebraic setting, and on semisimple coadjoint orbits in the algebraic setting. We also provide an explicit construction based on the BGG categories of deformed quantum enveloping algebras, whose unitarizability corresponds to being of quotient type.
Finally, we present a brief discussion of the non-quantum group case.
\end{abstract}

\maketitle
\tableofcontents

\section{Introduction}
The purpose of the present paper is to contribute a new result on
the Poisson geometric aspect of the Drinfeld-Jimbo deformation.

In the formal setting, the quantum coordinate ring $\cl{O}_h(G)$,
which is the Hopf dual of the quantum enveloping algebra $U_h(\g)$,
gives a deformation quantization of a semisimple algebraic group $G$
with respect to the standard Poisson-Lie structure.
By equipping these algebras with their
natural \star-structures,
one also obtains a deformation quantization of a compact real form $K$
of $G$. This observation leads to the theory of equivariant
deformation quantizations of homogeneous spaces over $K$, including
compact flag manifolds and symmetric spaces. For a semisimple coadjoint
orbit, which is a complexification of a compact flag manifold,
J. Donin showed that every equivariant Poisson structure has a corresponding deformation quantization equipped with an action of the quantum group \cite[Proposition 5.2]{MR1817512}. Moreover he classified such quantizations by Poisson structures admitting higher degree terms with respect to $h$ \cite[Proposition 5.3]{MR1817512}.
On the other hand, for symmetric spaces, a recent work \cite{MR4585468} due to K. De Commer, S. Neshveyev, L. Tuset and M. Yamashita give a certain classification in the framework of
quasi-coactions of the multiplier quasi-bialgebra $\cl{U}(G)$.
They also give a classification of ribbon braids,
which enables them to compare representations of the type B braid groups arising from the cyclotomic KZ equation and Letzter-Kolb coideals.

Even outside the formal setting, one finds a more indirect but still significant relationship between Poisson geometry and quantum groups through representation theory. As discussed in \cite{MR1116413},
irreducible representations of $C_q(K)$ are parametrized by the symplectic
leaves of $K$ with respect to the standard Poisson structures. Moreover,
similar results hold for quantizations of certain homogeneous
spaces of $K$ such as partial flag manifolds \cite{MR1697598} and
quotients by the Poisson subgroups \cite{MR2914062}. In another case,
recent works \cite{decommermoore1, MR4822588, moore3}
due to K. De Commer and S. Moore
reveal such a relationship for the quantization of the space
$H(N)$ of $N\times N$ hermitian matrices
with respect to the STS bracket, which is realized as the reflection equation algebra with respect to the Yang-Baxter operator on $\C^n$
arising from $U_q(\mathfrak{gl}_n)$.
These correspondences are not established via a direct geometric construction, but rather through indirect algebraic arguments, which nonetheless reveal the parallel.

The result presented in this paper may be regarded as part of this
series of indirect but remarkable relationships between Poisson geometry
and quantum groups in the non-root-of-unity case.
We focus on \emph{actions of $\fflag$-type} (Definition \ref{defn:action of flag manifold type}), which are defined as
semisimple left $\rep_q^{\fin} K$-module \cstar-categories
with the fusion rule same with that of a left $\rep^{\fin} K$-module
category $\rep^{\fin} T$.
We also consider
the algebraic setting, in which the base field $k$ is of characteristic $0$
and the module categories are called \emph{semisimple actions of $\fssorb$-type} (Definition \ref{defn:action of ssorb type}).

The main results of the present paper are the classifications of these actions. For a field $k$ of characteristic $0$, we define $X_{\fssorb}(k)$ and
$X_{\fssorb}^{\circ}(k)$ as follows:
\begin{align*}
 X_{\fssorb}(k) &:= \{(\phi_{\alpha})_{\alpha} \in k^{\roots}\mid \phi_{-\alpha} = -\phi_{\alpha},\,\phi_{\alpha}\phi_{\beta} + 1 = \phi_{\alpha + \beta}(\phi_{\alpha} + \phi_{\beta})\},\\
 X_{\fssorb}^{\circ}(k) &:= \{\phi \in X_{\fssorb}(k)\mid \phi_{\alpha} - 1 \not\in (\phi_{\alpha} + 1)q_{\alpha}^{2\Z}\},
\end{align*}
where $\roots$ is the root system associated with $\h \subset \g$.
We also consider $X_{\fflag}$ and $X_{\fflag}^{\quot}$ for the classification in the \cstar-algebraic setting:
\begin{align*}
 X_{\fflag} &:= X_{\fssorb}(\R),\\
 X_{\fflag}^{\quot} &:= \{\phi \in X_{\fflag}\mid -1 \le \phi_{\alpha} \le 1\}.
\end{align*}

Using the deformed quantum enveloping algebra introduced in
\cite{MR4837934}, we obtain a left $\rep_q^{\fin} G$-module category
$\intO{\phi}$ for any $\phi \in X_{\fssorb}(k)$. By Theorem \ref{thrm:characterization of semisimplicity}, Proposition \ref{prop:semisimple action arising from the category O} and Lemma \ref{lemm:identification of toric and moduli}, this gives a semisimple action
of $\fssorb$-type if and only if $\phi \in X_{\fssorb}^{\circ}(k)$.

\vspace{3mm}
\noindent
\textbf{Theorem \ref{thrm:algebraic classification}.}
Let $\cl{M}$ be a semisimple action of $\fssorba$-type.
Then there is a unique $\phi \in X_{\fssorba}^{\circ}(k)$
such that $\cl{M} \cong \intO{\phi}$.
\vspace{3mm}

\noindent
\textbf{Corollary \ref{coro:operator algebraic classification}.}
Let $\cl{M}$ be an action of $\fflaga$-type.
Then there is a unique $\phi \in X_{\fflaga}^{\quot}$
such that $\cl{M} \cong \intO{\phi}$.
\vspace{3mm}

At least for the latter statement, one possible interpretation
can be found in the theory of quantization. By Tannaka-Krein duality
\cite{MR3121622, MR3426224}, an action of $\fflag$-type corresponds to
a \cstar-algebra equipped with an ergodic action of $K_q$.
Since $\rep^{\fin} T$ corresponds to $C(\fflag)$ under the duality,
it is natural to
regard such an action as a noncommutative analogue of $\fflag$.
On the other hand, it is known that $X_{\fflag}$ classifies
the equivariant Poisson structures on $\fflag$ (c.f. \cite{MR1817512}).
Moreover, as discussed in Proposition \ref{prop:characterization of quotient Poisson structure}, $X_{\fflag}^{\quot}$
classifies the equivariant Poisson structures
admitting a $0$-dimensional symplectic leaf. Hence
the main theorem says that noncommutative compact full flag manifolds
of $SU(n)$ are
classified by the suitable Poisson structures on $\fflaga$. 
This situation is somewhat similar to the situation in the theory
of deformation quantization (c.f. \cite{MR2062626}).
The same interpretation also would be applicable in the algebraic setting
(c.f. the duality theorem \cite[Theorem 4.6]{MR3847209}),
after removing the assumption of semisimplicity which excludes
Poisson structures that should originally be taken into account.
In that case the classification would be modified and contain
$\intO{\phi}$ for $\phi \in X_{\fssorb}(k)\setminus X_{\fssorb}^{\circ}(k)$.
However we do not pursue this direction in the present paper
since our original motivation is in the \cstar-algebaic setting,
in which the semisimplicity is completely natural.

It also should be noted that the situation in the \cstar-algebraic setting
is similar to but differs from the theory of
deformation quantization at the point that the nontrivial restriction is imposed on the equivariant Poisson structures. As in
Woronowicz's no-go theorem \cite[Theorem 4.1]{MR1096123} on the quantization of $\SL(2,\R)$,
existence of such a restriction can be naturally interpreted
as a no-go theorem for equivariant Poisson structures.
In this respect, the same phenomenon can be observed for quantum groups beyond type $A$ for
the family of left $\rep_q^{\fin} G$-module categories
arising from deformed quantum enveloping algebras.

\vspace{3mm}
\noindent
\textbf{Theorem \ref{prop:characterization of unitarizability}.}
 For $\phi \in X_{\fssorb}(\C)$, $\intO{\phi}$
is unitarizable if and only if $\phi \in X_{\fflag}^{\mathrm{quot}}$.
\vspace{0mm}

Independently of the quantization perspective, Theorem \ref{thrm:algebraic classification} and Corollary \ref{coro:operator algebraic classification}
also belongs to the context of classification
of tensor categories and related structures.

If we focus on the statement itself, there are several related results, including reconstruction theorems of tensor categories \cite{MR1237835, MR2132671, MR3266525}, the classfication of fiber functors on $\rep_q^{\fin} K$
with the classical dimension functions \cite[Corollary 4.4]{MR3556413},
and the classification of quantum spheres \cite[Theorem 1, Theorem 2]{MR0919322}.
One of the most strongly related works is
\cite{MR3420332}, in which
ergodic actions of the quantum $\SU(2)$
are classified by graphes equipped with numerical data, called fair and balanced costs. In particular, they give
a classification \cite[Example 3.12]{MR3420332} of quantum spheres,
which is the rank $1$ case of Corollary \ref{coro:operator algebraic classification}.

On the other hand, if we focus on the strategy used in the proof of Theorem
\ref{thrm:algebraic classification} and Corollary \ref{coro:operator algebraic classification}, the paragroup theory relates to our theorem. The paragroup theory, introduced by A. Ocneanu \cite{MR996454}, has played an essential role in the classification of
subfactors and the discovery of new quantum symmetries including
Haagerup symmetry \cite{MR1317352, MR1686551}. In the theory, tensor categorical structures are encoded into graphs together with certain numerical data, representing
fusion rules and associators respectively. This reformulation makes it possible to treat some abstract conditions in a more combinatorial manner.
Our strategy to prove the main theorems
is also based on the same idea. Actually we consider
numerical data called \emph{scalar systems of $\fssorba$-type} (Definition \ref{defn:scalar system})
and classify them with the parameter space $X_{\fssorba}^{\circ}$.
This data naturally arise
from semisimple actions of $\fssorba$-type
by focusing on generating morphisms in $\rep_q^{\fin} \SL_n$ described
in \cite{MR3263166}. In light of this background,
it is also possible to find a concrete connection with Ocneanu's cell system \cite{MR1907188} (c.f. \cite{MR2545609}), which is based on Kuperberg's spider for $\mathrm{A}_2$ \cite{MR1403861}.

\subsection*{Outline of the paper}
In Section \ref{sect:Poisson geometric thing} we give a brief review
on $K^{\std}$-equivariant Poisson structures on compact flag manifolds
and prove the characterization of Poisson structures of ``quotient type''
in terms of the parameter space $X_{\fflag}$.
Though we only use the case of compact full flag manifolds,
We present
some results in the form applicable to partial flag manifolds since
all arguments are parallel.

After this section, there is no discussion on Poisson structure.
In Section \ref{sect:catO}, we investigate the category $\cl{O}$ of
deformed quantum enveloping algebras introduced in
\cite{MR4837934}. Some properties including simplicity and projectivity
of twisted Verma modules are discussed therein.

In Section \ref{sect:actions}, we introduce the main subject of this paper,
semisimple actions of $\fssorb$-type and actions of $\fflag$-type.
We also introduce some operations applicable to general semisimple
actions of $\fssorb$-type and investigate its properties
concerned with the actions arising from
the deformed quantum enveloping algebras.
At the end of this section, we discuss on the unitarizability.

In Section \ref{sect:classification}, we show the classification
results on semisimple actions
of $\fssorba$-type and actions of $\fflaga$-type. This part
is most technical in this paper, which relies on the paragroup-theoretical
argument.

In Section \ref{sect:non-quantum}, we present a discussion
in the non-quantum group case.
In particular we show that actions of $\fflaga$-type are
equivalent to $\fflaga$, which implies that $\fflaga$ admits
no nontrivial equivariant quantization in the operator algebraic
setting.

\section{Preliminaries}
\subsection{Notations and convensions}
Throughout this paper, the base field $k$ is of characteristic $0$
and not assumed to be algebraically closed. In the operator algebraic
setting, we consider the field $\C$ of complex numbers.

For $q$-integers, we use the following symbols:
\begin{gather*}
  q_{\alpha} = q^{d_{\alpha}},\quad
 [n]_q = \frac{q^n - q^{-n}}{q - q^{-1}},\quad
 [n]_q! = [1]_q[2]_q\cdots [n]_q,\\
\qbin{n}{k}{q} =
\begin{cases}
 {\displaystyle \frac{[n]_q[n - 1]_q\cdots [n - (k - 1)]_q}{[k]_q!}} & (k \ge 0), \\
 0 & (k < 0).
\end{cases}
\end{gather*}
Additionally we also use the following notation for $\chi = [x:y] \in \P^1(k)$
and $n, m \in \Z$:
\begin{align*}
 \frac{[n ; \chi]_q}{[m ; \chi]_q} := \frac{xq^n - yq^{-n}}{xq^m - yq^{-m}}.
\end{align*}
Note that we have
\begin{align*}
 \frac{[n;q^{2l}]_q}{[m;q^{2l}]_q} = \frac{[n + l]_q}{[m + l]_q},\quad
 \frac{[n;\infty]_q}{[m;\infty]_q} = q^{n - m}
\end{align*}
where $x \in k$ in general is identified with $[x:1] \in \P^1(k)$
and $\infty$ denotes $[1:0]$.
A quantum commutator is defined for elements in suitable algebras which
admit weight space decompositions as stated in Subsection \ref{subsec:quantum group}:
\begin{align*}
 [x,y]_q = xy - q^{-(\wt{x},\wt{y})}yx.
\end{align*}
We use the following notations on a multi-index $\Lambda = (\lambda_i)_i \in \Z_{\ge 0}^n$.
\begin{itemize}
 \item $\abs{\Lambda} = \sum_{i} \lambda_i$.
 \item $\support \Lambda = \{i\mid \lambda_i \neq 0\}$.
 \item $\Lambda \subset (k,l) \deff \support \Lambda \subset \{k + 1, k + 2,\cdots, l - 1\}$. For an interval $I$, like $[k,l]$,
$\Lambda \subset I$ is defined in a similar way.
 \item $\Lambda < k \deff \Lambda \subset (0,k)$. Similarly
$\Lambda \le k, \Lambda > k, \Lambda \ge k$ are defined.
 \item $\Lambda\cdot\alpha = \sum_i \lambda_i\alpha_i$ for
a sequence $(\alpha_i)_i$ of vectors.
 \item $x^{\Lambda} = x_1^{\lambda_1}x_2^{\lambda_2}\cdots x_n^{\lambda_n}$ for a sequence $(x_i)_i$ in a (possibly non-commutative) ring.
\end{itemize}

\subsection{Lie algebras and Lie groups}
In this paper $\g$ and $\h$ denote a split semisimple Lie algebra and
its split Cartan subalgebra respectively.
The associated set of roots is denoted by $\roots$, which naturally
appears as a decomposition of $\g$ into eigenspaces $\g_{\alpha}$
with respect to the adjoint action of $\h$ on $\g$:
\begin{align*}
 \g = \h \oplus \bigoplus_{\alpha \in R} \g_{\alpha}.
\end{align*}
We fix an invariant symmetric bilinear form $B(\tend,\tend)$ on
$\g$ and consider the induced bilinear form $(\tend,\tend)$ on $\h^*$.
We normalize the original bilinear form $B$
so that $(\alpha,\alpha) = 2$ for all short roots $\alpha$.

Then this induces an inner product on $\h_{\R}^* := \R\tensor_{\Q} \Q\roots$, which makes $R \subset \h_{\R}^*$ into a root system.
The reflection with respect to $\alpha \in \roots$ is
denoted by $s_{\alpha}$. The associated Weyl group is denoted by $W$.

We fix a positive system $\roots^+$, which induces
a triangular decomposition $\g = \n^-\oplus \h\oplus \n^+$
and defines a set $\simples = \{\epsilon_1,\epsilon_2,\dots,\epsilon_r\}$
of simple roots. Note that the number $r$ of simple roots is
the rank of $\g$. We also set $N = \abs{\roots^+}$.
The set of reflections with respect to simple roots generates $W$,
and defines the length function $\map{\ell}{W}{\Z_{\ge 0}}$.
The unique longest element is denoted by $w_0$, whose length is $N$.

We set $d_{\alpha}, \alpha^{\vee}, a_{ij}$ as follows:
\[
d_{\alpha} = \frac{(\alpha,\alpha)}{2},\quad
\alpha^{\vee} = d_{\alpha}^{-1}\alpha,\quad
a_{ij} = (\eps_i^{\vee},\eps_j).
\]
The fundamental weights, which are dual to $(\epsilon_i^{\vee})_i$
with respect to $(\tend,\tend)$, are denoted by $\varpi_i$.
The root lattice $Q$ (resp. the weight lattice $P$) is
the $\Z$-linear span of $\simples$ (resp. $(\varpi_i)_i$).
We also use the positive cone $Q^+$ and $P^+$:
\begin{align*}
 Q^+ &= \Z_{\ge 0}\epsilon_1 + \Z_{\ge 0}\epsilon_2 + \cdots + \Z_{\ge 0}\epsilon_r,\\
 P^+ &= \Z_{\ge 0}\varpi_1 + \Z_{\ge 0}\varpi_2 + \cdots + \Z_{\ge 0}\varpi_r.
\end{align*}
We usually replace $\epsilon_i$ by the symbol $i$ when $\epsilon_i$
appears as a subscript.
For instance we use $s_i, d_i, H_i, K_i$ instead of
using $s_{\eps_i}, d_{\epsilon_i}, H_{\eps_i}, K_{\eps_i}$.

At the end of this subsection, we give a brief review
on the representation theory. Let $G$ be the connected universal
algebraic group associated to $\g$ and $H$ be the subgroup
corresponding to $\h$.

In this paper, the category of finite dimensional representations
of $G$ (resp. $H$) is denoted by $\rep^{\fin} G$ (resp. $\rep^{\fin} H$).
Note that $\rep^{\fin} G$ is equivalent to $\rep^{\fin} \g$ as
$k$-linear tensor category.
We identify $\irr{\rep^{\fin} G}$ and $\irr{\rep^{\fin} H}$ with
$P^+$ and $P$ respectively.

\subsection{The Drinfeld-Jimbo deformations}
\label{subsec:quantum group}
Basically we refer the convension in \cite{MR4162277} and \cite{MR1492989}.
A textbook \cite{MR1359532} is also helpful for basic facts on
quantum groups.

Let $L$ be the smallest positive integer such that
$(\lambda,\mu) \in L^{-1}\Z$ for any $\lambda,\mu \in P$.
We fix a homomorphism $\map{q}{(2L)^{-1}\Z}{k^{\times}},\,r \longmapsto q^r$ and assume that this is injective, i.e., $q$ is not a root of unity.

The \emph{Drinfeld-Jimbo deformation} of $\g$ is a Hopf algebra
$U_q(\g)$ generated by $E_i, F_i, K_{\lambda}$ for $1 \le i \le r$ 
and $\lambda \in P$, with relations
\begin{align*}
 &K_0 = 1, &&K_{\lambda}E_iK_{\lambda}^{-1} = q^{(\lambda,\epsilon_i)}E_i, 
&& [E_i, F_j] = \delta_{ij}\frac{K_i - K_i^{-1}}{q_i - q_i^{-1}},\\
 &K_{\lambda}K_{\mu} = K_{\lambda + \mu}, 
&&K_{\lambda}F_iK_{\lambda}^{-1} = q^{-(\lambda,\epsilon_i)}F_i,
\end{align*}
and the quantum Serre relations:
\begin{align*}
&\sum_{k = 0}^{1 - a_{ij}}(-1)^k\qbin{1 - a_{ij}}{k}{q_i}E_i^{1 - a_{ij} - k}E_jE_i^k = 0,\\
&\sum_{k = 0}^{1 - a_{ij}}(-1)^k\qbin{1 - a_{ij}}{k}{q_i}F_i^{1 - a_{ij} - k}F_jF_i^k = 0.
\end{align*}
The coproduct $\Delta$, the antipode $S$ and the counit $\eps$ 
are given as follows on the generators:
\begin{align*}
 &\Delta(K_{\lambda}) = K_{\lambda}\tensor K_{\lambda}, &&S(K_{\lambda}) = K_{\lambda}^{-1}, &&\eps(K_{\lambda}) = 1,\\
 &\Delta(E_i) = E_i\tensor 1 + K_i\tensor E_i, &&S(E_i) = -K_i^{-1}E_i, &&\eps(E_i) = 0,\\
 &\Delta(F_i) = F_i\tensor K_i^{-1} + 1\tensor F_i, &&S(F_i) = -F_iK_i, &&\eps(F_i) = 0.
\end{align*}

Next we introduce some subalgebras of $U_q(\g)$.
The most fundamental ones are $U_q(\n^+), U_q(\n^-)$ and
$U_q(\h)$, which are generated by
$\{E_i\}_i, \{F_i\}_i, \{K_{\lambda}\}_{\lbd \in P}$ respectively.
These allow us to decompose $U_q(\g)$ into the tensor products
$U_q(\n^{\pm})\tensor U_q(\h)\tensor U_q(\n^{\mp})$ via
the multiplication maps. We also use $U_q(\frb^{\pm})$
for the subalgebras generated by $U_q(\h)$ and $U_q(\n^{\pm})$
respectively.
Note that $U_q(\frb^{\pm})$ are Hopf subalgebras of $U_q(\g)$.

Let $\h_q^*$ be the set of $k^{\times}$-valued characters on $P$.
The weight lattice $P$ is embedded into $\h_q^*$
by $\lbd \longmapsto q^{(\lbd,\tend)}$, which is injective
by our assumption on $q$. More generally, we substitute
$q^{(\xi,\tend)}$ for $\xi \in \h_q^*$. In this notation
the canonical structure of $\h_q^*$ is presented additively,
i.e. we have $\xi(\lbd)\eta(\lbd) = (\xi + \eta)(\lbd) = q^{(\xi + \eta,\lbd)}$.

For a $U_q(\h)$-module $M$ and $v \in M$, we say that
$v$ is a weight vector with weight $\xi \in \h_q^*$ when
$K_{\mu}v = q^{(\xi, \lambda)}v$ for all $\mu \in P$. In this case
$\xi$ is denoted by $\wt v$. The submodule of elements of
weight $\xi$ is denoted by $M_{\xi}$.
To consider the weight of
an element of $U_q(\g)$, we regard $U_q(\g)$ as a
$U_q(\h)$-module by the left adjoint action
$x\triangleright y = x_{(1)}yS(x_{(2)})$.

Next we describe the braid group action on $U_q(\g)$ and
the quantum PBW bases.
At first we have an algebra automorphism
$\cl{T}_i$ on $U_q(\g)$ for each $\eps_i \in \simples$,
which satisfies
\begin{align*}
\cl{T}_i(K_{\lambda}) = K_{s_i(\lambda)}, \quad
\cl{T}_i(E_i) = -F_iK_i, \quad
\cl{T}_i(F_i) = -K_i^{-1}E_i
\end{align*}
and other formulae in \cite[8.14]{MR1359532}
which determine $\cl{T}_i$ uniquely.

Then the family $(\cl{T}_i)_i$ satisfies the Coxeter relations and
defines an action of the braid group on $U_q(\g)$. Especially we have
$\cl{T}_w$ for each $w \in W$, which is given by
$\cl{T}_w = \cl{T}_{i_1}\cl{T}_{i_2}\cdots\cl{T}_{i_{\ell(w)}}$ where
$w = s_{i_1}s_{i_2}\cdots s_{i_{\ell(w)}}$ is a reduced expression.

This action produces PBW bases of $U_q(\g)$.
Let $w_0$ be the longest element in $W$ and fix its reduced expression
$w_0 = s_{\bs{i}} = s_{i_1}s_{i_2}\cdots s_{i_N}$, where
$\bs{i} = (i_1,i_2,\dots,i_N)$.
Then each $\alpha \in R^+$ has a unique positive integer $k \le N$
with $\alpha = \alpha^{\bs{i}}_{k} := s_{i_1}s_{i_2}\cdots s_{i_{k - 1}}(\eps_{i_k})$.
Finally we set $E_{\bs{i},\alpha}$ and 
$F_{\bs{i},\alpha}$, the quantum root vectors, as follows:
\begin{align*}
 E_{\bs{i},\alpha} &= E_{\bs{i},k} 
:= \cl{T}_{s_{i_1}s_{i_2}\cdots s_{i_{k - 1}}}(E_{i_k}) 
= \cl{T}_{s_{i_1}}\cl{T}_{s_{i_2}}\cdots\cl{T}_{s_{i_{k - 1}}}(E_{i_k}), \\
 F_{\bs{i},\alpha} &= F_{\bs{i},k} 
:= \cl{T}_{s_{i_1}s_{i_2}\cdots s_{i_{k - 1}}}(F_{i_k}) 
= \cl{T}_{s_{i_1}}\cl{T}_{s_{i_2}}\cdots\cl{T}_{s_{i_{k - 1}}}(F_{i_k}).
\end{align*}

Though these elements depend on $\bs{i}$, we still have an
analogue of the Poincar\'{e}-Birkhoff-Witt theorem in $U_q(\g)$ 
i.e. $\{F_{\bs{i}}^{\Lambda^-}K_{\mu}E_{\bs{i}}^{\Lambda^+}\}_{\Lambda^{\pm},\mu}$ forms a basis of $U_q(\g)$. Each element of
this family is called a \emph{quantum PBW vector}.

In this paper, a \emph{finite dimensional representation} of $U_q(\g)$
is a finite dimensional $U_q(\g)$-module admitting a weight space decomposition with weights in $P$. The category of finite dimensional representations of $U_q(\g)$ is denoted by $\rep_q^{\fin} G$.
We also introduce the category $\rep_q^{\fin} H$ of finite dimensional
$U_q(\h)$-modules admitting weight space decompositions with weights in $P$. Then we identify $\irr{\rep_q^{\fin} G}$ with $P^+$,
hence with $\irr{\rep^{\fin} G}$,
by looking at highest weights with respect to $U_q(\n^+)$.
The irreducible representation corresponding to $\lbd \in P^+$
is denoted by $L_{\lbd}$. We also identify $\irr{\rep_q^{\fin} H}$
with $P$ and with $\irr{\rep^{\fin} H}$.
Note that these identifications preserve the fusion rules.
Equivalently, these identifications induces the identifications
$\Z_+(\rep^{\fin} G) \cong \Z_+(\rep^{\fin}_q G)$ and
$\Z_+(\rep^{\fin} H) \cong \Z_+(\rep^{\fin}_q H)$ as $\Z_+$-rings
(\cite[Definition 3.1.1]{MR3242743}). These $\Z_+$-rings are
denoted by $\Z_+(G)$ and $\Z_+(H)$ respectively.
Note that $\Z_+(H)$ has a natural structure of $\Z_+$-module over
$\Z_+(G)$, which is compatible with the identifications.

\subsection{Compact real forms}
\label{subsec:compact real forms}

In this paper we also consider the operator algebraic setting,
in which quantum groups should be considered as quantizations of
compact Lie groups.

Assume $k = \C$. The compact real form of $(\g,\h)$ is
denoted by $(\k,\t)$, i.e., $\k$ is a compact Lie subalgebra
of the real Lie algebra $\g_{\R}$ satisfying $\g_{\R} = \k\oplus i\k$,
and $\t := \k \cap \h$ is a Cartan subalgebra of $\k$ satisfying
$\h_{\R} = \t \oplus i\t$.
Then we have a conjugate linear involutive
anti-automorphism $X \longmapsto X^*$ on $\g$ defined as $X^* = -X$
for $X \in \k$. The compactness of $\k$ implies that $(X,Y) := B(X^*,Y)$
is an hermitian inner product on $\g$.

We also fix a Chevalley system which is compatible with $(\k,\t)$, i.e.,
a family $\{(E_{\alpha},F_{\alpha}, H_{\alpha})\}_{\alpha \in R^+}$
of $\fsl_2$-triplets such that $E_{\alpha}^* = F_{\alpha}$ and $H_{\alpha}^* = H_{\alpha}$. Note that $\nor{E_{\alpha}} = \nor{F_{\alpha}} = d_{\alpha}^{-1/2}$, where $\nor{\tend}$ is the norm induced from the inner product above. Using this system, $\k$ and $\t$ are presented as follows:
\begin{align*}
 \t = \bigoplus_{i = 1}^r i\R H_i,\quad
 \k = \t \oplus\bigoplus_{\alpha \in R^+} \R(E_{\alpha} - F_{\alpha})\oplus\bigoplus_{\alpha \in R^+} i\R(E_{\alpha} + F_{\alpha}).
\end{align*}

Since $G$ and $H$ have natural structures of (complex) Lie groups
in this setting, there are connected closed subgroups $K, T$
correponding to $\k, \t$ respectively. They are connected compact Lie
groups. Moreover the complexifications of $K, T$ are isomorphic to
$G, H$ respectively.

The category of finite dimensional unitary representations
of $K$ (resp. $T$) is denoted by $\rep^{\fin} K$ (resp. $\rep^{\fin} T$).
Note the canonical equivalence
$\rep^{\fin} K \cong \rep^{\fin} G$ and $\rep^{\fin} T \cong \rep^{\fin} H$. In particular we can identify $\irr{\rep^{\fin} K}$ and $\irr{\rep^{\fin} T}$ with $P^+$ and $P$ respectively.

We also consider the compact real form
of $U_q(\g)$. Assume $q^r > 0$ for all $r \in (2L)^{-1}\Z$ and $q < 1$.
We define a Hopf \star-algebra $U_q(\k)$,
which is $U_q(\g)$ equipped with the following \star-structure:
\begin{align*}
 E_i^* = K_iF_i,\quad
 F_i^* = E_iK_i^{-1},\quad
 K_{\lbd}^* = K_{\lbd}.
\end{align*}
Then $U_q(\h)$ is closed under the involution
and defines a Hopf \star-algebra $U_q(\t)$.
A finite dimensional unitary representation of $U_q(\k)$
is a finite dimensional representation of $U_q(\g)$
on a finite dimensional Hilbert space $H$ such that
$\ip{\xi,X\eta} = \ip{X^*\xi,\eta}$ holds for all
$\xi,\eta \in H$ and $X \in U_q(\k)$. A finite dimensional unitary
representations of $U_q(\t)$ is also defined similarly.
Then we have the \cstar-tensor categories
$\rep_q^{\fin} K$ and $\rep_q^{\fin} T$, whose irreducible objects
are parametrized $P^+$ and $P$ again.

For consistency, the $\Z_+$-rings $\Z_+(G)$ and $\Z_+(H)$
are denoted by $\Z_+(K)$ and $\Z_+(T)$ in this setting.

\section{Equivariant Poisson structures on compact flag manifolds}
\label{sect:Poisson geometric thing}
In this section, we recall the classification of
equivariant Poisson structures on semisimple coadjoint orbits
and compact flag manifolds. Additionally we also provide a classification
of equivariant Poisson structures with $0$-dimensional symplectic leaves, which is important to interpret Corollary \ref{coro:operator algebraic classification}.

Though we only use the case of $H\backslash G$ and $T\backslash K$
in the present paper, each result in this section
is presented in the general form. For each subset $S \subset \simples$,
the corresponding Levi subgroup (resp. Levi sublagebra) is denoted by $L_S$ (resp. $\frl_S$). Similarly we also consider $K_S := K\cap L_S$ and $\k_S := \k \cap \frl_S$. The closed subsystem of $\roots$ corresponding to $S$ is denoted
by $\roots_S$.

\subsection{A brief review on the classification theorem for $L_S \backslash G$}

At first we recall a Poisson geometric aspect of $G$.
Let $r$ be the following elements of $\bigwedge^2 \g$, which is called
the \emph{starndard r-matrix}:
\begin{align*}
 r := \sum_{\alpha \in R^+} d_{\alpha}E_{\alpha}\wedge F_{\alpha}
\end{align*}
For $v \in \bigwedge^{\bullet} \g$, the corresponding left (resp. right) invariant
polyvector field is denoted by $v_L \in \Gamma(G, \bigwedge^{\bullet} TG)$ (resp. $v_R \in \Gamma(G,\bigwedge^{\bullet} TG)$). Under this notation,
the standard Poisson structure on $G$ can be presented as follows:
\begin{align*}
 \pi_G := r_R - r_L.
\end{align*}
It is known that this makes $G$ into a Poisson algebraic group,
which is denoted by $G^{\std}$ in this paper.

Next we look at $G$-actions on Poisson varieties.
A \emph{Poisson $G^{\std}$-variety} is a pair of a Poisson variety
$(X,\pi_X)$ and a right $G$-action on $X$
such that the action map $X\times G^{\std} \longrightarrow X$
is a morphism of Poisson varieties.

Consider a subset $S \subset \simples$. Then we have a right $G$-variety
$L_S\backslash G$. Note that the space of right invariant
polyvector fields is identified with $(\bigwedge^{\bullet} \m_S)^{\frl_S}$,
where $\m_S := \sum_{\alpha \in R\setminus R_S}\g_{\alpha}$. For
$v \in (\bigwedge^{\bullet} \m_S)^{\frl_S}$, the corresponding
right invariant polyvector field is denoted by $v_R$.

Let $r_L$ be the bivector field on $L_S\backslash G$
induced from $r$ by the right $G$-action. We also introduce
$X_{L_S\backslash G}(k)$ defined as the subset of $k^{R\setminus R_S}$
consisting of elements satisfying the following conditions:
\begin{enumerate}
 \item $\phi_{-\alpha} = -\phi_{\alpha}$ for all $\alpha \in R\setminus R_S$. \label{rel1}
 \item $\phi_{\alpha}\phi_{\beta} + 1 = \phi_{\alpha + \beta}(\phi_{\alpha} + \phi_{\beta})$ when $\alpha,\beta,\alpha + \beta \in R\setminus R_S$.
\label{rel2}
 \item $\phi_{\alpha} = \phi_{\beta}$ when $\alpha,\beta \in R\setminus R_S$ and $\alpha - \beta \in \aspan_{\Z} S$. \label{rel3}
\end{enumerate}
Note that $\phi \in X_{L_S\backslash G}(k)$ defines $v(\phi) \in (\bigwedge^2\m_S)^{\frl_S}$ as follows:
\begin{align*}
 v(\phi) = \sum_{\alpha \in R^+\setminus R_S^+} d_{\alpha}\phi_{\alpha}E_{\alpha}\wedge F_{\alpha},
\end{align*}

The following fact is pointed out in \cite{MR1817512}.
\begin{prop} \label{prop:classification of equiv Poisson var}
Let $\pi$ be a bivector field on $L_S\backslash G$.
Then $(L_S\backslash G, \pi)$ is a Poisson $G^{\std}$-variety
if and only if $\pi = v(\phi)_R - r_L$ for some
$\phi \in X_{L_S\backslash G}(k)$.
\end{prop}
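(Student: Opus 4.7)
The plan is to reduce the statement to a computation in $\bigwedge^{\bullet}\g$ by exploiting the homogeneity of $L_S\backslash G$, and then to extract the relations (i)--(iii) from the Jacobi identity.

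First I would analyze the equivariance. A right Poisson action of $G^{\std}$ on $(L_S\backslash G,\pi)$ satisfies
\begin{align*}
\pi([g]) = (R_g)_*\pi([e]) + (a_{[e]})_*\pi_G(g),
\end{align*}
where $a_{[e]}\colon G \to L_S\backslash G,\ h \mapsto [h]$. Substituting $\pi_G = r_R - r_L$ and using equivariance of the projection $G \to L_S\backslash G$ under the right $G$-action, the $r_R$-contribution combines with $(R_g)_*\pi([e])$ into a single right-$G$-invariant bivector field $u_R$ on $L_S\backslash G$, while the $-r_L$ contribution becomes precisely $-r_L$ on the coset (the projection of the left-invariant $r$-bivector on $G$). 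Since $G$ acts transitively on $L_S\backslash G$ with stabilizer $L_S$ at $[e]$, such $u_R$ is determined by an element $u \in (\bigwedge^2 \m_S)^{\frl_S}$ via the $\frl_S$-invariant complement $\m_S$ of $\frl_S$. Hence any equivariant Poisson structure has the form $\pi = u_R - r_L$ with $u \in (\bigwedge^2 \m_S)^{\frl_S}$.

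Next I would parametrize $u$. The decomposition $\m_S = \bigoplus_{\alpha \in R\setminus R_S}\g_\alpha$ shows that the $\t$-weight-zero subspace of $\bigwedge^2 \m_S$ is spanned by $E_\alpha \wedge F_\alpha$ for $\alpha \in R^+ \setminus R_S^+$; hence $u = \sum_{\alpha \in R^+ \setminus R_S^+} d_\alpha \phi_\alpha E_\alpha \wedge F_\alpha$ for scalars $\phi_\alpha \in k$, and extending antisymmetrically in $\alpha$ turns (i) into a convention. The full $\frl_S$-invariance (stronger than mere $\t$-weight zero) forces $\phi_\alpha = \phi_\beta$ whenever $\alpha$ and $\beta$ lie in the same $W_S$-orbit, i.e.\ $\alpha - \beta \in \aspan_{\Z}S$, which is exactly (iii).

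Finally I would impose Jacobi via the Schouten bracket:
\begin{align*}
[\pi,\pi] = [u_R,u_R] - 2[u_R,r_L] + [r_L,r_L].
\end{align*}
The cross term vanishes: $r_L$ is assembled from fundamental vector fields of the right $G$-action, $u_R$ is invariant under that same action, and the Lie derivative of an invariant polyvector along such a fundamental field is zero, so graded Leibniz for Schouten gives $[u_R,r_L]=0$. Both remaining terms are right-$G$-invariant trivectors, and the identity reduces to an equation in $(\bigwedge^3 \m_S)^{\frl_S}$. At $[e]$, $[r_L,r_L]$ is the projection of $[r,r] \in (\bigwedge^3 \g)^{\g}$ (which is $\mathrm{Ad}$-invariant because $G^{\std}$ is Poisson--Lie), whereas $[u_R,u_R]$ is computed from Lie brackets $[E_\alpha,E_\beta]$ and $[E_\alpha,F_\beta]$ among the factors. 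Comparing the coefficient of each weight-zero basis element $E_\alpha \wedge E_\beta \wedge F_{\alpha+\beta}$ with $\alpha,\beta,\alpha+\beta \in R\setminus R_S$ produces precisely (ii).

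The hard part is the bookkeeping in this last step: several structure constants---including the Cartan contribution from $[r,r]$ and the cross commutators $[E_\alpha,F_\beta]$---must be matched across the two Schouten brackets, and the normalizations $d_\alpha$ must line up. A clean organization is to restrict, for each triple $(\alpha,\beta,\alpha+\beta)$, to the $A_2$-subsystem generated by $\alpha$ and $\beta$, which reduces the check to an $\fsl_3$-computation. The converse---that every $\phi \in X_{L_S\backslash G}(k)$ yields a Poisson structure---is then obtained by retracing the same identities.
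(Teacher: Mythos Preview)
The paper does not actually prove this proposition; it simply attributes the fact to Donin \cite{MR1817512} and states it. Your proposal supplies a direct argument where the paper offers none, and the outline is essentially correct and is the standard computation (and presumably close to Donin's original).

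A few remarks on the details. Your reduction of equivariance to the form $\pi=u_R-r_L$ with $u\in(\bigwedge^2\m_S)^{\frl_S}$ is right; the cross term $[u_R,r_L]$ indeed vanishes because $u_R$ is right-$G$-invariant and each tensor factor of $r_L$ is a fundamental vector field for that same action. For condition (iii), your identification of $\frl_S$-invariance with constancy on $Q_S$-cosets is correct, but note that what the computation with $[E_\eps,v(\phi)]$ literally gives is $\phi_\gamma=\phi_{\gamma+\eps}$ whenever $\gamma,\gamma+\eps\in R\setminus R_S$ and $\eps\in S$; passing from this ``chain'' condition to $\alpha-\beta\in\aspan_\Z S$ uses that any two roots in the same $Q_S$-coset of $R\setminus R_S$ are linked by such a chain, which is a standard (but not entirely trivial) root-system fact. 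For the Jacobi step, the claim that $[r_L,r_L]$ is right-invariant relies on $[r,r]$ being $\mathrm{Ad}$-invariant (i.e., the modified classical Yang--Baxter equation behind the Poisson--Lie structure of $G^{\std}$), which you mention; with that in hand, both surviving terms are right-invariant trivectors and the comparison at $[e]$ goes through. Your suggestion to restrict to the rank-two subsystem generated by $\alpha,\beta$ is indeed the cleanest way to extract relation (ii) without drowning in structure constants.
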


\subsection{The classification theorem for $K_S\backslash K$}

Next we focus on the compact real form $K$. Note that
we have the following expression of the standard r-matrix:
\begin{align*}
 r = \frac{1}{2i}\sum_{\alpha \in R^+} d_{\alpha}(E_{\alpha} - F_{\alpha})\wedge(iE_{\alpha} + iF_{\alpha}).
\end{align*}
Hence we can regard $ir$ as an element of $\bigwedge^2 \k$. Then
this defines the standard Poisson structure $\pi_K := (ir)_L - (ir)_R$
on $K$, which makes $K$ into a Poisson-Lie group denoted by $K^{\std}$.
As same with $G^{\std}$, we have the notion of \emph{Poisson $K^{\std}$-manifold}.

Fix a subset $S \subset \simples$ and set $X_{K_S\backslash K} := X_{\lsbg}(\R)$.
Then we can see that $iv(\phi) \in (\bigwedge^2 (\k\cap \m_S))^{\k_S}$,
which defines a right $K$-invariant bivector $(iv(\phi))_R$ on $K_S\backslash K$. We can see the following proposition in the completely same way with
Proposition \ref{prop:classification of equiv Poisson var}:

\begin{prop} \label{prop:classificatoin of equiv Poisson var}
Let $\pi$ be a bivector field on $K_S\backslash K$.
Then $(K_S\backslash K, \pi)$ is a Poisson $K^{\std}$-manifold
if and only if $\pi = (iv(\phi))_R - (ir)_L$ for some
$\phi \in X_{K_S\backslash K}$.
\end{prop}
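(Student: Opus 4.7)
The plan is to mirror the proof of Proposition \ref{prop:classification of equiv Poisson var}: the compact statement differs from its complex counterpart only by the factors of $i$ that enforce reality with respect to the conjugation $X \mapsto -X^*$ defining $\k$, and once these factors are tracked, the algebraic relations (i)--(iii) remain unchanged and cut out $X_{L_S\backslash G}(\R) = X_{K_S\backslash K}$.

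The first step is to parametrize right-$K$-invariant bivectors on $K_S\backslash K$. Since $E_\alpha^* = F_\alpha$, the conjugation $X \mapsto -X^*$ sends $E_\alpha\wedge F_\alpha$ to $(-F_\alpha)\wedge(-E_\alpha) = -E_\alpha\wedge F_\alpha$, so $iE_\alpha\wedge F_\alpha$ lies in $\bigwedge^2\k$. Consequently $iv(\phi) \in \bigwedge^2(\k\cap\m_S)$ for each $\phi \in \R^{R\setminus R_S}$ satisfying (i), and the $\frl_S$-invariance condition (iii) is equivalent to $\k_S$-invariance of $iv(\phi)$ because $\frl_S = \k_S\oplus i\k_S$. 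Via the identification $T_{eK_S}(K_S\backslash K) \cong \k\cap\m_S$, every element of $(\bigwedge^2(\k\cap\m_S))^{\k_S}$ arises uniquely as such an $iv(\phi)$; similarly $ir \in \bigwedge^2\k$.

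For the ``only if'' direction, given a Poisson $K^{\std}$-manifold $(K_S\backslash K, \pi)$, the same Lie-algebraic manipulation as in the complex case shows that Poisson-Lie compatibility with the right $K^{\std}$-action is equivalent to right-$K$-invariance of $\pi + (ir)_L$. Combined with the previous paragraph this yields $\pi = (iv(\phi))_R - (ir)_L$ for some $\phi \in \R^{R\setminus R_S}$ satisfying (i) and (iii). Imposing $[\pi,\pi] = 0$ and expanding the Schouten bracket then produces relation (ii) in exactly the same form as in the complex setting, with the extra factors of $i^2 = -1$ coming from $iv(\phi)$ and $ir$ appearing only as an overall real scalar. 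The ``if'' direction simply reverses this computation. The main obstacle is the bookkeeping of these $i$'s in the Schouten-bracket expansion, to confirm that no sign discrepancy emerges when passing from the complex to the compact setting; once verified, the classification follows from the argument for Proposition \ref{prop:classification of equiv Poisson var} with no further input.
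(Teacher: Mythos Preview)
Your proposal is correct and matches the paper's approach exactly: the paper gives no separate proof but simply states that the proposition ``can be seen in the completely same way with Proposition~\ref{prop:classification of equiv Poisson var},'' after noting that $ir \in \bigwedge^2\k$. Your outline spells out precisely this reduction, tracking the reality conditions and the factors of $i$ that convert the complex statement into the compact one.
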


Let us recall the notion of a symplectic leaf of a Poisson manifold.
For a Poisson manifold $(M,\pi_M)$, a symplectic leaf is
a connected Poisson submanifold on which $\pi_M$ is non-degenerate
at each point. It is known that every Poisson manifold has a decomposition
into its symplectic leaves. We also remark here that
$\{m\} \subset M$ is a symplectic leaf if and only if $\pi_M(m) = 0$.

The following characterization is important when we consider
a classification of ``noncommutative flag manifolds''
in the \cstar-algebraic setting:

\begin{prop} \label{prop:characterization of quotient Poisson structure}
For $\phi \in X_{K_S\backslash K}$, the following are equivalent:
\begin{enumerate}
 \item There eixsts a Poisson $K^{\std}$-map $(K,\pi_K) \longrightarrow (K_S\backslash K, \pi_{\phi})$.
 \item There exists a $0$-dimensional symplectic leaf of $(K_S\backslash K, \pi_{\phi})$.
 \item For all $\alpha \in \roots\setminus\roots_S$, $\abs{\phi_{\alpha}} \le 1$.
\end{enumerate}
\end{prop}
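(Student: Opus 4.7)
The plan is to prove (i) $\Leftrightarrow$ (ii) by general Poisson-action theory and then (ii) $\Leftrightarrow$ (iii) by an explicit analysis of the vanishing locus of $\pi_\phi$.

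For (i) $\Leftrightarrow$ (ii), observe that $\pi_K(e) = (ir)_L(e) - (ir)_R(e) = ir - ir = 0$, so $\{e\}$ is a $0$-dimensional symplectic leaf of $(K, \pi_K)$. Thus (i) $\Rightarrow$ (ii): any Poisson map $f\colon K \to K_S\backslash K$ sends the vanishing bivector at $e$ to the vanishing bivector at $f(e)$, yielding the required $0$-dimensional leaf. Conversely, given a $0$-dimensional leaf $\{x_0\}$ of $\pi_\phi$, the orbit map $a_{x_0}\colon k \mapsto x_0\cdot k$ is trivially $K$-equivariant, and the Poisson-action identity
\[
  \pi_\phi(x_0\cdot k) = d(R_k)_{x_0}\pi_\phi(x_0) + (da_{x_0})_k\,\pi_K(k),
\]
together with $\pi_\phi(x_0) = 0$, simplifies to $(a_{x_0})_* \pi_K = \pi_\phi\circ a_{x_0}$, showing that $a_{x_0}$ is Poisson.

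For (ii) $\Leftrightarrow$ (iii), the first step is to reformulate vanishing in terms of $\mathrm{Ad}(K)$-orbits. Using the $dR_k$-trivialization $T_{K_S k}(K_S\backslash K) \cong \k/\k_S$, one computes
\[
  \pi_\phi(K_S\cdot k) \equiv iv(\phi) - \mathrm{Ad}(k)(ir) \pmod{\k_S\wedge\k + \wedge^2\k_S},
\]
so (ii) holds iff the $\mathrm{Ad}(K)$-orbit of $ir$ in $\wedge^2\k$ meets the affine coset $iv(\phi) + (\k_S\wedge\k + \wedge^2\k_S)$. The rank-one case is the model: for $\SU(2)$, $\mathrm{Ad}(K) \cong \mathrm{SO}(3)$ acts on $\wedge^2\k \cong \R^3$, the orbit of $ir$ is a $2$-sphere of radius $\tfrac12$, and its projection onto $\R\cdot u_\alpha\wedge v_\alpha$ is the closed interval $[-\tfrac12, \tfrac12]$; matching with $iv(\phi) = \tfrac{\phi_\alpha}{2}\,u_\alpha\wedge v_\alpha$ recovers $|\phi_\alpha|\le 1$ exactly. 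In general, the one-root bound $|\phi_\alpha|\le 1$ follows from $\mathrm{Ad}(K)$-invariance of the compact norm on $\wedge^2\k$, applied along each $\mathfrak{sl}_2$-subalgebra attached to $\alpha$.

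The main technical hurdle is the converse direction: exhibiting a single $k \in K$ whose $\mathrm{Ad}$-image of $ir$ simultaneously realizes the prescribed coefficients $(\phi_\alpha)$ modulo the $\k_S$-ideal. I would approach this via an Iwasawa-type construction in the complexified group $G$, combining a torus factor with a unipotent factor, and then use the algebraic relations defining $X_{K_S\backslash K}$ (which couple $\phi_\alpha, \phi_\beta, \phi_{\alpha+\beta}$) to verify that the assembled element actually lies in $K$ precisely when every $|\phi_\alpha|\le 1$. Verifying this combinatorial compatibility along root strings is the technical heart of the proof.
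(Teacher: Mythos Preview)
Your treatment of (i) $\Leftrightarrow$ (ii) is correct and matches the paper: both use the orbit map $\ell_{[x_0]}\colon k\mapsto [x_0k]$ together with the multiplicativity identity for the Poisson bivector under a Poisson action. Your sketch for (ii) $\Rightarrow$ (iii) is also in the right spirit; the paper makes this precise by writing $\phi_\alpha=\langle d_\alpha E_\alpha\wedge F_\alpha,\mathrm{Ad}_{x_0}(r)\rangle$ and bounding each of the two resulting sums of matrix coefficients by Bessel's inequality, which is exactly the norm/$\mathrm{Ad}(K)$-invariance mechanism you invoke.

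The genuine gap is in (iii) $\Rightarrow$ (ii). Your proposed ``Iwasawa-type construction in $G$'' is not carried out, and it is not clear how a torus-times-unipotent ansatz would land back in $K$ while satisfying the quadratic constraints $\phi_\alpha\phi_\beta+1=\phi_{\alpha+\beta}(\phi_\alpha+\phi_\beta)$ simultaneously for all root strings. The paper takes a structurally different route that avoids building $x_0$ directly: first, a Weyl-group trick using the parabolic subset $P=\{\alpha:\phi_\alpha>0\}\cup\{\alpha\in\roots^+\setminus\roots_S^+:\phi_\alpha=0\}\cup\roots_S^+$ conjugates the problem to one where $\phi_\alpha\ge 0$ on positive roots (so in particular $\phi_\alpha\neq-1$). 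Then an elementary lemma (from $xy+1=z(x+y)$ with $|x|,|y|,|z|\le 1$) forces each connected component of $\{\eps\in\Delta\setminus S:\phi_\eps\neq 1\}\cup S$ to contain at most one simple root outside $S$, and that root appears with multiplicity $1$ in the highest root of the component. This reduces the existence of $x_0$ to the irreducible Hermitian symmetric case, where it is known (the paper cites \cite[Proposition~3.8]{MR4585468}). Your plan lacks both ingredients---the Weyl reduction and the Hermitian symmetric endpoint---so as written it does not close the implication.
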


We say that $\pi_{\phi}$ is \emph{of quotient type} if $\phi$
satisfies the conditions above.
The set of $\phi \in X_{K_S\backslash K}$
satisfying the conditions above is denoted by
$X_{K_S\backslash K}^{\quot}$:
\begin{align*}
 X_{K_S\backslash K}^{\quot} := \{\phi \in X_{K_S\backslash K}\mid -1 \le \phi_{\alpha} \le 1\}.
\end{align*}

\begin{proof}[Proof of Proposition \ref{prop:characterization of quotient Poisson structure} (i) $\Longleftrightarrow$ (ii) $\Longrightarrow$ (iii)]
For $[x_0] = K_Sx_0 \in K_S\backslash K$, we define
$\map{\ell_{[x_0]}}{K}{K_S\backslash K}$ by $x\longmapsto [x_0x]$.

It is not difficult to see the equivalence of (i) and (ii).
Actually $\ell_{[x_0]}$ is a Poisson $K^{\std}$-map
if and only if $\{[x_0]\}$ is a symplectic leaf.

To see (ii) $\Longrightarrow$ (iii), assume $(K_S\backslash K, \pi_{\phi})$ has a $0$-dimensional
symplectic leaf $\{[x_0]\}$. Then $\ell_{[x_0]}$
is a Poisson map. Hence we have
\begin{align*}
iv(\phi) - \mathrm{pr}(ir)
= \pi_{\phi}([e]) = d\ell_{[x_0]}\pi_K(x_0^{-1})
= \mathrm{pr}(\mathrm{Ad}_{x_0}(ir) - ir),
\end{align*}
where $\map{\mathrm{pr}}{\k}{\k\cap \m_S}$ is the canonical projection.
For convenience, we regard $\bigwedge^2 (\k\cap \m_S)$
as an $\R$-subspace of $\bigwedge^2 \m_S$. Then we have
\begin{align*}
 \sum_{\alpha \in \roots^+\setminus\roots_S^+} d_{\alpha}\phi_{\alpha}E_{\alpha}\wedge F_{\alpha} = \mathrm{pr}(\mathrm{Ad}_{x_0}(r)).
\end{align*}
Hence, for any $\alpha \in \roots^+\setminus \roots_S^+$, we have
\begin{align*}
 \phi_{\alpha}
&= \ip{d_{\alpha}E_{\alpha}\wedge F_{\alpha}, \mathrm{Ad}_{x_0}(r)} \\
&= \sum_{\beta \in \roots^+}d_{\alpha}d_{\beta}(\ip{E_{\alpha}, \mathrm{Ad}_{x_0}E_{\beta}}\ip{F_{\alpha}, \mathrm{Ad}_{x_0}F_{\beta}} - \ip{F_{\alpha}, \mathrm{Ad}_{x_0}E_{\beta}}\ip{E_{\alpha}, \mathrm{Ad}_{x_0}F_{\beta}}).
\end{align*}
Since we have
\begin{align*}
 \ip{X,\mathrm{Ad}_k(Y)} = \bar{\ip{X^*,\mathrm{Ad}_k (Y^*)}}
\end{align*}
for $X,Y \in \g$ and $k \in K$, we can estimate the first summation
and the second summation as follows using Bessel's inequality:
\begin{align*}
 \sum_{\beta \in \roots^+} d_{\alpha}d_{\beta}\ip{E_{\alpha}, \mathrm{Ad}_{x_0}E_{\beta}}\ip{F_{\alpha}, \mathrm{Ad}_{x_0}F_{\beta}}
= \sum_{\beta \in \roots^+} d_{\alpha}d_{\beta}\abs{\ip{E_{\alpha},\mathrm{Ad}_{x_0}E_{\beta}}}^2 \le d_{\alpha}\nor{E_{\alpha}}^2 = 1,\\
 \sum_{\beta \in \roots^+} d_{\alpha}d_{\beta}\ip{F_{\alpha}, \mathrm{Ad}_{x_0}E_{\beta}}\ip{E_{\alpha}, \mathrm{Ad}_{x_0}F_{\beta}}
= \sum_{\beta \in \roots^+} d_{\alpha}d_{\beta}\abs{\ip{E_{\alpha},\mathrm{Ad}_{x_0}F_{\beta}}}^2 \le d_{\alpha}\nor{E_{\alpha}}^2 = 1.
\end{align*}
We also see that the LHSs are non-negative
since so are the middle terms.
Hence we see $-1 \le \phi_{\alpha} \le 1$.
\end{proof}

To see the converse direction, we need some observations on $X_{K_S\backslash K}^{\quot}$. We use the following elementary lemma without proof.

\begin{lemm} \label{lemm:elementary inequality}
Let $x,y,z$ be real numbers satisfying $xy + 1 = z(x + y)$.
If $\abs{x},\abs{y},\abs{z} \le 1$, either of $\abs{x} = 1$ or $\abs{y} = 1$ holds.
\end{lemm}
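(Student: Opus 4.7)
The plan is to recast the two-variable constraint $xy + 1 = z(x+y)$ as a non-negative sum that is forced to vanish. The key tool is the factorization
\begin{equation*}
 (xy+1)^2 - (x+y)^2 = (1-x^2)(1-y^2),
\end{equation*}
which one verifies by a one-line expansion. Substituting the defining relation $xy + 1 = z(x+y)$ into the left-hand side and rearranging produces the identity
\begin{equation*}
 (1-z^2)(x+y)^2 + (1-x^2)(1-y^2) = 0.
\end{equation*}

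Under the bounds $\abs{x}, \abs{y}, \abs{z} \le 1$, each of the four factors $1-z^2$, $(x+y)^2$, $1-x^2$, $1-y^2$ is non-negative, so the two summands of the displayed identity must vanish separately. If $(1-x^2)(1-y^2) = 0$ then $\abs{x} = 1$ or $\abs{y} = 1$ and we are done. Otherwise $(x+y)^2 = 0$, so $x + y = 0$; the original relation then degenerates to $xy + 1 = 0$, i.e.\ $xy = -1$, and together with $\abs{x}, \abs{y} \le 1$ this forces $\abs{x} = \abs{y} = 1$, again yielding the desired conclusion.

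The only mild subtlety is the degenerate case $x + y = 0$, where one cannot solve for $z$ by dividing. The symmetric form of the identity above sidesteps this issue automatically by never requiring a division, so no genuine obstacle is anticipated; the whole argument reduces to one algebraic identity and a sign count.
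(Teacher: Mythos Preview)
Your proof is correct. The identity $(xy+1)^2 - (x+y)^2 = (1-x^2)(1-y^2)$ combined with the relation $xy+1 = z(x+y)$ indeed yields
\[
(1-z^2)(x+y)^2 + (1-x^2)(1-y^2) = 0,
\]
and under the hypotheses both summands are non-negative, so each must vanish. This already forces $(1-x^2)(1-y^2) = 0$, hence $\abs{x}=1$ or $\abs{y}=1$; your ``otherwise'' branch treating $(x+y)^2=0$ is therefore vacuous (though harmless).

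As for comparison with the paper: there is nothing to compare against, since the paper explicitly states the lemma ``without proof'' as an elementary fact. Your argument is a clean way to fill that gap.
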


With an abuse of notation, we use $\simples$ for the Dynkin diagram associated to $(\roots,\roots^+)$
since its vertices are simple roots.
Then we say that a subset $\Gamma$ of $\simples$ is connected
when the associated full subgraph of the Dynkin diagram is connected.

\begin{lemm} \label{lemm:reduction to symmetric spaces}
Let $\phi$ be an element in $X_{K_S\backslash K}^{\quot}$ such that
$\phi_{\alpha}\neq -1$ for $\alpha \in \roots^+\setminus\roots_S^+$.
Then each connected component $\Gamma$ of
$\{\eps \in \Delta\setminus S\mid \phi_{\eps}\neq 1\} \cup S$
contains at
most one $\eps \in \Delta\setminus S$. Moreover the coefficient
of $\eps$ in the highest root $\beta_{\Gamma}$ of $\roots_{\Gamma}$
is $1$.
\end{lemm}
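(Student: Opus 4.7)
My plan splits along the two conclusions. In both, the key observation is that for every $\eps \in \Gamma \cap (\Delta \setminus S)$ we have $\phi_\eps \in (-1,1)$, since $\phi_\eps \neq 1$ by membership in $\Gamma$ and $\phi_\eps \neq -1$ by hypothesis. Combined with $|\phi_\alpha| \le 1$ throughout and the exclusion of $-1$ on $\roots^+ \setminus \roots_S^+$, this will let me leverage Lemma \ref{lemm:elementary inequality} as a rigidity tool.

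For the first assertion, I would argue by contradiction: suppose two distinct $\eps, \eps' \in \Gamma \cap (\Delta \setminus S)$ exist, and choose such a pair at minimum graph-theoretic distance in $\Gamma$. Since Dynkin diagrams are trees, there is a unique path between them, and by minimality all of its interior vertices lie in $S$; write the path as $\eps = \delta_0, \delta_1, \dots, \delta_k, \delta_{k+1} = \eps'$. The standard fact that partial sums along connected chains of simple roots are roots gives that $\alpha := \eps + \delta_1 + \cdots + \delta_k$ and $\alpha + \eps'$ are both positive roots, and all of $\alpha, \eps', \alpha + \eps'$ lie in $\roots^+ \setminus \roots_S^+$. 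Relation (iii) yields $\phi_\alpha = \phi_\eps$ because $\alpha - \eps \in \aspan_\Z S$, so relation (ii) becomes $\phi_\eps \phi_{\eps'} + 1 = \phi_{\alpha + \eps'}(\phi_\eps + \phi_{\eps'})$. Then Lemma \ref{lemm:elementary inequality} forces $|\phi_\eps| = 1$ or $|\phi_{\eps'}| = 1$, contradicting $\phi_\eps, \phi_{\eps'} \in (-1, 1)$.

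For the coefficient statement, let $\eps$ be the unique element of $\Gamma \cap (\Delta \setminus S)$ and suppose for contradiction that the coefficient of $\eps$ in $\beta_\Gamma$ is some $n \ge 2$. The decisive combinatorial step is to produce a positive root $\gamma \in \roots_\Gamma^+$ of coefficient exactly $2$ at $\eps$ such that $\gamma - \eps$ is again a positive root. Granted such a $\gamma$, note that $\gamma - 2\eps \in \aspan_\Z S$ because $\eps$ is the only non-$S$ simple root in $\Gamma$, so (iii) gives $\phi_{\gamma - \eps} = \phi_\eps$. Relation (ii) applied to the pair $(\gamma - \eps, \eps)$ then yields $\phi_\eps^2 + 1 = 2\phi_\gamma \phi_\eps$. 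If $\phi_\eps = 0$ this reads $1 = 0$; otherwise $|\phi_\gamma| = (\phi_\eps^2 + 1)/(2|\phi_\eps|) > 1$ since $|\phi_\eps| < 1$, both contradicting $|\phi_\gamma| \le 1$.

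I expect the existence of $\gamma$ to be the main technical obstacle. I would handle it by a finite case analysis over the connected Dynkin types in which some simple root has coefficient at least $2$ in the highest root, exhibiting $\gamma$ explicitly in each: for example $\gamma = e_{i-1} + e_i$ for type $B_n$ or $D_n$ with $\eps = \alpha_i$, $\gamma = 2e_i$ for type $C_n$ with $\eps = \alpha_i$ ($i < n$), $\gamma = 2\alpha_1 + \alpha_2$ or $3\alpha_1 + 2\alpha_2$ in type $G_2$, and analogous explicit choices in types $E_6, E_7, E_8$ and $F_4$. The verification that $\gamma - \eps$ is again a positive root is immediate from the explicit positive root lists, closing the argument.
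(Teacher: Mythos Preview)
Your argument is correct. The first assertion follows the same core idea as the paper: both produce a chain of simple roots leading to an application of relation~(ii), relation~(iii), and Lemma~\ref{lemm:elementary inequality}. Your version phrases this via the unique path in the Dynkin tree, while the paper uses a building chain $\beta_\Gamma = \delta_1 + \delta_2 + \cdots + \delta_k$ for the highest root (Bourbaki, Ch.~VI, \S1, Prop.~19) in which every partial sum is a root; these are essentially the same device.

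The genuine divergence is in the second assertion. You treat the multiplicity claim separately and propose a case analysis over Dynkin types to locate a root $\gamma$ with $\eps$-coefficient~$2$ such that $\gamma - \eps$ is a root. This works, but the paper avoids it entirely: in the building chain for $\beta_\Gamma$, every simple root of $\Gamma$ appears with its full multiplicity, so if the $\eps$-coefficient were at least~$2$ there would be a \emph{second} index $m'$ with $\delta_{m'} = \eps \notin S$, and the very same contradiction used for the first assertion (with $\phi_{\delta_{m'}} = \phi_{\delta_m} = \phi_\eps$) fires again. Thus the paper's single chain argument handles both claims uniformly and type-independently. You could likewise replace your case analysis by taking $\gamma$ to be the partial sum in any building chain for $\beta_\Gamma$ up to the second occurrence of $\eps$; then $\gamma - \eps$ is the previous partial sum, automatically a root. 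What your separation buys is a slightly more transparent reduction to the AM--GM inequality $(\phi_\eps^2 + 1)/(2|\phi_\eps|) > 1$; what the paper's approach buys is brevity and no reliance on classification.
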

\begin{proof}
Take a connected component $\Gamma$ and $\beta_{\Gamma}$ be
the highest root in the root system generated by $\Gamma$.
Then we can find a sequence $\{\delta_j\}_{j = 1}^k$ in $\Gamma$
such that $\beta_l := \delta_1 + \delta_2 + \cdots + \delta_l \in \roots_{\Gamma}^+$ and $\beta_k = \beta_{\Gamma}$
(\cite[Chapter VI, Section 1, Proposition 19]{MR1890629}).

If $\Gamma$ is contained in $S$, there is nothing to prove.

Assume $\Gamma \not\subset S$ and
take $m \ge 1$ so that $\delta_l \in S$ for $1 \le l \le m - 1$
and $\delta_m \not \in S$.
Assume that there is another $m' > m$
such that $\delta_l \in S$ for $m < l < m'$ and $\delta_{m'} \not\in S$.
Then we have
\begin{align*}
 \phi_{\beta_{m'}}(\phi_{\delta_{m'}} + \phi_{\beta_{m' - 1}})
= \phi_{\delta_{m'}}\phi_{\beta_{m' - 1}} + 1.
\end{align*}
Since $\phi_{\beta_{m' - 1}} = \phi_{\beta_m} = \phi_{\delta_m}$ holds,
Lemma \ref{lemm:elementary inequality} implies $\abs{\phi_{\beta_{m'}}} > 1$, which
contradicts to our assumption. Hence there is at most one $\delta_l$
which is not in $S$. Moreover this argument also shows that
its multiplicity in $\beta_{\Gamma}$ is $1$.
\end{proof}

We recall some facts on irreducible hermitian symmetric pairs.
See \cite[Subsection 1.6]{MR4585468} for brief description.

Let $\eps$ be a simple root whose multiplicity in the
highest root is $1$. This defines an involutive automorphism $\nu$
on $\g$ by $\id$ on $\frl_{\simples\setminus\{\eps\}}$
and $-\id$ on $\m_{\simples\setminus\{\eps\}}$. Then this involution
restricts to an involution on $\k$, whose fixed point part is $\k_{\simples\setminus\{\eps\}}$. This implies that
$\k_{\simples\setminus\{\eps\}} \subset \k$ is an hermitian symmetric pair.
If $\g$ is simple, \cite[Proposition 3.8]{MR4585468} implies that, for
any $\phi \in [-1,1]$, there exists an element $x_0 \in K$ such that
$\mathrm{pr}(\phi r) = \mathrm{pr}(\mathrm{Ad}_{x_0}(r))$.

\begin{proof}[Proof of Proposition \ref{prop:characterization of quotient Poisson structure} (ii) $\Longrightarrow$ (i)]
Take $\phi \in X_{K_S\backslash K}^{\quot}$ and assume
$\phi_{\alpha} \neq -1$ for $\alpha \in \roots^+\setminus\roots_S^+$. By Lemma \ref{lemm:reduction to symmetric spaces} and the discussion above,
we have $x_0 \in K$ such that
$v(\phi) = - \mathrm{pr}(\mathrm{Ad}_{x_0}(r))$. This means
that $\{x_0\}$ is a $0$-dimensional symplectic leaf of
$(K_S\backslash K, \pi_{\phi})$.

To prove the statement in general,
take $\phi \in X_{K_S\backslash K}^{\quot}$ and
consider the following subset:
\begin{align*}
 P
:= \{\alpha \in \roots\setminus\roots_S\mid \phi_{\alpha} > 0\}
\cup\{\alpha \in \roots^+\setminus\roots_S^+\mid \phi_{\alpha} = 0\}
\cup \roots_S^+.
\end{align*}
This is a parabolic subset in the sense of
\cite[Chapter VI, Section 1, Definition 4]{MR1890629}. Hence we can take a positive system
$\roots_0^+$ contained in $P$. Moreover, we can see the
following property of $\roots_S^+ \subset \roots_0^+$:
\begin{itemize}
 \item For $\alpha,\beta \in \roots_0^+$, $\alpha,\beta \in \roots_S^+$
if and only if $\alpha + \beta \in \roots_S^+$.
\end{itemize}
This implies that $S$ is contained in the set of simple roots of $\roots_0^+$.

Now take $w$ such that $w(\roots^+) = \roots_0^+$ and
set $S' := w^{-1}(S) \subset \simples$.
Consider the left multiplication $\map{\ell_{w^{-1}}}{K_S\backslash K}{K_{S'}\backslash K}$ defined by $K_Sx \longmapsto K_{S'}w^{-1}x$. Then
this is $K$-equivariant. Hence we have
\begin{align*}
 d\ell_{w^{-1}}(\pi_{\phi}) = (i\mathrm{Ad}_{w^{-1}}(v(\phi)))_R - (ir)_L.
\end{align*}
Moreover we have
\begin{align*}
 \mathrm{Ad}_{w^{-1}}(v(\phi)) = v(w^{-1}_*\phi),\quad
w^{-1}_*\phi = (\phi_{w(\alpha)})_{\alpha \in \roots\setminus\roots_{S'}}.
\end{align*}
Since $\phi_{w(\alpha)} \ge 0$ for all $\alpha \in \roots^+\setminus\roots_{S'}^+$, the discussion at the beginning implies that
$(K_{S'}\backslash K, \pi_{w^{-1}_*\phi})$
has a $0$-dimensional symplectic leaf. Then we see that
$(K_S\backslash K, \pi_{\phi})$ also has a $0$-dimensional
symplectic leaf since $\ell_{w^{-1}}$ preserves the Poisson structures.
\end{proof}

\subsection{The toric variety associated to a root system}

In this subsection, we recall the toric variety $X_{\roots}$
associated to the root system $\roots$ and also recall
how $X_{\lsbg}$ is embedded into $X_{\roots}$. For convenience
in later sections, we also give an embedding of $X_{\roots}$
into a product of projective lines.

All constructions can be carried out at the level of algebraic varieties,
but we restrict ourselves to description in terms of $k$-valued points,
which is sufficient for the present paper. See \cite[Subsection 5.2]{MR4837934}
for the embedding $X_{\lsbg} \subset X_{\roots}$ as algebraic varieties.

To define the set of $k$-valued points of $X_{\roots}$,
we would like to begin with the monoid algebra over $k$.
For a monoid $M$, the monoid algebra with coefficients in a commutative
ring $k$ is denoted by $k[M]$. It has a canonical $k$-basis
$\{e_{m}\}_{m \in M}$, for which we have $e_me_{m'} = e_{mm'}$.
Note that there is a canonical correspondence between
the set of monoid homomorphisms from $M$ to the multiplicative monoid $k$
and the set of $k$-algebra homomorphisms from $k[M]$ to $k$.
These sets are denoted by the same symbol $\Ch_k M$.

For an arbitrary positive system $\roots_0^+$, the corresponding
positive cone is denoted by $Q_0^+$. Then the set of $k$-valued
points of $X_{\roots}$ is defined as follows
\begin{align}
 X_{\roots}(k) := \left(\bigcup_{\roots_0^+} \Ch_k 2Q_0^+\right)\biggm/\sim, \label{eq:def of toric var}
\end{align}
where $\chi_1 \in \Ch_k 2Q_1^+$ and $\chi_2 \in \Ch_k 2Q_2^+$
are equivalent when there is $\chi \in \Ch_k (2Q_1^+ + 2Q_2^+)$
such that $\chi|_{2Q_i^+} = \chi_i$ for $i = 1,2$.

Note that $X_{\roots}(k)$ has an action of $W$, called the shifted
action on $X_{\roots}(k)$. For $\chi \in \Ch_k 2Q_0^+$ and $w \in W$,
$w\cdot\chi \in \Ch_k w(2Q_0^+)$ is defined as
\begin{align*}
 (w\cdot\chi)_{2\beta}
= q^{(w\rho - \rho,2\beta)}\chi_{w^{-1}(2\beta)}.
\end{align*}

Next we consider the $k$-valued points of the projective line, defined
as follows:

\begin{align*}
 \P^1(k) := (k^2\setminus\{(0,0)\})/\sim,
\end{align*}
where $(x_1,x_2)\sim (y_1,y_2)$ when $x_i = \lbd y_i$ for some $\lbd \in k$. The equivalence class containing $(x,y)$ is denoted by $[x:y]$
as usual. Let $\roots_0^+$ be a positive system.
Then $\chi \in \Ch_k 2Q_0^+$ defines the following element of
$\P^1(k)^{\roots}$, which is denoted by $\chi = \{\chi_{2\alpha}\}_{\alpha \in \roots}$ again:
\begin{align*}
 \chi_{2\alpha} =
\begin{cases}
 [\chi_{2\alpha}:1] & (\alpha \in \roots_0^+),\\
 [1:\chi_{-2\alpha}] & (\alpha \not\in \roots_0^+).
\end{cases}
\end{align*}
It is not difficult to check that this assignment is compatible
with the equivalence relation in (\ref{eq:def of toric var}).
Hence we have a map from $X_{\roots}(k)$ to $\P^1(k)^{\roots}$,
which is injective and has the following image.
\begin{align*}
 \{([x_{\alpha}:y_{\alpha}])_{\alpha \in \roots} \in \P^1(k)^{\roots}\mid[x_{-\alpha}:y_{-\alpha}] = [y_{\alpha}:x_{\alpha}],\,x_{\alpha}x_{\beta}y_{\alpha + \beta} = y_{\alpha}y_{\beta}x_{\alpha + \beta}\}.
\end{align*}

On the other hand, for any $\phi \in X_{\lsbg}(k)$,
we have an element $\chi_{\phi} \in \P^1(k)^{\roots}$ defined as follows:
\begin{align*}
 \chi_{\phi,2\alpha} =
\begin{cases}
 [\phi_{\alpha} + 1:\phi_{\alpha} - 1] & (\alpha \in \roots\setminus\roots_S),\\
 [1:1] & (\alpha \in \roots_S).
\end{cases}
\end{align*}
Then the conditions (\ref{rel1}), (\ref{rel2}), (\ref{rel3})
implies that $\chi_{\phi}$ is contained in the image
of $X_{\roots}(k)$, which allows us
to consider $\chi_{\phi}$ as an element of $X_{\roots}(k)$. Moreover
we can see that $\phi \longmapsto \chi_{\phi}$ gives an embedding
of $X_{\lsbg}(k)$ into $X_{\roots}(k)$.

For later use, we record the following lemma.

 \begin{lemm} \label{lemm:identification of toric and moduli}
The embedding $X_{\fssorb}(k) \longrightarrow X_{\roots}(k)$
induces the bijection between the following subsets:
\begin{align*}
 X_{\fssorb}^{\circ}(k) &:= \{\phi \in X_{\fssorb}(k)\mid (\phi_{\alpha} + 1) \not\in (\phi_{\alpha} - 1)q_{\alpha}^{2\Z}\},\\
 X_{\roots}^{\circ}(k) &:= \{\chi \in X_{\roots}(k)\mid \chi_{2\alpha}\not\in q_{\alpha}^{2\Z}\}
\end{align*}
\end{lemm}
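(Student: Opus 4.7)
The plan is to show that under the embedding $\phi \mapsto \chi_\phi$, the condition defining $X_{\fssorb}^\circ(k)$ translates exactly into the condition defining $X_\roots^\circ(k)$ at each root, and then to exhibit an explicit inverse on $X_\roots^\circ(k)$. Since the embedding is already known to be injective on all of $X_{\fssorb}(k)$, this will give the desired bijection.

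For the forward inclusion, I would verify at each root $\alpha$ the equivalence $\phi_\alpha + 1 \not\in (\phi_\alpha - 1)q_\alpha^{2\Z} \iff \chi_{\phi,2\alpha} \not\in q_\alpha^{2\Z}$. Using the formula $\chi_{\phi,2\alpha} = [\phi_\alpha + 1 : \phi_\alpha - 1]$ and identifying $q_\alpha^{2\Z} \subset k^\times$ with $\{[q_\alpha^{2n}:1] : n \in \Z\} \subset \P^1(k)$, I would split into three cases. In the generic case $\phi_\alpha \neq \pm 1$, the point $\chi_{\phi,2\alpha}$ equals the scalar $(\phi_\alpha + 1)/(\phi_\alpha - 1) \in k^\times$, and both conditions then read that this scalar is not a power of $q_\alpha^2$. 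In the boundary cases $\phi_\alpha = 1$ and $\phi_\alpha = -1$, the point $\chi_{\phi,2\alpha}$ is $[1:0]$ or $[0:1]$ respectively and so automatically lies outside $q_\alpha^{2\Z}$, while in parallel $(\phi_\alpha - 1)q_\alpha^{2\Z}$ equals $\{0\}$ or a subset of $k^\times$ missing $\phi_\alpha + 1$. This shows that the embedding restricts to an injection $X_{\fssorb}^\circ(k) \hookrightarrow X_\roots^\circ(k)$.

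For surjectivity, given $\chi \in X_\roots^\circ(k)$, the inclusion $1 = q_\alpha^0 \in q_\alpha^{2\Z}$ forces $\chi_{2\alpha} \neq [1:1]$ for every $\alpha$; writing $\chi_{2\alpha} = [x_\alpha : y_\alpha]$ with $x_\alpha \neq y_\alpha$, I set $\phi_\alpha := (x_\alpha + y_\alpha)/(x_\alpha - y_\alpha) \in k$. The relation $\chi_{-2\alpha} = [y_\alpha : x_\alpha]$ immediately gives $\phi_{-\alpha} = -\phi_\alpha$, and the cocycle-type relation $x_\alpha x_\beta y_{\alpha+\beta} = y_\alpha y_\beta x_{\alpha+\beta}$ defining $X_\roots(k)$ translates, after clearing denominators, into $\phi_\alpha \phi_\beta + 1 = \phi_{\alpha+\beta}(\phi_\alpha + \phi_\beta)$, so $\phi \in X_{\fssorb}(k)$. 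A direct substitution then gives $\chi_\phi = \chi$, and the first step upgrades the hypothesis $\chi \in X_\roots^\circ$ to $\phi \in X_{\fssorb}^\circ$.

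The whole argument is essentially careful translation between the affine $\phi$-description and the projective $\chi$-description, with no serious obstacle. The only care needed is the handling of the boundary values $\phi_\alpha = \pm 1$ and the point $[1:1]$: this latter is the unique element of $\P^1(k)$ missed by $\phi \mapsto [\phi+1:\phi-1]$, and it is automatically excluded from $X_\roots^\circ(k)$ because $1 \in q_\alpha^{2\Z}$, which is precisely what promotes the injection to a bijection on the $\circ$-loci.
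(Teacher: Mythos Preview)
Your proof is correct. The paper does not actually provide a proof of this lemma---it is stated without proof, presumably because the author regards it as a routine verification following directly from the definitions of the embedding $\phi \mapsto \chi_\phi$ and of the two $\circ$-loci. Your argument supplies precisely this verification: the rootwise translation between the affine condition on $\phi_\alpha$ and the projective condition on $\chi_{\phi,2\alpha}$, the case split at $\phi_\alpha = \pm 1$, and the observation that $[1:1]$ is excluded from $X_{\roots}^\circ(k)$ so that the inverse $\phi_\alpha = (x_\alpha + y_\alpha)/(x_\alpha - y_\alpha)$ is well-defined there. This is exactly the natural argument, and nothing more is needed.
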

\section{The category $\cl{O}$ for deformed QEA} \label{sect:catO}
\subsection{Deformed quantum enveloping algebras}
We recall deformed quantum enveloping algebras (deformed QEAs)
introduced in \cite[Definition 3.6]{MR4837934}, which enable
us to consider a certain limit of a Verma module twisted by a character
(Proposition \ref{lemm:comparison with the usual induction}).
To make the description consistent with literature,
\textbf{we give a definition slightly different from \cite{MR4837934}}.
We also avoid introducing an integral form of deformed QEAs for simplicity.

Let $U_{q,e}^+(\g)$ be a $k[2Q^+]$-subalgebra of $U_q^{k[P]}(\g) := k[P]\tensor U_q(\g)$ generated by
\begin{align*}
 \aE_i := E_i,\quad
 \aF_i := F_iK_i,\quad
 \aK_{\lbd} := e_{-\lbd}K_{\lbd}.
\end{align*}
with $1 \le i \le n$ and $\lbd \in 2P$.
This algebra is universal with respect to the following relations:
\begin{gather*}
 \aK_{\lbd}\aE_{\lbd} = q^{(\lbd,\eps_i)}\aE_{\lbd}\aK_{\lbd},\quad
 \aK_{\lbd}\aF_{\lbd} = q^{-(\lbd,\eps_i)}\aF_{\lbd}\aK_{\lbd},\quad
 [\aE_i,\aF_j]_q = \delta_{ij}\frac{e_{2\eps_i}\aK_i^2 - 1}{q_i - q_i^{-1}},\\
 \sum_{k = 0}^{1 - a_{ij}}(-1)^k\aE_i^{(k)}\aE_j\aE_i^{(1 - a_{ij} - k)} = 0,\quad
 \sum_{k = 0}^{1 - a_{ij}}(-1)^k\aF_i^{(k)}\aF_j\aF_i^{(1 - a_{ij} - k)} = 0.
\end{gather*}

Next take $w \in W$ and consider a $k$-algebra automorphism
$\map{t_w}{U_q^{k[P]}(\g)}{U_q^{k[P]}(\g)}$
defined by $t_w(e_{\lbd}\tensor x) = e_{w(\lbd)}\tensor \cl{T}_w(x)$.
Then $U_{q,e}^w(\g)$ is defined as $t_w^{-1}(U_{q,e}^+(\g))$.
Note that this is a $k[w^{-1}(2Q_+)]$-subalgebra of $U_q^{k[P]}(\g)$.

To give a generating set of $U_{q,e}^w(\g)$, we look at
quantum root vectors in $U_q(\g)$.
Consider a reduced expression $s_{\bs{i}}$ of $w_0$ which
begins with a reduced expression of $w^{-1}w_0$. Then we have
 that
$\alpha^{\bs{i}}_k \in w^{-1}(\roots^+)\cap \roots^+$
for $1 \le k \le l$ and
$\alpha^{\bs{i}}_k \in \roots^+\setminus w^{-1}(\roots^+)$
for $l < k \le N$, where $l$ is the length of $w^{-1}w_0$.
Take another reduced expression $s_{\bs{j}}$ of $w_0$
defined as follows:
\begin{align*}
 \eps_{j_k} :=
\begin{cases}
 -w_0(\eps_{i_{k + l}}) & (1 \le k \le N - l), \\
 \eps_{i_{k - (N - l)}} & (N - l < k \le N).
\end{cases}
\end{align*}
Then we have
\begin{align*}
 t_w(E_{\bs{i},k}) &=
\begin{cases}
 E_{\bs{j},k + (N - l)} & (1 \le k \le l), \\
 -F_{\bs{j},k - l}K_{\alpha^{\bs{j}}_{k - l}} & (l < k \le N),
\end{cases}\\
 t_w(F_{\bs{i},k}K_{\alpha^{\bs{i}}_k}) &=
\begin{cases}
 F_{\bs{j},k + (N - l)}K_{\alpha^{\bs{j}}_{k + (N - l)}} & (1 \le k \le l), \\
 -q^2_{\alpha^{j}_{k - l}}K_{\alpha^{\bs{j}}_{k - l}}^{-2}E_{\bs{j},k - l} & (l < k \le N),
\end{cases}\\
 t_w(K_{\lbd}) &= K_{w(\lbd)}.
\end{align*}
This implies the following elements of $U_{q,e}^w(\g)$ form
a generating set:
\begin{align*}
 \aE_{\bs{i},k} := E_{\bs{i},k},\quad
 \aK_{\lbd} := e_{-\lbd}K_{\lbd},\quad
 \aF_{\bs{i},k} :=
\begin{cases}
 F_{\bs{i},k}K_{\alpha^{\bs{i}}_k} & (1 \le k \le l),\\
 e_{-2\alpha^{\bs{i}}_k}F_{\bs{i},k}K_{\alpha^{\bs{i}}_k} & ( l < k \le N).
\end{cases}
\end{align*}
Moreover the same argument with \cite[Proposition 3.13]{MR4837934}
shows the PBW theorem for $U_{q,e}^w(\g)$, i.e.,
$\{\aF_{\bs{i}}^{\Lambda^-}K_{\lbd}\aE_{\bs{i}}^{\Lambda^+}\}_{\Lambda^{\pm},\lbd}$ is a basis of $U_{q,e}^w(\g)$.
Actually we only need simpler argument since we do not consider
the integral form of $U_{q,e}^w(\g)$.

The following lemma is crucial to construct
a left $\rep_q^{\fin} G$-module category.

\begin{lemm}[c.f. {\cite[Proposition 3.9]{MR4837934}}]
\label{lemm:left coaction}
For any $w \in W$, $U_{q,e}^w(\g)$ is a left coideal $k[w^{-1}(2Q^+)]$-subalgebra of $U_q^{k[P]}(\g)$.
\end{lemm}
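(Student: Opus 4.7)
I would verify the defining left coideal property
$\Delta(U_{q,e}^w(\g)) \subset U_q^{k[P]}(\g) \otimes U_{q,e}^w(\g)$
directly on the algebra generators $\aE_{\bs{i},k}$, $\aF_{\bs{i},k}$, $\aK_{\lambda}$ listed in the discussion preceding the lemma. The subalgebra claim and the description of the base ring $k[w^{-1}(2Q^+)]$ are immediate from the definitions (using that $t_w$ is an algebra automorphism and that $U_{q,e}^+(\g)$ has the analogous structure), so only the coideal property needs argument. The case $w = e$ is essentially \cite[Proposition 3.9]{MR4837934}; the task is to adapt it to arbitrary $w$.

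The group-like generators $\aK_{\lambda} = e_{-\lambda}K_{\lambda}$ trivially satisfy $\Delta(\aK_{\lambda}) = \aK_{\lambda} \otimes \aK_{\lambda}$. For the positive generators $\aE_{\bs{i},k} = E_{\bs{i},k}$ I would use the elementary fact that $U_q(\n^+)$ is a left coideal in $U_q(\frb^+)$, verified on the generators by $\Delta(E_i) = E_i \otimes 1 + K_i \otimes E_i$, so that $\Delta(E_{\bs{i},k}) \in U_q(\frb^+) \otimes U_q(\n^+)$. The inclusion $U_q(\n^+) \subset U_{q,e}^w(\g)$ follows from the PBW theorem for $U_{q,e}^w(\g)$ derived immediately before the lemma, and the coideal condition for these generators is done.

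The core of the argument is the negative generators $\aF_{\bs{i},k}$, which equal $F_{\bs{i},k}K_{\alpha_k^{\bs{i}}}$ for $k \le l$ and $e_{-2\alpha_k^{\bs{i}}}F_{\bs{i},k}K_{\alpha_k^{\bs{i}}}$ for $k > l$. Using that $U_q(\n^-)$ is a right coideal in $U_q(\frb^-)$, the coproduct $\Delta(F_{\bs{i},k})$ expands in the PBW basis as a sum of terms of the form $F_{\bs{i}}^{\Lambda_1} \otimes F_{\bs{i}}^{\Lambda_2}K_{\mu}$, with $\mu$ determined by the weight constraint. Multiplying on both sides by $K_{\alpha_k^{\bs{i}}}$, and (when $k > l$) distributing the group-like factor $e_{-2\alpha_k^{\bs{i}}}$, I would re-express each right tensor factor as a monomial in the generators $\aF_{\bs{i},m}$ and $\aK_{\nu}$.

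The main obstacle will be the bookkeeping of the $K$- and $e$-weights: one must check that every element appearing in the right tensor factor respects the base ring $k[w^{-1}(2Q^+)]$. For $k \le l$ the analysis stays within the range of roots $\alpha_k^{\bs{i}} \in w^{-1}(\roots^+) \cap \roots^+$ and parallels the $w = e$ computation. For $k > l$ one has $w(\alpha_k^{\bs{i}}) \in -\roots^+$ by our choice of reduced word, hence $-2\alpha_k^{\bs{i}} \in w^{-1}(2Q^+)$, and the prefactor $e_{-2\alpha_k^{\bs{i}}}$ is precisely the weight shift needed to bring each term back into $U_{q,e}^w(\g)$. I expect this matching of lattices across the two cases to be the only delicate point; once it is set up, every term in the PBW expansion visibly lies in $U_q^{k[P]}(\g) \otimes U_{q,e}^w(\g)$.
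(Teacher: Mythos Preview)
Your approach differs from the paper's, and the step you call ``bookkeeping'' hides a genuine gap. The paper does not verify the coideal property on the PBW-type generators for general $w$; instead it checks the case $w=1_W$ directly on the simple generators $\aE_i,\aF_i,\aK_\lambda$ and then invokes the compatibility of the braid group action with the coproduct,
\[
\Delta(\cl{T}_w^{-1}(x)) = B_w^{-1}(\cl{T}_w^{-1}\otimes\cl{T}_w^{-1})\Delta(x)\,B_w, \qquad B_w = \prod_{k = l+1}^N \exp_{q_{i_k}}\!\bigl((q_{i_k}-q_{i_k}^{-1})F_{\bs{i},k}\otimes E_{\bs{i},k}\bigr),
\]
taken from \cite[Proposition~3.81]{MR4162277}. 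Since $U_{q,e}^w(\g) = t_w^{-1}(U_{q,e}^+(\g))$, one applies $\cl{T}_w^{-1}\otimes t_w^{-1}$ to the $w=e$ inclusion and then conjugates by $B_w$; the right tensor legs $E_{\bs{i},k}$ of $B_w$ already lie in $U_{q,e}^w(\g)$, so the conjugation preserves $U_q^{k[P]}(\g)\otimes U_{q,e}^w(\g)$, and the proof is finished.

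In your approach, after converting the right tensor factor of $\Delta(\aF_{\bs{i},k})$ to the $\aF,\aK$ generators, the residual $k[P]$-coefficient is $e_{\gamma}$ with $\gamma = 2\sum_{m>l}(\Lambda_2)_m\alpha_m^{\bs{i}}$ (for $k\le l$) or $\gamma = -2\alpha_k^{\bs{i}} + 2\sum_{m>l}(\Lambda_2)_m\alpha_m^{\bs{i}}$ (for $k>l$). Since each $\alpha_m^{\bs{i}}$ with $m>l$ lies in $-w^{-1}(\roots^+)$, membership of $\gamma$ in $w^{-1}(2Q^+)$ does \emph{not} follow from the weight constraint $\beta_1+\beta_2=\alpha_k^{\bs{i}}$ alone: for $k\le l$ it forces $(\Lambda_2)_m=0$ for all $m>l$, i.e., that the right tensor leg of $\Delta(F_{\bs{i},k})$ stays inside the quantum Schubert cell $\langle F_{\bs{i},1},\ldots,F_{\bs{i},l}\rangle\cdot U_q(\h)$. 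That coideal-type property of the Schubert cell is true, but it is not an elementary lattice count; it is essentially equivalent to the conjugation formula above. So the ``matching of lattices'' you anticipate cannot be carried out without invoking the same structural input the paper uses, and the generator-by-generator route offers no real shortcut.
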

Before the proof, we introduce the following completion of $U_q(\g)\tensor U_q(\g)$:
\begin{align*}
\cl{U}_q(\g\times\g) := \prod_{\lbd,\mu \in P^+} \mathrm{End}_k(L_{\lbd}\tensor L_{\mu}).
\end{align*}
Then we can embed $U_q(\g\tensor U_q(\g)$ into $\cl{U}_q(\g\times \g)$
by its actions on $L_{\lbd}\tensor L_{\mu}$. Moreover the following
sum is well-defined in this algebra:
\begin{align*}
 \exp_{q_{\alpha}}((q_{\alpha} - q_{\alpha}^{-1})&K_{\alpha}^{-1}E_{\bs{j},\alpha}\tensor F_{\bs{j},\alpha}K_{\alpha})\\
&:= \sum_{n = 0}^{\infty} \frac{q_{\alpha}^{n(n - 1)/2}}{[n]_{q_{\alpha}}!}(q_{\alpha} - q_{\alpha}^{-1})^n(K_{\alpha}^{-1}E_{\bs{j},\alpha}\tensor F_{\bs{j},\alpha}K_{\alpha})^n.
\end{align*}
\begin{proof}[Proof of Lemma \ref{lemm:left coaction}]
The case of $w = 1_W$ can be confirmed directly, using the generating
set.

Take $w \in W$ arbitrary and consider the reduced expressions
$s_{\bs{i}}$ and $s_{\bs{j}}$ as above. Let $A_w$ be the following
element of $\cl{U}_q(\g\times\g)$.
\begin{align*}
 A_w := \prod_{k = 1}^{N - l}\exp_{q_{j_k}}((q_{j_k} - q_{j_k}^{-1})K_{\alpha^{\bs{j}}_k}^{-1}E_{\bs{j},k}\tensor F_{\bs{j},k}K_{\alpha^{\bs{j}}_k}).
\end{align*}
Then the following formula holds (\cite[Proposition 3.81]{MR4162277}):
\begin{align*}
 \Delta(\cl{T}_w(x)) = A_w(\cl{T}_w\tensor \cl{T}_w)\Delta(x)A_w^{-1}.
\end{align*}
Equivalently the following formula also holds:
\begin{align*}
 \Delta(\cl{T}_w^{-1}(x)) = B_w^{-1}(\cl{T}_w^{-1}\tensor \cl{T}_w^{-1})\Delta(x)B_w
\end{align*}
where
\begin{align*}
 B_w = \prod_{k = l + 1}^N \exp_{q_{i_k}}((q_{i_k} - q_{i_k}^{-1})F_{\bs{i},k}\tensor E_{\bs{i},k}).
\end{align*}
Now the statement follows.
\end{proof}
The deformed quantum enveloping algebra is now defined as an evaluation of $U_{q,e}^w(\g)$
 by a character on $w^{-1}(2Q^+)$.
\begin{defn}
Let $\roots_0^+$ be a positive system and $Q_0^+$ be a submonoid
generated by $\roots_0^+$. For a character $\map{\chi}{2Q_0^+}{k}$,
we define $U_{q,\chi}(\g)$ as $k\tensor_{k[2Q_0^+]} U_{q,e}^{w}(\g)$,
where $w \in W$ is the unique element satisfying $w^{-1}(\roots^+) = \roots_0^+$.
\end{defn}
\begin{rema}
We have a natural generating set
$\aE_{\bs{i},k},\aK_{\lbd},\aF_{\bs{i},k}$ of $U_{q,\chi}(\g)$
induced from those in $U_{q,e}^{\tilde{w}}(\g)$
and can see that the PBW theorem holds for $U_{q,\chi}(\g)$,
i.e., $\{\aF_{\bs{i}}^{\Lambda^-}\aK_{\lbd}\aE_{\bs{i}}^{\Lambda^+}\}_{\Lambda^{\pm},\lbd}$ is a basis of $U_{q,\chi}(\g)$.
\end{rema}
\begin{rema} \label{rema:independence of the domain}
Let $\roots_0^+$ and $\roots_1^+$ be positive systems of $\roots^+$
and $\chi$ be a $k$-valued character on $2Q_0^+ + 2Q_1^+$.
Then we have a natural identification of $U_{q,\chi|_{2Q_0^+}}(\g)$
and $U_{q,\chi|_{2Q_1^+}}(\g)$ as left $U_q(\g)$-comodule algebras,
induced from $k[2Q_0^+ + 2Q_1^+]U_{q,e}^{w_0}(\g) = k[2Q_0^+ + 2Q_1^+]U_{q,e}^{w_1}(\g)$ as a subalgebra of $U_q^{k[P]}(\g)$.
See \cite[Proposition 5.9]{MR4837934} for detail.
 This identification allows us to interpret $\chi$ as an
element of $X_{\roots}(k)$.
\end{rema}

We consider the following subalgebras of $U_{q,\chi}(\g)$:
\begin{itemize}
 \item $U_{q,\chi}(\frb)$, generated by $(\aK_{\mu})_{\mu \in 2P}, (\aE_{\bs{i},\alpha})_{\alpha \in \roots^+}$.
 \item $U_{q,\chi}(\n^+)$, generated by $(\aE_{\bs{i},\alpha})_{\alpha \in \roots^+}$.
 \item $U_{q,\chi}(\n^-)$, generated by $(\aF_{\bs{i},\alpha})_{\alpha \in \roots^+}$.
\end{itemize}
By the PBW theorem, these give decompositions as follows:
\begin{align*}
 U_{q,\chi}(\g) &\cong U_{q,\chi}(\n^-)\tensor U_{q,\chi}(\frb),\\
 U_{q,\chi}(\frb) &\cong U_{q,\chi}(\h)\tensor U_{q,\chi}(\n^+).
\end{align*}
Note that these subalgebras and decompositions are preserved under
the identification in Remark \ref{rema:independence of the domain}.

Finally we give a comparison of a deformed quantum enveloping
algebra and the usual quantum enveloping algebra.

\begin{lemm} \label{lemm:comparison with the usual qea}
Let $\map{\chi}{P}{k^{\times}}$ be a character.
Then $U_{q,\chi|_{2Q_0^+}}(\g)$ has a canonical embedding into
$U_q(\g)$ as a left $U_q(\g)$-comodule algebra:
\begin{align*}
 \aE_{\bs{i},k}\longmapsto E_{\bs{i},k}, \quad
 \aK_{\lbd}\longmapsto \chi_{-\lbd}K_{\lbd},\quad
 \aF_{\bs{i},k}\longmapsto 
\begin{cases}
 F_{\bs{i},k}K_{\alpha^{\bs{i}}_k} & (1 \le k \le l),\\
 \chi_{-2\alpha^{\bs{i}}_k}F_{\bs{i},k}K_{\alpha^{\bs{i}}_k} & ( l < k \le N).
\end{cases}
\end{align*}
\end{lemm}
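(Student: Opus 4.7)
The plan is to realize the embedding as an evaluation of $U_q^{k[P]}(\g) = k[P]\tensor U_q(\g)$ at the character $\chi$. First, I would extend $\chi\colon P\to k^\times$ multiplicatively to a $k$-algebra homomorphism $k[P]\to k$, and tensor with $\id_{U_q(\g)}$ to obtain an algebra homomorphism $\epsilon_\chi\colon U_q^{k[P]}(\g)\to U_q(\g)$. Let $w\in W$ be the unique element with $w^{-1}(\roots^+)=\roots_0^+$, so that $U_{q,e}^w(\g)$ is a $k[2Q_0^+]$-subalgebra of $U_q^{k[P]}(\g)$. On $k[2Q_0^+]\subset U_{q,e}^w(\g)$, the restriction of $\epsilon_\chi$ coincides with $\chi|_{2Q_0^+}$, so by the universal property of the tensor product it descends to an algebra map $\bar\epsilon_\chi\colon U_{q,\chi|_{2Q_0^+}}(\g)\to U_q(\g)$. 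Evaluating on the generators $\aE_{\bs{i},k}=E_{\bs{i},k}$, $\aK_\lbd=e_{-\lbd}K_\lbd$, and $\aF_{\bs{i},k}$ (which carries a factor $e_{-2\alpha^{\bs{i}}_k}$ exactly when $l<k\le N$) reproduces the formulas stated in the lemma.

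Injectivity follows from the PBW theorem. Each basis element $\aF_{\bs{i}}^{\Lambda^-}\aK_\lbd\aE_{\bs{i}}^{\Lambda^+}$ of $U_{q,\chi|_{2Q_0^+}}(\g)$ is mapped by $\bar\epsilon_\chi$ to a nonzero scalar (a product of values of $\chi$ together with $q$-powers from commuting $K$-factors past $F$'s) times a PBW vector $F_{\bs{i}}^{\Lambda^-}K_\mu E_{\bs{i}}^{\Lambda^+}$ in $U_q(\g)$, with $\mu$ determined by $\Lambda^-$ and $\lbd$. Since distinct basis elements on the left produce distinct PBW vectors on the right (up to nonzero scalars), linear independence of PBW vectors in $U_q(\g)$ gives injectivity.

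For the left $U_q(\g)$-comodule algebra structure, the coaction on $U_{q,\chi|_{2Q_0^+}}(\g)$ is obtained from the coideal structure of Lemma~\ref{lemm:left coaction} by composing $\Delta\colon U_{q,e}^w(\g)\to U_q^{k[P]}(\g)\tensor U_{q,e}^w(\g)$ with the canonical Hopf projection $\pi\colon U_q^{k[P]}(\g)\to U_q(\g)$, $e_\lbd\mapsto 1$, on the first tensor factor, and then descending to the quotient; this descent is well defined thanks to $\pi(e_\lbd)=1$ and grouplikeness of $e_\lbd$. The required intertwining then reduces to the identity
\[
 \Delta_{U_q(\g)}\circ\epsilon_\chi \;=\; (\pi\tensor\epsilon_\chi)\circ\Delta_{U_q^{k[P]}(\g)}
\]
on $U_{q,e}^w(\g)$. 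Both sides are algebra homomorphisms into $U_q(\g)\tensor U_q(\g)$, so it suffices to verify this on the generators $\aE_{\bs{i},k},\aK_\lbd,\aF_{\bs{i},k}$, which is an immediate computation. The one subtle point, and the main obstacle, is that $\epsilon_\chi$ itself is not a Hopf map (it would be only if $\chi$ were trivial), so the compatibility cannot be read off from a Hopf-morphism argument; rather, it must be checked by hand, noting that on the first tensor factor $\pi$ erases each $e_\lbd$ while on the second factor $\epsilon_\chi$ replaces it by $\chi(\lbd)$, producing matching scalars on both sides.
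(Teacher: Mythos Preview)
Your proof is correct, and the paper itself omits a proof of this lemma entirely. Your approach --- realizing the map as the restriction to $U_{q,e}^w(\g)$ of the evaluation $\epsilon_\chi\colon k[P]\tensor U_q(\g)\to U_q(\g)$, $e_\lbd\tensor x\mapsto \chi(\lbd)x$, and then deducing injectivity from the PBW theorem --- is the natural one and exactly what the paper's setup invites.

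One small streamlining of your comodule-compatibility step: rather than checking the identity
\[
 \Delta_{U_q(\g)}\circ\epsilon_\chi=(\pi\tensor\epsilon_\chi)\circ\Delta_{U_q^{k[P]}(\g)}
\]
on the generators $\aE_{\bs{i},k},\aK_\lbd,\aF_{\bs{i},k}$ of $U_{q,e}^w(\g)$ (whose coproducts are not entirely explicit for non-simple roots), it is cleaner to observe that both sides agree on all of $U_q^{k[P]}(\g)$, since they are algebra homomorphisms agreeing on simple tensors $e_\lbd\tensor x$: grouplikeness of $e_\lbd$ gives $\Delta(e_\lbd\tensor x)=(e_\lbd\tensor e_\lbd)\Delta(x)$, and then $\pi$ kills the $e_\lbd$ on the left while $\epsilon_\chi$ turns it into $\chi(\lbd)$ on the right, matching $\chi(\lbd)\Delta(x)$ on the other side. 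This avoids any appeal to the (implicit) coproduct formulas for $\aE_{\bs{i},k}$ and $\aF_{\bs{i},k}$.
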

\begin{rema}
Note that this map is not surjective since we restrict the indices
of the Cartan part to $2P$, not $P$. This yields some
differences between the module theory of $U_{q,\chi}(\g)$
and the module theory of $U_q(\g)$, as the weight space decomposition
with respect to $U_{q,\chi}(\g)$ can be different from that of $U_q(\g)$.
At least in the present paper, this difference is convenient. It
makes the theory of Verma modules simple
and suitable to our objective, constructing semisimple
actions of $\fssorb$-type.
See \cite[Subsection 3.13]{MR4162277} for the description
on this point, especially the linkage class in the usual setting.
\end{rema}

\subsection{The category $\catO{\chi}$}

In this subsection we would like to investigate the category
$\catO{\chi}$. Note that $U_{q,\chi}(\h)$ and $U_{q,\chi}(\n^+)$
allow us to consider the notion of weight, weight space and
highest weight vector for $U_{q,\chi}(\g)$-modules.

In the following, the Cartan part of $U_{q,\chi}(\h)$,
which is independent of $\chi$, is denoted by $U_q(\ah)$.
By definition it is isomorphic to $k[2P]$.
Hence the set of weights with respect to the action of $U_q(\ah)$
is $\Ch_k 2P$, which is denoted by $\ah_q^*$ in the following.

\begin{defn}
Let $U_{q,\chi}(\g)$ be a defomed quantum enveloping algebra.
The category $\catO{\chi}$ is the full subcategory of $\mod{U_{q,\chi}(\g)}$ whose objects are all of $U_{q,\chi}(\g)$-module $M$
satisfying the following conditions:
\begin{enumerate}
 \item $M$ is finitely generated as a $U_{q,\chi}(\g)$-module.
 \item The action of $U_q(\ah)$ on $M$ is semisimple i.e. it admits
a weight space decomposition.
 \item For any $m \in M$, $U_{q,\chi}(\n^+)m$ is finite dimensional.
\end{enumerate}
\end{defn}

As same with the usual category $\cl{O}_q$, the category
$\catO{\chi}$ is abelian. Also note that it has
a canonical structure of left $\rep_q^{\fin} G$-module
category, induced from the left $U_q(\g)$-comodule algebra
structure on $U_{q,\chi}(\g)$.

\begin{defn}
For any $\chi \in X_{\roots}(k)$, the \emph{$\chi$-shifted induction functor}
$\dqind$ is defined as
$\map{U_{q,\chi}(\g)\tensor_{U_{q,\chi}(\frb)}\tend}{\mod{U_q(\ah)}}{\mod{U_{q,\chi}(\g)}}$.
\end{defn}

\begin{exam}
Let $\lambda$ be a character on $U_q(\ah)$ and
$k_{\lbd}$ be the corresponding $1$-dimensional representation.
Then $M_{\chi}(\lbd) := \mathrm{ind}_{\frb, q}^{\g,\chi}k_{\lbd}$
is an object of $\catO{\chi}$, which is called a \emph{$\chi$-shifted Verma module} with highest weight $\lbd$.
\end{exam}

For a character $\map{\chi}{P}{k^{\times}}$, we have the following
comparison with the usual induction functor
$\qind{} := U_q(\g)\tensor_{U_q(\frb)}\tend$.
This enables us to extend the known results on the category $\cl{O}_q$
for $U_q(\g)$ to the category $\catO{\chi}$.

\begin{lemm} \label{lemm:comparison with the usual induction}
Let $\chi$ be a character on $P$ and $V$ be a $U_q(\h)$-module.
Under the isomorphism
in Lemma \ref{lemm:comparison with the usual qea}, we have the following
natural isomorphism as $U_{q,\chi}(\g)$-modules:
\begin{align*}
 \dqind V &\cong \qind(V\tensor \C_{\chi}), \\
1\tensor v &\longmapsto 1\tensor (v\tensor 1).
\end{align*}
\end{lemm}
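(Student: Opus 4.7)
The plan is to construct the map via the universal property of induction and then verify bijectivity through a PBW basis comparison. I would set $W := \qind(V \tensor \C_\chi)$, view it as a $U_{q,\chi}(\g)$-module through the embedding $\iota$ of Lemma \ref{lemm:comparison with the usual qea}, and define a candidate map $f : V \to W$ by $v \mapsto 1 \tensor (v \tensor 1)$. Here $V$ carries its natural $U_{q,\chi}(\frb)$-structure given by restricting the $U_q(\h)$-action to $U_q(\ah)$ and letting each $\aE_i$ act trivially.

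The first step is to check that $f$ is $U_{q,\chi}(\frb)$-linear. On the generator $\aE_i \mapsto E_i$, the coproduct $\Delta(E_i) = E_i \tensor 1 + K_i \tensor E_i$ gives $E_i \cdot (v \tensor 1) = 0$, matching the trivial action of $\aE_i$ on $V$. On the Cartan generator $\aK_\lbd \mapsto \chi_{-\lbd} K_\lbd$, the computation
\begin{align*}
\chi_{-\lbd} K_\lbd \cdot (v \tensor 1) = \chi_{-\lbd} \chi_\lbd (K_\lbd v \tensor 1) = K_\lbd v \tensor 1
\end{align*}
shows the required compatibility, precisely because the twist built into $\iota$ is absorbed by the character $\C_\chi$. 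Thus $f$ extends by universality to a $U_{q,\chi}(\g)$-linear map $F : \dqind V \to \qind(V \tensor \C_\chi)$ realizing the stated formula.

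Finally, I would verify bijectivity of $F$ via the PBW theorems. By Remark \ref{rema:independence of the domain} I may take $w = 1$, so that $l = N$ and $\iota(\aF_{\bs{i},k}) = F_{\bs{i},k} K_{\alpha^{\bs{i}}_k}$. Fix a reduced expression $\bs{i}$ of $w_0$ and a $k$-basis $(v_\beta)$ of $V$; then $\{\aF_{\bs{i}}^\Lambda \tensor v_\beta\}$ is a basis of $\dqind V$ and $\{F_{\bs{i}}^\Lambda \tensor (v_\beta \tensor 1)\}$ is a basis of $\qind(V \tensor \C_\chi)$. Using the relations $K_\mu F_{\bs{i},k} = q^{-(\mu,\alpha^{\bs{i}}_k)} F_{\bs{i},k} K_\mu$ to gather Cartan factors on the right, one writes $\iota(\aF_{\bs{i}}^\Lambda) = c(\Lambda) F_{\bs{i}}^\Lambda K_{\mu(\Lambda)}$ for a nonzero scalar $c(\Lambda) \in k$ and $\mu(\Lambda) \in Q^+$, yielding
\begin{align*}
F(\aF_{\bs{i}}^\Lambda \tensor v_\beta) = c(\Lambda) \chi_{\mu(\Lambda)} F_{\bs{i}}^\Lambda \tensor (K_{\mu(\Lambda)} v_\beta \tensor 1).
\end{align*}
Since $K_{\mu(\Lambda)}$ acts invertibly on $V$ and the scalar prefactor is nonzero, $F$ restricts to a linear isomorphism on each $\Lambda$-component, hence is an isomorphism overall. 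The main obstacle, though essentially bookkeeping in nature, is the careful tracking of these Cartan and $\chi$-scalars: the whole identification rests on the twist in $\iota$ being exactly compensated by the auxiliary character $\C_\chi$, which is the structural reason the two inductions coincide.
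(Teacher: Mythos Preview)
Your proof is correct and follows the same approach as the paper, which simply states that ``the statement follows from the universal property.'' You have spelled out the universal-property construction of the map and additionally supplied an explicit PBW verification of bijectivity that the paper leaves implicit.
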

\begin{proof}
The statement follows from the universal property.
\end{proof}

We analyze the category $\catO{\chi}$ by the standard
argument. At first we show the Harish-Chandra theorem on
the center $ZU_{q,\chi}(\g)$.

\begin{prop}
Let $\map{P}{U_{q,\chi}(\g)}{U_{q}(\ah)}$ be the projection
along with the triangular decomposition $U_{q,\chi}(\g)= U_{q,\chi}(\n^-)U_{q}(\ah)U_{q,\chi}(\n^+)$. Then this is an injective algebra homomorphism on $ZU_{q,\chi}(\g)$ with the following image:
\begin{align*}
 \mathrm{span}_k\bigg\{\sum_{\tilde{w} \in W} q^{(\rho,\mu - \tilde{w}\mu)}\chi_{w^{-1}\mu - \tilde{w}\mu}\aK_{-\tilde{w}\mu}\bigg\}_{\mu \in 2P^+},
\end{align*}
where $w$ is the element of $W$
satisfying $w(\roots_0^+) = \roots^+$.
\end{prop}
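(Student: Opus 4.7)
The plan is to adapt the classical Harish-Chandra strategy to the $\chi$-twisted setting, proceeding in three stages.

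First, establish that $P|_{ZU_{q,\chi}(\g)}$ is an algebra homomorphism. Any central $z$ commutes with all $\aK_\mu$, so every PBW monomial $\aF_{\bs{i}}^{\Lambda^-}\aK_\mu \aE_{\bs{i}}^{\Lambda^+}$ appearing in $z$ has weight zero. Acting on the highest weight vector $v_\lambda \in M_\chi(\lambda)$, only the $\Lambda^+=0$ components survive, giving $z\cdot v_\lambda = \lambda(P(z))v_\lambda$. For central $z_1, z_2$, evaluating at every $\lambda \in \ah_q^*$ yields $\lambda(P(z_1 z_2)) = \lambda(P(z_1))\lambda(P(z_2))$; since characters separate elements of $U_q(\ah)\cong k[2P]$, multiplicativity follows.

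Second, prove injectivity. If $P(z)=0$ then $z$ annihilates every $v_\lambda$ and, by centrality, the whole Verma $M_\chi(\lambda) = U_{q,\chi}(\n^-)v_\lambda$. Hence $z$ lies in the left ideal $I_\lambda$ generated by the $\aE_i$ and $\aK_\mu - \lambda(\aK_\mu)$ for every $\lambda$. A direct PBW analysis shows the $\Lambda^+ = 0$ part of $z$ vanishes; combining this with the analogous statement obtained via the anti-involution swapping $\aE$ and $\aF$ (up to a $\chi$-dependent Cartan correction), together with the weight-zero constraint from centrality, forces $z=0$.

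Third, identify the image. To produce the stated elements, use the construction of central elements in $U_q(\g)$ via the universal $R$-matrix and the quantum trace on the irreducibles $L_\mu$. After possibly enlarging the domain of $\chi$ as in Remark \ref{rema:independence of the domain}, Lemma \ref{lemm:comparison with the usual qea} embeds $U_{q,\chi}(\g)$ into $U_q(\g)$ as a comodule subalgebra; pulling back the classical central elements and rewriting $K_\lambda = \chi_\lambda \aK_\lambda$ together with the Weyl character formula for $\mathrm{ch}\,L_\mu$ produces exactly the factor $\chi_{w^{-1}\mu - \tilde{w}\mu}$. For the reverse inclusion, the infinitesimal character $\lambda \longmapsto \lambda(P(z))$ must be constant on $\chi$-twisted Weyl linkage classes of $\catO{\chi}$: homomorphisms between linked Verma modules propagate equality of central characters, and the usual density argument forces $P(z)$ to lie in the span above. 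A count of $P^+$-graded dimensions, with one basis element per dominant weight on both sides, closes the identification.

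The main technical obstacle lies in the third stage: verifying that the twist factor $\chi_{w^{-1}\mu - \tilde{w}\mu}$ is exactly what emerges from the interplay between the classical Weyl shift by $\rho$, the generators $\aK_\lambda = e_{-\lambda}K_\lambda$ carrying the $\chi$-twist, and the choice of positive system $\roots_0^+ = w^{-1}(\roots^+)$ underlying the triangular decomposition.
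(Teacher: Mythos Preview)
Your first two stages are broadly sound, though the injectivity argument is more elaborate than needed: the paper dispatches it in one line by observing that the collection of $\chi$-shifted Verma modules is jointly faithful for $U_{q,\chi}(\g)$.

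The substantive gaps are both in stage three. First, your construction of central elements by pull-back along Lemma~\ref{lemm:comparison with the usual qea} only applies when $\chi$ extends to a $k^{\times}$-valued character on $P$, i.e., when every $\chi_{2\alpha}$ is nonzero; Remark~\ref{rema:independence of the domain} lets you change the positive system but does not remove such degeneracies. The paper avoids this by working first in the universal algebra $U_{q,e}^+(\g)$ over $k[2Q^+]$: the adjoint action $x_\mu \triangleright \aK_{-\mu}$ (with $x_\mu$ as in Joseph--Letzter) produces a central element whose image under $P$ already carries the formal factor $e_{\mu - \tilde{w}\mu}$, and only afterwards does one evaluate at an arbitrary $\chi \in \Ch_k 2Q^+$. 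The case $w \neq 1_W$ is then handled by transporting along the isomorphism $t_w\colon U_{q,\chi}(\g)\to U_{q,w\chi}(\g)$ and checking that $P$ intertwines $\cl{T}_w$ on the center.

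Second, and more seriously, your reverse inclusion via ``homomorphisms between linked Verma modules'' collapses precisely when $\chi \in X_{\roots}^{\circ}(k)$: by Corollary~\ref{coro:characterization of simplicity} every $M_\chi(\lambda)$ is then simple, there are no nontrivial Verma embeddings, and the linkage argument imposes no constraint whatsoever on $P(z)$. Your density/dimension count cannot rescue this, since without the Weyl-type symmetry you have no upper bound on the image at all. The paper's substitute is purely algebraic and works uniformly in $\chi$: for each simple root $\eps_i$ there is a projection $P_i\colon U_{q,\chi}(\g) \to U_{q,\chi}(\frl_i)$ onto the rank-one Levi satisfying $P_i(ZU_{q,\chi}(\g)) \subset ZU_{q,\chi}(\frl_i)$ and $P = P \circ P_i$ on central elements; the image $P(ZU_{q,\chi}(\frl_i))$ is computed by hand, and the intersection $\bigcap_i P(ZU_{q,\chi}(\frl_i))$ is exactly the stated span.
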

\begin{proof}
Since the $\chi$-shifted Verma modules distinguish elements of $U_{q,\chi}(\g)$, the homomorphism is injective.

To determine the image, we assume $\chi \in \Ch_k 2Q^+$
at first. In this case we have $w = 1$. Recall the adjoint action
of $U_q(\g)$ on $U_q(\g)$, given by $x \triangleright y = x_{(1)}yS(x_{(2)})$. Then it is not difficult to see that this induces an action of
$U_q(\g)$ on $U_{q,e}^+(\g)$, which is also denoted by $\tend\triangleright \tend$. Now fix $\mu \in 2P^+$. By the discussion in the proof
of \cite[Theorem 8.6]{MR1198203}, there is $x_{\mu} \in U_q(\g)$ such that $x_{\mu}\triangleright K_{-\mu}$ is central and whose
image under $P$ is
\begin{align*}
\sum_{\tilde{w} \in W} q^{(\rho,\mu - \tilde{w}\mu)}K_{-\tilde{w}\mu}.
\end{align*}
Then $x_{\mu}\triangleright \aK_{-\mu} \in U_{q,e}^+(\g)$ is also
central and its image under $P$ is
\begin{align*}
 \sum_{\tilde{w} \in W} q^{(\rho,\mu - \tilde{w}\mu)}e_{\mu - \tilde{w}\mu}\aK_{-\tilde{w}\mu}.
\end{align*}
By evaluating $e$ by $\chi$, we can see that the image of the homomorphism
 contains
\begin{align*}
 \sum_{\tilde{w} \in W} q^{(\rho,\mu - \tilde{w}\mu)}\chi_{\mu - \tilde{w}\mu}\aK_{-\tilde{w}\mu}
\end{align*}
for all $\mu \in 2P^+$. To see that these elements span the
image, we consider the subalgebra
$U_{q,\chi}(\frl_i)$ generated by $\aE_i, \aF_i$ and $U_{q,\chi}(\h)$.
Then the quantum PBW bases defines a projection $\map{P_i}{U_{q,\chi}(\g)}{U_{q,\chi}(\frl_i)}$, which does not depend on
the choice of quantum root vectors.
Then we can see that
$P_i(ZU_{q,\chi}(\g)) \subset ZU_{q,\chi}(\frl_i)$ and
$P = P\circ P_i$ on $U_{q,\chi}(\g)$. Now direct computation shows
that the image of $ZU_{q,\chi}(\frl_i)$ under $P$ is
generated by $\{\aK_{2\varpi_j}\}_{j \neq i}$ and $\aK_{-2\varpi_i} + q_i^2\chi_{2\eps_i}\aK_{-s_i(2\varpi_i)}$. Hence we have
\begin{align*}
 P(ZU_{q,\chi}(\frl_i)) = \mathrm{span}_k\{\aK_{-\mu} + q^{(\rho,\mu - s_i(\mu))}\chi_{\mu - s_i(\mu)}\aK_{-s_i(\mu)} \mid \mu \in 2P, (\mu, \eps_i^{\vee}) \ge 0\}.
\end{align*}
Since $P(ZU_{q,\chi}(\g)) \subset \bigcap_i P(ZU_{q,\chi}(\frl_i))$,
we obtain the statement.

For general $\chi \in \Ch_k 2Q_0^+$, take $w \in W$ so that $w(\roots_0^+) = \roots^+$. Then we have an isomorphism
$\map{t_w}{U_{q,\chi}(\g)}{U_{q,w\chi}(\g)}$
induced from $\map{t_w}{U_{q,e}^w(\g)}{U_{q,e}^+(\g)}$.
Hence we also have an isomorphism
$ZU_{q,\chi}(\g)\cong ZU_{q,w\chi}(\g)$.
To reduce the statement for $\chi$ to the statement for $w\chi$,
which is proven by the discussion above, it suffices to show that
the following diagram is commutative:
\begin{align*}
 \xymatrix{
ZU_q(\g) \ar[r]^{\cl{T}_w} \ar[d]_-P & ZU_q(\g) \ar[d]^-P \\
U_q(\h) \ar[r]^{\cl{T}_w} & U_q(\h).
}
\end{align*}
This follows from that $\cl{T}_w$ is implemented on each finite dimensional representation and that finite dimensional representations
distinguish elements of $U_q(\g)$
\end{proof}

The value of $\lbd \in \ah_q^*$ at $K_{\mu}$is denoted by
$\chi_{\lbd}(K_{\mu}) = q^{(\lbd, \mu)}$.
The product in $\ah_q^*$ is written as an addition i.e. $\chi_{\lbd}\chi_{\lbd'} = \chi_{\lbd + \lbd'}$ and $q^{(\lbd,\mu)}q^{(\lbd',\mu)} = q^{(\lbd + \lbd',\mu)}$. We also use $q^{-(\lbd,\mu)} := (q^{(\lbd,\mu)})^{-1} = q^{(\lbd,-\mu)} = q^{(-\lbd,\mu)}$.

Note that there is an embedding of $P$
into $\ah_q^*$,
using the inner product on $\h_{\R}^*$ and $r \in (2L)^{-1}\longmapsto q^r \in k^{\times}$.
Then a partial order on $\ah_q^*$ is defined by $\lbd \le \lbd'$
if and only if $\lbd' - \lbd \in Q^+$.

To describe the linkage class in our setting, we introduce some
notations. For $\chi \in X_{\roots}(k)$, we define
$\roots_{\chi} := \{\alpha \in \roots\mid \chi_{2\alpha} \neq 0,\infty\}$.
Then we have $\roots_{\chi} = \roots\cap \aspan_{\Z} \{\alpha \in \roots\mid \chi_{2\alpha} \neq 0,\infty\}$, which means that $\roots_{\chi}$ is
a root system with the Weyl group $W_{\chi} \subset W$ generated by
$\{s_{\alpha}\mid \alpha \in \roots, \chi_{2\alpha}\neq 0,\infty\}$.
The root lattice associated to $\roots_{\chi}$ is denoted by $Q_{\chi}$.

\begin{defn}
The \emph{$\chi$-shifted action} of $W_{\chi}$ on $\ah_q^*$ is
defined by
\begin{align*}
 q^{(w\cdot_{\chi}\lbd,\mu)}:= \chi_{w^{-1}\mu - \mu}q^{(\rho,w^{-1}\mu - \mu)}q^{(\lbd,w^{-1}\mu)},
\end{align*}
where $\chi$ is extended to a character on $2Q_{\chi}$.
When $\chi$ is the trivial character, we simply say
the \emph{shifted action}.
\end{defn}

Note that $q^{(\lbd - s_{\alpha}\cdot_{\chi}\lbd,\mu)} = (\chi_{2\alpha}q^{(\rho + \lbd,2\alpha)})^{(\alpha^{\vee}/2,\mu)}$ for $\alpha \in \roots_{\chi}$.

When $\chi$ is a character on $2P$, we have the following comparison
with the usual shifted action of $W$.

\begin{lemm}
Let $\chi$ be a character on $2P$. In this case
$\roots_{\chi} = \roots$ and $W_{\chi} = W$.
Moreover the $\chi$-shifted action
$W_{\chi}\curvearrowright \h_q^*$ is isomorphic to
the usual shifted action $W \curvearrowright \h_q^*$ via
$\lbd \longmapsto \lbd + \chi$.
\end{lemm}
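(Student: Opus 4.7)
The plan is to handle the two assertions separately; both are essentially bookkeeping with the definitions, so I expect the proof to be short.

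For the first assertion, I would appeal directly to the definition of $\roots_\chi$: because $\chi$ is assumed to be a character on $2P$ with values in $k^\times$, every $\chi_{2\alpha}$ is a nonzero element of $k$, hence in particular neither $0$ nor $\infty$. Thus every $\alpha \in \roots$ lies in $\roots_\chi$, so $\roots_\chi = \roots$ and therefore $W_\chi = W$ by its definition as the group generated by the reflections $s_\alpha$ for $\alpha \in \roots_\chi$.

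For the second assertion, I would show that the map $\phi \colon \ah_q^* \to \ah_q^*$, $\lbd \longmapsto \lbd + \chi$, intertwines the two actions. The trivial-character specialization of the defining formula reads
\[
q^{(w\cdot\lbd,\,\mu)} \;=\; q^{(\rho,\, w^{-1}\mu - \mu)}\, q^{(\lbd,\, w^{-1}\mu)},
\]
which is the usual shifted action $w\cdot\lbd = w(\lbd + \rho) - \rho$. It therefore suffices to verify $(w\cdot_\chi \lbd) + \chi \;=\; w\cdot(\lbd + \chi)$ as characters on $2P$. Pairing the left-hand side with an arbitrary $\mu \in 2P$ gives
\[
\chi_{w^{-1}\mu - \mu}\, q^{(\rho,\, w^{-1}\mu - \mu)}\, q^{(\lbd,\, w^{-1}\mu)} \cdot \chi_\mu,
\]
and the multiplicativity of $\chi$ as a homomorphism $2P \to k^\times$ collapses the two $\chi$-factors to $\chi_{w^{-1}\mu}$. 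Pairing the right-hand side with the same $\mu$ gives
\[
q^{(\rho,\, w^{-1}\mu - \mu)}\, q^{(\lbd + \chi,\, w^{-1}\mu)} \;=\; q^{(\rho,\, w^{-1}\mu - \mu)}\, q^{(\lbd,\, w^{-1}\mu)}\, \chi_{w^{-1}\mu},
\]
and the two expressions agree.

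There is no real obstacle here; the only point worth flagging is the mild overloading of the symbol $\chi$, which plays two roles — as the character on $2P$ that twists the $W$-action, and as the element of $\ah_q^*$ by which weights are shifted. The identity $\chi_{w^{-1}\mu - \mu}\,\chi_\mu = \chi_{w^{-1}\mu}$, i.e.\ the fact that $\chi$ is a homomorphism from the additive group $2P$, is precisely what makes the shift by $\chi$ absorb the twist in the action.
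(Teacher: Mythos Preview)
Your proof is correct. The paper states this lemma without proof, treating it as an immediate consequence of the definitions; your argument supplies exactly the elementary verification one would expect, unwinding the definition of $\roots_\chi$ for the first assertion and checking the intertwining identity $(w\cdot_\chi\lbd)+\chi = w\cdot(\lbd+\chi)$ by pairing with an arbitrary $\mu\in 2P$ for the second.
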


The objective of this section is to determine $\chi \in X_{\roots}(k)$
such that the integral part of the category $\catO{\chi}$
is semisimple. As expected, this involves the shifted version of
dominancy and antidominancy.

\begin{defn} \label{defn:shifted dominancy}
 We say that $\lbd \in \ah_q^*$ is \emph{$\chi$-dominant} (resp. \emph{$\chi$-antidominant}) when
$\lbd$ is maximal (resp. minimal) in $W_{\chi}\cdot_{\chi}\lbd$.
\end{defn}

We have a characterization analogous to
\cite[Proposition 3.5]{MR2428237}
(c.f. \cite[Proposition 5.7, Proposition 5.8]{MR4162277}
for the quantum group version).

\begin{lemm} \label{lemm:characterization of dominancy}
For $\lbd \in \ah_q^*$, the following conditions are equivalent:
\begin{enumerate}
 \item The element $\lbd$ is maximal (resp. minimal) in $W_{\chi}\cdot_{\chi}\lbd$.
 \item $q^{(\lbd + \rho,2\alpha)}\chi_{2\alpha} \not\in q_{\alpha}^{2\Z_{<0}}$ (resp. $q_{\alpha}^{2\Z_{>0}}$) for all $\alpha \in \roots_{\chi}^+ := \roots_{\chi}\cap\roots^+$.
\end{enumerate}
\end{lemm}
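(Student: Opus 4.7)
The plan is to prove the "maximal" version; the "minimal" version follows by the same argument with $\Z_{<0}$ replaced by $\Z_{>0}$ (equivalently, passing to the opposite positive system). Throughout, I abbreviate $z_\alpha(\lbd) := \chi_{2\alpha} q^{(\lbd+\rho, 2\alpha)}$ and use the identity
\[
q^{(\lbd - s_\alpha \cdot_\chi \lbd, \mu)} = z_\alpha(\lbd)^{(\alpha^\vee/2, \mu)}
\]
recorded just before the lemma.

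First I analyse a single reflection. The identity shows that if $z_\alpha(\lbd) = q_\alpha^{2m}$ for some $m \in \Z$, then $s_\alpha \cdot_\chi \lbd = \lbd - m\alpha$, and otherwise $s_\alpha \cdot_\chi \lbd - \lbd$ as a character on $2P$ does not arise from any element of $Q$: one verifies this by pairing with $2\alpha \in 2P$ and using that $q$ is not a root of unity (so that no $\nu \in Q$ can satisfy $q^{2(\nu, \cdot)} = z_\alpha^{-(\alpha^\vee, \cdot)}$ identically). Hence $s_\alpha \cdot_\chi \lbd > \lbd$ holds if and only if $z_\alpha(\lbd) \in q_\alpha^{2\Z_{<0}}$, which gives (i) $\Rightarrow$ (ii) at once by ranging over $\alpha \in \roots_\chi^+$.

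For the converse (ii) $\Rightarrow$ (i), I induct on $\ell(w)$ in $W_\chi$. The essential tool is the transfer identity
\[
z_\alpha(w' \cdot_\chi \lbd) = z_{(w')^{-1}(\alpha)}(\lbd)
\]
for $w' \in W_\chi$ and $\alpha \in \roots_\chi$, which is immediate from the definition of $\cdot_\chi$ together with $\chi$ being a character on $2Q_\chi$. For the inductive step, write $w = s_\beta w'$ reduced with $\beta \in \roots_\chi$ simple; then $\delta := (w')^{-1}(\beta) \in \roots_\chi^+$ by the reduced-expression property. Setting $\mu := w' \cdot_\chi \lbd$, the transfer identity and hypothesis (ii) at $\delta$ give $z_\beta(\mu) \notin q_\beta^{2\Z_{<0}}$, so the single-reflection analysis applied to $\mu$ shows that $s_\beta \cdot_\chi \mu - \mu$ is either $-m\beta$ with $m \ge 0$ (the integral subcase) or a character outside $Q$ (the incomparable subcase). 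Decomposing $w\cdot_\chi \lbd - \lbd = (s_\beta \cdot_\chi \mu - \mu) + (\mu - \lbd)$ and invoking the inductive hypothesis $\mu \not> \lbd$, a case analysis rules out $w\cdot_\chi \lbd > \lbd$: in the integral subcase, $w\cdot_\chi\lbd > \lbd$ would force $\mu - \lbd = m\beta + (w\cdot_\chi\lbd - \lbd) \in Q^+\setminus\{0\}$, contradicting the IH.

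The main obstacle is the incomparable subcase, where the non-$Q$ contribution from $s_\beta \cdot_\chi \mu - \mu$ could in principle be cancelled by non-$Q$ contributions accumulated in $\mu - \lbd$. I plan to handle this by evaluating $q^{(w\cdot_\chi \lbd - \lbd, 2\eta)}$ against several test elements $\eta \in P$ (beginning with $\eta = \beta$, using $w^{-1}(2\beta) - 2\beta = -2(\delta + \beta)$ to get $q^{(w\cdot_\chi\lbd - \lbd, 2\beta)} = z_\delta(\lbd)^{-1} z_\beta(\lbd)^{-1}$): each such evaluation, assuming $w\cdot_\chi\lbd - \lbd \in Q^+$, produces a multiplicative relation between $z_\delta(\lbd)$, $z_\beta(\lbd)$ and integer powers of $q$; combining these relations with the non-root-of-unity hypothesis on $q$ and the behaviour of $\beta^\vee$ modulo $Q$ forces $z_\delta(\lbd) \in q_\delta^{2\Z}$, contradicting the subcase hypothesis and completing the induction.
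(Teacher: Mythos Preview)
Your single-reflection analysis and (i)$\Rightarrow$(ii) are correct and match the paper. The gap is in (ii)$\Rightarrow$(i): the incomparable subcase is a real obstruction, and your plan to resolve it is aimed at the wrong conclusion. Take $\roots_\chi=\roots$ of type $A_2$ with $z_{\eps_1}(\lbd)=c$ and $z_{\eps_2}(\lbd)=c^{-1}q^{2m}$, where $c\notin q^{2\Z}$ and $m>0$; then (ii) holds. For $w=w_0=s_1s_2s_1$ one has $\beta=\eps_1$, $\delta=(w')^{-1}(\beta)=\eps_2$, and $z_\delta(\lbd)\notin q^{2\Z}$, so this is your Case~2; yet $w_0\cdot_\chi\lbd-\lbd=-m(\eps_1+\eps_2)\in Q$, because the non-$Q$ contributions from the first and last reflections in the reduced word cancel. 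Your evaluation at $\eta=\beta$ yields only $z_\delta(\lbd)z_\beta(\lbd)=q^{2m}$, and no choice of further test weights will force $z_\delta(\lbd)\in q^{2\Z}$: it simply is not. What actually rules out $w_0\cdot_\chi\lbd>\lbd$ here is hypothesis (ii) applied at the \emph{integral} root $\eps_1+\eps_2$ (forcing $m\ge 0$), a constraint invisible to an induction that tracks only the last reflection.

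The paper takes a different route for (ii)$\Rightarrow$(i): pass to the algebraic closure, extend $\chi|_{2Q_\chi}$ to a character $\chi'$ on $P$ so that $\lbd\mapsto\lbd+\chi'$ converts the $\chi$-shifted $W_\chi$-action into a restriction of the ordinary shifted action, project to the weight lattice of $\roots_\chi$, and then invoke the classical argument of \cite[Proposition~3.5]{MR2428237}. The crucial structural input, replacing \cite[Theorem~3.4]{MR2428237}, is Lemma~\ref{lemm:integral roots}: the roots with integral shifted pairing form a subsystem whose Weyl group is exactly the set of $w$ with $w\lbd-\lbd\in Q$. This forces any $w$ with $w\cdot_\chi\lbd$ comparable to $\lbd$ into that integral Weyl group, inside which every simple reflection is of your Case~1 type and the induction runs unobstructed. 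If you want to retain the direct inductive framework, you must first establish and use this integral-subsystem reduction.
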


To prove this lemma, we need the following variation of
\cite[Satz 1.3]{MR552943}.

\begin{lemm} \label{lemm:integral roots}
Let $\roots \subset E$ be a root system and $P$ be the weight lattice. Let $\lbd$ be a $k^{\times}$-valued character on $2P$.
We define $R_{[\lbd]}$ and $W_{[\lbd]}$
as follows:
\begin{align*}
 R_{\lbd} := \{\alpha \in \roots\mid \lbd_{2\alpha} \in q_{\alpha}^{2\Z}\},\quad
 W_{\lbd} := \{w \in W\mid w\lbd - \lbd \in Q\}.
\end{align*}
Then $R_{\lbd}$ is a root system, whose Weyl group is
 $W_{\lbd}$.
\end{lemm}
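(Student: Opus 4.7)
The plan is to adapt Jantzen's classical argument on integral roots (Satz~1.3 of \cite{MR552943}) to the present multiplicative setting of $k^{\times}$-valued characters on $2P$. The lemma decomposes into two claims: that $R_\lambda$ is a subroot system of $R$, and that $W_\lambda$ is its Weyl group.

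I would first establish the reflection-level equivalence $s_\alpha \in W_\lambda \Longleftrightarrow \alpha \in R_\lambda$. For $\mu \in 2P$ one has $s_\alpha\mu - \mu = -(\mu,\alpha^\vee)\alpha \in 2\Z\alpha$, so
\begin{align*}
 (s_\alpha\lambda)_\mu\, \lambda_\mu^{-1} = \lambda_{-(\mu,\alpha^\vee)\alpha} = \lambda_{2\alpha}^{-(\mu,\alpha^\vee)/2}.
\end{align*}
This coincides with $q^{(\nu,\mu)}$ for some fixed $\nu \in Q$ if and only if $\lambda_{2\alpha} \in q_\alpha^{2\Z}$: sufficiency is witnessed by $\nu = -n\alpha$ when $\lambda_{2\alpha} = q_\alpha^{2n}$, and necessity comes from testing at $\mu = 2\varpi_j$ for each fundamental weight and invoking the non-root-of-unity assumption on $q$. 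Since $W_\lambda$ is a subgroup of $W$ (via the cocycle identity for $w\lambda\cdot\lambda^{-1}$ together with $W$-stability of $Q$), the conjugate $s_{s_\alpha\beta} = s_\alpha s_\beta s_\alpha^{-1}$ lies in $W_\lambda$ whenever $s_\alpha, s_\beta$ do, giving $s_\alpha\beta \in R_\lambda$. Combined with the axioms inherited from $R$, this makes $R_\lambda$ into a subsystem and ensures $W_{R_\lambda} := \langle s_\alpha : \alpha \in R_\lambda\rangle \subseteq W_\lambda$.

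The substantive step is the reverse inclusion $W_\lambda \subseteq W_{R_\lambda}$, which I would prove by a double induction on the rank of $R$ and the Coxeter length $\ell(w)$ in $W$. For $w \in W_\lambda$ with $w \neq 1$: since $W_\lambda$ is closed under conjugation it preserves $R_\lambda$, so $w(R_\lambda \cap R^+)$ is another positive system of the subsystem $R_\lambda$; by transitivity of $W_{R_\lambda}$ on these, after left-multiplication by a suitable element of $W_{R_\lambda}$ we may assume $w(R_\lambda \cap R^+) = R_\lambda \cap R^+$. This forces $w$ to fix $V_\lambda := \R\text{-span}(R_\lambda)$ pointwise, so $w$ lies in the Weyl group $W(R')$ of the orthogonal subsystem $R' := R \cap V_\lambda^\perp$. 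Since $R' \cap R_\lambda = \emptyset$, the outer induction on the rank of $R$ (applied to $R'$, whose rank is strictly smaller than that of $R$ when $R_\lambda$ is nonempty) gives $W(R') \cap W_\lambda = \{1\}$, whence $w = 1$.

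The main obstacle is the base case $R_\lambda = \emptyset$ of the rank induction, in which one must show directly that $W_\lambda = \{1\}$. The natural approach is to expand $w\lambda\cdot\lambda^{-1}$ along the inversion set of a reduced expression for $w$ and to observe that $w \in W_\lambda$ forces at least one inversion $\alpha \in R^+$ to satisfy $\lambda_{2\alpha} \in q_\alpha^{2\Z}$, contradicting $R_\lambda = \emptyset$. This is the heart of Jantzen's original argument, and it is the step where the non-root-of-unity hypothesis on $q$ and the multiplicative formulation interact most delicately.
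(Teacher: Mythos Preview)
Your strategy diverges from the paper's, and it has two real gaps beyond the base case you already flag.

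\textbf{The paper's approach.} Rather than a direct induction, the paper linearizes the problem. One first reduces to $k=\C$ (the subfield generated by the values of $\lambda$ and $q$ embeds into $\C$), then writes $q=\exp(i\pi h)$ with $h\notin\Q$ and identifies characters on $2P$ with $E_\C/h^{-1}Q^\vee$. Choosing a $\Q$-basis $(e_i)$ of $\C$ with $e_0=1$, $e_1=h^{-1}$ and decomposing $\mu=\sum e_i\mu_i$ with $\mu_i\in E_\Q$, the conditions defining $R_\lambda$ and $W_\lambda$ become simultaneous \emph{additive} integrality conditions on the components $\mu_i$. One then applies (the proof of) Jantzen's Satz~1.3 twice: once to the conditions on $\mu_0$ and the $\mu_i$ for $i\ge2$ to produce an intermediate subsystem $R'$, and once more to the dual root system $(R')^\vee$ with $\mu_1$. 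This sidesteps any direct multiplicative induction.

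\textbf{Gap in your Step 2.} The implication ``$w(R_\lambda\cap R^+)=R_\lambda\cap R^+$ forces $w$ to fix $V_\lambda$ pointwise'' is not valid as stated: setwise preservation of $R_\lambda^+$ only says that $w|_{V_\lambda}$ is a diagram automorphism of $R_\lambda$, and such automorphisms can be nontrivial and realized inside the ambient $W$. For instance, in $A_3$ with $R_\lambda=\{\pm\alpha_1,\pm\alpha_3\}$, the element $(13)(24)\in W$ swaps $\alpha_1\leftrightarrow\alpha_3$ while preserving $R_\lambda^+$. (In fact $(13)(24)\notin W_\lambda$ here --- but that is precisely the content you are trying to prove, and your argument does not establish it.) What \emph{does} follow is that $w$ fixes $\rho_\lambda=\tfrac12\sum_{\alpha\in R_\lambda^+}\alpha$, and by Steinberg's fixed-point theorem $w$ then lies in $W(R'')$ with $R''=\{\alpha\in R:(\alpha,\rho_\lambda)=0\}$. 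This $R''$ satisfies $R''\cap R_\lambda=\emptyset$ and has strictly smaller rank, so the induction can proceed --- but with $R''$, not your $R'=R\cap V_\lambda^\perp$.

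\textbf{Gap in the inductive step.} Even with that fix, ``apply the outer induction to $R''$'' is not automatic: the lemma is stated for a character on $2P$, whereas the weight lattice $P''$ of $R''$ need not sit inside $P$, and the condition $w\lambda-\lambda\in Q$ uses $Q$, not $Q''$. You need to explain how to produce a character $\lambda''$ on $2P''$ with $R''_{\lambda''}=\emptyset$ and $W(R'')\cap W_\lambda\subseteq W^{R''}_{\lambda''}$; this requires passing through $2Q''\subset 2P$ and checking that the resulting $\nu$ actually lands in $Q''$, which is a nontrivial lattice computation.

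Given these issues together with the admittedly unfinished base case, the paper's reduction to the classical additive setting is the more efficient route.
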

\begin{proof}
By consider the image of $\lbd$, we may assume that $k$ is finitely
generated over $\Q$ as a field. Then we can embed $k$ into $\C$.
Hence it suffices to show the statement when $k = \C$.

 Set $E_{\C}:= E\tensor_{\R}\C$ and
take $h \in \C$ so that $q = \exp(i\pi h)$. Note that $h \not \in \Q$
since $q$ is not a root of unity.

Let $\Gamma$ be the set of $\C$-valued characters on $2P$. Then we can identify $E_{\C}/h^{-1}Q^{\vee}$ with $\Gamma$ via $[\mu] \longmapsto \exp(i\pi h(\mu,\tend))$, where $Q^{\vee}$ is the coroot lattice.
In this picture, it is convenient to consider a basis $(e_i)_{i \in I}$ of $\C$ over $\Q$ such that $e_0 = 1$ and $e_1 = h^{-1}$.
Let $E_{\Q}$ be the $\Q$-linear span of $R$. Then we have the following presentation of $\mu$:
\begin{align*}
 \mu = \sum_{i \in I} e_i\mu_i,\quad \mu_i \in E_{\Q}.
\end{align*}
Then $R_{[\mu]}$ and $W_{[\mu]}$ are presented as follows:
\begin{align*}
 R_{[\mu]} &= \{\alpha \in R\mid (\mu_0,\alpha^{\vee}) \in \Z,\,(\mu_1,\alpha) \in \Z,\,(\mu_i,\alpha^{\vee}) = 0\},\\
 W_{[\mu]} &= \{w \in W\mid w\mu_0 - \mu_0 \in Q,\,w\mu_1 - \mu_1 \in Q^{\vee},w\mu_i - \mu_i = 0\}.
\end{align*}
Consider
\begin{align*}
 R' &= \{\alpha \in R\mid (\mu_0,\alpha^{\vee}) \in \Z,\,(\mu_i,\alpha^{\vee}) = 0\},\\
 W' &= \{w \in W\mid w\mu_0 - \mu_0 \in Q,w\mu_i - \mu_i = 0\}.
\end{align*}
Then the proof of \cite[Satz 1.3]{MR552943} implies $R'$ is a root system with
the Weyl group $W'$. By considering the dual root system of $R'$
and the orthogonal decomposition $\mu_1 = \mu_1' + \mu_1''$
according to $E = \R R'\oplus (\R R')^{\perp}$,
another application of the discussion in
\cite[Satz 1.3]{MR552943} proves the statement.
\end{proof}

\begin{proof}[Proof of Lemma \ref{lemm:characterization of dominancy}]
It is not difficult to see (i) $\Longrightarrow$ (ii).
To see the converse, we replace $k$ by its algebraic closure and
extend $\chi|_{2Q_{\chi}}$ to a character $\chi'$ on $P$.
Then $\lbd \longmapsto \lbd + \chi'$ gives an isomorphism
from the $\chi$-shifted action
$W_{\chi}\curvearrowright \ah_q^*$ to the restriction of
the shifted action
$W \curvearrowright \ah_q^*$.

Let $P_{\chi}$ be the weight lattice of $\roots_{\chi}$ and
$\rho_{\chi}$ be the half sum of $\roots_{\chi}$. Since
$\roots_{\chi}$ is a closed subsystem generated by simple
roots of a positive system, we have the canonical map $\map{\pi}{P}{P_{\chi}}$ and $\map{i}{P_{\chi}}{P}$ with $\pi\circ i = \id$.
Then $\lbd \longmapsto \lbd' = (\lbd + \rho)\circ i|_{2P_{\chi}} - \rho_{\chi}$ preserves the shifted action of $W_{\chi}$.

Now the assumption implies that $\lbd'$ satisfies
$q^{(\lbd' + \rho_{\chi},2\alpha)}\chi_{2\alpha} \not\in q_{\alpha}^{2\Z_{<0}}$ for all $\alpha \in \roots_{\chi}^+$.
Hence the discussion in
\cite[Proposition 3.5]{MR2428237}, after replacing
\cite[Theorem 3.4]{MR2428237} by
Lemma \ref{lemm:integral roots}, implies that $\lbd'$ is maximal in
$W_{\chi}\cdot\lbd'$. This proves (ii) $\Longrightarrow$ (i).
\end{proof}

Now we give the sufficient conditions for projectivity. We omit the proof since the usual argument can be applied.
See \cite[Proposition 3.8]{MR2428237} for example.

\begin{prop} \label{prop:dominancy implies projectivity}
If $\lbd \in \ah_q^*$ is $\chi$-dominant, $M_{\chi}(\lbd)$
is projective.
\end{prop}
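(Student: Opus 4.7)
The plan is to adapt the classical proof that dominant Verma modules are projective in category $\cl{O}$, using the Harish-Chandra-type description of $ZU_{q,\chi}(\g)$ just established. First, I would set up a block decomposition of $\catO{\chi}$ by generalized central characters: since objects are finitely generated with locally finite action of $U_{q,\chi}(\n^+)$ and semisimple action of $U_q(\ah)$, the center acts locally finitely, so every $M \in \catO{\chi}$ splits as $M = \bigoplus_\theta M^\theta$ indexed by maximal ideals of $ZU_{q,\chi}(\g)$. Using the image description from the previous proposition, together with the standard computation of the action of central elements on a highest weight vector, I would then identify the central character of $M_\chi(\mu)$ and deduce that $M_\chi(\mu)$ and $M_\chi(\mu')$ share a central character if and only if $\mu' \in W_\chi \cdot_\chi \mu$.

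The decisive step uses $\chi$-dominance of $\lambda$, i.e.\ maximality of $\lambda$ in $W_\chi \cdot_\chi \lambda$. I claim that for any $N \in \catO{\chi}$ contained in the block of $M_\chi(\lambda)$, every vector $v \in N_\lambda$ is automatically a highest weight vector. Indeed, the submodule $U_{q,\chi}(\g) v$ lies in that same block, so its simple subquotients have highest weights in $W_\chi \cdot_\chi \lambda$; since $\lambda$ is maximal in this orbit and the weights of $U_{q,\chi}(\n^+) v$ are $\ge \lambda$, applying any $\aE_{\bs{i},\alpha}$ to $v$ would produce either $0$ or a highest weight above $\lambda$ inside the same block, which is impossible.

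With this in hand, projectivity follows from the universal property of $\dqind$. Given a surjection $\map{\pi}{M}{N}$ in $\catO{\chi}$ and a morphism $\map{f}{M_\chi(\lambda)}{N}$, the image of the canonical generator is a highest weight vector $\bar v \in N_\lambda$ lying in the block of $M_\chi(\lambda)$. After projecting to that block in $M$ (which preserves surjectivity by the block decomposition), the induced map $M_\lambda \twoheadrightarrow N_\lambda$ on weight spaces is surjective because $U_q(\ah)$ acts semisimply; any preimage $v \in M_\lambda$ is, by the previous paragraph, automatically annihilated by $U_{q,\chi}(\n^+)$, hence defines by Frobenius reciprocity for $\dqind$ a lift $M_\chi(\lambda) \to M$ of $f$.

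The main technical obstacle is the block decomposition: one must verify that $ZU_{q,\chi}(\g)$ acts locally finitely and that the $\chi$-shifted orbit $W_\chi \cdot_\chi \lambda$ indeed records the central character. The former is standard once one shows the center preserves generalized weight spaces and that highest weight submodules have fixed central character; the latter reduces, after the $t_w$-twist used in the proof of the center theorem and the identification in Remark \ref{rema:independence of the domain}, to the analogous statement for the usual $U_q(\g)$, where it is classical (cf.\ \cite[Proposition 5.7]{MR4162277}). The rest is a direct transcription of \cite[Proposition 3.8]{MR2428237}.
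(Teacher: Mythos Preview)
Your proposal is correct and follows exactly the approach the paper intends: the paper omits the proof and simply refers to \cite[Proposition 3.8]{MR2428237}, and your sketch is precisely that standard category-$\cl{O}$ argument (block decomposition via generalized central characters, then lifting the highest weight vector through a surjection) transcribed to the $\chi$-shifted setting. The only small slip is phrasing---$\aE_{\bs{i},\alpha} v$ need not itself be a \emph{highest} weight vector---but your block argument already forces $N_{\lambda+\alpha}=0$ directly, which is all you need.
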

\begin{rema}
The converse direction is also likely to be true, but we do not pursue the argument here since it plays no role in the present paper.
\end{rema}

Next we proceed to the characterization of the simplicity.

\begin{defn}
We say that $\lbd \in \ah_q^*$ is $\chi$-strongly linked to $\lbd' \in \ah_q^*$, denoted by $\lbd \uparrow_{\chi} \lbd'$, if there is a sequence $\alpha_1,\alpha_2,\cdots,\alpha_k$
in $\roots_{\chi}$ with the following condition:
\begin{align*}
 \lbd = s_{\alpha_k}s_{\alpha_{k - 1}}\cdots s_{\alpha_1}\cdot_{\chi}\lbd' < s_{\alpha_{k - 1}}\cdots s_{\alpha_1}\cdot_{\chi}\lbd'
< \cdots < s_{\alpha_1}\cdot_{\chi}\lbd' < \lbd'.
\end{align*}
\end{defn}

The following is a variant of Verma's theorem in our setting.

\begin{prop} \label{prop:Verma theorem}
For $\lbd,\lbd' \in \ah_q^*$ such that $\lbd$ is
$\chi$-strongly linked to $\lbd'$,
there is an embedding $M_{\chi}(\lbd) \longrightarrow M_{\chi}(\lbd')$.
\end{prop}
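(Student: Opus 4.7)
My plan is to adapt the classical proof of the BGG-Verma embedding theorem (cf.\ \cite[Theorem 5.1]{MR2428237}) to the deformed setting by working with the $\chi$-shifted action and quantum root vectors. Since embeddings of Verma modules compose, it suffices to treat a single link in the strong linkage chain: given $\alpha \in \roots_\chi$ and $\mu \in \ah_q^*$ with $s_\alpha \cdot_\chi \mu < \mu$, construct an embedding $M_\chi(s_\alpha \cdot_\chi \mu) \hookrightarrow M_\chi(\mu)$.

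For the case where $\alpha = \epsilon_i$ is simple in the fixed positive system $\roots^+$, the inequality $s_i \cdot_\chi \mu < \mu$ translates, by the same argument as in Lemma \ref{lemm:characterization of dominancy}, to $\chi_{2\epsilon_i}q^{(\mu+\rho,2\epsilon_i)} = q_i^{2n}$ for some $n \in \Z_{>0}$. Working inside the rank-one subalgebra $U_{q,\chi}(\frl_i)$ and using the defining commutator $[\aE_i,\aF_i]_q = (e_{2\epsilon_i}\aK_i^2 - 1)/(q_i - q_i^{-1})$ specialized at $\chi$, a quantum-$\fsl_2$ induction on $k$ yields a formula $\aE_i\aF_i^k v_\mu = [k]_{q_i}\,c_k\,\aF_i^{k-1} v_\mu$ in which the scalar $c_k$ vanishes precisely when $k = n$. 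The quantum Serre relations, together with $[\aE_j,\aF_i] = 0$ for $j \neq i$, ensure that $\aE_j\aF_i^n v_\mu = 0$ for all $j$. Hence $\aF_i^n v_\mu$ is a $U_{q,\chi}(\n^+)$-singular vector of weight $\mu - n\epsilon_i = s_i\cdot_\chi \mu$, giving a nonzero homomorphism $M_\chi(s_i\cdot_\chi\mu) \to M_\chi(\mu)$ that is automatically injective since $M_\chi(\mu)$ is free over $U_{q,\chi}(\n^-)$ by the PBW theorem.

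For general $\alpha \in \roots_\chi^+$, I would proceed by induction on the height of $\alpha$ inside $\roots_\chi$, using Verma's combinatorial lemma: if $\alpha$ is not $\roots_\chi$-simple, choose a $\roots_\chi$-simple root $\gamma$ with $(\alpha,\gamma^\vee) > 0$ and $\gamma \neq \alpha$, put $\beta := s_\gamma(\alpha) \in \roots_\chi^+$ of strictly smaller height, and use the identity $s_\alpha = s_\gamma s_\beta s_\gamma$ together with the equivariance of the $\chi$-shifted action to relate the target embedding to those produced by $s_\beta$ (handled by the induction hypothesis) and by $s_\gamma$. The base of the induction requires the simple-root step for every $\roots_\chi$-simple $\gamma$, not only those in $\simples$; to handle this, I would invoke Remark \ref{rema:independence of the domain}, choosing a positive system $\roots_1^+$ in which $\gamma$ is simple in $\roots_\chi \cap \roots_1^+$, re-presenting $U_{q,\chi}(\g)$ via the quantum root vectors $\aE_{\bs{i},\gamma}, \aF_{\bs{i},\gamma}$ adapted to $\roots_1^+$, and repeating the rank-one calculation of the previous paragraph verbatim.

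The main obstacle is this last step: carrying Verma's reflection-splitting argument through inside the possibly proper subsystem $\roots_\chi$ while keeping the Verma module tied to the fixed Borel $U_{q,\chi}(\frb)$. In particular, since a general $\chi \in X_\roots(k)$ need not extend to a character on all of $P$, Lemma \ref{lemm:comparison with the usual induction} is unavailable as a shortcut to the classical quantum group statement; one must run the combinatorial argument natively in the deformed setting, carefully tracking how the $\chi$-shifted action absorbs the discrepancies produced each time the generators are reparametrized.
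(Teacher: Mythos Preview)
Your approach differs fundamentally from the paper's, and the gap you flag at the end is real and not repaired by the fix you propose. Remark~\ref{rema:independence of the domain} lets you change the positive system $\roots_0^+$ on which $\chi$ is presented as a character, but the Borel $U_{q,\chi}(\frb)$---and hence the notion of highest weight vector and the Verma module $M_\chi(\mu)$---is always taken with respect to the \emph{fixed} positive system $\roots^+$; the paper explicitly notes that these subalgebras are preserved under the identifications of that remark. So re-presenting $\chi$ over a different $\roots_1^+$ does not make a $\roots_\chi$-simple root $\gamma$ simple in the sense your rank-one calculation needs: you still must show $\aE_{\eps}\,\aF_{\bs{i},\gamma}^{\,n} v_\mu = 0$ for every $\eps \in \simples$, and for $\gamma \notin \simples$ the commutator $[E_\eps, F_{\bs{i},\gamma}]$ is typically a nonzero element of negative weight, so $\aF_{\bs{i},\gamma}^{\,n} v_\mu$ is not singular. (Already classically in $\fsl_3$ with $\gamma = \eps_1+\eps_2$ the singular vector of weight $s_\gamma\cdot\mu$ is not a power of $F_\gamma$.) Your height induction inside $\roots_\chi$ therefore has no base case whenever $\roots_\chi$ admits a simple root outside $\simples$, which is precisely the degenerate-$\chi$ situation in which Lemma~\ref{lemm:comparison with the usual induction} is unavailable.

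The paper sidesteps all of this with a deformation argument. First it treats the generic case $\chi \in \Ch_k 2Q$ (so $\roots_\chi = \roots$), where Lemmas~\ref{lemm:comparison with the usual qea} and~\ref{lemm:comparison with the usual induction} reduce the claim directly to the standard quantum Verma theorem for $U_q(\g)$. Then, for an arbitrary $\chi$ and a single link $s_\alpha\cdot_\chi\lambda' < \lambda'$ with $c := \chi_{2\alpha} \in k^\times$, it observes that the existence of a highest weight vector in $M_\chi(\lambda')_{\lambda' - n\alpha}$ is the \emph{Zariski-closed} condition that a certain finite-dimensional linear map (built from the PBW basis and depending algebraically on $\chi$) fail to be injective. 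This closed condition holds on the dense locus of the affine slice $\{\chi : \chi_{2\alpha} = c\}$ where every $\chi_{2\beta}$ is nonzero, by the first step; hence it holds everywhere on the slice. That density-and-specialization step is the substantive idea, and it is what your outline lacks.
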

Combining with Lemma \ref{lemm:characterization of dominancy},
we obtain the following immediate corollary.
\begin{coro} \label{coro:characterization of simplicity}
 For $\lbd \in \ah_q^*$, the following are equivalent:
\begin{enumerate}
 \item The $\chi$-shifted Verma module $M_{\chi}(\lbd)$ is simple.
 \item The weight $\lbd$ is $\chi$-antidominant.
\end{enumerate}
\end{coro}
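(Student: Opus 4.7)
The plan is to establish the equivalence by proving its two contrapositives. The forward direction (i) $\Rightarrow$ (ii) I would attack contrapositively via Proposition \ref{prop:Verma theorem}, while the converse (ii) $\Rightarrow$ (i) I would handle through the central character analysis afforded by the Harish-Chandra-type description of $ZU_{q,\chi}(\g)$ established just above.

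Suppose first that $\lbd$ is not $\chi$-antidominant. Lemma \ref{lemm:characterization of dominancy} then produces $\alpha \in \roots_\chi^+$ with $q^{(\lbd + \rho, 2\alpha)}\chi_{2\alpha} = q_\alpha^{2n}$ for some $n \in \Z_{>0}$. Feeding this into the identity
\[
q^{(\lbd - s_\alpha\cdot_\chi \lbd,\mu)} = \bigl(q^{(\lbd + \rho, 2\alpha)}\chi_{2\alpha}\bigr)^{(\alpha^\vee/2,\mu)}
\]
recorded after Definition \ref{defn:shifted dominancy} yields $s_\alpha\cdot_\chi \lbd = \lbd - n\alpha$ under the embedding $P \hookrightarrow \ah_q^*$, which is strictly less than $\lbd$ in the weight order. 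In particular $s_\alpha\cdot_\chi \lbd \uparrow_\chi \lbd$, so Proposition \ref{prop:Verma theorem} provides a nonzero embedding $M_\chi(s_\alpha\cdot_\chi \lbd) \hookrightarrow M_\chi(\lbd)$ whose image is a proper submodule, forbidding simplicity.

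For the converse, assume $\lbd$ is $\chi$-antidominant and let $N \subseteq M_\chi(\lbd)$ be any nonzero submodule. Since $N \in \catO{\chi}$, local finiteness of the $U_{q,\chi}(\n^+)$-action produces a highest weight vector $v \in N$ of some weight $\mu \leq \lbd$. The resulting nonzero map $M_\chi(\mu) \to M_\chi(\lbd)$ equalizes the central characters of these two Verma modules, and invoking the description of the image of $ZU_{q,\chi}(\g)$ under $P$ pins $\mu$ down to the $W_\chi$-orbit of $\lbd$ under the $\chi$-shifted action. Antidominancy then forbids $\mu < \lbd$, so combined with $\mu \leq \lbd$ we must have $\mu = \lbd$; hence $N$ contains a scalar multiple of the canonical generator of $M_\chi(\lbd)$ and so $N = M_\chi(\lbd)$.

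The main obstacle I expect is the central character step in the second direction: extracting from the explicit description of the Harish-Chandra image a genuine linkage principle for $W_\chi$. Evaluating the generators $z_\mu$ on $v_\lbd$ and separating off the common factor $\chi_{w^{-1}\mu - \mu}$, one is left with a Weyl-type sum $\sum_{\tilde{w} \in W} q^{-(\tilde{w}\cdot_\chi \lbd,\mu)}$; isolating the $W_\chi$-orbits of $\lbd$ (rather than the possibly coarser $W$-orbit under $\cdot_\chi$) from all competing weights in $\{\mu \leq \lbd\}$ via a Vandermonde-type separation argument, while simultaneously tracking how $\chi$ extends off $2Q_\chi$ inside $X_{\roots}(k)$, is the delicate point on which the proof hinges.
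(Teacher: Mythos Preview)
The paper records this as an immediate corollary of Proposition~\ref{prop:Verma theorem} together with Lemma~\ref{lemm:characterization of dominancy} and gives no further argument. Your (i)~$\Rightarrow$~(ii) direction is exactly the intended one. For (ii)~$\Rightarrow$~(i) the central-character route is the natural reconstruction given that the Harish--Chandra description of $ZU_{q,\chi}(\g)$ is already in hand, and you have correctly located the one nontrivial point: the defining sum ranges over $W$, while the linkage you need is for $W_\chi$.

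Two comments on your sketch of that step. First, the factorization you propose is not literally available: the $\chi$-shifted action $\tilde w\cdot_\chi$ is only defined for $\tilde w\in W_\chi$ (it requires $\chi$ to be invertible along the relevant roots), so the expression $\sum_{\tilde w\in W}q^{-(\tilde w\cdot_\chi\lambda,\mu)}$ does not make sense, and ``separating off $\chi_{w^{-1}\mu-\mu}$'' fails at the monoid level because $\mu-\tilde w\mu$ need not lie in $2Q_0^+$. Second, the resolution is cleaner than a Vandermonde separation: one checks that the coefficient $\chi_{w^{-1}\mu-\tilde w\mu}$ \emph{vanishes} unless $\tilde w\mu\in W_\chi\cdot(w^{-1}\mu)$. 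Since $\chi$ is a monoid character on $2Q_0^+$, $\roots_\chi$ is parabolic with respect to $\roots_0^+$, and the required root-system lemma is: for $\mu_0$ dominant with respect to $\roots_0^+$ and $S_0\subset\Delta_0$, any $\nu\in W\mu_0$ with $\mu_0-\nu\in Q_{S_0}$ already lies in $W_{S_0}\mu_0$. (Induct on the height of $\mu_0-\nu$: if $\nu\neq\mu_0$, choose $\eps_i\in\Delta_0$ with $(\nu,\eps_i^\vee)<0$; then $(\mu_0-\nu,\eps_i^\vee)>0$ together with non-positivity of the off-diagonal Cartan entries forces $\eps_i\in S_0$, and $s_i\nu$ is closer to $\mu_0$.) Once this is in place the surviving terms in $z_\mu(\lambda)$ run over a single $W_\chi$-orbit, the central character is genuinely $W_\chi$-symmetric under $\cdot_\chi$, and the standard separation argument gives $\mu\in W_\chi\cdot_\chi\lambda$ as you wanted.
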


The injectivity of the map is due to the following lemma.
Again we omit the proof since the discussion in
\cite[Proposition 3.134]{MR4162277} can be applied.

\begin{lemm}
There exists no zero-divisor in $U_{q,\chi}(\n^-)$.
\end{lemm}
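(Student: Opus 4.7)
My plan is to mimic the standard argument for $U_q(\n^-)$: build a filtration on $U_{q,\chi}(\n^-)$ whose associated graded algebra is a quantum affine space, then invoke the classical fact that quantum affine spaces are domains. The passage from ``$\mathrm{gr}$ is a domain'' to ``the algebra is a domain'' is automatic once the filtration is $\Z_{\ge 0}$-indexed and exhaustive with $F^0 = k$.

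First, I would establish Levendorskii--Soibelman type relations for the generators $\aF_{\bs{i},k}$ of $U_{q,\chi}(\n^-)$. Recall that the classical LS relations in $U_q(\g)$ state that for $i < j$,
\[
 [F_{\bs{i},j}K_{\alpha^{\bs{i}}_j},\; F_{\bs{i},i}K_{\alpha^{\bs{i}}_i}]_q \in \mathrm{span}_k\{F_{\bs{i}}^{\Lambda^-}K_{\Lambda^-\cdot\alpha^{\bs{i}}} : \support \Lambda^- \subset (i,j),\;\Lambda^-\cdot\alpha^{\bs{i}} = \alpha^{\bs{i}}_i + \alpha^{\bs{i}}_j\}.
\]
Since each $\aF_{\bs{i},k}$ differs from $F_{\bs{i},k}K_{\alpha^{\bs{i}}_k}$ only by an invertible scalar in $k$ (coming from the specialization of $\chi$ on $2Q^+$ via the identification in Remark \ref{rema:independence of the domain}), the same shape of relation holds inside $U_{q,\chi}(\n^-)$, i.e.\
\[
 [\aF_{\bs{i},j},\aF_{\bs{i},i}]_q \in \mathrm{span}_k\{\aF_{\bs{i}}^{\Lambda^-} : \support \Lambda^- \subset (i,j),\;\Lambda^-\cdot\alpha^{\bs{i}} = \alpha^{\bs{i}}_i + \alpha^{\bs{i}}_j\},
\]
with possibly twisted scalar coefficients that do not affect the leading quantum-commutator term.

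Next, I would introduce the $\Z_{\ge 0}^N$-filtration on $U_{q,\chi}(\n^-)$ in which $\aF_{\bs{i}}^{\Lambda^-}$ has multidegree $\Lambda^-$, ordered by a total order refining the lexicographic order on the PBW indices and compatible with the containment $\support\Lambda^-\subset (i,j)$ appearing on the right-hand side of the LS relations. By the PBW theorem for $U_{q,\chi}(\g)$ recorded after the definition of deformed QEA, the PBW monomials $\{\aF_{\bs{i}}^{\Lambda^-}\}$ form a $k$-basis, so the filtration is well defined. The LS relations above then imply that in $\mathrm{gr}\, U_{q,\chi}(\n^-)$, the images of the generators satisfy pure $q$-commutation relations
\[
 \bar{\aF}_{\bs{i},j}\bar{\aF}_{\bs{i},i} = q^{-(\alpha^{\bs{i}}_i,\alpha^{\bs{i}}_j)}\bar{\aF}_{\bs{i},i}\bar{\aF}_{\bs{i},j}\quad (i<j),
\]
so $\mathrm{gr}\, U_{q,\chi}(\n^-)$ is (a quotient of) a quantum affine space $k_{\mathbf{q}}[x_1,\dots,x_N]$; comparing bases via the PBW theorem shows the surjection is an isomorphism.

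Finally, quantum affine spaces are well-known to be integral domains, as one checks directly by comparing leading monomials of a product in the ordered PBW basis. Since $\mathrm{gr}\, U_{q,\chi}(\n^-)$ is a domain and the filtration is exhaustive with $F^0 = k$, so is $U_{q,\chi}(\n^-)$. The main obstacle is the careful verification of the LS relations in the deformed setting, since the identification between $\aF_{\bs{i},k}$ and $F_{\bs{i},k}K_{\alpha^{\bs{i}}_k}$ depends on whether $k\le l$ or $k>l$ in the chosen reduced expression; however, both cases produce only scalar twists in $k$, so the leading-term structure in the associated graded is unaffected and reduces the argument to the well-documented case of $U_q(\n^-)$ treated in \cite[Proposition 3.134]{MR4162277}.
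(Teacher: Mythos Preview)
Your proposal is correct and matches the paper's approach exactly: the paper simply omits the proof and states that the argument of \cite[Proposition~3.134]{MR4162277} applies, which is precisely the LS-relations/filtration/quantum-affine-space route you spell out. One small imprecision: for general $\chi$ there is no embedding of $U_{q,\chi}(\g)$ into $U_q(\g)$, so saying $\aF_{\bs{i},k}$ differs from $F_{\bs{i},k}K_{\alpha^{\bs{i}}_k}$ by an invertible scalar in $k$ is not quite well-posed; the clean version is to establish the LS relations in the $k[2Q_0^+]$-algebra $U_{q,e}^w(\g)$ (where the scalar factors $e_{-2\alpha^{\bs{i}}_k}$ are genuinely invertible) and then specialize to $\chi$, which is exactly what your final paragraph correctly describes.
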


\begin{proof}[Proof of Proposition \ref{prop:Verma theorem}]
We may assume that $k$ is algebraically closed.
In the case that $k$ is not algebraically closed,
we consider the base change by its algegraic closure.

At first we show the statement for $\chi \in \Ch_k 2Q$.
By our assumption on $k$, we can extend $\chi$ to a character on $P$,
which is also denoted by $\chi$. Then Lemma \ref{lemm:comparison with the usual qea} and Lemma \ref{lemm:comparison with the usual induction} reduces the existence of an embedding $M_{\chi}(\lambda) \subset M_{\chi}(\lbd')$ to the existence of
an embedding $M(\lbd + \chi) \subset M(\lbd' + \chi)$
after extending $\lbd$ and $\lbd'$ to characters on $P$
so that $\lbd + \chi$ is strongly linked to $\lbd'$.
The latter is a conclusion of Verma's theorem
(\cite[Theorem 5.14]{MR4162277}).

Now fix $\alpha \in \roots_0^+, c \in k^{\times}$ and $n \in \Z$ so that $n\alpha \in Q^+\setminus\{0\}$.
Take $\lbd' \in \ah_q^*$ satisfying $cq^{(\rho + \lbd',2\alpha)} = q_{\alpha}^{2n}$. Then we have $s_{\alpha}\cdot_{\chi} \lbd' = \lbd' - n\alpha < \lbd'$ for $\chi$ in the following algebraic subset:
\begin{align*}
A_{\alpha,c} := \{\map{\chi}{2Q_0^+}{k}\mid \chi_{2\alpha} = c.\}
\end{align*}
We can also see that existence of a heighest weight vector in
$M_{\chi}(\lbd')_{\lbd' - n\alpha}$ is an algebraic condition on
$\chi$
since it is equivalent to non-injectivity of the following map,
where $U_{q,1}(\n^-)$ is identified with $U_{q,\chi}(\n^-)\cong M_{\chi}(\lbd')$ through the PBW basis $\{\aF^{\Lambda}\}_{\Lambda}$:
\begin{align*}
 U_{q,1}(\n^-)_{- n\alpha} \cong M_{\chi}(\lbd')_{\lbd' - n\alpha} &\longrightarrow
\bigoplus_{\eps \in \simples} M_{\chi}(\lbd')_{\lbd' - n\alpha + \eps}\cong \bigoplus_{\eps \in \simples} U_{q,1}(\n^-)_{-n\alpha + \eps},\\
x &\longmapsto (E_{\eps}x)_{\eps \in \simples}.
\end{align*}
Hence the discussion in the case that $\chi_{2\alpha} \neq 0$ for all
$\alpha \in \roots_0^+$ implies the existence of $M_{\chi}(s_{\alpha}\cdot \lbd') \subset M_{\chi}(\lbd')$
for all $\chi \in A_{\alpha,c}$. This concludes the statement
since we consider all possible choices of $(\alpha,c,n)$.
\end{proof}

Finally we see the main result in this section.
The category $\intO{\chi}$ is defined as the full subcategory of
$\catO{\chi}$ consisting of modules whose weights are contained
in $P$.

\begin{thrm} \label{thrm:characterization of semisimplicity}
The category $\intO{\chi}$ is semisimple if and only if
$\chi \in X_{\roots}^{\circ}(k)$. In this case, the shifted induction
functor $\dqind$ gives an equivalence $\rep_q^{\fin} H \cong \intO{\chi}$
as $k$-linear categories.
\end{thrm}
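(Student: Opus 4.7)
The plan is to reduce both directions to Corollary \ref{coro:characterization of simplicity}, Proposition \ref{prop:dominancy implies projectivity} and Proposition \ref{prop:Verma theorem}, by first translating the numerical condition $\chi \in X_{\roots}^{\circ}(k)$ into a statement about every integral weight being simultaneously $\chi$-dominant and $\chi$-antidominant.

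For the forward direction, suppose $\chi \in X_{\roots}^{\circ}(k)$ and let $\lambda \in P$. For each $\alpha \in \roots_{\chi}^+$, the quantity $q^{(\lambda + \rho, 2\alpha)} = q_{\alpha}^{2(\lambda + \rho, \alpha^{\vee})}$ lies in $q_{\alpha}^{2\Z}$, because $(\lambda + \rho, \alpha^{\vee}) \in \Z$. Since $\chi_{2\alpha} \notin q_{\alpha}^{2\Z}$ by definition of $X_{\roots}^{\circ}(k)$, the product $q^{(\lambda + \rho, 2\alpha)}\chi_{2\alpha}$ also lies outside $q_{\alpha}^{2\Z}$. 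Lemma \ref{lemm:characterization of dominancy} then shows $\lambda$ is both $\chi$-dominant and $\chi$-antidominant, so by Proposition \ref{prop:dominancy implies projectivity} and Corollary \ref{coro:characterization of simplicity}, $M_{\chi}(\lambda)$ is simultaneously simple and projective for every $\lambda \in P$.

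From here I would conclude semisimplicity of $\intO{\chi}$ by a standard $\mathcal{O}$-category argument: any object is generated by a finite-dimensional $U_{q,\chi}(\frb)$-stable subspace (using finite generation and local $U_{q,\chi}(\n^+)$-finiteness), hence is a quotient of a finite successive extension of $\chi$-shifted Verma modules $M_{\chi}(\lambda_i)$ with $\lambda_i \in P$; every such extension splits since each $M_{\chi}(\lambda_i)$ is projective, so finite length together with the simplicity of each Verma yields a direct sum decomposition. Since the $M_{\chi}(\lambda)$ are pairwise non-isomorphic and each equals $\dqind k_{\lambda}$, the shifted induction functor is essentially surjective, and full faithfulness follows from Frobenius reciprocity $\Hom(M_{\chi}(\lambda), M_{\chi}(\mu)) = (M_{\chi}(\mu))_{\lambda}^{\n^+}$ together with the observation that a simple Verma contains only its own highest weight line as vectors killed by $\n^+$.

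For the converse, suppose $\chi \notin X_{\roots}^{\circ}(k)$, so $\chi_{2\alpha} = q_{\alpha}^{2n_0}$ for some $\alpha \in \roots_{\chi}^+$ and $n_0 \in \Z$. The formula recorded just after Definition of the $\chi$-shifted action gives
\[
q^{(\lambda - s_{\alpha} \cdot_{\chi} \lambda,\, \mu)} = q^{(n_0 + (\rho + \lambda, \alpha^{\vee}))(\alpha, \mu)},
\]
i.e.\ $\lambda - s_{\alpha} \cdot_{\chi} \lambda = (n_0 + (\rho + \lambda, \alpha^{\vee}))\alpha$. Choosing $\lambda \in P$ with $n_0 + (\rho + \lambda, \alpha^{\vee}) > 0$ makes $s_{\alpha} \cdot_{\chi} \lambda < \lambda$ with both weights in $P$, so Proposition \ref{prop:Verma theorem} produces a nonzero embedding $M_{\chi}(s_{\alpha} \cdot_{\chi} \lambda) \hookrightarrow M_{\chi}(\lambda)$ inside $\intO{\chi}$. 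The target is indecomposable (its highest-weight space is one-dimensional and cyclic), so this proper submodule cannot be a direct summand and $\intO{\chi}$ fails to be semisimple.

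The technical step most at risk is the finite-length claim that underwrites the splitting argument in the forward direction; I expect this to go through verbatim from the classical $\mathcal{O}$-category treatment by exploiting the usual weight-multiplicity estimates, since Lemma \ref{lemm:comparison with the usual qea} and Lemma \ref{lemm:comparison with the usual induction} already match the deformed setting with the standard one on integral weights.
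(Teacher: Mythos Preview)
Your proof is correct and follows essentially the same approach as the paper: both directions hinge on showing that for $\chi \in X_{\roots}^{\circ}(k)$ every integral weight is simultaneously $\chi$-dominant and $\chi$-antidominant, so each $M_{\chi}(\lambda)$ is simple and projective, and conversely that failure of the condition produces a non-simple indecomposable Verma. Your treatment is somewhat more explicit than the paper's---you spell out the Verma-filtration argument for semisimplicity and the Frobenius-reciprocity check for the equivalence, and you phrase the converse as a contrapositive via Proposition~\ref{prop:Verma theorem} rather than appealing directly to Corollary~\ref{coro:characterization of simplicity}---but the underlying logic is the same.
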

\begin{proof}
Assume that $\intO{\chi}$ is semisimple. Since each $M_{\chi}(\lbd)$
with $\lbd \in P$
is indecomposable, this assumption implies the simplicity of
$M_{\chi}(\lbd)$. Hence Corollary \ref{coro:characterization of simplicity} implies $q^{(\lbd + \rho,2\alpha)}\chi_{2\alpha} \not\in q_{\alpha}^{2\Z_{>0}}$ for all $\alpha \in \roots^+$ and $\lbd \in P$.
This shows the latter condition on $\chi$.

Next we assume $\chi_{2\alpha} \not\in q_{\alpha}^{2\Z}$ for all $\alpha \in \roots_0^+$. Then Lemma \ref{lemm:characterization of dominancy}
implies that each $M_{\chi}(\lbd)$ is simple. Hence it suffices to show
that there is no non-trivial extension of $M_{\chi}(\lbd)$ by
$M_{\chi}(\lbd')$ when $\lbd \neq \lbd'$. This follows from
Proposition \ref{prop:dominancy implies projectivity}.
\end{proof}

\subsection{Highest weight vectors in tensor products}

For later use, we investigate highest weight vectors
in tensor products of finite dimensional representations
and $\chi$-shifted Verma modules.

At first we determine Shapovalov determinants in our setting,
up to scalar multiplication.
In the following, $\chi$ is a character defined on $2Q_0^+$
generated by a positive system $\roots_0^+$.
Recall that
$\map{P}{U_{q,\chi}(\g)}{U_{q}(\ah)}$ is the projection arising from
the tensor product decomposition $U_{q,\chi}(\g) = U_{q,\chi}(\n^-)\tensor U_{q}(\ah)\tensor U_{q,\chi}(\n^+)$.

\begin{defn}
A pairing
$\map{\cl{S}}{U_{q,\chi}(\n^+)\times U_{q,\chi}(\n^-)}{U_q(\ah)}$ is defined by $S(y,x) := P(yx)$. Its restriction on $U_{q,\chi}(\n^+)_{\nu}\times U_{q,\chi}(\n^-)_{-\nu}$
is denoted by $\cl{S}_{\nu}$
\end{defn}

\begin{lemm} \label{lemm:null space of Shapovalov pairing}
Take $\lbd \in \ah_q^*$. For any $x \in U_{q,\chi}(\n^-)_{-\nu}$,
$\chi_{\lbd}(\cl{S}_{\nu}(\cdot,x)) = 0$ if and only if
$x\tensor 1 \in M_{\chi}(\lbd)$ is contained in
a proper submodule of $M_{\chi}(\lbd)$.
\end{lemm}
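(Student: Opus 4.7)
The plan is to translate both sides of the equivalence into a statement about the cyclic submodule $N := U_{q,\chi}(\g)\cdot(x\tensor 1)$ and then extract it via the triangular decomposition. Since $x\tensor 1$ lies in some proper submodule of $M_{\chi}(\lbd)$ exactly when $N \neq M_{\chi}(\lbd)$, and since $M_{\chi}(\lbd)$ is cyclically generated by $1\tensor 1$ with one-dimensional $\lbd$-weight space $k(1\tensor 1)$, the first reduction is that $N \neq M_{\chi}(\lbd)$ if and only if the weight space $N_{\lbd}$ vanishes.

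To compute $N_{\lbd}$, I would use $u\cdot(x\tensor 1) = (ux)\cdot(1\tensor 1)$ together with the PBW decomposition of $U_{q,\chi}(\g)$ as $U_{q,\chi}(\n^-)\cdot U_q(\ah)\cdot U_{q,\chi}(\n^+)$. Since $1\tensor 1$ is annihilated by the positive-weight part of $U_{q,\chi}(\n^+)$ and $U_q(\ah)$ acts by $\chi_{\lbd}$, only the $U_{q,\chi}(\n^-)\cdot U_q(\ah)$ component of the PBW expansion of $ux$ acts nontrivially, and a weight count shows that the $\lbd$-weight component of $(ux)\cdot(1\tensor 1)$ equals $\chi_{\lbd}(P(ux))(1\tensor 1)$.

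The key reduction is the following: if $u = y h z$ is a PBW term of weight $\nu$ with $y \in U_{q,\chi}(\n^-)_{-\gamma}$, $h \in U_q(\ah)$, $z \in U_{q,\chi}(\n^+)_{\delta}$ and $\delta - \gamma = \nu$, then $P(yhzx) = 0$ unless $\gamma = 0$. Indeed, PBW-expanding $zx = \sum_k y_k' h_k' z_k'$ and commuting $h$ past the $y_k'$ (which only produces scalars), the resulting PBW form of $yhzx$ has each $\n^-$-factor lying in $U_{q,\chi}(\n^-)_{-\gamma - \gamma_k'}$, so a nonzero contribution to $P$ forces $\gamma = \gamma_k' = 0$ and then also $z_k' = 1$. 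Hence effective $u$ lie in $U_q(\ah)\cdot U_{q,\chi}(\n^+)_{\nu}$, and for such $u = hz$ one has $P(hzx) = h\cdot P(zx) = h\cdot\cl{S}_{\nu}(z,x)$.

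Combining these steps, $N_{\lbd}$ is spanned by the scalars $\chi_{\lbd}(h)\chi_{\lbd}(\cl{S}_{\nu}(z,x))(1\tensor 1)$ as $h \in U_q(\ah)$ and $z \in U_{q,\chi}(\n^+)_{\nu}$ vary, which vanishes if and only if $\chi_{\lbd}(\cl{S}_{\nu}(\cdot,x)) = 0$ identically on $U_{q,\chi}(\n^+)_{\nu}$. The only nonroutine step is the PBW bookkeeping in the reduction above; the rest is standard formalism of induced modules.
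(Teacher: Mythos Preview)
Your proof is correct and follows essentially the same approach as the paper: both arguments use the triangular decomposition $U_{q,\chi}(\g) = U_{q,\chi}(\n^-)\,U_q(\ah)\,U_{q,\chi}(\n^+)$ to identify $N_{\lbd}$ with $(U_{q,\chi}(\n^+)\cdot(x\tensor 1))_{\lbd}$, reducing the condition on the submodule to the vanishing of $\chi_{\lbd}(\cl{S}_{\nu}(\cdot,x))$. The paper states the key equality $(U_{q,\chi}(\g)(x\tensor 1))_{\lbd} = (U_{q,\chi}(\n^+)(x\tensor 1))_{\lbd}$ without elaboration, whereas you carry out the PBW bookkeeping explicitly; the content is the same.
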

\begin{proof}
Note that the assumption on $x$ implies
$\chi_{\lbd}(\cl{S}(y,x)) = 0$ for all $y \in U_{q,\chi}(\n^+)$,
which is equivalent to
$(U_{q,\chi}(\n^+)x\tensor 1)_{\lbd} = 0$.
Now the statement follows since
$(U_{q,\chi}(\g)(x\tensor 1))_{\lbd} = (U_{q,\chi}(\n^+)(x\tensor 1))_{\lbd} = 0$.
\end{proof}
\begin{prop} \label{lemm:Shapovalov determinant}
Fix a basis of $U_{q,\chi}(\n^{\pm})_{\pm \nu}$ and
consider the matrix presentation of $\cl{S}_{\nu}$ and
its determinant $\det \cl{S}_{\nu}$. Then this is
a product of an invertible element of $U_{q}(\ah)$
and the following element:
\begin{align*}
 \prod_{\beta \in \roots^+ \cap \roots_0^+}\prod_{m = 1}^{\infty}&
(q_{\beta}^{2(\rho,\beta^{\vee})}\chi_{2\beta}\aK_{2\beta} - q_{\beta}^{2m})^{P(\nu - m\beta)} \\
&\times
 \prod_{\beta \in \roots^+ \setminus \roots_0^+}\prod_{m = 1}^{\infty}
(q_{\beta}^{2(\rho,\beta^{\vee})}\aK_{2\beta} - q_{\beta}^{2m}\chi_{-2\beta})^{P(\nu - m\beta)}.
\end{align*}
\end{prop}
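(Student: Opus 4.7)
The plan is a standard Shapovalov determinant argument using the tools already developed in the paper. By Lemma \ref{lemm:null space of Shapovalov pairing}, a weight $\lambda \in \ah_q^*$ lies in the zero locus of $\det \cl{S}_\nu$ precisely when $M_\chi(\lambda)$ admits a proper submodule whose $(\lambda - \nu)$-weight space is nontrivial. By Proposition \ref{prop:Verma theorem} such submodules arise from $\chi$-shifted strong linkages, and at the codimension-one level they come from single reflections $s_\beta \cdot_\chi \lambda = \lambda - m\beta < \lambda$ with $\beta \in \roots_\chi^+$ and $m \geq 1$. A direct computation of the shifted action, distinguishing whether $\beta \in \roots_0^+$ or $-\beta \in \roots_0^+$ (since $\chi$ is originally only defined on $2Q_0^+$ and must be extended via $\chi_{-2\beta} = \chi_{2\beta}^{-1}$), shows that this linkage is cut out exactly by the factor $q_\beta^{2(\rho, \beta^\vee)} \chi_{2\beta} \aK_{2\beta} - q_\beta^{2m}$ when $\beta \in \roots^+ \cap \roots_0^+$, and by $q_\beta^{2(\rho, \beta^\vee)} \aK_{2\beta} - q_\beta^{2m} \chi_{-2\beta}$ when $\beta \in \roots^+ \setminus \roots_0^+$. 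These are exactly the factors appearing on the right-hand side of the claimed formula.

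For the multiplicity $P(\nu - m\beta)$ along each such hyperplane, the singular vector produced by Proposition \ref{prop:Verma theorem} generates a subVerma module $M_\chi(\lambda - m\beta)$ whose $(\lambda - \nu)$-weight space has dimension $P(\nu - m\beta)$ by the PBW theorem for $U_{q,\chi}(\n^-)$. A first-order deformation argument in the Jantzen style then shows $\det \cl{S}_\nu$ vanishes to order at least $P(\nu - m\beta)$ on that hyperplane, giving divisibility of $\det \cl{S}_\nu$ by the full product. To upgrade this to equality up to a unit of $U_q(\ah) = k[2P]$, I would compare leading terms with respect to the PBW basis $\{\aF_{\bs{i}}^{\Lambda^-}\aE_{\bs{i}}^{\Lambda^+}\}$. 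Ordered lexicographically, the matrix of $\cl{S}_\nu$ becomes upper triangular to leading order and the diagonal entries reduce to rank-one computations inside the $\fsl_2$-subalgebras $U_{q,\chi}(\fsl_2^\beta)$ generated by $\aE_\beta, \aF_\beta, \aK_{2\beta}$, which are computed explicitly by iterating $[\aE_\beta, \aF_\beta]_q = (e_{2\beta} \aK_{2\beta} - 1)/(q_\beta - q_\beta^{-1})$. The dichotomy between $k \leq l$ and $k > l$ in the description of $\aF_{\bs{i}, k}$ produces exactly the placement of $\chi_{2\beta}$ versus $\chi_{-2\beta}$ in the two families of factors.

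The main obstacle is the rank-one reduction: verifying that the PBW-ordered matrix of $\cl{S}_\nu$ factorizes cleanly into rank-one blocks to leading order, and that the braid-operator construction of $\aE_{\bs{i}, k}, \aF_{\bs{i}, k}$ behaves compatibly with this factorization while keeping track of the $\chi$-scalars. This is the same bookkeeping required in the classical quantum Shapovalov computation (cf.\ \cite{MR1359532, MR1198203} in the $U_q(\g)$ setting), but needs to be carried through the $\chi$-dependence. An alternative route that avoids much of the combinatorics would be to reduce to the classical formula for $U_q(\g)$ directly: when all $\chi_{2\alpha}$ with $\alpha \in \simples \cap \roots_0^+$ are invertible, $\chi$ extends to a character on $P$ and Lemma \ref{lemm:comparison with the usual qea} embeds $U_{q,\chi}(\g) \hookrightarrow U_q(\g)$, reducing the formula to the known quantum Shapovalov determinant; the general case then follows by a Zariski-density argument, since both sides are polynomial in the parameters $\chi_{2\alpha}$ and units in $k[\chi_{2\alpha}]$ are constants.
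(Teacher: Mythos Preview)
Your alternative route at the end is essentially what the paper does, and is the far easier path. The paper's execution differs from your sketch in one structural way: rather than fixing a generic $\chi$, reducing to the classical formula, and then invoking Zariski density over the $\chi$-space, the paper works directly at the universal level $U_{q,e}^w(\g)$ over the base $k[w^{-1}(2Q^+)]$. There the embedding of Lemma~\ref{lemm:comparison with the usual qea} into $k[P]\otimes U_q(\g)$ lets one read off divisibility of $\det\cl{S}_\nu \in U_{q,e}^w(\h)$ by the stated product directly from the classical Shapovalov formula for $U_q(\g)$ (\cite[Theorem~5.22]{MR4162277}), and one argues the quotient is a unit $c\aK_\mu$ using that $U_{q,e}^w(\h)^\times = \bigcup_{\mu\in 2P} k^\times \aK_\mu$. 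Specialising $e\mapsto\chi$ then gives the formula for every $\chi$ at once, rather than for a dense set.

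The one ingredient you are missing is how to certify that the specialised determinant does not vanish identically in $U_q(\ah)$ (equivalently, that the unit survives specialisation). Your phrase ``units in $k[\chi_{2\alpha}]$ are constants'' is not quite the right control, since the ambient ring is $U_q(\ah)=k[2P]$ and the putative unit could in principle pick up a factor that dies at $\chi$. The paper handles this by a short representation-theoretic argument: if $\det\cl{S}_\nu=0$ in $U_q(\ah)$, then by Lemma~\ref{lemm:null space of Shapovalov pairing} every $M_\chi(\lambda)$ would fail to be simple, contradicting Corollary~\ref{coro:characterization of simplicity}. (That corollary is stated after the proposition but its proof is independent of it, so there is no circularity.)

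Your first approach---identifying the zero locus via Proposition~\ref{prop:Verma theorem}, Jantzen-style multiplicity bounds, and a PBW leading-term analysis reducing to rank-one $\fsl_2$ computations---is the standard ``from scratch'' Shapovalov argument and would certainly work, but it reproves the classical formula rather than invoking it. The paper takes the shortcut precisely because the comparison lemma makes the classical result available for free.
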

\begin{proof}
Take $w \in W$ so that $w^{-1}(\roots^+) = \roots_0^+$. Note that the paring is well-defined for
$U_{q,e}^w(\g)$

Consider the PBW basis of $U_{q,e}^{w}(\n^{\pm})$. Then Lemma
\ref{lemm:comparison with the usual qea} and
the corresponding statement for $U_q(\g)$ (\cite[Theorem 5.22]{MR4162277}) implies that
$\det \cl{S}_{\nu} \in U_{q,e}^w(\h)$
is divided by the factor above. Moreover
we can see that
the remaining factor is a scalar multiple of $\aK_{\mu}$ for some
$\mu \in 2P$ since
$U_{q,e}^w(\h)^{\times} = \cup_{\mu \in 2P}k^{\times}\aK_{\mu}$.
Since $\det \cl{S}_{\nu} \in U_{q}(\ah)$ for $\chi \in \Ch_k 2Q_0^+$
is the evaluation of $\det \cl{S}_{\mu} \in U_{q,e}^w(\h)$ by $\chi$,
it suffices to show that $\det \cl{S}_{\nu} \neq 0$.

Assume $\det \cl{S}_{\nu} = 0$. Then $M_{\chi}(\lbd)$
is not simple for all $\lbd \in \ah_q^*$ by Lemma \ref{lemm:null space of Shapovalov pairing}.
This contradicts to the simplicity of $M_{\chi}(\lbd)$
for some $\lbd$ (Corollary \ref{coro:characterization of simplicity}).
\end{proof}

We say that $\Lambda \subset \ah_q^*$ is \emph{$\chi$-strongly regular} when
either of $q^{(\Lambda + \rho,2\beta)}\chi_{2\beta} \cap q_{\beta}^{2\Z_{\ge 0}} = \emptyset $ or $q^{(\Lambda + \rho,2\beta)}\chi_{2\beta} \cap q_{\beta}^{2\Z_{\le 0}} = \emptyset$ holds for all $\beta \in \roots_0^+$.

For a $U_{q,\chi}(\g)$-module $M$, we define $M^{\n^+}$
as follows:
\begin{align*}
 M^{\n^+} := \{m \in M \mid \aE_{\bs{i},\alpha}m = 0 \text{ for all }\alpha \in \roots^+\}.
\end{align*}

\begin{prop} \label{prop:strong regularity}
Let $\lbd \in \ah_q^*$ be a weight and $V$ be a finite dimensional representation of $U_q(\g)$.
If $\lbd + \wt V$  is $\chi$-strongly regular,
the canonical map $(V\tensor M_{\chi}(\lbd))^{\n^+} \longrightarrow V\tensor M_{\chi}(\lbd)_{\lbd} \cong V$
is an isomorphism.
\end{prop}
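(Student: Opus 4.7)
The plan is to reduce the statement to the splitting of a Verma-type filtration on $V \tensor M_\chi(\lambda)$.

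The first step is to set up the filtration. Using the left $U_q(\g)$-coaction on $U_{q,\chi}(\g)$ from Lemma \ref{lemm:left coaction} and the tensor identity for coideal subalgebras, I would identify
\[V \tensor M_\chi(\lambda) \;\cong\; U_{q,\chi}(\g) \tensor_{U_{q,\chi}(\frb)} (V \tensor k_\lambda)\]
as $U_{q,\chi}(\g)$-modules, where $V \tensor k_\lambda$ is a $U_{q,\chi}(\frb)$-module via the restricted coaction (checking on the generators $\aE_i$, the action on $V \tensor k_\lambda$ is just $E_i$ on the $V$-factor since $\aE_i \cdot 1_\lambda = 0$). Choosing a weight basis $v_1,\dots,v_N$ of $V$ with weights $\mu_1,\dots,\mu_N$ ordered so that $\mu_i > \mu_j$ in $P$ implies $i<j$, each $V^{\leq i} := \aspan\{v_1,\dots,v_i\}$ is $U_q(\frb^+)$-stable, hence $V^{\leq i} \tensor k_\lambda$ is a $U_{q,\chi}(\frb)$-submodule. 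Inducing up produces a filtration $0 = M_0 \subset M_1 \subset \cdots \subset M_N = V \tensor M_\chi(\lambda)$ whose successive quotients are isomorphic to $M_\chi(\lambda+\mu_i)$. Taking $\n^+$-invariants through the associated long exact sequence of $H^*(\n^+, -)$ already yields the upper bound $\dim (V \tensor M_\chi(\lambda))^{\n^+} \leq N = \dim V$.

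The second and main step is surjectivity of the map in question. For each $v \in V_\mu$ I would construct an $\n^+$-invariant $w = v \tensor 1_\lambda + \sum_{\nu > 0} w_\nu$ with $w_\nu \in V_{\mu+\nu} \tensor M_\chi(\lambda)_{\lambda-\nu}$, by inductively cancelling $\aE_\alpha w$ one weight at a time. The cancellation at each depth reduces to inverting a Shapovalov-type operator, whose determinant — by Proposition \ref{lemm:Shapovalov determinant} — factors into terms of the form $q^{(\lambda + \mu + \rho, 2\beta)}\chi_{2\beta} - q_\beta^{2m}$ for $\beta \in \roots^+$, $m \in \Z_{\neq 0}$, and $\mu \in \wt V$ appearing in the recursion. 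The strong regularity hypothesis on $\Lambda = \lambda + \wt V$ is precisely the statement that for each $\beta$ the quantity $q^{(\lambda + \mu + \rho, 2\beta)}\chi_{2\beta}$ uniformly avoids either $q_\beta^{2\Z_{\geq 0}}$ or $q_\beta^{2\Z_{\leq 0}}$ as $\mu$ ranges over $\wt V$, so the corresponding factors never vanish in the recursion. The resulting $w$ projects to $v$ under the canonical map to $V \tensor M_\chi(\lambda)_\lambda \cong V$.

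Combining the upper bound from step one with surjectivity from step two, the canonical map is a surjection from a space of dimension at most $\dim V$ onto $V$, and is therefore an isomorphism.

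The main obstacle is step two: rigorously matching the combinatorial strong regularity hypothesis on $\Lambda$ with analytic non-vanishing at every stage of the recursion. Concretely, one must track which weights $\lambda + \mu + \nu'$ with $\mu+\nu' \in \wt V$ appear as arguments of the Shapovalov factors, and verify that the uniform sidedness provided by the hypothesis — rather than a mere pointwise condition on $\lambda$ alone — is what propagates through the recursion and keeps every factor non-zero. This is exactly where the set-theoretic formulation of $\chi$-strong regularity on $\Lambda$ enters essentially, as opposed to weaker single-weight non-degeneracy.
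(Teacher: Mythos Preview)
Your approach is essentially the paper's, with one cosmetic swap: the paper argues that the standard filtration actually \emph{splits} under the hypothesis (giving $\dim (V\tensor M_\chi(\lambda))^{\n^+} = \dim V$ exactly) and then proves the canonical projection is \emph{injective} via the Shapovalov form, whereas you extract only the upper bound from the filtration and prove \emph{surjectivity}. Either pairing works, and the recursive step is identical in both --- once the relevant Shapovalov determinants are nonzero, the highest-weight equation determines the $v_\Lambda$ uniquely from $v_0$, which gives injectivity and surjectivity simultaneously.

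One point to sharpen in your step two: the determinant you must control is $\chi_\lambda(\det\cl{S}_\nu)$, and by Proposition~\ref{lemm:Shapovalov determinant} its factors sit at $\lambda$, namely $q_\beta^{2(\lambda+\rho,\beta^\vee)}\chi_{2\beta} - q_\beta^{2m}$ for $m\ge 1$ with $\nu - m\beta \in Q^+$ --- there is no $\mu\in\wt V$ inside the factor. The weights of $V$ enter only through the constraint that $v_\Lambda \in V_{\mu+\nu}$ vanishes unless $\mu+\nu\in\wt V$, which bounds the $\nu$ you need. Translating the sidedness hypothesis on $q^{(\lambda+\wt V+\rho,2\beta)}\chi_{2\beta}$ into nonvanishing of these factors at $\lambda$ for all such $\nu$ is not immediate: the paper handles the nontrivial case by producing, for each relevant $m$, a weight $\nu_m\in\wt V$ with $(\nu_m,\beta^\vee)=(\mu+m\beta,\beta^\vee)$, then applying the hypothesis at $\nu_m$ and at $s_\beta(\nu_m)\in\wt V$ (using $W$-invariance of $\wt V$) to trap $q^{(\lambda+\rho,2\beta)}\chi_{2\beta}$ away from $q_\beta^{2m}$. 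This is precisely the obstacle you flagged, and it is the only place where the set-wise formulation of $\chi$-strong regularity is genuinely needed.
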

\begin{proof}
By the usual discussion, we have a filtration $(M_k)_{k = 0}^{\dim V}$
of $V\tensor M_{\chi}(\lbd)$ such that $M_0 = 0, M_{\dim V} = V\tensor M_{\chi}(\lbd)$ and $M_{i + 1}/M_i \cong M_{\chi}(\mu_i + \lbd)$ for some
$\mu_i \in \wt{V}$. Then our assumption implies that
each $M_i$ has a complement submodule in $M_{i + 1}$, in particular
we have an isomorphism $V\tensor M_{\chi}(\lbd) \cong \dqind (V\tensor k_{\lbd})$, which implies $\dim (V\tensor M_{\chi}(\lbd))^{\n^+} = \dim V$.

Now take a highest weight vector in $(V\tensor M_{\chi}(\lbd))_{\mu + \lbd}$, where $\mu \in \wt{V}$, presented as follows:
\begin{align*}
\sum_{\Lambda} v_{\Lambda}\tensor \aF^{\Lambda}\tensor 1.
\end{align*}
To prove the statement, it suffices to show that $v_0 = 0$
implies $v_{\Lambda} = 0$ for all $\Lambda$.
Fix $\nu \in Q^+$ and take $y \in U_{q,\chi}(\n^+)_{\nu}$. Then
\begin{align*}
 \Delta(y) = K_{\nu}\tensor y + \sum_{i = 1}^m y_{1,m}\tensor y_{2,m}
\end{align*}
with $y_{1,m} \in U_{q,\chi}(\frb^+)_{\nu - \nu_m}$ and $y_{2,m} \in U_{q,\chi}(\n^+)_{\nu_m}$, where $\nu_m \in Q^+$ such that $\nu_m < \nu$.
If $\nu \neq 0$, we have
\begin{align*}
 0 &= \Delta(y)\sum_{\Lambda} v_{\Lambda}\tensor \aF^{\Lambda}\tensor 1\\
&= \sum_{\Lambda}K_{\nu}v_{\Lambda}\tensor y\aF^{\Lambda}\tensor 1 + 
\sum_{i = 1}^m \sum_{\Lambda} y_{1,m}v_{\Lambda}\tensor y_{2,m}\aF^{\Lambda}\tensor 1.
\end{align*}
Looking at the terms in $V\tensor M_{\chi}(\lbd)_{\lbd}$, we obtain
\begin{align*}
 \sum_{\Lambda\cdot\alpha = \nu} \chi_{\lbd}(\cl{S}_{\nu}(y,\aF^{\Lambda}))K_{\nu}v_{\Lambda} = -\sum_{i = 1}^m \sum_{\Lambda\cdot\alpha = \nu_m}\chi_{\lambda}(\cl{S}_{\nu_m}(y_{2,m},\aF^{\Lambda}))y_{1,m}v_{\Lambda}.
\end{align*}
Hence, if we see that $\chi_{\lbd}(\det \cl{S}_{\nu}) \neq 0$, we can conclude $v_{\Lambda} = 0$ when $\Lambda\cdot\alpha = \nu$
from $v_{\Lambda} = 0$ when $\Lambda\cdot\alpha < \nu$. By Lemma \ref{lemm:Shapovalov determinant}, $\chi_{\lbd}(\det \cl{S}_{\nu})$ is a non-zero scalar multiple of
\begin{align*}
  \prod_{\beta \in \roots^+ \cap \roots_0^+}\prod_{m = 1}^{\infty}&
(q_{\beta}^{2(\lbd + \rho,\beta^{\vee})}\chi_{2\beta} - q_{\beta}^{2m})^{P(\nu - m\beta)} \\
&\times
 \prod_{\beta \in \roots^+ \setminus \roots_0^+}\prod_{m = 1}^{\infty}
(q_{\beta}^{2(\lbd + \rho,\beta^{\vee})} - q_{\beta}^{2m}\chi_{-2\beta})^{P(\nu - m\beta)}.
\end{align*}
Fix $\beta \in \roots^+\cap \roots_0^+$ and take $m > 0$ so that $\nu > m\beta$. If $q^{(\wt{V} + \lbd + \rho,2\beta)}\chi_{2\beta} \cap q_{\beta}^{2\Z_{\ge 0}} = \emptyset$, we can see directly that the $\beta$-factor is non-zero.
We assume that $q^{(\wt{V} + \lbd + \rho,2\beta)}\chi_{2\beta} \cap q_{\beta}^{2\Z_{\le 0}} = \emptyset$. If $\mu + \nu$ is not in $\wt{V}$,
there is nothing to prove since $v_{\Lambda} \in V_{\mu + \nu} = \{0\}$.
Hence we assume that $\mu + \nu \in \wt{V}$. Then
there is $\nu_m \in \wt{V}$ such that $(\nu_m,2\beta^{\vee}) = (m\beta + \mu,2\beta^{\vee}) = 4m + (\mu,2\beta^{\vee})$. Since $s_{\beta}(\nu_m) \in s_{\beta}(\wt{V}) = \wt{V}$,
our assumption implies $q_{\beta}^{-4m}q_{\beta}^{(-\mu + \lbd + \rho,2\beta^{\vee})}\chi_{2\beta} \not\in q_{\beta}^{2\Z_{\le 0}}$. On the other hand, we have $q_{\beta}^{(\mu + \lbd + \rho,2\beta^{\vee})}\chi_{2\beta} \not\in q_{\beta}^{2\Z_{\le 0}}$. Hence we can conclude
$q_{\beta}^{-2m}q_{\beta}^{(\lbd + \rho,2\beta^{\vee})}\chi_{2\beta} \not\in q_{\beta}^{2\Z_{\le 0}}$. This implies $\chi_{\lbd}(\det \cl{S}_{\nu}) \neq 0$.
The case of $\beta \in \roots^+\setminus\roots_0^+$ can be shown
by the same argument.
\end{proof}
\begin{rema}
As a consequence of this proposition, we have a well-defined linear
map $v_0\longrightarrow v_{\Lambda}$ for all $\Lambda$. Moreover
the proof above implies that this linear map, parametrized by $\chi$, is
algebraic with respect to $\chi$.
\end{rema}

\section{Actions of $\fssorb$-type and $\fflag$-type}
\label{sect:actions}
In this section we introduce the main subject of this paper and
investigate their general properties not only in the case of type A.

\subsection{Definition and examples}

\begin{defn} \label{defn:action of ssorb type}
A \emph{semisimple action of $\fssorb$-type} is a pair of a semisimple left $\rep^{\fin}_q G$-module category $\cl{M}$ and an identification $\phi\colon\Z_+(\cl{M})\xlongrightarrow{\cong}\Z_+(H)$ as $\Z_+(G)$-modules.
Semisimple actions $(\cl{M},\phi)$ and $(\cl{N},\psi)$ of $\fssorb$-type are said to be equivalent if there is an equivalence $\map{F}{\cl{M}}{\cl{N}}$ of left $\rep_q^{\fin}G$-module categories which
makes the following diagram commutative:
\begin{align*}
 \xymatrix{
\Z_+(\cl{M}) \ar[rr]^-{F_*} \ar[rd]_-{\phi} && \Z_+(\cl{N}) \ar[ld]^-{\psi} \\
& \Z_+(H), &
}
\end{align*}
where $\map{F_*}{\Z_+(\cl{M})}{\Z_+(\cl{N})}$
is the induced isomorphism.
\end{defn}

\begin{rema} \label{rema:algebraic duality}
The semisimplicity arises from our original motivation,
which lies in study of quantum groups from
the
operator-algebraic perspective. As stated in Remark \ref{rema:operator algebraic duality},
a \emph{connected} semisimple left $\rep_q^{\fin} K$-module
category with a pointed irreducible object
corresponds to an ergodic action of $K_q$ on a unital \cstar-algebra.
In the algebraic setting, as stated in \cite[Theorem 4.6]{MR3847209},
the semisimplicity is replaced by a condition on certain projectivity
of the pointed object. In light of this duality in the algebraic
setting, \emph{actions of $\fssorb$-type}
should be defined and studied.
\end{rema}
\begin{rema}
By the duality theorem \cite[Theorem 4.6]{MR3847209}, a semisimple action
of $\fssorb$-type can be presented as a concrete category.
Let $\cl{M}$ be a semisimple action of $\fssorb$-type.
We define $\cl{O}_{\cl{M}}(\fssorb)$, which has
a natural structure of left $U_q(\g)$-module algebra,
as follows:
\begin{align*}
 \cl{O}_{\cl{M}}(\fssorb) &:= \int^{\rep_q G} \cl{M}(\tend\tensor X_0,X_0)\tensor \tend \\
&\cong \bigoplus_{\mu \in P^+} \cl{M}(L_{\mu}\tensor X_0, X_0)\tensor L_{\mu},
\end{align*}
where $X_{\lbd}$ is an irreducible object corresponding to
$k_{\lbd}$ under the identification $\Z_+(\cl{M})\cong \Z_+(H)$.
Note that $\cl{O}_{\cl{M}}(\fssorb)$ has the same
spectral decomposition with $\cl{O}(\fssorb)$.

Now the category of finitely generated right $\cl{O}_{\cl{M}}(\fssorb)$-modules with left semisimple actions of $U_q(\g)$
is denoted by $G_q\text{-}\mathrm{mod}_{\cl{O}_{\cl{M}}(\fssorb)}$.
Then we have the equivalence $\cl{M}\cong G_q\text{-}\mathrm{mod}_{\cl{O}_{\cl{M}}(\fssorb)}$ of left $\rep_q^{\fin}G$-module categories,
given by
\begin{align*}
 X \longmapsto \int^{\rep_q G} \cl{M}(\tend\tensor X_0,X)\tensor \tend \cong \bigoplus_{\mu \in P^+} \cl{M}(L_{\mu}\tensor X_0, X)\tensor L_{\mu}.
\end{align*}
\end{rema}

\begin{exam} \label{exam:std quantization}
The most fundamental example
of a semimsimple action of $\fssorb$-type
is the representation category $\rep_q^{\fin} H$ with
the natural action $(\pi,\rho)\longmapsto \pi|_{U_q(\h)}\tensor \rho$ and the usual idendification
$\Z_+(\rep_q^{\fin} H)\cong \Z_+(H)$. It is not difficult
to see that $\cl{O}_{\rep_q^{\fin} H}(\fssorb)$
is the quantum coordinate algebra $\cl{O}_q(\fssorb)$.
\end{exam}

We obtain a large family of semisimple actions of $\fssorb$-type
from deformed quantum enveloping algebras.

\begin{prop} \label{prop:semisimple action arising from the category O}
 For any $\chi \in X_{\roots}^{\circ}(k)$, the category
$\intO{\chi}$ is a semisimple action of $\fssorb$-type,
equipped with the identification $\Z_+(\intO{\chi})\cong \Z_+(H)$
induced from the $\chi$-shifted induction functor $\dqind$.
\end{prop}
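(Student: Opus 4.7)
The plan is to combine Theorem \ref{thrm:characterization of semisimplicity} with a standard filtration argument for tensoring a finite-dimensional representation against a $\chi$-shifted Verma module. Theorem \ref{thrm:characterization of semisimplicity} already gives that $\intO{\chi}$ is semisimple as a $k$-linear category and that $\dqind\colon\rep_q^{\fin} H\xrightarrow{\cong}\intO{\chi}$ is an equivalence of $k$-linear categories. Lemma \ref{lemm:left coaction} supplies $U_{q,\chi}(\g)$ with a left $U_q(\g)$-comodule algebra structure, endowing $\catO{\chi}$ with a left $\rep_q^{\fin}G$-module structure; this structure restricts to $\intO{\chi}$ because tensoring with a finite-dimensional representation whose weights lie in $P$ preserves the defining conditions. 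What remains is therefore to verify that $\dqind$ induces an isomorphism $\Z_+(H)\cong\Z_+(\intO{\chi})$ of $\Z_+(G)$-modules, not merely of abelian groups.

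The key step is to establish, for every $V\in\rep_q^{\fin} G$ and every $\lbd\in P$, an isomorphism
\[
V\tensor M_{\chi}(\lbd)\;\cong\;\bigoplus_{\mu\in P} M_{\chi}(\mu+\lbd)^{\oplus\dim V_{\mu}}
\]
in $\intO{\chi}$. Granting this and passing to classes in $\Z_+(\intO{\chi})$, one obtains $[V]\cdot\dqind([k_{\lbd}])=\dqind([V|_{U_q(\h)}\tensor k_{\lbd}])$, which is exactly the compatibility required for $\dqind$ to intertwine the $\Z_+(G)$-actions.

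To produce the isomorphism I would reuse the filtration argument sketched in the proof of Proposition \ref{prop:strong regularity}: after choosing an ordered weight basis $v_1,\dots,v_{\dim V}$ of $V$ with weights arranged so that $\wt v_i\not<\wt v_j$ whenever $i<j$, the $U_{q,\chi}(\g)$-submodules $M_k\subset V\tensor M_{\chi}(\lbd)$ generated by $\{v_i\tensor 1:i\le k\}$ form a filtration whose successive quotients are isomorphic to the $\chi$-shifted Verma modules $M_{\chi}(\wt v_k+\lbd)$. Since $\chi\in X_{\roots}^{\circ}(k)$, the relation $\chi_{2\alpha}\notin q_{\alpha}^{2\Z}$ combined with $q^{(\mu+\lbd+\rho,2\alpha)}\in q_{\alpha}^{2\Z}$ for $\mu+\lbd\in P$ implies, via Lemma \ref{lemm:characterization of dominancy}, that every such weight is simultaneously $\chi$-dominant and $\chi$-antidominant. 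Corollary \ref{coro:characterization of simplicity} then gives simplicity of each subquotient and Proposition \ref{prop:dominancy implies projectivity} gives projectivity, so the filtration splits and yields the displayed decomposition.

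The only step requiring real work is the construction of the filtration together with the identification of its subquotients; this follows a familiar pattern and is already implicit at the beginning of the proof of Proposition \ref{prop:strong regularity}, so no essentially new ingredient beyond what has been established in Section \ref{sect:catO} is needed.
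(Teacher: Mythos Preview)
Your proposal is correct and follows essentially the same route as the paper: invoke Theorem \ref{thrm:characterization of semisimplicity} for semisimplicity, use the comodule algebra structure for the $\rep_q^{\fin}G$-module structure, and then run the standard filtration argument on $V\tensor M_{\chi}(\lbd)$ to obtain the $\Z_+(G)$-module isomorphism. The paper is simply terser, writing ``this follows from the usual argument on a standard filtration on $V\tensor \dqind{W}$ since $\intO{\chi}$ is semisimple'' and deferring to \cite[Subsection 3.6]{MR2428237}, whereas you spell out explicitly why the subquotients are simple and projective so that the filtration splits.
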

\begin{proof}
 By Theorem \ref{thrm:characterization of semisimplicity},
$\intO{\chi}$ is semisimple. By the left $U_q(\g)$-comodule
structure on $U_{q,\chi}(\g)$, it has a canonical structure of
a left $\rep_q^{\fin} G$-module category.

To see that the map $\map{(\dqind)_*}{\rep_q^{\fin} H}{\intO{\chi}}$ is
an isomorphism of $\Z_+(G)$-modules, it suffices to
see $\dqind{(V\tensor W)}\cong V\tensor \dqind{W}$
for all objects. This follows from the usual argument
on a standard filtration on $V\tensor \dqind{W}$ since
$\intO{\chi}$ is semisimple. See \cite[Subsection 3.6]{MR2428237} for detail.
\end{proof}

Recall that there is a canonical embedding $X_{\fssorb}(k) \longrightarrow X_{\roots}(k)$.

\begin{defn}
For $\phi \in X_{\fssorb}^{\circ}(k)$, the category
$\intO{\chi_{\phi}}$ is denoted by $\intO{\phi}$.
\end{defn}

By Lemma \ref{lemm:identification of toric and moduli}, $\intO{\phi}$ is semisimple if and only if $\phi \in X_{\fssorb}^{\circ}(k)$. Moreover, $\intO{\phi}$ defines a semisimple action of $\fssorb$-type in this case.

\begin{rema}[See {\cite[Subsection 4.4]{MR4837934}} for detail]
\label{rema:def quant aspect}
Even in the formal setting, the same construction
of left $\rep_h^{\fin} G$-module categories works
after modifying the definition of deformed qunatum enveloping
algebras slightly. In this case
each $\phi \in X_{\fssorb}(k)$ defines the semisimple category.
Then the corresponding algebra, denoted by
$\cl{O}_{h,\phi}(\fssorb)$, provides a deformation quantization
of $(\fssorb, \pi_{\phi})$
equipped with the action of $U_h(\g)$.
\end{rema}

We also introduce another approach to semisimple actions of $\fssorb$-type.

\begin{defn} \label{defn:associator}
 An \emph{associator} on $\rep_q^{\fin} H$ is an natural automorphism
$\Phi$ on the tensor product functor $\map{\tend\tensor\tend\tensor \tend}{\rep_q^{\fin} G\times \rep_q^{\fin} G\times \rep_q^{\fin} H}{\rep_q^{\fin} H}$
satisfying the following conditions:
\begin{enumerate}
 \item $\Phi_{V,\mathbf{1},W} = \id$, $\Phi_{\mathbf{1},V,W} = \id$.
 \item $\Phi_{V_1\tensor V_2,V_3,W}\circ \Phi_{V_1,V_2,V_3\tensor W} = \Phi_{V_1,V_2\tensor V_3,W}\circ (\id_{V_1}\tensor \Phi_{V_2,V_3,W})$.
\end{enumerate}

Equivalently, we say that $\Phi$ is an associator when
$\rep_{q,\Phi}^{\fin} H := (\rep_q^{\fin} H,\tensor, \Phi)$ is
a left $\rep_q^{\fin} G$-module category.
\end{defn}

Note that $\rep_{q,\Phi}^{\fin} H$ is canonically a semisimple action
of $\fssorb$-type. We say that two associators $\Phi$ and $\Psi$
are equivalent when $\rep_{q,\Phi}^{\fin} H \cong \rep_{q,\Psi}^{\fin} H$
as semisimple actions of $\fssorb$-type. In terms of natural transformations, this is equivalent to the existence of
an natural automorphism $b$ on
$\map{\tend\tensor\tend}{\rep_q^{\fin} G\times \rep_q^{\fin} H}{\rep_q^{\fin} H}$ satisfying
\begin{align*}
 \Phi_{V,V',W}b_{V,V'\tensor W}(\id\tensor b_{V',W})
= b_{V\tensor V',W}\Psi_{V,V',W}.
\end{align*}

\begin{lemm} \label{lemm:reduction to associator}
Any semisimple action of $\fssorb$-type is equivalent to $\rep_{q,\Phi}^{\fin} H$ for some associator $\Phi$.
\end{lemm}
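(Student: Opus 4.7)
The plan is to transport the module-category structure on $\cl{M}$ back to $\rep_q^{\fin} H$ along a $k$-linear equivalence realizing the identification $\phi$. First, using $\phi$, for each $\mu \in P$ pick a simple object $X_\mu \in \cl{M}$ representing $\phi^{-1}([k_\mu])$, chosen so that $X_0$ is the unit, and define a $k$-linear functor
\[
F\colon \rep_q^{\fin} H \longrightarrow \cl{M}, \qquad F(W) = \bigoplus_{\mu \in P} \rep_q^{\fin} H(k_\mu, W) \otimes_k X_\mu.
\]
Since both categories are semisimple with absolutely simple simple objects (built into the $\Z_+$-ring conventions of \cite[Definition 3.1.1]{MR3242743}), $F$ is a $k$-linear equivalence sending $k_\mu$ to $X_\mu$.

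Next I will produce a natural isomorphism of bifunctors
\[
\eta\colon F(V \cdot W) \xrightarrow{\;\sim\;} V \otimes F(W), \qquad V \in \rep_q^{\fin} G,\ W \in \rep_q^{\fin} H,
\]
where $V \cdot W := V|_{U_q(\h)} \otimes W$ denotes the intrinsic tensor product on $\rep_q^{\fin} H$. Pointwise, the two sides have the same class in $\Z_+(\cl{M})$ by the $\Z_+(G)$-equivariance of $\phi$, so they are isomorphic in the semisimple category $\cl{M}$. The bifunctorial upgrade is obtained by fixing isomorphisms on pairs $(L_\lambda, k_\mu)$ of simples and extending through isotypic decompositions; Schur's lemma together with $\End(X_\nu) = k$ forces naturality in all morphisms, and one can normalize $\eta_{\mathbf{1}, W} = \id$. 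With $\eta$ in hand, I define $\Phi$ by conjugating the associator $\alpha^{\cl{M}}$ of $\cl{M}$:
\[
F(\Phi_{V_1, V_2, W}) := \eta^{-1}_{V_1, V_2 \cdot W}\circ(\id_{V_1} \otimes \eta^{-1}_{V_2, W})\circ \alpha^{\cl{M}}_{V_1, V_2, F(W)}\circ \eta_{V_1 \otimes V_2, W}.
\]
The pentagon and unit axioms for $\Phi$ reduce to those for $\alpha^{\cl{M}}$ by a routine chase using only the naturality of $\eta$ and the normalization; hence $\Phi$ is an associator and $F$, equipped with $\eta$, promotes to an equivalence $(\rep_{q,\Phi}^{\fin} H, \id) \xrightarrow{\sim} (\cl{M}, \phi)$ of semisimple actions of $\fssorb$-type.

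The main obstacle I expect is the bifunctorial upgrade of $\eta$: while objectwise existence is immediate from the $\Z_+(G)$-module identification, showing that $\eta$ can be chosen natural jointly in both arguments rests on the general principle that, in a semisimple $k$-linear category with absolutely simple generators, any two $k$-bilinear functors agreeing on Grothendieck classes are naturally isomorphic. The cleanest way to implement this, which I plan to follow, is to produce $\eta$ from canonical isotypic decompositions on both sides and a coherent choice of scalars on each isotypic component, rather than making ad hoc choices object by object. Every other step — full faithfulness of $F$, the pentagon for $\Phi$, and the unit normalization — is formal once $\eta$ is in place.
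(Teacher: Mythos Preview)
Your proposal is correct and follows essentially the same approach as the paper's proof: both transport the module-category structure on $\cl{M}$ back to $\rep_q^{\fin} H$ along a $k$-linear equivalence $F$ realizing the given identification on $\Z_+$, using a natural isomorphism $\eta\colon F(V\otimes W)\cong V\otimes F(W)$ (which the paper denotes $f$) to define $\Phi$ by conjugation. The only differences are cosmetic --- you construct $F$ explicitly via isotypic components and retain the associator $\alpha^{\cl{M}}$ of $\cl{M}$, whereas the paper asserts the existence of $F$ and $f$ directly and suppresses $\alpha^{\cl{M}}$; you also flag the bifunctorial upgrade of $\eta$ as the main step, which the paper states without comment but which indeed follows from semisimplicity and Schur's lemma exactly as you outline.
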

\begin{proof}
Let $\cl{M}$ be a semisimple action of $\fssorb$-type and fix
a $k$-linear equivalence $\map{F}{\rep_q^{\fin} H}{\cl{M}}$
compatible with the identification $\Z_+(H)\cong \Z_+(\cl{M})$.
Since this identification preserves the action of $\Z_+(G)$,
we have a natural automorphism
$\map{f}{F(\tend\tensor \tend)}{\tend\tensor F(\tend)}$. Then the fully faithfulness of $F$ implies that there
is an associator $\Phi$ whose image under $F$ coincides with
the following composition of morphisms:
\begin{align*}
 F(V\tensor V'\tensor W)
\xlongrightarrow{f_{V,V'\tensor W}} V\tensor F(V'\tensor W)
&\xlongrightarrow{\id_V\tensor f_{V',W}} V\tensor V'\tensor F(W)\\
&\xlongrightarrow{f_{V\tensor V',\tensor W}^{-1}} F(V\tensor V'\tensor W).
\end{align*}
Now we can see that $\rep_{q,\Phi}^{\fin}H$ is equivalent to $\cl{M}$
as a semisimple action of $\fssorb$-type.
\end{proof}

\subsection{Twist of actions}

Since the formal character of a finite dimensional representation
of $G$ is invariant under the action of $W$,
we have a canonical action of $W$
on the $\Z_+(G)$-module $\Z_+(H)$.
Then it is natural to consider the following operation
on semisimple actions of $\fssorb$-type.

\begin{defn}
Let $\cl{M}$ be a semisimple action of $\fssorb$-type.
For any $w \in W$, we define a semisimple action $w_*\cl{M}$
of $\fssorb$-type as $\cl{M}$ equipped with the twisted
identification $\Z_+(\cl{M})\cong \Z_+(H)\overset{w}{\cong}\Z_+(H)$
\end{defn}

For the semisimple actions arising from deformed quantum enveloping algebras, we have the following comparison theorem.

\begin{prop} \label{prop:twisted catO}
 For any $\chi \in X_{\roots}^{\circ}(k)$ and $w \in W$,
we have $w_*\intO{\chi}\cong \intO{w\cdot \chi}$.
\end{prop}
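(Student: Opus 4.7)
The strategy is to construct an equivalence of left $\rep_q^{\fin} G$-module categories $F_w : \intO{\chi} \to \intO{w\cdot\chi}$ that sends each $\chi$-shifted Verma module $M_\chi(\lambda)$ to $M_{w\cdot\chi}(w\lambda)$ for $\lambda \in P$. Since both $\intO{\chi}$ and $\intO{w\cdot\chi}$ are identified with $\rep_q^{\fin} H$ via the respective shifted induction functors $\dqind{}$ (sending $k_\nu$ to the corresponding Verma module), such an $F_w$ induces the map $[k_\lambda] \mapsto [k_{w\lambda}]$ on $\Z_+(H)$, which is precisely the $W$-action appearing in the definition of $w_*$. This yields $w_*\intO{\chi} \cong \intO{w\cdot\chi}$.

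At the algebra level, $F_w$ arises as pullback along an isomorphism $\tau_w : U_{q,\chi}(\g) \xrightarrow{\cong} U_{q,w\cdot\chi}(\g)$ induced by the Lusztig automorphism $t_w$. By construction $t_w$ carries $U_{q,e}^{w_0'}(\g)$ to $U_{q,e}^{w_0' w^{-1}}(\g)$, and the definition of the shifted action on $X_{\roots}(k)$ is engineered precisely so that the factor $q^{(w\rho-\rho,\cdot)}$ appearing in $(w\cdot\chi)_{2\beta} = q^{(w\rho-\rho,2\beta)}\chi_{w^{-1}(2\beta)}$ is absorbed upon taking quotients, once one compensates by a $\rho$-shift on the Cartan part (concretely, a rescaling of the generators $\aK_\mu$). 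A direct computation on the PBW generators, using the formulas for $t_w$ exhibited in Section~\ref{sect:catO}, shows that $\tau_w^{-1}$ swaps positive root vectors indexed by $\roots^+\triangle w(\roots^+)$ with their negative counterparts, and consequently carries $M_{w\cdot\chi}(w\lambda)$ to $M_\chi(\lambda)$, with the $\rho$-shift built into the definition of the shifted action. Semisimplicity of both sides (Theorem~\ref{thrm:characterization of semisimplicity}) guarantees that $\tau_w^*$ restricts to an equivalence on the integrable parts.

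The main technical step is equipping $F_w$ with the structure of a $\rep_q^{\fin} G$-module functor, since $\cl{T}_w$ is not a coalgebra map and hence $\tau_w$ fails to be a comodule algebra isomorphism. As recalled in the proof of Lemma~\ref{lemm:left coaction}, we have $\Delta \circ \cl{T}_w = A_w \cdot (\cl{T}_w \otimes \cl{T}_w)\Delta(\cdot)\,A_w^{-1}$ with $A_w$ a well-defined element of $\cl{U}_q(\g\times\g)$. The element $A_w$ acts on any tensor product $V\otimes V'$ of objects in $\rep_q^{\fin} G$ as a linear automorphism built from Lusztig's braid operators $T_w^V \in \End_k V$, and this operator provides the required natural isomorphism $c_{V, M}: F_w(V\otimes M) \xrightarrow{\cong} V\otimes F_w(M)$. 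The hexagon coherence condition for $c$ reduces to the standard cocycle identity $A_{w_1 w_2} = A_{w_1}\cdot(\cl{T}_{w_1}\otimes\cl{T}_{w_1})(A_{w_2})$, valid whenever $\ell(w_1 w_2) = \ell(w_1)+\ell(w_2)$.

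The cleanest organization is to first handle the case of a simple reflection $w = s_i$, where $A_{s_i}$ is a single quantum exponential and the coherence checks are explicit, and then to assemble a general $w$ along a reduced expression using the length-additive composition $F_{w_1 w_2} \cong F_{w_1} \circ F_{w_2}$; independence of the reduced expression follows from the braid relations. The principal obstacle is the careful bookkeeping of the $\rho$-shift so that $\tau_w$ genuinely realizes the $\chi$-shifted action on $X_{\roots}(k)$ at the level of algebras; once this matching is correctly arranged, promoting $\tau_w^*$ to a module-functor equivalence is essentially formal from Lusztig's theory of the braid group action on finite-dimensional representations.
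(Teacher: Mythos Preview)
Your outline has a genuine gap at the step where you claim the pullback $\tau_w^*$ carries $M_{w\cdot\chi}(w\lambda)$ to $M_\chi(\lambda)$. The isomorphism $t_w$ does \emph{not} preserve the triangular decomposition: as the formulae preceding Lemma~\ref{lemm:left coaction} show, $t_w$ sends some of the $E_{\bs{i},k}$ to (scalar multiples of) $F_{\bs{j},k'}K_{\alpha}$ and vice versa, precisely for the roots in $\roots^+\setminus w^{-1}(\roots^+)$. Since the Borel subalgebra $U_{q,\chi}(\frb)$ and hence the notion of highest weight in $\catO{\chi}$ are defined relative to the \emph{fixed} positive system $\roots^+$ regardless of where $\chi$ lives, the pullback $\tau_w^*(M_{w\cdot\chi}(w\lambda))$ is a module that is locally finite only for the \emph{twisted} nilpotent $\tau_w^{-1}(U_{q,w\cdot\chi}(\n^+))$, not for $U_{q,\chi}(\n^+)$. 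It therefore need not lie in $\catO{\chi}$ at all, let alone coincide with $M_\chi(\lambda)$. In the classical setting this is exactly the obstruction that forces one to use Arkhipov's twisting functors (involving localization and a non-trivial derived construction) rather than a naive pullback; your proposal does not supply the analogous mechanism here.

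The paper circumvents this issue entirely: rather than seeking an algebra isomorphism, it transports both sides to the associator picture on $\rep_q^{\fin} H$ via the induction functors, and then compares the associators $\Phi(\chi)$ and $\Phi(s_\eps\cdot\chi)$ directly. The intertwining data are the explicit linear maps $S_\eps(x)$ on each $V\in\rep_q^{\fin}G$ (not coming from any algebra map), and the key identity relating the two associators is first established on the Zariski-dense locus of integral $\chi$ with suitable dominance---where genuine embeddings $M_{s_\eps\cdot\chi}(\cdot)\hookrightarrow M_\chi(\cdot)$ of Verma modules exist---via an explicit highest-weight computation (Lemma~\ref{lemm:comparison of f} and the $q$-binomial identity following it), and then extended by algebraicity in $\chi$. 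This is what replaces the missing ``$\tau_w^*$ preserves category $\cl{O}$'' step in your sketch.
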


The proof of this proposition is based on comparison of
associators. By Proposition \ref{prop:strong regularity},
we have an isomorphism
$\map{f_{V,W}}{\dqind{(V\tensor W)}}{V\tensor \dqind{W}}$
characterized as follows 
when $\wt{V} + \wt{W}$ is $\chi$-strongly regular:
\begin{align*}
 f_{V,W}(1\tensor (v\tensor w)) = v\tensor (1\tensor w) + \cdots.
\end{align*}

Let $V, V'$ be objects of $\rep_q^{\fin} G$
and $W$ be a semisimple module of $U_q(\ah)$. If
$\wt{V'} + \wt{W}$ and $\wt{V} + \wt{V'} + \wt{W}$ is $\chi$-strongly regular, we have the isomorphisms $f_{V',W}, f_{V,V'\tensor W}, f_{V\tensor V',W}$. These define the invertible $U_q(\h)$-endomorphism
$\Phi_{V,V',W}(\chi)$ on $V\tensor V'\tensor W$, whose image under
$\dqind$ is
$f_{V\tensor V',W}^{-1}\circ(\id\tensor f_{V',W})\circ f_{V,V'\tensor W}$.
If $\chi$ is an element of $X_{\roots}^{\circ}(k)$,
this defines an associator $\Phi(\chi)$ such that
$\rep_{q,\Phi}^{\fin}H$ is equivalent to $\intO{\chi}$.

Now the desired statement, which is equivalent to Proposition \ref{prop:twisted catO}, is the existence of a family of linear isomorphisms
$\{\map{b_{V,W}}{V\tensor W}{V\tensor W}\}_{V,W}$ sending
$V_{\mu}\tensor W_{\lbd}$ to $V_{w(\mu)} \tensor W_{\lbd}$
and satisfying
\begin{align*}
 \Phi_{V,V',W}(w\cdot \chi)b_{V,V'\tensor W}(\id\tensor b_{V',W}) = b_{V\tensor V',W}\Phi_{V,V',W}(\chi).
\end{align*}

Let $L_k$ be the irreducible representation of $U_q(\fsl_2)$ of dimension $k + 1$. There is a basis $(v_{l})_{l = 0}^k$ satisfying
\begin{align*}
 F^{(r)}v_l = \qbin{r + l}{r}{q}v_{l + r},\quad
 E^{(r)}v_l = \qbin{k + r - l}{r}{q}v_{l - r}.
\end{align*}
Then, for $x \in \P^1(k)$, a linear map $\map{S(x)}{L_k}{L_k}$
is defined as
\begin{align*}
 S(x)v_l = (-1)^lq^{k - l}\qbin{1 + k - l;x}{l}{q}\qbin{0;x}{l}{q}^{-1}v_{k - l}.
\end{align*}
For a general representation of $U_q(\fsl_2)$, we define $S(x)$
by using an irreducible decomposition. We also define
$S_{\eps}(x)$ on $V \in \rep_q^{\fin} G$ by regarding
it as a representation of $U_q(\frl_{\eps})$,
where $U_q(\frl_{\eps})$ is the subalgebra of $U_q(\g)$
generated by $E_{\eps}, F_{\eps}$ and $U_q(\h)$.

In the following lemma, the generators of
$U_{q,\chi}(\g)$ for $\chi \in P$
is induced from $U_{q,e}^+(\g)$.
Note that $M_{\chi}(W)$ has a canonical structure of $U_q(\g)$-module
when $\wt{W}$ is contained in $P$.
We also fix a reduced expression $s_{\bs{i}}$
for the longest element $w_0$, but we omit the subscript $\bs{i}$.
For example we substitute $\aE_{k}$ for $\aE_{\bs{i},k}$.

\begin{lemm} \label{lemm:comparison of f}
Assume that $\chi$ is an integral weight.
Fix $\eps \in \simples$ and $\lbd \in P$.
If $\lbd + \wt V$ is $\chi$-strongly regular and
$q_{\eps}^{(\lbd + \wt V + \rho,2\eps^{\vee})}\chi_{2\eps} \in q_{\eps}^{2\Z_{>0}}$,
the following diagram of $U_q(\g)$-modules is commutative:
\begin{align*}
 \xymatrix{
M_{s_{\eps}\cdot\chi}(s_{\eps *}(V\tensor k_{\lbd})) \ar[r] \ar[d] &V\tensor M_{s_{\eps}\cdot\chi}(s_{\eps *}k_{\lbd}) \ar[d] \\
M_{\chi}(V\tensor k_{\lbd}) \ar[r] & V\tensor M_{\chi}(k_{\lbd}).
}
\end{align*}
where
\begin{itemize}
 \item The left vertical map is defined by $1\tensor (v\tensor 1)\longmapsto \aF_{\eps}^{((\lbd + \wt v + \chi,\eps^{\vee}) + 1)}\tensor (v\tensor 1)$.
 \item The right vertical map is defined by $v\tensor (1\tensor 1) \longmapsto v\tensor \aF^{((\lbd + \chi,\eps^{\vee}) + 1)}\tensor 1$.
 \item The top horizontal map is
$f_{V,(s_{\eps})_*k_{\lbd}}\circ(S_{\eps}(q_{\eps}^{(\lbd,2\eps^{\vee})}\chi_{2\eps})\tensor \id)$.
 \item The bottom horizontal map is $f_{V,k_{\lbd}}$.
\end{itemize}
\end{lemm}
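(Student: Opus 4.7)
All four arrows in the square are $U_q(\g)$-module homomorphisms, so the plan is to check the identity on the generating set $\{1 \tensor (v \tensor 1) \mid v \in V \text{ a weight vector}\}$ of the top-left module, and then compute explicitly in $U_q(\fsl_2)$.

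First I would verify well-definedness of the left vertical arrow. Since $\chi$ is integral, Lemma \ref{lemm:comparison with the usual induction} identifies $\dqind$ with ordinary induction shifted by $\chi$. The standard filtration on $M_\chi(V \tensor k_\lbd)$ with subquotients $M_\chi(k_{\lbd + \wt v})$, combined with the hypothesis $q_\eps^{(\lbd + \wt V + \rho, 2\eps^\vee)}\chi_{2\eps} \in q_\eps^{2\Z_{>0}}$ (which gives strong $s_\eps$-linkage for every weight occurring in $V$) and Proposition \ref{prop:Verma theorem}, produces singular vectors of the form $\aF_\eps^{(\lbd + \wt v + \chi, \eps^\vee) + 1} \tensor (v \tensor 1)$ inside $M_\chi(V \tensor k_\lbd)$. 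These assemble into the prescribed map, and the right vertical arrow is treated identically.

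Next I would localize the computation in $U_q(\frl_\eps)$. The operator $S_\eps(x)$, the relevant powers of $\aF_\eps$, and the part of the coproduct that contributes to the $\eps$-component all live in this rank-one subalgebra, so after decomposing $V$ as a $U_q(\frl_\eps)$-module and fixing an isotypic component $L_k$, the identity reduces to a computation in the weight basis $(v_l)_{l = 0}^{k}$. Going down-then-right, one applies the divided-power coproduct $\Delta(\aF_\eps^{(n)})$ to $f_{V, k_\lbd}(1 \tensor (v \tensor 1))$, whose expansion in the $\aF$-PBW basis is uniquely pinned down by the highest-weight characterization of Proposition \ref{prop:strong regularity}. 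Going right-then-down, one applies the closed form of $S_\eps(x)$ with $x = q_\eps^{(\lbd, 2\eps^\vee)}\chi_{2\eps}$ and then multiplies the second tensor factor by $\aF_\eps^{(\lbd + \chi, \eps^\vee) + 1}$.

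The main obstacle will be matching the resulting $q$-binomial expressions. The ratios $[1 + k - l - j; x]_{q_\eps}/[-j; x]_{q_\eps}$ defining $S_\eps(x)$ must coincide with the ratios of $q$-integers produced by the $\aF_\eps$-recursion on a Verma generator of shifted highest weight. Substituting $x = q_\eps^{(\lbd, 2\eps^\vee)}\chi_{2\eps}$ rewrites $[m; x]_{q_\eps}$ as, up to a common scalar, $[m + (\lbd + \chi, \eps^\vee)]_{q_\eps}$, reducing the identity to a standard quantum $\fsl_2$-calculation; the sign and $q$-power $(-1)^l q_\eps^{k - l}$ are exactly those picked up by commuting $\aF_\eps$ past the $K_\eps^{-1}$ factors arising from $\Delta(\aF_\eps^{(n)})$. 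Careful bookkeeping of these coefficients is where most of the work lies.
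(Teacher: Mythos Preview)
Your plan matches the paper's approach closely: reduce to $U_q(\frl_\eps)$, expand the highest-weight vector $\sum_n v_n\tensor\aF_\eps^{(n)}\tensor 1$ via the recursion forced by $\aE_\eps$-annihilation, apply the divided-power coproduct of $\aF_\eps^{(m)}$ with $m=(\lbd+\wt v+\chi,\eps^\vee)+1$, and compare the coefficient of $v'\tensor\aF_\eps^{((\lbd+\chi,\eps^\vee)+1)}\tensor 1$ against the closed form of $S_\eps(x)$. The paper organizes this slightly differently---it first observes abstractly that \emph{some} linear map $S\colon V\to V$ makes the square commute (using Proposition~\ref{prop:strong regularity} to pin down the top arrow by its leading term), and then computes $S$ explicitly---but the underlying computation is the same as yours.

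The one place where you are too optimistic is the final matching. After the substitution $x=q_\eps^{2(\lbd+\chi,\eps^\vee)}$ and the bookkeeping you describe, the equality $S=S_\eps(x)$ does \emph{not} collapse to a routine $\fsl_2$ fact: it becomes the identity
\[
\sum_{n\ge 0}(-1)^n\,\frac{[m-l]_q}{[m-n]_q}\qbin{k}{l-n}{q}\qbin{k+n}{k}{q}
=(-1)^l\qbin{m+k}{l}{q}\qbin{m}{l}{q}^{-1},
\]
which the paper isolates as a separate lemma and proves by induction on $l$. This sum arises because the coproduct of $\aF_\eps^{(m)}$ distributes over all terms $v_n\tensor\aF_\eps^{(n)}$, not just one, so the ``careful bookkeeping'' is a genuine $q$-hypergeometric identity rather than a term-by-term match. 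Plan to prove this identity explicitly; the rest of your outline is sound.
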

\begin{proof}
It is not difficult to see that there is a linear map $\map{S}{V}{V}$
which makes the diagram above commutative after replacing the top horizontal homomorphism by the homomorphism induced by $1\tensor (v\tensor 1)\longmapsto Sv\tensor (1\tensor 1)$. Hence it suffices to show that
$S = S_{\eps}(x)$.

Take a weight vector $v \in V$ and consider the image of $1\tensor (v\tensor 1)$ under the top morphism, which is of the form $S(v)\tensor (1\tensor 1) + \cdots$. Then we can see that $\wt S(v) = s_{\eps}(\wt v)$.
Moreover the image of this element under the right vertical map is
of the form:
\begin{align*}
 S(v)\tensor \aF_{\eps}^{((\lbd + \chi,\eps^{\vee}) + 1)}\tensor 1 + \cdots.
\end{align*}
On the other hand, the image of $1\tensor (v\tensor 1)$ under the left vertical map is
$\aF_{\eps}^{((\lbd + \wt v + \chi,\eps^{\vee}) + 1)}\tensor (v\tensor 1)$,
whose image under the bottom horizontal homomorphism is
\begin{align*}
 \aF_{\eps}^{((\lbd + \wt v + \chi,\eps^{\vee}) + 1)}\left(\sum_{\Lambda}v_{\Lambda}\tensor \aF^{(\Lambda)}\tensor 1\right),
\end{align*}
where $\sum_{\Lambda}v_{\Lambda}\tensor \aF^{(\Lambda)}\tensor 1$
is the highest weight vector with $v_0 = v$. To determine $S(v)$,
it suffices to look at the term of the form $v'\tensor \aF_{\eps}^{((\lbd + \chi,\eps^{\vee}) + 1)}\tensor 1$. Since we have
\begin{align*}
 \Delta(\aF_{\eps}^{(m)}) = \sum_{i = 0}^m q_{\eps}^{-i(m - i)}(F_{\eps}K_{\eps})^{(i)}K_{\eps}^{m - i}\tensor \aF_{\eps}^{(m - i)},
\end{align*}
we only have to consider $\Lambda$ such that $\aF^{(\Lambda)} = \aF_{\eps}^{(n)}$ for some $n$. For such $\Lambda$, $v_{\Lambda}$ is denoted by $v_n$.
Then we can see that
\begin{align*}
 \aE_{\eps}\sum_{n = 0}^{\infty}v_n \tensor \aF_{\eps}^{(n)} \tensor 1 = 0,
\end{align*}
which is equivalent to
\begin{align*}
 E_{\eps}v_n + q_{\eps}^{-2n}\frac{q_{\eps}^{-n}q_{\eps}^{2(\lbd,\eps^{\vee})}\chi_{2\eps} - q_{\eps}^n}{q_{\eps} - q_{\eps}^{-1}}K_{\eps}v_{n + 1} = 0
\end{align*}
for all $n \ge 0$. Hence we have $\wt v_n = \wt v + n\eps$ and
\begin{align*}
 v_n = (-1)^nq_{\eps}^{-2n}q_{\eps}^{-n(\wt v,\eps^{\vee})}q_{\eps}^{-n(\lbd + \chi,\eps^{\vee})} \qbin{(\lbd + \chi,\eps^{\vee})}{n}{q_{\eps}}^{-1}E_{\eps}^{(n)}v.
\end{align*}
Note that this is well-defined since $E_{\eps}^{(\lbd + \chi,\eps^{\vee})}v = 0$.

Set $m = (\lbd + \chi + \wt v,\eps^{\vee}) + 1$.
By the observation above, we can see that
\begin{align*}
 S(v)
&= \sum_{\substack{0 \le n \\ 0 \le i \le m \\ i - n = (\wt v, \eps^{\vee})}} q_{\eps}^{-i(m - i)}\qbin{(\lbd + \chi, \eps^{\vee}) + 1}{n}{q_{\eps}}(F_{\eps}K_{\eps})^{(i)}K_{\eps}^{m - i}v_n \\
&= \sum_{n = \max\{0,-(\wt v, \eps^{\vee})\}}^{\infty} (-1)^n q_{\eps}^{-n(n + 1) - n(\wt v, \eps^{\vee})} \\
&\hspace{40mm}\times \frac{[(\lbd + \chi, \eps^{\vee}) + 1]_{q_{\eps}}}{[(\lbd + \chi, \eps^{\vee}) + 1 - n]_{q_{\eps}}}
(F_{\eps}K_{\eps})^{(n + (\wt v,\eps^{\vee}))}E_{\eps}^{(n)}v.
\end{align*}
Now we assume $v$ is contained in a irreducible $U_q(\fsl_{\eps})$-subspace, whose dimension is $k$. Fix an isomorphism between this subspace and
$L_k$ so that $v$ corresponds to $v_l$ for some $l$. Then
\begin{align*}
 S(v) = q_{\eps}^{(\wt v,\eps^{\vee})}\sum_{n = 0}^{\infty} (-1)^n \frac{[(\lbd + \chi, \eps^{\vee}) + 1]_{q_{\eps}}}{[(\lbd + \chi, \eps^{\vee}) + 1 - n]_{q_{\eps}}}\qbin{k - l}{l - n}{q_{\eps}}\qbin{k - l + n}{k - l}{q_{\eps}}v',
\end{align*}
where $v'$ corresponds to $v_{k - l}$ under the identification above.
Now the statement follows from the lemma below, where the symbols
are replaced as $k \longrightarrow k + l, l \longrightarrow l, (\lbd + \chi, \eps^{\vee}) + 1\longrightarrow m$.
\end{proof}

\begin{lemm}
Let $k,l$ be non-negative integers. Then the following identity holds
for all $m \in \Z$:
\begin{align} \label{eq:fraction decomposition}
 \sum_{n = 0}^{\infty} (-1)^n\frac{[m - l]_q}{[m - n]_q}\qbin{k}{l - n}{q}\qbin{k + n}{k}{q} = (-1)^l \qbin{m + k}{l}{q}\qbin{m}{l}{q}^{-1}.
\end{align}
\end{lemm}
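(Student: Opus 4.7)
The plan is to prove (\ref{eq:fraction decomposition}) by partial fraction decomposition: both sides, viewed as rational functions of $q^{m}$, have simple poles at the same locations, so it suffices to match residues.

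First I would divide through by $[m-l]_q$ to reduce (\ref{eq:fraction decomposition}) to the equivalent identity
\begin{align*}
\sum_{n=0}^{l}\frac{(-1)^n}{[m-n]_q}\qbin{k}{l-n}{q}\qbin{k+n}{k}{q}
=(-1)^{l}\,\frac{\prod_{i=0}^{l-1}[m+k-i]_q}{\prod_{i=0}^{l}[m-i]_q}.
\end{align*}
Since $q$ is not a root of unity, each factor $[m-i]_q$ is an element of $k(q^m)$ vanishing to first order at $m=i$, so the right-hand side is a proper rational function of $q^{m}$: its denominator has one more linear factor than its numerator, hence it vanishes as $m\to\pm\infty$ and decomposes uniquely as $\sum_{n=0}^{l}A_n/[m-n]_q$. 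The left-hand side is already written in this form, so the identity reduces to computing $A_n$.

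To identify $A_n$, I would multiply the right-hand side by $[m-n]_q$ and set $m=n$. The numerator becomes $\prod_{i=0}^{l-1}[n+k-i]_q=[n+k]_q!/[n+k-l]_q!$, and the surviving denominator factors, after using $[n-i]_q=-[i-n]_q$ for $i>n$, give $(-1)^{l-n}[n]_q!\,[l-n]_q!$. Combining with the overall sign $(-1)^l$ yields
\begin{align*}
A_n=(-1)^n\,\frac{[n+k]_q!}{[k]_q!\,[n]_q!}\cdot\frac{[k]_q!}{[l-n]_q!\,[n+k-l]_q!}=(-1)^n\qbin{k+n}{k}{q}\qbin{k}{l-n}{q},
\end{align*}
exactly matching the coefficient on the left. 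Multiplying back by $[m-l]_q$ then recovers (\ref{eq:fraction decomposition}).

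There is no essential obstacle beyond this bookkeeping; the one point that deserves a line of care is the degenerate range $l-n>k$, where the left-hand coefficient vanishes because $\qbin{k}{l-n}{q}=0$, while on the right the residue vanishes for the matching reason that $n+k<l$ forces one of the factors $[n+k-i]_q$ in the numerator (at $i=n+k$) to equal $[0]_q=0$. Once the sign conventions and the two $q$-binomial regroupings are set up, matching residues at each of the $l+1$ poles is immediate and completes the proof.
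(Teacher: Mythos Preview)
Your argument is correct and takes a genuinely different route from the paper. The paper proves the identity by induction on $l$: it rewrites the product $\qbin{k}{l-n}{q}\qbin{k+n}{k}{q}$ via the identity
\[
\frac{[l]_q}{[l-n]_q}\,[k+1-(l+n)]_q=[k+1-l]_q+[n]_q\,\frac{[k+1]_q}{[l-n]_q}
\]
as a sum of two terms, each of which matches the induction hypothesis with $l$ replaced by $l-1$ (and $k$ either unchanged or replaced by $k+1$). Your partial-fraction approach is more direct and explains \emph{why} the identity holds: the left-hand side is precisely the pole expansion of the right-hand side. The induction, by contrast, is entirely self-contained and avoids any discussion of rational functions in $q^m$.

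One point of phrasing deserves tightening. The factors $[m-i]_q$ are not linear in $q^m$; they are of the form $c_1 q^m + c_2 q^{-m}$, so ``one more linear factor'' is not literally what controls the decay. The clean way to justify your uniqueness step is to set $u=q^m$ and observe that both sides, after dividing by $[m-l]_q$, are \emph{odd} rational functions of $u$ (each $[m-i]_q$ changes sign under $u\mapsto -u$), vanish at $u=0$ and $u=\infty$, and have simple poles only at $u=\pm q^n$ for $0\le n\le l$. Any such function is a unique linear combination of the $\frac{1}{[m-n]_q}$, so it suffices to match the coefficients by multiplying by $[m-n]_q$ and setting $m=n$, exactly as you do. Your residue computation and the treatment of the degenerate range $l-n>k$ are both correct.
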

\begin{proof}
By induction on $l$. If $l = 0$, we can see that both sides are $1$.

Next we assume that the statement holds for $l - 1$. Noting that
\begin{align*}
 \frac{[l]_q}{[l - n]_q}[k + 1 - (l + n)]_q = [k + 1 - l]_q + [n]_q\frac{[k + 1]_q}{[l - n]_q},
\end{align*}
we can see that
\begin{align*}
 \qbin{k}{l - n}{q}\qbin{k + n}{k}{q}
&= \frac{[k + 1 - l]_q}{[l]_q}\qbin{k}{(l - 1) - n}{q}\qbin{k + n}{k}{q}\\
&+ \frac{[k + 1]_q}{[l]_q}\qbin{k + 1}{(l - 1) - (n - 1)}{q}\qbin{k + 1 + (n - 1)}{k + 1}{q}.
\end{align*}
Then the induction hypthesis implies that the LHS of (\ref{eq:fraction decomposition})
is equal to
\begin{align*}
&(-1)^{l - 1}\frac{1}{[l]_q}\left(\frac{[m - l]_q}{[m - l + 1]_q}\qbin{m + k}{l - 1}{q}\qbin{m}{l - 1}{q}^{-1}[k + 1 - l]_q\right.\\
&\hspace{50mm}\left.\quad - \qbin{m + k}{l - 1}{q}\qbin{m - 1}{l - 1}{q}^{-1}[k + 1]_q\right)\\
&=(-1)^{l - 1}\frac{[m - l]_q}{[l]_q}\qbin{m + k}{l}{q}\qbin{m}{l}{q}^{-1}\\
&\hspace{30mm}\times\frac{1}{[m + k - l + 1]_q}\left([k + 1 - l]_q - \frac{[m]_q}{[m - l]_q}[k + 1]_q\right) \\
&=(-1)^l\qbin{m + k}{l}{q}\qbin{m}{l}{q}^{-1}.
\end{align*}
\end{proof}

Now a linear map
$\map{S_{\eps,V,W}(\chi)}{V\tensor W}{V\tensor W}$
is defined by $S_{\eps,V,k_{\lbd}}(\chi) = S_{\eps}(q_{\eps}^{(\lbd, 2\eps^{\vee})}\chi_{2\eps})\tensor \id$.
Then we obtain the following comparison result.

\begin{lemm}
Let $V$ and $V'$ be objects of $\rep_q^{\fin} G$ and $\lbd$ be an integral
weight. If $\wt V' + \lbd$ and $\wt V + \wt V' + \lbd$ are $\chi$-strongly regular, we have
\begin{align*}
 \Phi_{V,V',k_{\lbd}}(\chi) = S_{\eps,V\tensor V',k_{\lbd}}(\chi)^{-1}\Phi_{V,V',s_{\eps *}k_{\lbd}}(s_{\eps}\cdot\chi)(\id\tensor S_{\eps,V',k_{\lbd}})(\chi)S_{\eps,V, V'\tensor k_{\lbd}}(\chi).
\end{align*}
\end{lemm}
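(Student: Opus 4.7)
The statement is an equality in $\End_{U_q(\h)}(V\tensor V'\tensor k_{\lbd})$, which I would verify by passing through the $\chi$-shifted induction functor: both $U_q(\h)$-endomorphisms are determined by the $U_q(\g)$-endomorphisms of $M_\chi(V\tensor V'\tensor k_{\lbd})$ they induce, and those in turn by their image on the generator $1\tensor(v\tensor v'\tensor 1)$. By the very definition of $\Phi$, the image of the left-hand side unwinds to the composition $(f^\chi_{V\tensor V',k_{\lbd}})^{-1}\circ(\id_V\tensor f^\chi_{V',k_{\lbd}})\circ f^\chi_{V,V'\tensor k_{\lbd}}$; my strategy is to rewrite each of the three $f$-factors at $\chi$ in terms of the corresponding $f$-map at $s_\eps\cdot\chi$ by invoking Lemma~\ref{lemm:comparison of f}.

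\textbf{Main step.} Applied to the three pairs $(V,V'\tensor k_{\lbd})$, $(V',k_{\lbd})$ (with $\id_V$ tensored on the outside), and $(V\tensor V',k_{\lbd})$, Lemma~\ref{lemm:comparison of f} produces three commuting squares in which the bottom row is $f^\chi$, the top row is $f^{s_\eps\cdot\chi}$ precomposed with the appropriate $S_\eps$-twist, and the vertical arrows are embeddings of shifted Verma modules given by multiplication by divided powers of $\aF_\eps$. Composing these three squares in the order prescribed by the definition of $\Phi$, the vertical embeddings attached to the intermediate Verma modules $M_\chi(V'\tensor k_{\lbd})$ and $M_\chi(k_{\lbd})$ telescope away, while those attached to $M_\chi(V\tensor V'\tensor k_{\lbd})$ appear on the outer boundary. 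After cancelling the outer embedding, which is injective, the resulting composition of $U_q(\h)$-morphisms on $V\tensor V'\tensor k_{\lbd}$ is exactly the right-hand side of the claim.

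\textbf{Main obstacle.} The principal technical wrinkle is that Lemma~\ref{lemm:comparison of f} is stated only for $W=k_{\lbd}$, whereas the first application above requires $W=V'\tensor k_{\lbd}$. To handle this I would decompose $V'\tensor k_{\lbd}$ into its $U_q(\h)$-weight summands and invoke the lemma weight-by-weight, which forces a case analysis depending on whether $q_\eps^{(\mu+\rho,2\eps^\vee)}\chi_{2\eps}$ lies in $q_\eps^{2\Z_{>0}}$ for each weight $\mu$ that arises, and requires checking the $\chi$-strong regularity hypothesis (preserved under the $\chi$-shifted Weyl action, hence also valid at $s_\eps\cdot\chi$) in each case. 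Once this regularity bookkeeping is settled, the telescoping of the embeddings and the identification of the $S_\eps$-twists proceeds by direct calculation.
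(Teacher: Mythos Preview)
Your diagram-chase via Lemma~\ref{lemm:comparison of f} is the right underlying idea, but there is a genuine gap: that lemma carries two hypotheses on $\chi$ that are not implied by the hypotheses of the present statement. First, it assumes $\chi$ is an \emph{integral} weight (so that $M_\chi(\,\cdot\,)$ carries a $U_q(\g)$-structure via Lemma~\ref{lemm:comparison with the usual qea} and the vertical embeddings by $\aF_\eps^{(m)}$ make sense). Second, it assumes the positivity condition $q_{\eps}^{(\lbd + \wt V + \rho,2\eps^{\vee})}\chi_{2\eps} \subset q_{\eps}^{2\Z_{>0}}$, which is strictly stronger than $\chi$-strong regularity: strong regularity only says the set avoids one half of $q_\eps^{2\Z}$, not that it lies entirely in the positive half. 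Your proposed ``case analysis depending on whether $q_\eps^{(\mu+\rho,2\eps^\vee)}\chi_{2\eps}$ lies in $q_\eps^{2\Z_{>0}}$'' does not repair this, because in the case where the condition fails you have no commuting square to invoke at all, and for non-integral $\chi$ the vertical Verma embeddings do not even exist.

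The paper closes this gap by a different mechanism: both sides of the identity are \emph{algebraic} in $\chi$ (this is the content of the remark following Proposition~\ref{prop:strong regularity}, which says the coefficients $v_0 \mapsto v_\Lambda$ defining $f$ vary algebraically, and $S_\eps(x)$ is visibly algebraic in $x$). Hence it suffices to verify the identity on a Zariski-dense subset of the parameter space, namely those integral $\chi \in P$ for which the positivity hypothesis of Lemma~\ref{lemm:comparison of f} holds. On that subset your three-square composition and telescoping argument goes through essentially as you describe. So the missing ingredient in your proposal is the algebraicity/density reduction, without which the direct invocation of Lemma~\ref{lemm:comparison of f} is illegitimate for general $\chi$.
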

\begin{proof}
Since both sides are algebraic on $\chi$, it suffices to show the identity
on a Zariski dense subset. This follows from Lemma \ref{lemm:comparison
of f} on the set of all $\chi \in P$ with
$q_{\eps}^{(\lbd + \wt V' + \rho,2\eps^{\vee})}\chi_{2\eps} \in q_{\eps}^{2\Z_{> 0}}$.
\end{proof}
This completes the proof of Proposition \ref{prop:twisted catO}.

\subsection{Induction of actions}

In this subsection we investigate the
structure of $\catO{\chi}$ when $\chi$ degenerates on $\roots\setminus\roots_S$ for some $S$, i.e., $\chi_{2\alpha} = 0$
for $\alpha \in \roots_0^+\setminus \roots_S$.

For the character $0^+ \in \Ch_k 2Q^+$ defined as $0^+_{2\alpha} = 0$ for all $\alpha \in \roots^+$, T. Nakashima shows that the category
$\cl{O}(B)$, which is a slight variation of $\catO{0^+}$,
is semisimple (\cite[Proposition 2.4]{MR1289324}).
We generalize their result. At first we consider the deformed quantum
enveloping algebra and its category $\cl{O}$ for a Levi subalgebra $\frl_S$
of $\g$, where $S$ is a subset of $\simples$. More concretely,
we consider a deformed quantum enveloping algebra $U_{q,\chi}(\frl_S)$
for $\chi \in X_{\roots_S}(k)$ and define the category $\catO{\chi}^S$
as a full subcategory of $\mod{U_{q,\chi}(\frl_S)}$.
Then this has a natural structure of a left $\rep_q^{\fin} L_S$-module
category. By considering the restriction functor $\rep_q^{\fin} G\longrightarrow \rep_q^{\fin} L_S$, we also have a natural structure
of a left $\rep_q^{\fin} G$-module category on $\catO{\chi}^S$.

Let $\roots_{S,0}^+$ be a positive system of $\roots_S$ and
$\chi$ be a character on $2Q_{S,0}^+$. Then
$\roots_0^+ := \roots_{S,0}^+ \cup \roots^+\setminus\roots_S^+$
is a positive system of $\roots$. We extend $\chi$ to
a character on $2Q_0^+$ by
$\chi_{2\alpha} = 0$ for $\alpha \in \roots^+\setminus\roots_S^+$.

Let $w \in W$ be the unique element satisfying $w(\roots_0^+) = \roots^+$ and fix a reduced expression $s_{\bs{i}}$ of the longest element $w_0$ such that $\alpha_{k}^{\bs{i}} \in \roots^+\setminus\roots_S^+$ for $1 \le k \le N - N_S$ and $\alpha_{k}^{\bs{i}} \in \roots\setminus\roots_0^+$ for $N - \ell(w) < k \le N$.
Moreover we have another reduced expression $s_{\bs{j}}$
such that
\begin{align*}
 w^S(\alpha_{k}^{\bs{j}}) =
\begin{cases}
 \alpha_{k + N - N_S}^{\bs{i}} & (1 \le k \le N_S) \\
 -\alpha_{k - N_S}^{\bs{i}} & (N_S < k \le N),
\end{cases}
\end{align*}
where $w^S = w_Sw_0$.
Then $\ell(ww^S) = \ell(w) + \ell(w^S)$ holds. Hence we have $\cl{T}_{ww^S} = \cl{T}_{w}\cl{T}_{w^S}$, which implies that $t_{w^S}$
gives an isomorphism $\map{t_{w^S}}{U_{q,e}^{ww^S}(\g)}{U_{q,e}^w(\g)}$.
Set $\bar{\chi} = (w^S)^{-1}(\chi)$. Note that $\bar{\chi}_{2\alpha} = 0$
for $\alpha \in -\roots^+\setminus\roots_{\bar{S}}^+$,
where $\bar{S} = -w_0(S) = (w^S)^{-1}(S)$.

The isomorphism above induces
an isomorphism $\map{t_{w^S}}{U_{q,\bar{\chi}}(\g)}{U_{q,\chi}(\g)}$. This isomorphism does not preserve the left $U_q(\g)$-coactions, but we can see the following identity:
\begin{align*}
 \Delta(t_{w^S}(x)) = A_{w^S}(\cl{T}_{w^S}\tensor t_{w^S})\Delta(x)A_{w^S}^{-1},
\end{align*}
where
\begin{align*}
 A_{w^S} := \prod_{k = 1}^{N - N_S}\exp_{q_{i_k}}((q_{i_k} - q_{i_k}^{-1})K_{\alpha^{\bs{i}}_k}^{-1}E_{\bs{i},k}\tensor \aF_{\bs{i},k}).
\end{align*}
We define the $\chi$-shifted parabolic induction functor
$\map{\dqpind}{\catOs{\chi}}{\catO{\chi}}$ as
$U_{q,\chi}(\g)\tensor_{U_{q,\chi}(\frp_S)}\tend$,
where $U_{q,\chi}(\frp_S)$ is the parabolic subalgebra. Then
we define $M_{\chi}^S(\lbd)$ as $\dqpind{k_{\lbd}}$.
For a $U_{q,\chi}(\g)$-module $M$, $m \in M$ is said to be a
$\fru_S$-highest weight vector when $\aE_{\bs{i},\alpha}m = 0$
for all $\alpha \in \roots^+\setminus\roots_S^+$.
The set of $\fru_S$-highest weight vectors is denoted by $M^{\fru_S}$.

\begin{lemm} \label{lemm:us highest}
For any $M \in \catOs{\chi}$, we have $(\dqpind{M})^{\fru_S} = 1\tensor M$.
\end{lemm}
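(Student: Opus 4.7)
The plan is to use the PBW decomposition of $\dqpind M$ as a vector space and run a descent on the $\fru_S^-$-degree, exploiting the vanishing $\chi_{2\alpha} = 0$ for $\alpha \in \roots^+ \setminus \roots_S^+$.

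The PBW theorem for $U_{q,\chi}(\g)$ gives a vector-space identification $\dqpind M \cong U_{q,\chi}(\fru_S^-) \tensor M$ with basis $\{\aF^{\Lambda}\tensor m\}$, where $\Lambda$ runs over multi-indices supported in $\{1, \dots, N - N_S\}$ (indexing the quantum root vectors $\aF_{\bs{i}, k}$ of $\fru_S^-$). The module $M$ is regarded as a $U_{q,\chi}(\frp_S)$-module with trivial $\fru_S^+$-action, consistent with the parabolic induction convention. The inclusion $1 \tensor M \subset (\dqpind M)^{\fru_S}$ is then immediate: for $\alpha \in \roots^+ \setminus \roots_S^+$ and $m \in M$, the generator $\aE_{\bs{i}, \alpha}$ lies in $U_{q,\chi}(\frp_S)$, hence $\aE_{\bs{i}, \alpha}(1 \tensor m) = 1 \tensor \aE_{\bs{i}, \alpha} m = 0$.

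For the converse, write $x = \sum_{\Lambda} \aF^{\Lambda} \tensor m_{\Lambda} \in (\dqpind M)^{\fru_S}$ and suppose, for contradiction, that some $m_{\Lambda_*} \ne 0$ with $\Lambda_* \ne 0$; choose $\Lambda_*$ with $\abs{\Lambda_*}$ maximal, breaking ties by an appropriate ordering on the PBW indices, and let $k_0$ be the largest index with $\lambda_{*, k_0} > 0$. Apply $\aE_{\bs{i}, k_0}$, whose root lies in $\roots^+ \setminus \roots_S^+$, and expand $\aE_{\bs{i}, k_0} \aF^{\Lambda_*}$ via the Levendorskii--Soibelman-type straightening in $U_{q,\chi}(\g)$. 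The leading monomial $\aF^{\Lambda_*} \aE_{\bs{i}, k_0}$ dies upon passage to $M$; the immediate subleading contribution is produced by the diagonal commutator $[\aE_{\bs{i}, k_0}, \aF_{\bs{i}, k_0}]_q$ inserted in the $\aF^{\Lambda_* - e_{k_0}}$-slot, which at our degenerate $\chi$ collapses from the usual $(\chi_{2 \alpha_{k_0}^{\bs{i}}} \aK_{2 \alpha_{k_0}^{\bs{i}}} - 1)/(q_{\alpha_{k_0}^{\bs{i}}} - q_{\alpha_{k_0}^{\bs{i}}}^{-1})$ to a non-zero scalar. Cross-commutators $[\aE_{\bs{i}, k_0}, \aF_{\bs{i}, k}]_q$ with $k \ne k_0$ either produce strictly lower $\fru_S^-$-degree contributions, by triangularity of the straightening, or carry factors of $\aE_{\bs{i}, k'}$ with $k' \le N - N_S$ that annihilate $M$, so by the maximality of $\Lambda_*$ none of these can cancel the diagonal term. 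Thus the $\aF^{\Lambda_* - e_{k_0}}$-coefficient of $\aE_{\bs{i}, k_0} x$ is a non-zero scalar multiple of $m_{\Lambda_*}$, contradicting $\aE_{\bs{i}, k_0} x = 0$.

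The main obstacle is making the Levendorskii--Soibelman straightening in the deformed algebra sufficiently explicit. In $U_{q,\chi}(\g)$ every $\aE$--$\aF$ relation among $\fru_S^{\pm}$-root vectors is dressed by $e_{2\gamma}$-factors with $\gamma \in \roots^+ \setminus \roots_S^+$, and one must verify that each potential obstruction to the descent indeed carries at least one such factor and is therefore killed upon specialization at $\chi$. I would handle this bookkeeping by transporting the computation via the isomorphism $t_{w^S} \colon U_{q, \bar{\chi}}(\g) \to U_{q, \chi}(\g)$ introduced above to a setting where the degeneration is concentrated on the negative roots, reducing to the Borel-type case analyzed in \cite{MR1289324}.
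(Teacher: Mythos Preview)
Your overall strategy---PBW expansion plus a descent on the $\fru_S^-$-part---is exactly the paper's, but your tactical choice of index is reversed, and that is what creates the ``main obstacle'' you flag. You pick $k_0$ to be the \emph{largest} index with $\lambda_{*,k_0}>0$ and then try to push $\aE_{\bs{i},k_0}$ leftward through $\aF_{\bs{i},1}^{\lambda_1}\cdots\aF_{\bs{i},k_0-1}^{\lambda_{k_0-1}}$. Those cross-commutators $[\aE_{\bs{i},k_0},\aF_{\bs{i},l}]_q$ with $l<k_0$ are the ones for which no clean relation is available, which is why you are forced to invoke an unspecified tie-breaking order and then the transport through $t_{w^S}$.

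The paper instead applies $\aE_{\bs{i},1}$, then $\aE_{\bs{i},2}$, and so on, using the \emph{smallest} index at each stage. The point is that at the degenerate parameter one has, for $1\le k<l\le N-N_S$, the exact $q$-commutation
\[
\aE_{\bs{i},k}\aF_{\bs{i},l}=q^{(\alpha^{\bs{i}}_k,\alpha^{\bs{i}}_l)}\aF_{\bs{i},l}\aE_{\bs{i},k},
\]
together with
\[
\aE_{\bs{i},k}\aF_{\bs{i},k}^{\,n}-q_{\alpha^{\bs{i}}_k}^{-2n}\aF_{\bs{i},k}^{\,n}\aE_{\bs{i},k}
=-\frac{q_{\alpha^{\bs{i}}_k}^{1-n}[n]_{q_{\alpha^{\bs{i}}_k}}}{q_{\alpha^{\bs{i}}_k}-q_{\alpha^{\bs{i}}_k}^{-1}}\,\aF_{\bs{i},k}^{\,n-1},
\]
both cited from \cite[Proposition~3.4~(iii)]{MR4837934}. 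With the PBW ordering $\aF_{\bs{i}}^{\Lambda}=\aF_{\bs{i},1}^{\lambda_1}\aF_{\bs{i},2}^{\lambda_2}\cdots$, applying $\aE_{\bs{i},1}$ meets $\aF_{\bs{i},1}^{\lambda_1}$ first, produces a nonzero scalar multiple of $\aF_{\bs{i}}^{\Lambda-e_1}$, and then passes through the remaining $\aF_{\bs{i},l}$ (all with $l>1$) up to a scalar, landing on $m_\Lambda$ where it vanishes. So $\aE_{\bs{i},1}x=0$ immediately forces $m_\Lambda=0$ whenever $\lambda_1>0$; then $\aE_{\bs{i},2}$ forces $\lambda_2=0$, and so on. No maximal-degree argument, no tie-breaking, and no detour through $t_{w^S}$ is needed. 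Reversing your index choice from largest to smallest turns your outline into a complete two-line proof.
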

\begin{proof}
Note the following commutation relations, derived from
(\cite[Proposition 3.4 (iii) Eq. (4)]{MR4837934}):
\begin{align}
 \aE_{\bs{i},k}\aF_{\bs{i},l} &= q^{(\alpha^{\bs{i}}_k,\alpha^{\bs{i}}_l)}\aF_{\bs{i},l}\aE_{\bs{i},k}
\quad ( 1\le k < l \le N - N_S), \label{eq:commutation in degenerate case}\\
 \aE_{\bs{i},k}\aF_{\bs{i},k}^n - q_{\alpha^{\bs{i}}_k}^{-2n}\aF_{\bs{i},k}^n\aE_{\bs{i},k} &= -\frac{q_{\alpha^{\bs{i}}_k}^{1 - n}[n]_{q_{\alpha^{\bs{i}}_k}}}{q_{\alpha^{\bs{i}}_k} - q_{\alpha^{\bs{i}}_k}^{-1}}\aF_{\bs{i},k}^{n - 1}. \notag
\end{align}
Let $\tilde{m}$ be a $\fru_S$-highest weight vector
and consider the expansion
$\tilde{m} = \sum_{\Lambda} \aF_{\bs{i}}^{\Lambda}\tensor m_{\Lambda}$.
Applying $\aE_{\bs{i},1}$, we see that $m_{\Lambda} = 0$
if $\Lambda_1 \neq 0$. Then, applying $\aE_{\bs{i},2}$, we
see that $m_{\Lambda} = 0$ if $\Lambda_1 = 0$ and $\Lambda \neq 0$.
Iterating this procedure, we can see that $m_{\Lambda} = 0$
if $\Lambda \neq 0$.
\end{proof}
\begin{lemm}
For $M \in \catOs{\chi}$ and $V \in \rep_q^{\fin} G$, there is a
canonical isomorphism $\dqpind{(V\tensor M)} \cong V\tensor \dqpind{M}$.
\end{lemm}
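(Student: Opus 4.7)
The statement is a standard tensor identity (projection formula) for induction along a coideal subalgebra. The plan is to construct explicit mutually inverse $U_{q,\chi}(\g)$-linear maps, exploiting the left $U_q(\g)$-comodule algebra structure on $U_{q,\chi}(\g)$ provided by Lemma \ref{lemm:left coaction}. Writing the coaction as $\Delta(x) = x_{(-1)} \otimes x_{(0)} \in U_q(\g) \otimes U_{q,\chi}(\g)$, the $\rep_q^{\fin} G$-module category structure on $\catO{\chi}$ is $a \cdot (v \otimes w) = a_{(-1)} v \otimes a_{(0)} w$.

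First, I would check that $U_{q,\chi}(\frp_S)$ is itself a left $U_q(\g)$-coideal subalgebra of $U_{q,\chi}(\g)$ by verifying $\Delta(U_{q,\chi}(\frp_S)) \subset U_q(\g) \otimes U_{q,\chi}(\frp_S)$ on the generators $\aK_\mu$, $\aE_{\bs{i},\alpha}$ for $\alpha \in \roots^+$, and $\aF_{\bs{i},\alpha}$ for $\alpha \in \roots_S^+$, following the strategy of the proof of Lemma \ref{lemm:left coaction}. Consequently, for $V \in \rep_q^{\fin} G$ and $M \in \catOs{\chi}$, the tensor product $V \otimes M$ carries a well-defined $U_{q,\chi}(\frp_S)$-module structure.

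Second, I would define maps $\phi\colon \dqpind(V \otimes M) \to V \otimes \dqpind M$ and $\psi\colon V \otimes \dqpind M \to \dqpind(V \otimes M)$ by
\begin{align*}
\phi(x \otimes (v \otimes m)) &= x_{(-1)} v \otimes (x_{(0)} \otimes m),\\
\psi(v \otimes (x \otimes m)) &= x_{(0)} \otimes (S^{-1}(x_{(-1)}) v \otimes m).
\end{align*}
Well-definedness of $\phi$ on the balanced tensor uses the coideal property: for $y \in U_{q,\chi}(\frp_S)$, both sides of $xy \otimes (v \otimes m) = x \otimes y \cdot (v \otimes m)$ are sent to $x_{(-1)} y_{(-1)} v \otimes (x_{(0)} y_{(0)} \otimes m)$ using multiplicativity of $\Delta$, and the balance is preserved in $\dqpind M$ because $y_{(0)} \in U_{q,\chi}(\frp_S)$. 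The analogous check works for $\psi$.

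Third, I would verify $\psi \circ \phi = \id$ and $\phi \circ \psi = \id$ using coassociativity of the comodule action together with the Hopf algebra antipode identity $S^{-1}(a_{(2)}) a_{(1)} = \varepsilon(a) = a_{(1)} S^{-1}(a_{(2)})$ in $U_q(\g)$: applying $\psi$ to $\phi(x \otimes (v \otimes m)) = x_{(-1)} v \otimes (x_{(0)} \otimes m)$ yields
\begin{align*}
x_{(0)(0)} \otimes (S^{-1}(x_{(0)(-1)}) x_{(-1)} v \otimes m)
&= x_{(0)} \otimes (S^{-1}(x_{(-1)(2)}) x_{(-1)(1)} v \otimes m)\\
&= x \otimes (v \otimes m)
\end{align*}
by coassociativity and the antipode identity, and similarly for $\phi \circ \psi$. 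The $U_{q,\chi}(\g)$-linearity of $\phi$ is immediate: both $(ax)_{(-1)} v \otimes ((ax)_{(0)} \otimes m)$ and $a_{(-1)} x_{(-1)} v \otimes (a_{(0)} x_{(0)} \otimes m)$ coincide by multiplicativity of $\Delta$. The calculations are routine; the only piece that requires any care is verifying the coideal property for the parabolic in the first step, and no substantial conceptual obstacle arises.
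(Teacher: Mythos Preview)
Your argument has a genuine gap: it conflates two different $U_{q,\chi}(\frp_S)$-module structures on $V\tensor M$. The functor $\dqpind$ is defined on $\catOs{\chi}$, so $V\tensor M$ in the statement is first an object of $\catOs{\chi}$ (via the $U_q(\frl_S)$-comodule action on $U_{q,\chi}(\frl_S)$) and then \emph{inflated} to a $U_{q,\chi}(\frp_S)$-module, with the nilradical generators $\aE_{\bs{i},\alpha}$ for $\alpha\in\roots^+\setminus\roots_S^+$ acting by zero. Your well-definedness check for $\phi$, however, uses the \emph{comodule} action $y\cdot(v\tensor m)=y_{(-1)}v\tensor y_{(0)}m$. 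These disagree on the nilradical: with the inflated structure one has $\aE_{\bs{i},\alpha}\tensor(v\tensor m)=1\tensor 0=0$ in $\dqpind(V\tensor M)$, so $\phi$ must send it to $0$; but your formula gives
\[
\phi(\aE_{\bs{i},\alpha}\tensor(v\tensor m))=E_{\bs{i},\alpha}v\tensor(1\tensor m)+K_\alpha v\tensor(\aE_{\bs{i},\alpha}\tensor m)+\cdots,
\]
and while the second term vanishes (since $\aE_{\bs{i},\alpha}\in U_{q,\chi}(\frp_S)$ and $\aE_{\bs{i},\alpha}m=0$), the first term $E_{\bs{i},\alpha}v\tensor(1\tensor m)$ is nonzero whenever $E_{\bs{i},\alpha}v\neq 0$. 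Thus $\phi$ is not well-defined on the balanced tensor product that the lemma is actually about. What your argument proves is the (correct, but different) projection formula for $U_{q,\chi}(\g)\tensor_{U_{q,\chi}(\frp_S)}(V\tensor_{\mathrm{comod}}M^{\mathrm{infl}})$.

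The paper's proof handles exactly this discrepancy, and this is where the degeneracy of $\chi$ enters essentially. From $\Delta(\aF_{\bs{j},\alpha})=K_\alpha\tensor\aF_{\bs{j},\alpha}$ for $\alpha\in\roots^+\setminus\roots_{\bar S}^+$ one obtains $\Delta(\aE_{\bs{i},\alpha})=A_{w^S}(K_\alpha\tensor\aE_{\bs{i},\alpha})A_{w^S}^{-1}$ for $\alpha\in\roots^+\setminus\roots_S^+$; together with Lemma~\ref{lemm:us highest} this identifies $(V\tensor\dqpind M)^{\fru_S}$ with $A_{w^S}(V\tensor(1\tensor M))$, yielding the map $1\tensor(v\tensor m)\mapsto A_{w^S}(v\tensor(1\tensor m))$ rather than the naive $v\tensor(1\tensor m)$. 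In effect, $A_{w^S}$ is precisely the twist that intertwines the inflated and comodule $\frp_S$-structures after induction. Your generic Hopf-algebraic identity misses this twist entirely, and the twist is what the subsequent Corollary (with its $A_{w^S}\tensor\id$) and Theorem depend on.
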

\begin{proof}
Since $\Delta(\aF_{\bs{j},\alpha}) = K_{\alpha}\tensor \aF_{\bs{j},\alpha}$ in $U_{q,\bar{\chi}}(\g)$ for $\alpha \in \roots^+\setminus \roots_{\bar{S}}^+$ by \cite[Proposition 3.5 Eq. (6)]{MR4837934}, we obtain
\begin{align*}
 \Delta(\aE_{\bs{i},\alpha})
= A_{w^S}(K_{\alpha}\tensor \aE_{\bs{i}, \alpha})A_{w^S}^{-1}
\end{align*}
for $\alpha \in \roots^+\setminus\roots_S^+$.
Hence Lemma \ref{lemm:us highest} says that
$A_{w^S}(V\tensor (1\tensor M))$ is the set of
$U_{q,\chi}(\fru_S)$-highest weight vectors. In particular
there is a morphism $\dqpind{(V\tensor M)} \longrightarrow V\tensor \dqpind{M}$ induced from $1\tensor (v\tensor m) \longmapsto A_{w^S}(v\tensor (1\tensor m))$. Since a highest weight vector in $\dqpind{(V\tensor M)}$
is of the form $1\tensor x$ with $x \in V\tensor M$, this map
is injective. The surjectivity follows from the comparison of the formal characters.
\end{proof}

The following is a corollary of the proof.

\begin{coro}
We have the following commutative diagram for the canonical
isomorphism $\dqpind{(\tend\tensor\tend)} \cong \tend\tensor \dqpind{\tend}$:

\begin{align} \label{eq:constraint on parabolic induction}
 \xymatrix{
\dqpind{(V\tensor V'\tensor W)} \ar[r] \ar[d]_-{\dqpind{(A_{w^S}\tensor \id)}} & V\tensor \dqpind{(V'\tensor W)} \ar[d]\\
\dqpind{(V\tensor V'\tensor W)} \ar[r] & V\tensor V'\tensor \dqpind{W}
}
\end{align}
\end{coro}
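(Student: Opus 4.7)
The unlabeled right vertical arrow is $\id_V \tensor \iota$, where $\map{\iota}{\dqpind{(V'\tensor W)}}{V'\tensor \dqpind{W}}$ denotes the canonical isomorphism from the preceding lemma, given by $1\tensor(v'\tensor w)\longmapsto A_{w^S}(v'\tensor(1\tensor w))$. All four arrows of the square are $U_{q,\chi}(\g)$-linear, so by Lemma \ref{lemm:us highest} it suffices to check commutativity on the $\fru_S$-highest weight generators $1\tensor(v\tensor v'\tensor w)$ of $\dqpind{(V\tensor V'\tensor W)}$.

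Using the explicit formula for the canonical isomorphism, together with the $U_{q,\chi}(\g)$-linearity of $\iota$ (so that $\iota$ commutes past the action of the second leg of $A_{w^S}$, which lies in $U_{q,\chi}(\g)$), the top-then-right composite sends $1\tensor(v\tensor v'\tensor w)$ to $A_{w^S}\bigl(v\tensor A_{w^S}(v'\tensor(1\tensor w))\bigr)$, while the left-then-bottom composite yields $A_{w^S}\bigl(A_{w^S}(v\tensor v')\tensor (1\tensor w)\bigr)$. Both are $\fru_S$-highest weight vectors in $V\tensor V'\tensor \dqpind{W}$; by the preceding lemma applied to the object $V\tensor V'\tensor W\in \catOs{\chi}$, the canonical isomorphism $\dqpind{(V\tensor V'\tensor W)}\cong V\tensor V'\tensor\dqpind{W}$ identifies such highest weight vectors with elements of $V\tensor V'\tensor W$ through the projection $\map{\pi}{V\tensor V'\tensor \dqpind{W}}{V\tensor V'\tensor W}$ obtained by applying the counit $\eps$ of $U_{q,\chi}(\g)$ on the $\dqpind{W}$-factor.

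The required equality therefore reduces to $\pi$ of both expressions coinciding. For the left-then-bottom expression, $\pi$ picks out only the trivial Sweedler term of the outer $A_{w^S}$, returning $A_{w^S}(v\tensor v')\tensor w$. For the top-then-right expression, a Sweedler expansion of $\Delta$ on each $\aF_{\bs{i},k}$-factor of the outer $A_{w^S}$ (using that in our parabolic setting $\aF_{\bs{i},k} = F_{\bs{i},k}K_{\alpha^{\bs{i}}_k}$ for $k\le N - N_S$, so that these elements lie in $U_q(\g)$ and have ordinary coproducts), combined with the identity $(\id\tensor\eps)(A_{w^S}) = 1$ which follows from $\eps(F_\alpha K_\alpha) = 0$, collapses the resulting double summation to the same value $A_{w^S}(v\tensor v')\tensor w$. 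The main technical point is this Sweedler-and-counit collapse for the top-then-right path; once it is in hand, the commutativity of the diagram follows from the uniqueness of $\fru_S$-highest weight vectors with prescribed $\pi$-image.
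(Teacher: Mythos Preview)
Your argument is correct and is exactly the elaboration the paper has in mind when it says ``corollary of the proof'': both composites send the generator $1\tensor(v\tensor v'\tensor w)$ to $\fru_S$-highest weight vectors in $V\tensor V'\tensor\dqpind W$, which by the proof of the preceding lemma form the space $A_{w^S}((V\tensor V')\tensor(1\tensor W))$ and are determined by their leading term, and your Sweedler/counit computation shows both leading terms equal $A_{w^S}(v\tensor v')\tensor w$. One small terminological point: $U_{q,\chi}(\g)$ is only a left $U_q(\g)$-comodule algebra, not a Hopf algebra, so your $\pi$ is more cleanly described via the PBW splitting $\dqpind W\cong U_{q,\chi}(\fru_S^-)\tensor_k W$ and the augmentation on the first factor (this is the ``$+\cdots$'' convention used later in the paper)---but since, as you observe, the relevant $\aF_{\bs{i},k}$ lie in $U_q(\g)$, the computation is unaffected.
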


Let $M_{\chi}^S(\lbd)$ be the $\chi$-shifted Verma module
of $U_{q,\chi}(\frl_S)$ with highest weight $\lbd$.

\begin{lemm} \label{lemm:enough projective}
For any $M \in \catOs{\chi}$, there is a projective object $P \in \catOs{\chi}$ such that there exists a surjection $P\longrightarrow M$ and
$\dqpind P$ is also projective.
\end{lemm}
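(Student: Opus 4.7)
The plan is to construct $P$ in the form $P = V \tensor M^S_{\chi}(\lbd_0)$, where $\lbd_0 \in P$ is chosen $\chi$-dominant for $\g$ (not only for $\frl_S$) and $V \in \rep_q^{\fin} G$ is a sufficiently large finite dimensional representation. First I would fix a finite set of $\frl_S\cap\n^+$-highest weight generators $\{m_i\}_i$ of $M$, of weights $\{\mu_i\}_i$; such a set exists because $M$ is finitely generated and each generator lies in a finite dimensional $U_{q,\chi}(\frl_S\cap\n^+)$-submodule, which necessarily contains $\frl_S\cap\n^+$-highest weight vectors in the $U_q(\ah)$-semisimple sense. I would then invoke Lemma \ref{lemm:characterization of dominancy} to pick $\lbd_0$ arbitrarily $\chi$-dominant for $\g$, and in addition arrange that $\lbd_0 + \wt V$ is $\chi$-strongly regular for $\g$ (hence for $\frl_S$) for the $V$ chosen below.

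Since $\chi$-dominance for $\g$ implies $\chi$-dominance for $\frl_S$, the Levi analogue of Proposition \ref{prop:dominancy implies projectivity} gives that $M^S_{\chi}(\lbd_0)$ is projective in $\catOs{\chi}$. Tensoring with $V \in \rep_q^{\fin} G$ preserves projectivity via the standard adjunction $\Hom(V\tensor\tend,\tend)\cong\Hom(\tend, V^*\tensor\tend)$ and exactness of $V^*\tensor\tend$, so $P \in \catOs{\chi}$ is projective. The tensor product compatibility (the diagram \eqref{eq:constraint on parabolic induction} and the lemma preceding it) then yields $\dqpind P \cong V\tensor\dqpind M^S_{\chi}(\lbd_0)$, which equals $V\tensor M_{\chi}(\lbd_0)$ by transitivity of induction. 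Since $\lbd_0$ is $\chi$-dominant for $\g$, $M_{\chi}(\lbd_0)$ is projective in $\catO{\chi}$ by Proposition \ref{prop:dominancy implies projectivity}, and the same tensor-with-$V$ argument gives the projectivity of $\dqpind P$ in $\catO{\chi}$.

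What remains is to exhibit a surjection $P \twoheadrightarrow M$. By the Levi analogue of Proposition \ref{prop:strong regularity} (the isomorphism $V\tensor M^S_{\chi}(\lbd_0)\cong \mathrm{ind}^{\frl_S,\chi}_{\frl_S\cap\frb,q}(V\tensor k_{\lbd_0})$) together with Frobenius reciprocity, morphisms $V\tensor M^S_{\chi}(\lbd_0) \to M$ correspond to $U_{q,\chi}(\frl_S\cap \frb)$-linear maps $V\tensor k_{\lbd_0} \to M$. I would choose $V$ so that, when restricted to $\rep_q^{\fin} L_S$, it contains an irreducible component with highest weight $\mu_i - \lbd_0$ for every $i$ (achievable once $\lbd_0$ is chosen dominant enough that each $\mu_i - \lbd_0$ lies in the $\frl_S$-dominant cone). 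Sending the corresponding $\frl_S\cap\n^+$-highest weight vector of $V$ tensored with $1$ to $m_i$ defines the required $U_{q,\chi}(\frl_S\cap \frb)$-map, and the resulting morphism $P \to M$ hits each generator and is therefore surjective.

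The main technical obstacle I anticipate is this last step: matching the $\frl_S\cap\n^+$-highest weight vectors of $V$ to the chosen generators of $M$ in a way that yields a well-defined $U_{q,\chi}(\frl_S\cap \frb)$-map. This requires choosing $\lbd_0$ large enough both to secure the $\chi$-strong regularity needed for Frobenius reciprocity and to ensure that the relevant weights $\mu_i - \lbd_0$ are dominant for $\frl_S$, so that the $\frl_S$-decomposition of $V$ supplies highest vectors of the desired weights; this is precisely the standard argument for existence of projective covers in category $\cl{O}$, now transported to the $\chi$-shifted deformed setting.
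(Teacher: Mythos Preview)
Your projectivity argument is essentially correct and matches the paper's: pick a $\chi$-dominant $\lbd_0$, so $M^S_{\chi}(\lbd_0)$ is projective in $\catOs{\chi}$ and $\dqpind M^S_{\chi}(\lbd_0)\cong M_{\chi}(\lbd_0)$ is projective in $\catO{\chi}$; then tensor with $V$. One remark: the paper exploits the fact that in this subsection $\chi_{2\alpha}=0$ for $\alpha\in\roots_0^+\setminus\roots_S$, so $\roots_\chi\subset\roots_S$ and hence $W_{S,\chi}=W_\chi$. Thus $\chi$-dominance for $\frl_S$ and for $\g$ are \emph{equivalent}, not merely related by implication; this is why one choice of $\lbd_0$ handles both sides.

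The surjection step, however, has a genuine gap. You claim that once $\lbd_0$ is ``dominant enough'' each $\mu_i-\lbd_0$ lies in the $\frl_S$-dominant cone, but the direction is reversed: for $\eps\in S$ one has $(\mu_i-\lbd_0,\eps^\vee)=(\mu_i,\eps^\vee)-(\lbd_0,\eps^\vee)$, which becomes large \emph{negative} as $\lbd_0$ grows in the $S$-directions. Since $\chi$-dominance is a condition on $\roots_\chi^+\subset\roots_S^+$, you cannot simultaneously push $\lbd_0$ far into the dominant cone (to secure $\chi$-dominance when needed) and keep $\mu_i-\lbd_0$ $\frl_S$-dominant. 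There is also no guarantee that an arbitrary $\frl_S$-dominant weight appears as an $\frl_S$-highest weight in some chosen $V\in\rep_q^{\fin}G$; this requires a separate argument you have not supplied.

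The paper sidesteps all of this by first reducing to the case $M=M_\chi^S(\lbd)$: any $M\in\catOs{\chi}$ is a quotient of a finite direct sum of Verma modules (via your highest-weight generators $m_i$), so it suffices to treat a single Verma. Then set $\lbd_0=\lbd+n\rho$ for $n\gg 0$ and $V=L_{n\rho}$. The surjection $L_{n\rho}\tensor M_\chi^S(\lbd+n\rho)\twoheadrightarrow M_\chi^S(\lbd)$ is the top quotient of the standard filtration, since $-n\rho$ is the lowest weight of $L_{n\rho}$; no $\frl_S$-dominance of any difference is needed. (Incidentally, the tensor identity $V\tensor M_\chi^S(\lbd_0)\cong\mathrm{ind}(V\tensor k_{\lbd_0})$ and hence Frobenius reciprocity hold without strong regularity, so that hypothesis is not needed either.)
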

\begin{proof}
It suffices to show the statement for $M = M_{\chi}^S(\lbd)$
for some $\lbd \in \h_q^*$.

Take $n > 0$ so that $\lbd + n\rho$ is $\chi$-dominant.
By Proposition \ref{prop:dominancy implies projectivity},
$M_{\chi}^S(\lbd + n\rho)$ is projective. Then
$P := L_{n\rho}\tensor M_{\chi}^S(\lbd + n\rho)$ is also projective
and has a surjection to $M_{\chi}^S(\lbd)$.

To see projectivity of $\dqpind{P}$, note
$W_{S,\chi} = W_{\chi}$. This implies projectivity
of $\dqpind{M_{\chi}^S(\lbd + n\rho)}\cong M_{\chi}(\lbd + n\rho)$,
which implies projectivity of $\dqpind{P} \cong L_{\n\rho}\tensor M_{\chi}(\lbd + n\rho)$.
\end{proof}

\begin{prop}
 The functor $\map{\dqpind}{\catOs{\chi}}{\catO{\chi}}$
is an equivalence of $k$-linear categories.
\end{prop}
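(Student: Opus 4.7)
The plan is to verify that $\dqpind$ is exact, fully faithful, and essentially surjective. Exactness follows from the triangular decomposition $U_{q,\chi}(\g)\cong U_{q,\chi}(\fru_S^-)\tensor U_{q,\chi}(\frp_S)$, which exhibits $U_{q,\chi}(\g)$ as a free right $U_{q,\chi}(\frp_S)$-module. Fully faithfulness comes from Frobenius reciprocity combined with Lemma~\ref{lemm:us highest}: any $U_{q,\chi}(\g)$-linear map $\dqpind M\to\dqpind N$ restricts to a $U_{q,\chi}(\frp_S)$-linear map $M\to\dqpind N$ which must factor through $(\dqpind N)^{\fru_S}=1\tensor N\cong N$, since $\fru_S^+$ acts trivially on $M$. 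This gives the inverse of the canonical map $\Hom_{\catOs{\chi}}(M,N)\to\Hom_{\catO{\chi}}(\dqpind M,\dqpind N)$.

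For essential surjectivity, the idea is to construct, for each $X\in\catO{\chi}$, a two-step presentation
\begin{align*}
 \dqpind M_1\longrightarrow\dqpind M_0\longrightarrow X\longrightarrow 0
\end{align*}
with $M_0,M_1\in\catOs{\chi}$. Once this is in hand, fully faithfulness lifts the first arrow to some $f\colon M_1\to M_0$ in $\catOs{\chi}$, and exactness of $\dqpind$ yields $X\cong\dqpind(\operatorname{coker}f)$. The whole argument thus reduces to the following: given any $X\in\catO{\chi}$, produce $M_0\in\catOs{\chi}$ and a surjection $\dqpind M_0\twoheadrightarrow X$, then apply the same to the kernel.

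To produce this surjection I would exploit the degeneracy hypothesis $\chi_{2\alpha}=0$ for $\alpha\in\roots^+\setminus\roots_S^+$. Under this hypothesis the commutation relations (\ref{eq:commutation in degenerate case}) show that the subalgebra generated by $\{\aE_{\bs{i},k},\aF_{\bs{i},k}:1\le k\le N-N_S\}$ is a $q$-deformed Weyl algebra with scalar $[\aE,\aF]_q$. Starting from a finite generating set $\{v_i\}$ of $X$, the local finiteness of the $\aE_{\bs{i},k}$-action lets us pick, inside the finite-dimensional subspace $U_{q,\chi}(\fru_S^+)v_i$, a nonzero $\fru_S$-highest weight vector $v_i'\in X^{\fru_S}$. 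Setting $M_0:=U_{q,\chi}(\frl_S)\cdot\{v_i'\}\subset X^{\fru_S}$ then yields a finitely generated $U_{q,\chi}(\frl_S)$-submodule inheriting the two remaining axioms of $\catOs{\chi}$, and the canonical morphism $\dqpind M_0\to X$ sending $1\tensor v_i'\mapsto v_i'$ is the candidate surjection.

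The main obstacle is verifying that $\{v_i'\}$ actually generates $X$ as a $U_{q,\chi}(\g)$-module; equivalently that each $v_i$ belongs to $U_{q,\chi}(\fru_S^-)\cdot v_i'$. In the rank-one case $|\roots^+\setminus\roots_S^+|=1$ this reduces to a Fock-space calculation for the $q$-Weyl algebra with relation $\aE\aF-q^{-2}\aF\aE\in k^\times$: every module with locally finite $\aE$-action is a sum of copies of the unique irreducible representation, and each copy is generated by its vacuum under the $\aF$-action. The general case is obtained by handling the positive roots outside $S$ one at a time, using the $q$-commutation $\aE_{\bs{i},k}\aF_{\bs{i},l}=q^{(\alpha_k^{\bs{i}},\alpha_l^{\bs{i}})}\aF_{\bs{i},l}\aE_{\bs{i},k}$ for $k<l$ from (\ref{eq:commutation in degenerate case}), which decouples the different root directions.
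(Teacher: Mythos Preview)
Your treatment of exactness and full faithfulness is correct and essentially matches the paper. The gap is in essential surjectivity: the claim that raising each generator $v_i$ to a single $\fru_S$-highest weight vector $v_i'$ produces a generating set is false, already in rank one.

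Take $\g=\fsl_2$, $S=\emptyset$, and $X=M_\chi(\lambda)\oplus M_\chi(\lambda-\alpha)$ with highest weight vectors $w_0,w_0'$. The single element $v=\aF w_0+w_0'$ generates $X$, since $\aE v$ is a nonzero multiple of $w_0$ and then $w_0'=v-\aF w_0\in U_{q,\chi}(\g)v$. But your procedure gives $v'=\aE v\in kw_0$, so $M_0=k_\lambda$ and $\dqpind{M_0}\cong M_\chi(\lambda)\subsetneq X$. Your Fock-space analysis is correct in that it shows $X=k[\aF]\cdot X^{\aE=0}$; what fails is that a given $v$ need not lie in a single Fock summand, so there is no reason for $v\in k[\aF]v'$. (The word ``equivalently'' in your penultimate paragraph is also off: $\{v_i'\}$ generating $X$ over $U_{q,\chi}(\g)$ is strictly weaker than $v_i\in U_{q,\chi}(\fru_S^-)v_i'$.)

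The natural repair is to take $M_0=X^{\fru_S}$ rather than the span of finitely many chosen $v_i'$. Your iterated rank-one argument then gives more than a surjection: both halves of the Fock decomposition iterate, so the action map $U_{q,\chi}(\fru_S^-)\tensor X^{\fru_S}\to X$ is a linear isomorphism and $\dqpind{X^{\fru_S}}\cong X$ directly, with no two-step presentation needed. One must still check $X^{\fru_S}\in\catOs{\chi}$: the weight and local-finiteness conditions are inherited from $X$, while finite generation follows since a strictly ascending chain of $U_{q,\chi}(\frl_S)$-submodules of $X^{\fru_S}$ would, via $\dqpind$ and Lemma~\ref{lemm:us highest}, yield a strictly ascending chain of submodules of $X$, contradicting finite length in $\catO{\chi}$.

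For comparison, the paper argues essential surjectivity by induction on length: simples are handled because $\dqpind{L^S_\chi(\lambda)}$ is simple (again by Lemma~\ref{lemm:us highest}), and extensions are handled by a pushout along a projective cover supplied by Lemma~\ref{lemm:enough projective}. Your $q$-Weyl approach, once corrected as above, is more direct in that it exhibits an explicit quasi-inverse $X\mapsto X^{\fru_S}$, whereas the paper's argument avoids verifying that $X^{\fru_S}$ is finitely generated.
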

\begin{proof}
 It is not difficult to see that this functor is faithful and exact.
To see fullness, take a morphism $\map{\tilde{T}}{\dqpind M}{\dqpind N}$.
Then the image of a $\fru_S$-highest weight vector $\tilde{m} = 1\tensor m$ is again a $\fru_S$-highest weight vector $\tilde{T}(\tilde{m}) = 1\tensor T(m)$. Then it is not difficult to
see that $\tilde{T} = \dqpind T$.

At last we show essential surjectivity by induction on the length of objects. If $\tilde{M} \in \catO{\chi}$ is of length $1$, i.e. $\tilde{M}$ is simple, there exists a unique weight $\lbd \in \ah_q^*$
such that
$\tilde{M} \cong L_{\chi}(\lbd)$, the unique irreducible quotient of
$M_{\chi}(\lbd)$. On the other hand, for the unique irreducible quotient
$L_{\chi}^S(\lbd)$ of $M_{\chi}^S(\lbd)$, $\dqpind L_{\chi}^S(\lbd)$ is a highest weight
module with highest weight $\lbd$, there is a surjection
$\dqpind L_{\chi}^S(\lbd) \longrightarrow \tilde{M}$. Since $\dqpind L_{\lbd}^S$
is simple by Lemma \ref{lemm:us highest},
we see that this is injective, hence an isomorphism.

Next we assume that any object of $\catO{\chi}$ whose length is less than $n$ is contained in the image of the induction functor. Take an object
$\tilde{M}$ whose length is $n$. Then there is a submodule $\tilde{N}$
of length $n - 1$. By assumption we may assume $\tilde{N} = \dqpind N$
for some $N \in \catOs{\chi}$. Similarly $\tilde{M}/\tilde{N}$
is isomorphic to $\dqpind L$ for some simple object $L \in \catOs{\chi}$.

By Lemma \ref{lemm:enough projective}, there is an exact sequence of the following form:
\begin{align*}
 0\longrightarrow K \longrightarrow P \longrightarrow L \longrightarrow 0,
\end{align*}
where $P$ is a projective object such that $\dqpind P$ is also projective.
Then we can lift the map $\dqpind P \longrightarrow \dqind L \cong \tilde{M}/\tilde{N}$ to a morphism $\dqpind{P}\longrightarrow \tilde{M}$. This
induces the following diagram:
\begin{align*}
 \xymatrix{
0 \ar[r] & \dqpind K \ar[r] \ar[d] & \dqpind P \ar[r] \ar[d] & \dqpind L \ar[r] \ar[d] & 0 \\
0 \ar[r] & \dqpind N \ar[r] & \tilde{M} \ar[r] & \tilde{M}/\dqpind{N} \ar[r] & 0.
}
\end{align*}
Then $\tilde{M}$ is the pushout with respect to the two morphisms from $\dqind K$. On the other hand, we can consider the corresponding
morphisms $K \longrightarrow N$ and $K \longrightarrow P$ since
$\dqpind$ is full. Let $M$ be the pushout with respect to these morphisms.
Then exactness of $\dqpind$ implies that
$\tilde{M} \cong \dqpind{M}$.
\end{proof}
\begin{rema}
The same discussion works to prove the equivalence for
$U_q(\g;\cl{S})$ in \cite[Definition 2.7]{MR3376147}
and $B_q^J(\g)$ in \cite[Definition 3.4]{murata2025}.
\end{rema}

Unfortunately, this equivalence does not
preserve the action of $\rep_q^{\fin} G$.
To fix this, we consider a twisted version of this equivalence.

Note that the isomorphism $\map{t_{w^S}}{U_{q,\bar{\chi}}(\g)}{U_{q,\chi}(\g)}$ restricts to an isomorphism
$\map{t_{w^S}}{U_{q,\bar{\chi}}(\frl_{\bar{S}})}{U_{q,\chi}}(\frl_S)$, which preserves the triangular decomposition. In particular this induces the equivalence $\catOs{\chi}\cong \catO{\bar{\chi}}^{\bar{S}}$ as $k$-linear categories.

\begin{lemm}
For $x \in U_{q,\bar{\chi}}(\frl_{\bar{S}})$,
we have $\Delta(t_{w^S}(x)) = (\cl{T}_{w^S}\tensor t_{w^S})(\Delta(x))$.
\end{lemm}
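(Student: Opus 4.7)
The plan is to combine the general intertwining relation
\[
\Delta(t_{w^S}(x)) = A_{w^S}\,(\cl{T}_{w^S}\tensor t_{w^S})\Delta(x)\,A_{w^S}^{-1}
\]
(established in the proof of Lemma~\ref{lemm:left coaction} applied with $w$ replaced by $w w^S$) with a commutation property: I claim that $A_{w^S}$ commutes with $(\cl{T}_{w^S}\tensor t_{w^S})\Delta(x)$ for every $x\in U_{q,\bar{\chi}}(\frl_{\bar{S}})$. Granting this, the two conjugating factors cancel and the lemma follows immediately.

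For the first reduction, note that $U_q(\frl_{\bar{S}})$ is a Hopf subalgebra of $U_q(\g)$, so the coaction gives $\Delta(x)\in U_q(\frl_{\bar{S}})\tensor U_{q,\bar{\chi}}(\frl_{\bar{S}})$. Combined with the statement immediately preceding the lemma that $\cl{T}_{w^S}$ and $t_{w^S}$ restrict to isomorphisms $U_q(\frl_{\bar{S}})\to U_q(\frl_S)$ and $U_{q,\bar{\chi}}(\frl_{\bar{S}})\to U_{q,\chi}(\frl_S)$, we conclude $(\cl{T}_{w^S}\tensor t_{w^S})\Delta(x)\in U_q(\frl_S)\tensor U_{q,\chi}(\frl_S)$. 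Thus it suffices to show that $A_{w^S}$ commutes with $\Delta(y)$ for every $y\in U_q(\frl_S)$.

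This I would prove via the Levi factorisation of the quasi-$R$-matrix. The element $A_{w^S}$ is built solely from quantum root vectors indexed by $\alpha^{\bs{i}}_k\in\roots^+\setminus\roots_S^+$; up to the central shifts $e_{-2\alpha}$, which commute with everything in $U_q(\g)$, it is precisely the $\fru_S^+$-contribution to the quasi-$R$-matrix $\Theta_{w_0}$ of $U_q(\g)$ attached to the reduced expression $\bs{i}$. Because the tail of $\bs{i}$ spells a reduced word for the longest element of $W_S$, one obtains the factorisation $\Theta_{w_0}=A_{w^S}\Theta_{w_S}$, where $\Theta_{w_S}$ is the quasi-$R$-matrix of $U_q(\frl_S)$. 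Combining the quasi-triangularity identities
\[
\Theta_{w_0}\Delta(y)=\Delta^{\mathrm{op}}(y)\Theta_{w_0}\qquad\text{and}\qquad \Theta_{w_S}\Delta(y)=\Delta^{\mathrm{op}}(y)\Theta_{w_S}\quad(y\in U_q(\frl_S)),
\]
and cancelling $\Theta_{w_S}$ on both sides yields $A_{w^S}\Delta(y)=\Delta(y)A_{w^S}$, as required.

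The main obstacle is matching ordering conventions: one must confirm that the product defining $A_{w^S}$ in the paper genuinely corresponds to the left factor in $\Theta_{w_0}=A_{w^S}\Theta_{w_S}$ (rather than the right factor in an opposite factorisation), since the wrong order would produce commutation with $\Delta^{\mathrm{op}}$ instead of $\Delta$. If the ordering turned out to go the opposite way, I would fall back on a direct verification on the generators $\aK_\lambda,\aE_j,\aF_j$ ($j\in\bar{S}$) of $U_{q,\bar{\chi}}(\frl_{\bar{S}})$: the case of $\aK_\lambda$ is a one-line computation using $t_{w^S}(\aK_\lambda)=\aK_{w^S(\lambda)}$, and the cases of $\aE_j,\aF_j$ use that $w^S(\eps_j)\in S$ is a simple positive root, which forces $\cl{T}_{w^S}(E_j)$ and $\cl{T}_{w^S}(F_j)$ to be scalar multiples of $E_{w^S(j)}$ and $F_{w^S(j)}$ by a weight-space dimension count, after which the identity reduces to the immediate Cartan computation.
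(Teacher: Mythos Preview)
Your reduction is exactly the paper's: start from the conjugation identity $\Delta(t_{w^S}(x)) = A_{w^S}(\cl{T}_{w^S}\tensor t_{w^S})\Delta(x)A_{w^S}^{-1}$ and reduce to showing that $A_{w^S}$ commutes with $\Delta(y)$ for $y\in U_q(\frl_S)$. Where you diverge is in how you prove that commutation.

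Your primary route via the Levi factorisation $\Theta_{w_0}=A_{w^S}\Theta_{w_S}$ and the two quasi-triangularity identities is a legitimate high-level argument, but it carries exactly the burden you flag: one must match conventions carefully, and here $A_{w^S}$ is built from $K_\alpha^{-1}E_\alpha\tensor F_\alpha K_\alpha$ rather than $E_\alpha\tensor F_\alpha$, so it is not literally a block of the standard $\Theta$ but a twisted version. This can be sorted out, but it is extra bookkeeping.

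The paper sidesteps all of that with a neat self-referential trick: it applies the very same conjugation identity to the generators. For $\eps\in\bar S$ one has $\cl{T}_{w^S}(E_\eps)=E_{w^S(\eps)}$ and $\cl{T}_{w^S}(F_\eps K_\eps)=F_{w^S(\eps)}K_{w^S(\eps)}$ exactly (not merely up to scalar), because $w^S(\eps)\in S$ is again simple; since simple generators have the standard two-term coproduct, $(\cl{T}_{w^S}\tensor\cl{T}_{w^S})\Delta(E_\eps)=\Delta(E_{w^S(\eps)})$, and the conjugation identity collapses to $\Delta(E_{w^S(\eps)})=A_{w^S}\Delta(E_{w^S(\eps)})A_{w^S}^{-1}$. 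No external $R$-matrix facts are needed.

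Your fallback is essentially this same computation, phrased as a direct check of the lemma rather than of the commutation. One small point: your ``weight-space dimension count'' for $\cl{T}_{w^S}(E_j)\in k E_{w^S(j)}$ is not quite self-contained, since $U_q(\g)_{\delta}$ is not one-dimensional; you need either the standard fact that $\cl{T}_w(E_i)\in U_q(\n^+)$ when $w(\eps_i)\in\roots^+$, or better the exact equality $\cl{T}_w(E_i)=E_j$ whenever $w(\eps_i)=\eps_j$ is simple. With that in hand your fallback is complete and coincides with the paper's argument.
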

\begin{proof}
 Since $\Delta(\cl{T}_{w^S}(x)) = A_{w^S}(\cl{T}_{w^S}\tensor t_{w^S})(\Delta(x))A_{w^S}^{-1}$, it suffices to show that $\Delta(U_{q,\chi}(\frl_S))$ commutes with $A_{w^S}$. In light of the definition
of $U_{q,\chi}(\frl_S)$, it suffices to show the statement
for $U_q(\frl_S)$. This follows from
\begin{align*}
 \cl{T}_{w^S}(\aE_{\bs{j},\eps}) &= \cl{T}_{w^S}(E_{\eps})
= E_{w^S(\eps)} = \aE_{\bs{i},w^S(\eps)},\\
\cl{T}_{w^S}(\aF_{\bs{j},\eps}) &= \cl{T}_{w^S}(F_{\eps}K_{\eps})
= F_{w^S(\eps)}K_{w^S(\eps)} = \aF_{\bs{i},w^S(\eps)},
\end{align*}
combining with that $\eps \in \bar{S}$ and $w^S(\eps) \in S$ are
simple roots.
\end{proof}

The functor induced by $t_{w^S}^{-1}$ is denoted by
$\map{t_{w^S *}}{\cl{O}_{q,\bar{\chi}}^{\bar{S}}}{\catOs{\chi}}$.
Then this is also an equivalence of $k$-linear categories.

\begin{thrm}
The functor $\map{\dqpind\circ t_{w^S *}}{\cl{O}_{q,\bar{\chi}}^{\bar{S}}}{\catO{\chi}}$ is an equivalence of left
$\rep_q^{\fin} G$-module categories. The identification
$\dqpind{t_{w^S *}(V\tensor M)}\cong V\tensor \dqpind{t_{w^S *}(M)}$ is given as follows:
\begin{align*}
 \dqpind{t_{w^S *}(V\tensor M)} \longrightarrow
 \dqpind{(V\tensor t_{w^S *}M)} \longrightarrow
 V\tensor \dqpind{t_{w^S *}M},\\
1\tensor (v\tensor m) \longmapsto
1\tensor (\cl{T}_{w^S}v\tensor m) \longmapsto
\cl{T}_{w^S}v\tensor (1\tensor m) + \cdots.
\end{align*}
\end{thrm}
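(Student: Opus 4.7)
The plan is to assemble the theorem from two equivalences already in hand: the equivalence $\map{\dqpind}{\catOs{\chi}}{\catO{\chi}}$ of $k$-linear categories established in the preceding proposition, and the equivalence $\map{t_{w^S *}}{\cl{O}_{q,\bar{\chi}}^{\bar{S}}}{\catOs{\chi}}$ induced by the algebra isomorphism $t_{w^S}$. Their composite is automatically an equivalence of $k$-linear categories, so the remaining content is to upgrade it to a $\rep_q^{\fin} G$-module equivalence with the constraint described in the statement.

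First I would verify that the stated formula truly defines a morphism in $\catO{\chi}$. The preceding lemma $\Delta(t_{w^S}(x)) = (\cl{T}_{w^S}\tensor t_{w^S})(\Delta(x))$ on $U_{q,\bar{\chi}}(\frl_{\bar{S}})$ says that the $k$-linear map $\cl{T}_{w^S}^V\tensor \id_M$ is $U_{q,\chi}(\frl_S)$-linear as a map $t_{w^S *}(V\tensor M)\to V\tensor t_{w^S *}M$, where $\cl{T}_{w^S}^V$ denotes the braid operator applied to the finite-dimensional $U_q(\g)$-module $V$. Applying $\dqpind$ produces the first arrow of the constraint, and the second is the canonical $\dqpind$-constraint already constructed. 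Naturality in both variables is routine, so this defines a candidate module constraint $\Phi_{V,M}$.

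The content of the theorem is then the pentagon axiom for $\Phi$: the two composites
\begin{align*}
\Phi_{V\tensor V',M}\quad\text{and}\quad (\id_V\tensor \Phi_{V',M})\circ \Phi_{V,V'\tensor M}
\end{align*}
must agree. By Lemma \ref{lemm:us highest} both are determined by their values on the $\fru_S$-highest vector $1\tensor(v\tensor v'\tensor m)$. The braid relation $\Delta(\cl{T}_{w^S}(x)) = A_{w^S}(\cl{T}_{w^S}\tensor \cl{T}_{w^S})\Delta(x)A_{w^S}^{-1}$ implies that the lift $\cl{T}_{w^S}^{V\tensor V'}$ equals $A_{w^S}\circ(\cl{T}_{w^S}^V\tensor \cl{T}_{w^S}^{V'})$, so the left-hand side factors through the one-step $\dqpind$-constraint applied to $A_{w^S}(\cl{T}_{w^S}v\tensor \cl{T}_{w^S}v')\tensor m$, while the right-hand side factors through the two-step $\dqpind$-constraint applied to $\cl{T}_{w^S}v\tensor \cl{T}_{w^S}v'\tensor m$.

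The main obstacle---and also the resolution---is to identify these two expressions, and this is the exact content of diagram (\ref{eq:constraint on parabolic induction}): the two-step $\dqpind$-constraint equals the one-step $\dqpind$-constraint precomposed with $\dqpind(A_{w^S}\tensor \id)$. Consequently the two sides of the pentagon agree on the generator $1\tensor(v\tensor v'\tensor m)$, hence as morphisms in $\catO{\chi}$, establishing the coherence and completing the proof that $\dqpind\circ t_{w^S *}$ is an equivalence of $\rep_q^{\fin} G$-module categories.
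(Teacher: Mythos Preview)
Your proof is correct and follows the same route as the paper: the pentagon is verified by combining the coproduct formula $\cl{T}_{w^S}^{V\tensor V'} = A_{w^S}(\cl{T}_{w^S}^V\tensor\cl{T}_{w^S}^{V'})$ (the paper's diagram (\ref{eq:constraint on twisting})), naturality of the $\dqpind$-constraint, and diagram (\ref{eq:constraint on parabolic induction}). The paper packages these three ingredients as one large commutative diagram, whereas you chase the generator $1\tensor(v\tensor v'\tensor m)$ directly; note that your reduction of the right-hand side to ``the two-step $\dqpind$-constraint applied to $\cl{T}_{w^S}v\tensor\cl{T}_{w^S}v'\tensor m$'' silently uses naturality of the $\dqpind$-constraint in the module variable (to commute $\id_V\tensor\dqpind(\cl{T}_{w^S}^{V'}\tensor\id)$ past the first constraint), which is exactly the paper's upper-right square.
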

\begin{proof}
 By the previous lemma, $v\tensor m\longmapsto \cl{T}_{w^S}v\tensor m$
gives an isomorphism $t_{w^S *}(V\tensor M)\cong V\tensor t_{w^S *}M$. Hence the identification in the statement preserves
the action of $U_{q,\chi}(\g)$. To see that it satisfies the associativity, note that the following diagram is commutative:
\begin{align} \label{eq:constraint on twisting}
 \xymatrix{
t_{w^S *}(V\tensor V'\tensor M) \ar[r]^-{\cl{T}_{w^S}\tensor \id\tensor \id} \ar[d]_-{\Delta(\cl{T}_{w^S})\tensor \id} & V\tensor t_{w^S *}(V'\tensor M) \ar[d]^-{\id\tensor \cl{T}_{w^S}\tensor \id} \\
V\tensor V'\tensor t_{w^S *}M \ar[r]_-{A_{w^S}^{-1}\tensor \id} & V\tensor V'\tensor t_{w^S *}M.
}
\end{align}
Hence the following diagram is also commutative:
\begin{align*}
\footnotesize
 \xymatrix{
\dqpind{t_{w^S *}(V\tensor V'\tensor M)} \ar[r] \ar[d] & \dqpind{(V\tensor t_{w^S *}(V'\tensor M))} \ar[r] \ar[d] & V\tensor \dqpind{t_{w^S *}(V'\tensor M)} \ar[d]\\
\dqpind{(V\tensor V'\tensor t_{w^S *}M)} \ar[r] \ar[rd]_-{\id} & \dqpind{(V\tensor V'\tensor t_{w^S *}M)} \ar[r] \ar[d] & V\tensor \dqpind{(V'\tensor t_{w^S *}M)} \ar[d] \\
&\dqpind{(V\tensor V'\tensor t_{w^S *}M)} \ar[r] & V\tensor V'\tensor \dqpind{t_{w^S *}M},
}
\end{align*}
where the upper left corner is the image of (\ref{eq:constraint on twisting}), the upper right corner is the naturality diagram for $\dqpind{(\tend\tensor\tend)} \cong \tend\tensor \dqpind{\tend}$, and the lower right
corner is (\ref{eq:constraint on parabolic induction}).
This diagram shows the associativity.
\end{proof}

By looking at the integral part, we obtain the following corollary.

\begin{coro} \label{coro:induced action}
There exists an equivalence $w^S_*\cl{O}^{\bar{S},\mathrm{int}}_{\bar{\chi}} \cong \intO{\chi}$ of semisimple actions of $\fssorb$-type.
\end{coro}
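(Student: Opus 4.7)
The corollary follows from the preceding theorem by restricting to integral subcategories and tracking the identifications with $\Z_+(H)$. First I verify that the equivalence $F := \dqpind{} \circ t_{w^S *}$ restricts to integral subcategories. The twist $t_{w^S *}$ shifts $U_q(\ah)$-weights by $w^S$: from $t_{w^S}(\aK_\nu) = \aK_{w^S(\nu)}$, a vector of $U_{q,\bar\chi}(\ah)$-weight $\mu$ becomes a vector of $U_{q,\chi}(\ah)$-weight $w^S(\mu)$ after the twist. Since $P$ is $W$-stable, integrality is preserved, and $\dqpind{}$ preserves integrality since $U_{q,\chi}(\fru_S^-)$ has weights in $Q \subset P$. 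Hence $F$ restricts to $F\colon \cl{O}^{\bar S, \mathrm{int}}_{\bar\chi} \to \intO{\chi}$.

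Next I identify the image of a simple object. For $\lbd \in P$, the simple object $L^{\bar S}_{\bar\chi}(\lbd) \in \cl{O}^{\bar S, \mathrm{int}}_{\bar\chi}$ with highest weight $\lbd$ (for the positive system attached to $\bar S$) is sent by $t_{w^S *}$ to a simple object of $\catOs{\chi}$ whose highest weight, now with respect to $\roots_S^+$, is $w^S(\lbd)$, using that $w^S$ carries $\roots_{\bar S}^+$ to $\roots_S^+$. Applying $\dqpind{}$ yields a highest weight object of $\intO{\chi}$ with highest weight $w^S(\lbd)$, which by semisimplicity (Theorem \ref{thrm:characterization of semisimplicity}) is isomorphic to $M_\chi(w^S(\lbd))$. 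Hence $F$ implements $[L^{\bar S}_{\bar\chi}(\lbd)] \mapsto [M_\chi(w^S(\lbd))]$.

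Finally I compare the identifications with $\Z_+(H)$. The canonical identification on $\cl{O}^{\bar S, \mathrm{int}}_{\bar\chi}$ sends $[L^{\bar S}_{\bar\chi}(\lbd)]$ to $[k_\lbd]$; its $w^S_*$-twist sends $[L^{\bar S}_{\bar\chi}(\lbd)]$ to $[k_{w^S(\lbd)}]$. The canonical identification on $\intO{\chi}$ sends $[M_\chi(w^S(\lbd))]$ to $[k_{w^S(\lbd)}]$. These match through $F$, yielding the desired equivalence of semisimple actions of $\fssorb$-type.

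The main obstacle is the bookkeeping in the second step: ensuring that the highest weight of $L^{\bar S}_{\bar\chi}(\lbd)$, defined with respect to the positive system of $\bar S$, translates to the highest weight $w^S(\lbd)$ with respect to the positive system of $S$ after applying $t_{w^S *}$. This is straightforward from the formulas $t_{w^S}(\aE_\eps) = \aE_{w^S(\eps)}$ and $t_{w^S}(\aK_\nu) = \aK_{w^S(\nu)}$ on the generating set, but it is the only nontrivial verification. The substantive content, compatibility with the $\rep_q^{\fin} G$-action, has already been supplied by the preceding theorem.
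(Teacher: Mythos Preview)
Your proof is correct and follows the same approach as the paper, which proves the corollary with the single sentence ``By looking at the integral part, we obtain the following corollary.'' You have spelled out the bookkeeping (the weight shift under $t_{w^S *}$, the tracking of highest weights through parabolic induction, and the matching of the two identifications with $\Z_+(H)$) that the paper leaves implicit.
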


Consider $X_{H\backslash L_S}(k), X_{H\backslash L_S}^{\circ}(k)$,
which are defined similarly to $X_{\fssorb}(k)$ and $X_{\fssorb}^{\circ}(k)$. Since $X_{H\backslash L_S}(k)$ is canonically embedded into $X_{\roots_S}(k)$, we can define $\cl{O}_{q,\phi}^{S,\mathrm{int}}$
for any $\phi \in X_{H\backslash L_S}(k)$.

For any $\phi \in X_{H\backslash L_S}(k)$, we define $\tilde{\phi} \in X_{\fssorb}(k)$ by
\begin{align*}
 \tilde{\phi}_{\alpha}=
\begin{cases}
 \phi_{\alpha} & (\alpha \in \roots_S),\\
 1 & (\alpha \in \roots^+\setminus\roots_S^+),\\
 -1 & (\alpha \in \roots^-\setminus\roots_S^-).
\end{cases}
\end{align*}

Then Corollary \ref{coro:induced action} can be reformulated as follows:

\begin{coro}
 For any $\phi \in X_{H\backslash L_S}^{\circ}(k)$,
we have $\cl{O}_{q,\phi}^{S,\mathrm{int}}\cong \intO{\tilde{\phi}}$
as semisimple actions of $\fssorb$-type.
\end{coro}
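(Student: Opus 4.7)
The plan is to apply Corollary~\ref{coro:induced action} with carefully chosen data and then match the two sides with $\cl{O}^{S,\mathrm{int}}_{q,\phi}$ and $\intO{\tilde\phi}$.

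First, given $\phi \in X_{H\backslash L_S}^\circ(k)$, I take the positive system $\roots_{S,0}^+ = \roots_S^+$ in the setup of Corollary~\ref{coro:induced action}. Then $\roots_0^+ = \roots^+$, the Weyl element $w$ from that setup is trivial, and the corollary yields an equivalence $w^S_* \cl{O}^{\bar S, \mathrm{int}}_{\bar\chi} \cong \intO{\chi}$ of $\fssorb$-type actions, where $\chi \in X_\roots^\circ(k)$ is the extension of $\chi_\phi$ by zero on $\roots^+\setminus\roots_S^+$ and $\bar\chi = (w^S)^{-1}\chi$.

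Next, I identify the two sides. For the left-hand side, the algebra isomorphism $t_{w^S} \colon U_{q,\bar\chi}(\frl_{\bar S}) \to U_{q,\chi_\phi}(\frl_S)$ produces a $k$-linear equivalence between $\cl{O}^{\bar S, \mathrm{int}}_{\bar\chi}$ and $\cl{O}^{S, \mathrm{int}}_{q,\phi}$. Because $t_{w^S}$ acts on $U_q(\ah)$ by $K_\lbd \mapsto K_{w^S(\lbd)}$, the natural $\fssorb$-identifications (given by $\dqind$ on each side) differ precisely by the Weyl action of $w^S$ on $\Z_+(H)$; the $w^S_*$ twist built into the left-hand side of Corollary~\ref{coro:induced action} exactly absorbs this discrepancy, so $w^S_* \cl{O}^{\bar S, \mathrm{int}}_{\bar\chi} \cong \cl{O}^{S, \mathrm{int}}_{q,\phi}$ as $\fssorb$-actions. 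For the right-hand side, the task is to show $\intO{\chi} \cong \intO{\tilde\phi}$; concretely, one checks that the extension-by-zero $\chi$ and the image $\chi_{\tilde\phi}$ of $\tilde\phi$ under the embedding of Lemma~\ref{lemm:identification of toric and moduli} determine the same point of $X_\roots(k)$ after passing between the two natural representatives via the equivalence relation in the definition of $X_\roots(k)$ (Remark~\ref{rema:independence of the domain}).

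The hard part will be this last identification on the right-hand side. The natural $\P^1$-coordinate presentations of $\chi$ (extension-by-zero) and $\chi_{\tilde\phi}$ at roots outside $\roots_S$ look inequivalent at first glance, so the verification requires careful bookkeeping of the positive systems involved and of how the identifications intertwine the $\dqind$-induced $\fssorb$-structures on both sides. Once this is done, composing it with the left-side identification completes the proof.
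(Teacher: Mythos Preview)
Your plan has two genuine gaps, one on each side of the identification.

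\textbf{Left-hand side.} The equivalence $t_{w^S *}\colon \cl{O}^{\bar S,\mathrm{int}}_{q,\bar\chi}\to \cl{O}^{S,\mathrm{int}}_{q,\phi}$ is only a $k$-linear equivalence; it is \emph{not} an equivalence of left $\rep_q^{\fin} G$-module categories. The twist $w^S_*$ only modifies the identification $\Z_+(\cl{M})\cong \Z_+(H)$, not the module structure itself, so it cannot repair a failure of module-functor coherence. Concretely, the obvious candidate for the module structure on $t_{w^S *}$, namely $v\tensor m\mapsto \cl{T}_{w^S}v\tensor m$, does not satisfy the associativity axiom because $\cl{T}_{w^S}$ is not a coalgebra map: one has $\Delta(\cl{T}_{w^S}(x))=A_{w^S}(\cl{T}_{w^S}\tensor \cl{T}_{w^S})\Delta(x)A_{w^S}^{-1}$, and the factor $A_{w^S}$ is precisely the obstruction. (In fact your claimed equivalence $w^S_*\cl{O}^{\bar S,\mathrm{int}}_{q,\bar\chi}\cong \cl{O}^{S,\mathrm{int}}_{q,\phi}$ together with Corollary~\ref{coro:induced action} would give $\cl{O}^{S,\mathrm{int}}_{q,\phi}\cong \intO{\chi}$, contradicting the paper's result $\cl{O}^{S,\mathrm{int}}_{q,\phi}\cong \intO{\chi_{\tilde\phi}}$ and Proposition~\ref{prop:distinction of actions}.)

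\textbf{Right-hand side.} Your $\chi$ (the extension of $\chi_\phi$ by zero on $\roots^+\setminus\roots_S^+$) and $\chi_{\tilde\phi}$ are \emph{not} the same point of $X_\roots(k)$, and no amount of bookkeeping with positive systems will make them so. In $\P^1(k)^\roots$-coordinates, $\chi_{2\alpha}=0$ for $\alpha\in\roots^+\setminus\roots_S^+$, whereas $\chi_{\tilde\phi,2\alpha}=[2:0]=\infty$ there (since $\tilde\phi_\alpha=1$); the zero locus of $\chi_{\tilde\phi}$ lies instead on $\roots^-\setminus\roots_S^-$. The two points are related by the \emph{shifted} action of $w^S$, not by the equivalence relation defining $X_\roots(k)$.

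The paper's approach fixes both issues at once: rather than trying to match the left side of Corollary~\ref{coro:induced action} with $\cl{O}^{S,\mathrm{int}}_{q,\phi}$, it applies $(w^S)^{-1}_*$ to both sides and invokes Proposition~\ref{prop:twisted catO} on the right, obtaining $\cl{O}^{\bar S,\mathrm{int}}_{q,\bar\chi}\cong \intO{(w^S)^{-1}\cdot\chi}$. One then computes $(w^S)^{-1}\cdot\chi=\bar\chi$ using $(w^S\rho-\rho,\eps)=0$ for $\eps\in\bar S$ and the degeneracy of $\bar\chi$ outside $\roots_{\bar S}$. Since $\bar S$ and $\bar\chi$ range over all admissible data as $S,\chi$ vary, and since $\bar\chi=\chi_{\tilde\phi}$ for the $\phi$ corresponding to $\bar\chi|_{\roots_{\bar S}}$, the statement follows after relabeling.
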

\begin{proof}
In the setting of Corollary \ref{coro:induced action},
we have $\cl{O}_{q,\bar{\chi}}^{\bar{S}}\cong \intO{(w^S)^{-1}\cdot\chi}$.
Since $S$ and $\chi$ are arbitrary, it suffices to show
$(w^S)^{-1}\cdot\chi = (w^S)^{-1}(\chi) = \bar{\chi}$. This follows from
$(w^S\rho - \rho,\eps) = 0$ for $\eps \in \bar{S}$ and
$\bar{\chi}_{2\alpha} \in \{0,\infty\}$ for $\alpha \in \roots\setminus\roots_{\bar{S}}$.
\end{proof}
\begin{rema}
As a special case, we have $\rep_q^{\fin} H\cong \intO{\infty}$,
where $\infty \in X_{\fssorb}(k)$ is characterized by
$\infty_{\alpha} = 1$ for all $\alpha \in \roots^+$.
Note that this parameter corresponds to the Poisson structure
on $\fssorb$ induced by the quotient map $G^{\std}\longrightarrow \fssorb$.
Since $\rep_q^{\fin} H$
corresponds to $\cl{O}_q(\fssorb)$ (Example \ref{exam:std quantization}), this equivalence is compatible with
the semi-classical limit of the deformation quantization $\cl{O}_{q,\infty}(\fssorb)$ in Remark \ref{rema:def quant aspect}.

Also note that the work due to K. De Commer and S. Neshveyev
is relevant. In \cite{MR3376147} they realize
$\cl{O}_q(\fssorb)$ as an algebra of linear maps on $M_{0^+}(0)$.
\end{rema}

\subsection{Invariant coefficients}

The objective of this subsection is to give a basic strategy to
distinguish different semisimple actions of $\fssorb$-type.
As a consequence, we prove the following proposition:

\begin{prop} \label{prop:distinction of actions}
Let $\chi$ and $\chi'$ be elements of $X_{\roots}^{\circ}(k)$.
If $\chi \neq \chi'$, we have $\intO{\chi} \not\cong \intO{\chi'}$.
\end{prop}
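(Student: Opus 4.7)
The plan is to reduce the problem, via Lemma~\ref{lemm:reduction to associator}, to a comparison of associators, and then to extract numerical invariants from the equivalence class of $\Phi(\chi)$ that recover each scalar $\chi_{2\eps}$ for $\eps$ a simple root.

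First, I apply Lemma~\ref{lemm:reduction to associator} to both $\intO{\chi}$ and $\intO{\chi'}$ to realize them as $\rep^{\fin}_{q,\Phi(\chi)} H$ and $\rep^{\fin}_{q,\Phi(\chi')} H$, where $\Phi(\chi)$ is the associator built in the preceding subsection via the maps $f_{V,W}$. An equivalence of $\fssorb$-type actions then amounts to a natural family of isomorphisms $b_{V,W}\colon V\tensor W\to V\tensor W$, natural in $V\in \rep_q^{\fin} G$ and $W\in \rep_q^{\fin} H$, satisfying
\[
 \Phi(\chi)_{V,V',W}\,b_{V,V'\tensor W}\,(\id_V\tensor b_{V',W}) = b_{V\tensor V',W}\,\Phi(\chi')_{V,V',W}.
\]
Since each $b_{V,W}$ is a morphism in $\rep_q^{\fin} H$, it is $U_q(\h)$-equivariant; in particular $b_{V,k_\lambda}$ preserves the weight decomposition $V\tensor k_\lambda = \bigoplus_\mu V_\mu\tensor k_\lambda$. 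The goal is to show that no such $b$ can exist if $\chi\neq \chi'$.

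Second, I produce the invariants. Fix a simple root $\eps=\eps_i$ and set $V = V' = L_{\varpi_i}$. Choose $\lambda\in P$ so that $\lambda+\wt V$ and $\lambda+\wt V+\wt V$ are both $\chi$- and $\chi'$-strongly regular, so that $\Phi(\chi)_{V,V,k_\lambda}$ and $\Phi(\chi')_{V,V,k_\lambda}$ are defined through the composition $f_{V\tensor V,k_\lambda}^{-1}\circ(\id\tensor f_{V,k_\lambda})\circ f_{V,V\tensor k_\lambda}$ arising from Proposition~\ref{prop:strong regularity}. In the weight subspace of $V\tensor V\tensor k_\lambda$ of weight $2\varpi_i + \lambda - \eps$, which is two-dimensional with basis $\{v_0\tensor v_1\tensor 1_\lambda,\ v_1\tensor v_0\tensor 1_\lambda\}$ (where $v_0$ is a highest-weight vector of $V$ and $v_1 = \aF_i v_0$), the matrix entries of $\Phi(\chi)_{V,V,k_\lambda}$ are explicit rational expressions in $\chi_{2\eps}$ extracted from the recurrence in the proof of Proposition~\ref{prop:strong regularity}. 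The $U_q(\h)$-equivariance forces $b_{V\tensor V,k_\lambda}$ to be diagonal in the same basis and $b_{V,V\tensor k_\lambda}$ to preserve the corresponding weight component of $V\tensor V\tensor k_\lambda$; taking the ratio of a suitable pair of matrix coefficients cancels the diagonal scalars and yields a genuine invariant of the equivalence class, which is an explicit non-constant rational function of $\chi_{2\eps}$. Matching this invariant across $\chi$ and $\chi'$ gives $\chi_{2\eps} = \chi'_{2\eps}$.

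Finally, since a character on $2Q_0^+$ is determined by its values on the generators $\{2\eps_i\}$, and since two elements of $X_{\roots}(k)$ that agree on some $2Q_0^+$ are equal (by the embedding $X_\roots(k)\hookrightarrow \P^1(k)^{\roots}$ described before Lemma~\ref{lemm:identification of toric and moduli} and Remark~\ref{rema:independence of the domain}), we conclude $\chi = \chi'$. The main obstacle is isolating the invariant in step two: one must carefully track the correction terms in the expansion $f_{V,W}(1\tensor v\tensor w) = v\tensor (1\tensor w) + \cdots$ and show that a well-chosen ratio of matrix coefficients is insensitive to the twist $b$ yet determines $\chi_{2\eps}$. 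Once this rank-one scalar is extracted, the propagation across the root system is automatic.
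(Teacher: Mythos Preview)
Your strategy is close to the paper's, but there are two genuine gaps.

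First, in step two you assert that $b_{V\tensor V,k_\lambda}$ is diagonal in the basis $\{v_0\tensor v_1\tensor 1_\lambda,\ v_1\tensor v_0\tensor 1_\lambda\}$. This is false: both vectors have the same $U_q(\h)$-weight $2\varpi_i-\eps_i+\lambda$, and the only constraints on $b_{V\tensor V,k_\lambda}$ are $U_q(\h)$-equivariance and naturality in the $\rep_q^{\fin} G$-variable $V\tensor V$; neither forces it to respect the splitting $V_{\varpi_i}\tensor V_{\varpi_i-\eps_i}\oplus V_{\varpi_i-\eps_i}\tensor V_{\varpi_i}$. So a ratio of matrix coefficients of $\Phi$ itself is not invariant under the gauge freedom. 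The paper fixes this by inserting a $U_q(\g)$-endomorphism $A$ of $V\tensor V'$ (the projection $P_\eps^{\mu,\nu}$ onto $L_{\mu+\nu-\eps}$) and studying $\Phi^{-1}(A\tensor\id)\Phi$: naturality of $b$ in the first slot makes $b_{V\tensor V',k_\lambda}$ commute with $A\tensor\id$, so it cancels, and only $b_{V,V'\tensor k_\lambda}(\id\tensor b_{V',k_\lambda})$ survives. That factor \emph{does} preserve each $V_\mu\tensor V'_\nu\tensor k_\lambda$ (by naturality in the $\rep_q^{\fin} H$-variable), and its restriction to a one-dimensional block is a scalar, yielding the genuine invariant $c_{\mu,\nu;w,\eps}$.

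Second, and more seriously, your final step fails: knowing $\chi_{2\eps_i}\in\P^1(k)$ for the \emph{standard} simple roots $\eps_i$ does not determine $\chi\in X_\roots^\circ(k)$. The multiplicative relation $\chi_{2(\alpha+\beta)}=\chi_{2\alpha}\chi_{2\beta}$ degenerates when one factor is $0$ and another is $\infty$; already in type $A_2$, if $\chi_{2\eps_1}=0$ and $\chi_{2\eps_2}=\infty$ (perfectly allowed in $X_\roots^\circ(k)$), then $\chi_{2(\eps_1+\eps_2)}$ is unconstrained in $\P^1(k)\setminus q^{2\Z}$. Your sentence ``a character on $2Q_0^+$ is determined by its values on the generators $\{2\eps_i\}$'' conflates the standard simple roots with the simple roots of the positive system $\roots_0^+$ on which $\chi$ is an honest $k$-valued character. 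The paper closes this gap by evaluating the invariant not only at highest-weight vectors but at arbitrary \emph{extremal} weight vectors $(L_\mu)_{w(\mu-\eps)}\tensor(L_\nu)_{w\nu}$; via Proposition~\ref{prop:twisted catO} and the second formula of Lemma~\ref{lemm:invariant coefficients at extremal} this recovers $\chi_{w^{-1}(2\eps)}$, hence $\chi_{2\alpha}$ for every root $\alpha$, after which the injectivity of $X_\roots(k)\hookrightarrow\P^1(k)^\roots$ gives $\chi=\chi'$.
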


To explain the construction, we focus on the associator
picture (Definition \ref{defn:associator}) of semisimple actions of $\fssorb$-type.

Take an associator $\Phi$ on $\rep_q^{\fin} H$. Take a finite
dimensional representation $V,V'$ of $U_q(\g)$ and
an integral weight $\lbd$. For an endomorphism $A \in \End_{U_q(\g)}(V\tensor V')$, we consider the following $U_q(\h)$-morphism on
$V\tensor V'\tensor k_{\lbd}$:
\begin{align*}
 \Phi_{V,V',k_{\lbd}}^{-1}(A\tensor \id)\Phi_{V,V',k_{\lbd}}.
\end{align*}
In general this morphism depends on the representative $\Phi$
of an equivalence class of associators. Actually, if $\Psi$
is another associator equivalent to $\Phi$, there is
a natural automorphism $b$ such that $\Psi_{V,V',W} = b_{V\tensor V',W}^{-1}\Phi_{V,V',W}b_{V,V'\tensor W}(\id\tensor b_{V',W})$.
Then we have
\begin{align*}
 \Psi_{V,V',k_{\lbd}}^{-1}&(A\tensor \id)\Psi_{V,V',k_{\lbd}}\\
&=(b_{V,V'\tensor k_{\lbd}}(\id\tensor b_{V',k_{\lbd}}))^{-1}\Phi_{V,V',k_{\lbd}}(A\tensor \id)\Phi_{V,V',k_{\lbd}}b_{V,V'\tensor k_{\lbd}}(\id\tensor b_{V',k_{\lbd}}).
\end{align*}

Now consider the weight space decompositions of $V$ and $V'$. Then
naturality of $b$ implies that $b_{V\tensor V', k_{\lbd}}(\id\tensor b_{V',k_{\lbd}})$ preserves each tensor product $V_{\mu}\tensor V_{\nu}\tensor k_{\lbd}$ of weight spaces. Hence the conjugacy class of
$\Phi_{V,V',k_{\lbd}}^{-1}(A\tensor \id)\Phi_{V,V',k_{\lbd}}$
on each tensor product of weight spaces only depends on
the equivalence class of $\Phi$. In particular, if
we consider weights $\mu,\nu$ such that
$\dim V_{\mu} = \dim V'_{\nu} = 1$, the conjugacy class reduces
to a scalar. We call the scalar an \emph{invariant coefficient}
of $\Phi$.

In this subsection, we consider the specific type of
invariant coefficients. Take dominant integral weights $\mu,\nu$
and a simple root $\eps$ such that $(L_{\mu})_{\mu - \eps}$
and $(L_{\nu})_{\nu - \eps}$ are non-zero. Then, for any $w \in W$,
all of $(L_{\mu})_{w\mu}, (L_{\mu})_{w(\mu - \eps)}, (L_{\nu})_{w\nu},(L_{\nu})_{w(\nu - \eps)}$ are $1$-dimensional.
Hence, by considering $(L_{\mu})_{w(\mu - \eps)}\tensor (L_{\nu})_{w(\nu)}\tensor k_{\lbd}$ and the projection $\map{P^{\mu,\nu}_{\eps}}{L_{\mu}\tensor L_{\nu}}{L_{\mu + \nu - \eps}}$ regarded as an endomorphism on $L_{\mu}\tensor L_{\nu}$, we obtain the invariant coefficient $c_{\mu,\nu;w,\eps}(\Phi;\lbd) \in k$. We also use $c_{\mu,\nu;w,\eps}(\cl{M};\lbd)$,
where $\cl{M}$ is a semisimple action of $\fssorb$-type equivalent
to $\rep_{q,\Phi}^{\fin} H$.

In order to calculate the invariant coefficient
$c_{\mu,\nu;w,\eps}(\cl{M};\lbd)$
for a given semisimple action $\cl{M}$ of $\fssorb$-type,
it is convenient to use another definition of the invariant coefficient.
Let $\cl{M}$ be a semisimple $\fssorb$-type action. For $\lbd \in P$
and $w \in W$, we have
\begin{align*}
\cl{M}(X&_{\lbd + w(\mu + \nu - \eps)}, L_{\mu}\tensor L_{\nu}\tensor X_{\lbd})\\
&\cong \cl{M}(X_{\lbd + w(\nu - \eps)}, L_{\nu}\tensor X_{\lbd})
 \tensor \cl{M}(X_{\lbd + w(\mu + \nu - \eps)}, L_{\mu}\tensor X_{\lbd + w(\nu - \eps)}) \\
&\oplus \cl{M}(X_{\lbd + w(\nu)}, L_{\nu}\tensor X_{\lbd})
 \tensor \cl{M}(X_{\lbd + w(\mu + \nu - \eps)}, L_{\mu}\tensor X_{\lbd + w(\nu)}).
\end{align*}
According to this decomposition, we consider the matrix presentation
of
\begin{align*}
{\cl{M}(X_{\lbd + w(\mu + \nu - \eps)}, L_{\mu}\tensor L_{\nu}\tensor X_{\lbd})}
\overset{P_{\eps}^{\mu,\nu}\circ\tend}{\longrightarrow}
{\cl{M}(X_{\lbd + w(\mu + \nu - \eps)}, L_{\mu}\tensor L_{\nu}\tensor X_{\lbd})}.
\end{align*}
Then $c_{\mu,\nu;w,\eps}(\cl{M};\lbd)$ appears as
the $(2,2)$-entry of the matrix. From this picture
we can see $c_{\mu,\nu;w,\eps}(\cl{M};\lbd) = c_{\mu,\nu;1,\eps}(w_*\cl{M};w(\lbd))$.

\begin{lemm} \label{lemm:invariant coefficients at extremal}
For $\chi \in X_{\roots}^{\circ}(k)$, we have
\begin{align*}
 c_{\mu,\nu;1,\eps}(\intO{\chi};\lbd) = \frac{[(\nu,\eps^{\vee})]_{q_{\eps}}}{[(\mu + \nu,\eps^{\vee})]_{q_{\eps}}}\frac{[(\mu + \nu + \lbd,\eps^{\vee});\chi_{2\eps}]_{q^{\eps}}}{[(\nu + \lbd,\eps^{\vee});\chi_{2\eps}]_{q^{\eps}}}.
\end{align*}
Hence we also have
\begin{align*}
 c_{\mu,\nu;w,\eps}(\intO{\chi};\lbd) = \frac{[(\nu,\eps^{\vee})]_{q_{\eps}}}{[(\mu + \nu,\eps^{\vee})]_{q_{\eps}}}\frac{[(\mu + \nu + w^{-1}\cdot\lbd,\eps^{\vee});\chi_{w^{-1}(2\eps)}]_{q^{\eps}}}{[(\nu + w^{-1}\cdot \lbd,\eps^{\vee});\chi_{w^{-1}(2\eps)}]_{q^{\eps}}}.
\end{align*}
\end{lemm}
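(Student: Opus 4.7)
The plan is to compute $c_{\mu,\nu;1,\eps}(\intO{\chi};\lbd)$ as the $(2,2)$-entry of $(P_\eps^{\mu,\nu}\otimes\id)$ acting on the two-dimensional Hom space $\Hom(M_\chi(\lbd+\mu+\nu-\eps), L_\mu\otimes L_\nu\otimes M_\chi(\lbd))$, identified with the space of highest weight vectors in $L_\mu\otimes L_\nu\otimes M_\chi(\lbd)$ of weight $\lbd+\mu+\nu-\eps$. By Proposition \ref{prop:strong regularity}, under $\chi$-strong regularity of $\lbd+\wt(L_\mu\otimes L_\nu)$, such highest weight vectors are uniquely determined by their projections to $L_\mu\otimes L_\nu\otimes (1\otimes 1)$, which I will call their ``top parts''; since both sides of the claimed formula are algebraic in $q^{(\lbd,\eps^\vee)}$, Zariski density lets me restrict to the strongly regular locus.

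First I would write the basis elements as $\alpha_i = (\id\otimes\phi_\nu^{(i)})\circ \phi_\mu^{(i)}$, where $\phi_\nu^{(i)}, \phi_\mu^{(i)}$ are the canonical generators of the relevant one-dimensional Hom spaces. The top part of $\alpha_1(1\otimes 1)$ is immediate, namely $v_\mu\otimes v_{\nu-\eps}$, because $\phi_\mu^{(1)}(1\otimes 1) = v_\mu\otimes(1\otimes 1)$ exactly (no correction, since $v_\mu$ is the highest weight vector in $L_\mu$). For $\alpha_2$ the highest weight vector $\phi_\mu^{(2)}(1\otimes 1) \in L_\mu\otimes M_\chi(\lbd+\nu)$ acquires a correction $c\cdot v_\mu\otimes(\aF_\eps\otimes 1)$; I would solve for $c$ from the highest-weight condition $\aE_\eps\cdot \phi_\mu^{(2)}(1\otimes 1) = 0$, which through the defining relation $[\aE_\eps,\aF_\eps]_q = (e_{2\eps}\aK_{2\eps}-1)/(q_\eps-q_\eps^{-1})$ evaluated at $\chi$ and on the top vector of $M_\chi(\lbd+\nu)$ introduces the $\chi$-dependent denominator $\chi_{2\eps}q_\eps^{2(\lbd+\nu,\eps^\vee)}-1$.

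Next I would apply $(\id\otimes\phi_\nu^{(2)})$ and extract the top part. Since $v_\nu$ is a genuine highest weight vector in $L_\nu$, we have $\phi_\nu^{(2)}(1\otimes 1) = v_\nu\otimes(1\otimes 1)$ without correction, and the only subtle point is the top of $\phi_\nu^{(2)}(\aF_\eps\otimes 1) = \aF_\eps\cdot(v_\nu\otimes(1\otimes 1))$; via the coaction $\Delta(\aF_\eps) = \aF_\eps\otimes 1 + K_\eps\otimes \aF_\eps$, the $(1\otimes 1)$-component is $q^{(\nu,\eps)}v_{\nu-\eps}\otimes(1\otimes 1)$. Combined with the $\alpha_1$ case this expresses the top parts of $\alpha_1(1\otimes 1)$ and $\alpha_2(1\otimes 1)$ as explicit vectors in $(L_\mu\otimes L_\nu)_{\mu+\nu-\eps}$, with $\alpha_2$ carrying a $\chi$-dependent admixture of $v_\mu\otimes v_{\nu-\eps}$.

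Finally I would compute the matrix of $P_\eps^{\mu,\nu}$ on the two-dimensional space $(L_\mu\otimes L_\nu)_{\mu+\nu-\eps}$, which is a purely $U_q(\g)$-theoretic step: pick the adapted basis consisting of the highest weight vector of $L_{\mu+\nu-\eps}\subset L_\mu\otimes L_\nu$ and the vector $F_\eps v_{\mu+\nu}\in L_{\mu+\nu}$, invert the change of basis against $\{v_{\mu-\eps}\otimes v_\nu, v_\mu\otimes v_{\nu-\eps}\}$, and apply $P_\eps^{\mu,\nu}$ as the identity on the first factor and zero on the second. Matching top parts then reads off the coefficient of $\alpha_2$ in $(P_\eps^{\mu,\nu}\otimes\id)\alpha_2$, and elementary simplification using $[m]_{q_\eps}(q_\eps-q_\eps^{-1}) = q_\eps^m - q_\eps^{-m}$ collapses the result to the claimed ratio $\frac{[(\nu,\eps^\vee)]_{q_\eps}}{[(\mu+\nu,\eps^\vee)]_{q_\eps}}\cdot\frac{[(\mu+\nu+\lbd,\eps^\vee);\chi_{2\eps}]_{q_\eps}}{[(\nu+\lbd,\eps^\vee);\chi_{2\eps}]_{q_\eps}}$. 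The formula for general $w$ then follows from the $w=1$ case by the identity $c_{\mu,\nu;w,\eps}(\cl{M};\lbd) = c_{\mu,\nu;1,\eps}(w_*\cl{M};w(\lbd))$ noted in the text, combined with Proposition \ref{prop:twisted catO} ($w_*\intO{\chi}\cong \intO{w\cdot\chi}$). The main obstacle is the careful tracking of the $\chi$-dependent correction to $\phi_\mu^{(2)}(1\otimes 1)$ and of the $K_\eps$-contribution in $\Delta(\aF_\eps)$ to the top part, since omitting either would yield an incorrect $\chi$-independent answer.
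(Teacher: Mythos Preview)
Your proposal is correct and follows essentially the same approach as the paper: both compute the two highest weight vectors of weight $\lbd+\mu+\nu-\eps$ in $L_\mu\otimes L_\nu\otimes M_\chi(\lbd)$ via their top parts (the paper phrases this through the isomorphism $f_{V,k_\lbd}$ while you work directly with the Hom-space description given just before the lemma), apply $P_\eps^{\mu,\nu}$ on the two-dimensional weight space $(L_\mu\otimes L_\nu)_{\mu+\nu-\eps}$, and read off the $(2,2)$-entry, then invoke Proposition~\ref{prop:twisted catO} for general $w$. Two minor remarks: your Zariski density reduction is unnecessary since for $\chi\in X_\roots^\circ(k)$ any finite set of integral weights is automatically $\chi$-strongly regular (the condition $\chi_{2\beta}\notin q_\beta^{2\Z}$ forces both intersections in the definition to be empty), and the paper's case split ``$\eps\in\roots_0^+$'' (which you implicitly assume when invoking the relation for $[\aE_\eps,\aF_\eps]_q$) is what handles the possibility $\chi_{2\eps}=\infty$.
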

\begin{proof}
We assume that $\eps$ is contained in $\roots_0^+$. The other case
is similar.

To calculate the matrix coefficient, we have to determine
the isomorphism $\bdqind{(V\tensor k_{\lbd})} \cong V\tensor \bdqind{k_{\lbd}}$ at least on some weight vectors. It is not difficult to see
\begin{align*}
 (1\tensor v_{\mu}\tensor 1)\longmapsto v_{\mu}\tensor (1\tensor v_{\lbd}).
\end{align*}
Similarly we also have
\begin{align*}
1\tensor  (F_{\eps}v_{\mu}\tensor 1)\longmapsto q_{\eps}^{(\mu,\eps^{\vee})}\frac{\chi_{2\eps}q_{\eps}^{2(\lbd,\eps^{\vee})}-1}{q_{\eps} - q_{\eps}^{-1}}F_{\eps}v_{\mu} \tensor (1\tensor 1) - [(\mu,\eps^{\vee})]_{q_{\eps}}
v_{\mu}\tensor (\aF_{\eps}\tensor 1).
\end{align*}
Hence we have
\begin{align}
 1\tensor (v_{\mu}\tensor F_{\eps}u_{\nu})\longmapsto &q_{\eps}^{(\nu,\eps^{\vee})}\frac{\chi_{2\eps}q_{\eps}^{2(\lbd,\eps^{\vee})}-1}{q_{\eps} - q_{\eps}^{-1}}v_{\mu}\tensor F_{\eps}v_{\nu} \tensor (1\tensor 1) + \cdots, \\
1\tensor (F_{\eps}v_{\mu}\tensor u_{\nu})\longmapsto &q_{\eps}^{(\mu,\eps^{\vee})}\frac{\chi_{2\eps}q_{\eps}^{2(\lbd + \nu,\eps^{\vee})}-1}{q_{\eps}- q_{\eps}^{-1}}F_{\eps}v_{\mu}\tensor v_{\nu}\tensor (1\tensor 1)  \label{eq:second highest weight vector}\\
&- q_{\eps}^{(\nu,\eps^{\vee})}[(\mu,\eps^{\vee})]_{q_{\eps}}v_{\mu}\tensor F_{\eps}v_{\nu}\tensor (1\tensor 1) + \cdots. \notag
\end{align}
To determine $c_{\mu,\nu;1,\eps}(\chi;\lbd)$, it suffices to
consider the image of the right hand side in (\ref{eq:second highest weight vector}) under $P_{\eps}^{\mu,\nu}\tensor \id$. Since this projection kills $F_{\eps}(v_{\mu}\tensor v_{\nu})$ and preserves $q_{\eps}^{(\mu,\eps^{\vee})}[(\nu,\eps^{\vee})]_{q_{\eps}}F_{\eps}v_{\mu}\tensor v_{\nu} - [(\mu,\eps^{\vee})]_{q_{\eps}}v_{\mu}\tensor F_{\eps}v_{\nu}$,
\begin{align*}
 P_{\eps}^{\mu,\nu}(F_{\eps}v_{\mu}\tensor v_{\nu})
&= -q_{\eps}^{(\nu,\eps^{\vee})}P_{\eps}^{\mu,\nu}(v_{\mu}\tensor F_{\eps}v_{\nu})\\
&=\frac{1}{[(\mu + \nu,\eps^{\vee})]_{q_{\eps}}}\left(q_{\eps}^{(\mu,\eps^{\vee})}[(\nu,\eps^{\vee})]_{q_{\eps}}F_{\eps}v_{\mu}\tensor v_{\nu} - [(\mu,\eps^{\vee})]_{q_{\eps}}v_{\mu}\tensor F_{\eps}v_{\nu}\right).
\end{align*}
Hence we have
\begin{align*}
 c_{\mu,\nu;1,\eps}(\chi;\lbd) = q_{\eps}^{-(\mu,\eps^{\vee})}\frac{[(\nu,\eps^{\vee})]_{q_{\eps}}}{[(\mu + \nu,\eps^{\vee})]_{q_{\eps}}}\frac{\chi_{2\eps}q_{\eps}^{2(\lbd + \mu + \nu,\eps^{\vee})} - 1}{\chi_{2\eps}q_{\eps}^{2(\lbd + \nu,\eps^{\vee})} - 1}.
\end{align*}
\end{proof}

This completes the proof of Proposition \ref{prop:distinction of actions}.

\subsection{\cstar-structure}

In this subsection we discuss on \cstar-structure
on semisimple actions of $\fssorb$-type.
The base field is $\C$ and $q$ is a real number between $0$ and $1$.
We consider $U_q(\k)$, which is $U_q(\g)$ with the \star-structure.
Then, as pointed out in Subsection \ref{subsec:compact real forms}, we can form
a \cstar-tensor category $\rep_q^{\fin} K$ of finite dimensional
unitary representations of $U_q(\k)$. 

See \cite{MR3121622} for the notion of tensor categories and their module categories in the \cstar-algebraic setting.

\begin{defn} \label{defn:action of flag manifold type}
An action of $\fflag$-type is a pair of a semisimple left $\rep_q^{\fin} K$-module \cstar-category $\cl{M}$
and an identification $\Z_+(\cl{M}) \cong \Z_+(T)$ as $\Z_+(K)$-modules.
\end{defn}
\begin{rema} \label{rema:operator algebraic duality}
As same with the algebraic setting, we have the duality theorem
for a \emph{connected} semisimple left $\rep_q^{\fin} K$-module
category with a pointed irreducible object
corresponds to an ergodic action of $K_q$ on a unital \cstar-algebra.
Since $\rep_q^{\fin} T$ with $\mathbf{1}$ as a pointed object
corresponds to the \emph{standard quantum full flag manifold} $C_q(\fflag)$, it is natural to regard an action of $\fflag$-type
as a noncommutative analogue of $\fflag$.
\end{rema}
Let $\cl{M}$ be a left $\rep_q^{\fin} G$-module category $\cl{M}$.
A \emph{unitarization} of $\cl{M}$ is a pair of a left $\rep_q^{\fin} K$-module \cstar-category $\cl{M}^{\mathrm{uni}}$ and the equivalence
$\cl{M}\cong \cl{M}^{\mathrm{uni}}$ as a left $\rep_q^{\fin}G$-module category. We also say that $\cl{M}$ is \emph{unitarizable}
if it admits a unitarization.

The following lemma implies that unitarizations of
a left $\rep_q^{\fin} G$-module category are
unitarily equivalent to each other.

\begin{lemm} \label{lemm:uniqueness of unitarization}
Let $\cl{C}$ be a \cstar-tensor category and $\cl{M},\cl{M'}$
be semisimple left $\cl{C}$-module \cstar-categories.
If $\cl{M}$ is equivalent to $\cl{M}'$ as a left $\cl{C}$-module
category, $\cl{M}$ is equivalent to $\cl{M}'$ as a left $\cl{C}$-module
\cstar-categories.
\end{lemm}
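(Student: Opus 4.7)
The plan is to take an equivalence $\map{F}{\cl{M}}{\cl{M}'}$ of left $\cl{C}$-module categories, with coherence isomorphisms $\eta_{V,X}\colon F(V\tensor X)\to V\tensor F(X)$ a priori neither $\ast$-preserving nor unitary, and to modify it by natural positive adjustments until both properties hold. This is the standard technique for promoting $\C$-linear equivalences to \star-equivalences in the setting of semisimple \cstar-categories, and I will carry it out in two stages.

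First, I turn $F$ into a \star-functor. Pulling back the involution on each $\End_{\cl{M}'}(F(X))$ along the linear isomorphism $F$ yields a second \cstar-structure $\#$ on the same underlying finite-dimensional $\C$-algebra. By the uniqueness of the \cstar-structure on such an algebra, there exists a positive invertible $a_X\in\End_{\cl{M}'}(F(X))$ with $f^{\#}=a_X f^* a_X^{-1}$; this choice can be arranged to be natural in $X$, since on simple objects $a_X$ reduces to a positive scalar and the extension to direct sums is then forced. A $2\times 2$-matrix argument applied to $X\oplus Y$ extends the relation from endomorphism algebras to $F(f^*) = a_X F(f)^* a_Y^{-1}$ for every morphism $f\colon X\to Y$, and setting $\tilde{F}(f) := a_Y^{1/2} F(f) a_X^{-1/2}$ produces a \star-preserving functor naturally isomorphic to $F$.

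Second, I unitarize the coherence. Transporting $\eta$ by the above scaling yields a coherence $\tilde{\eta}_{V,X}\colon\tilde{F}(V\tensor X)\to V\tensor \tilde{F}(X)$ that is now an isomorphism within the \cstar-category $\cl{M}'$. Write its polar decomposition $\tilde{\eta}_{V,X} = u_{V,X}\, p_{V,X}$ with $u_{V,X}$ unitary and $p_{V,X}$ positive invertible; uniqueness of polar decomposition combined with the naturality of $\tilde{\eta}$ implies that both $u$ and $p$ are themselves natural in $V$ and $X$. Substituting $\tilde{\eta} = u p$ into the pentagon-like associativity axiom for the module structure and again separating unitary from positive parts, one deduces the same axiom for $u$ alone. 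Thus $(\tilde{F}, u)$ is a unitary equivalence of left $\cl{C}$-module \cstar-categories.

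The main technical difficulty will lie in the final step: verifying that the unitary factor $u_{V,X}$ inherits the associativity axiom from $\tilde{\eta}$. This reduces to showing that the positive factor $p_{V,X}$ is itself coherent with the $\cl{C}$-associators, which in turn follows from the observation that, on each isotypic component of the fusion decomposition of $V\tensor\tilde{F}(X)$, $p_{V,X}$ reduces to a positive scalar determined by intrinsic dimension data and hence is transparent to the associativity constraints.
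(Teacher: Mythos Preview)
The paper does not give an independent argument: its entire proof is the one-line observation that Reutter's proof of \cite[Proposition 2.11]{MR4538281} goes through verbatim when $\cl{C}$ is merely a \cstar-tensor category rather than a unitary fusion category. Your two-stage outline (first polar-correct $F$ to a \star-functor, then polar-decompose the coherence) is exactly the strategy of that reference, so in spirit you are reproducing the cited proof rather than offering an alternative.

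That said, your final paragraph does not yet constitute a proof, and the justification you give is partly incorrect. The positive factor $p_{V,X}=(\tilde\eta_{V,X}^{\,*}\tilde\eta_{V,X})^{1/2}$ is \emph{not} determined by ``intrinsic dimension data'': it depends on the particular (non-unitary) coherence $\tilde\eta$ you started with, and different representatives in the same equivalence class produce different $p$'s. What is true, and what you need, is that $p_{V,X}$ is \emph{central} in $\End_{\cl{M}'}(\tilde F(V\tensor X))$; this follows because naturality of $\tilde\eta$ in both variables makes $p_{V,X}^2$ commute with $\tilde F(\End_{\cl{M}}(V\tensor X))$, and $\tilde F$ is fully faithful. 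Centrality (not any dimension computation) is the reason $p_{V,X}$ acts as a positive scalar on each isotypic summand.

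Even granting centrality, ``transparent to the associativity constraints'' is too vague. You should argue directly via uniqueness of polar decomposition: in the pentagon identity
\[
u_{V\tensor W,X}\,p_{V\tensor W,X}\,\tilde F(\alpha)
=\alpha'\,(\id_V\tensor u_{W,X})(\id_V\tensor p_{W,X})\,u_{V,W\tensor X}\,p_{V,W\tensor X},
\]
rewrite the left side as $\bigl(u_{V\tensor W,X}\tilde F(\alpha)\bigr)\cdot\bigl(\tilde F(\alpha)^{*}p_{V\tensor W,X}\tilde F(\alpha)\bigr)$, which is unitary times positive since $\tilde F(\alpha)$ is unitary. On the right side, centrality of $p_{V,W\tensor X}$ guarantees that it commutes with the positive operator $u_{V,W\tensor X}^{*}(\id\tensor p_{W,X})u_{V,W\tensor X}$, so their product is again positive and the right side factors as $\bigl(\alpha'(\id\tensor u_{W,X})u_{V,W\tensor X}\bigr)$ times a positive. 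Comparing unitary parts now yields the coherence axiom for $u$. This is the missing step; once you supply it your argument is complete and matches the cited one.
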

\begin{proof}
The proof of \cite[Proposition 2.11]{MR4538281} works even when $\cl{C}$ is not a unitary
fusion category.
\end{proof}

By this lemma, there is a natural bijection between
the unitary equivalence classes of actions of $\fflag$-type
and the equivalence classes of unitarizable semisimple actions
of $\fssorb$-type. Hence it suffices to discuss on
unitarizability of semisimple actions of $\fssorb$-type.

To show non-unitarizability, the invariant coefficients
in the previous subsection is useful. Note that the associator
in the \cstar-algebraic setting is assumed to be unitary. Also note
that the projection $P_{\eps}^{\mu,\nu}$ is positive. Hence
the invariant coefficients $c_{\mu,\nu;w,\eps}(\cl{M};\lbd)$ is
non-negative.

\begin{lemm} \label{lemm:sufficient condition for non-unitarizability}
For $\phi \in X_{\fssorb}^{\circ}\setminus X_{\fflag}^{\quot}$, $\intO{\phi}$ is not unitarizable.
\end{lemm}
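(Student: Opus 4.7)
The plan is to use the invariant coefficients $c_{\mu,\nu;w,\eps}(\cl{M};\lbd)$ of Lemma~\ref{lemm:invariant coefficients at extremal}. In the \cstar-setting each such coefficient must be a non-negative real, because it is the $(2,2)$-diagonal entry of the matrix of the self-adjoint projection $(P_\eps^{\mu,\nu}\tensor\id)\circ(-)$ acting on the Hom Hilbert space $\cl{M}(X_{\lbd+w(\mu+\nu-\eps)},L_\mu\tensor L_\nu\tensor X_\lbd)$ in the orthogonal decomposition introduced to define it. Hence it is enough to produce, for every $\phi\in X_{\fssorb}^{\circ}(\C)\setminus X_{\fflag}^{\quot}$, a choice of parameters for which $c_{\mu,\nu;w,\eps}(\intO{\phi};\lbd)$ is either negative or non-real.

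Choose $\alpha\in\roots$ with $\phi_\alpha\notin\R$ or $|\phi_\alpha|>1$. Since every root is $W$-conjugate to a simple one, fix $w\in W$ and $\eps\in\simples$ with $w(\alpha)=\eps$; then $q_\alpha=q_\eps$. Set $\mu=\nu=\varpi_\eps$, so that $(\mu,\eps^\vee)=(\nu,\eps^\vee)=1$ and the prefactor in Lemma~\ref{lemm:invariant coefficients at extremal} is a positive real. Using $w(\eps^\vee)=w(\eps)^\vee$, $(\rho,\eps^\vee)=1$ and $w^{-1}\cdot\lbd=w^{-1}(\lbd+\rho)-\rho$, a short calculation gives $(\nu+w^{-1}\cdot\lbd,\eps^\vee)=(\lbd+\rho,w(\eps)^\vee)$; and the integer $B:=(\lbd+\rho,w(\eps)^\vee)$ ranges over all of $\Z$ as $\lbd$ runs through $P$, since $w(\eps)^\vee$ is a primitive coroot. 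The character-dependent factor in Lemma~\ref{lemm:invariant coefficients at extremal} then reduces to $f(B+1)/f(B)$ with
\begin{equation*}
 f(x):=(\phi_\alpha+1)q_\eps^x-(\phi_\alpha-1)q_\eps^{-x}.
\end{equation*}

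For $\phi_\alpha\in\R$ with $|\phi_\alpha|>1$, the equation $f(x_0)=0$ has a unique real solution $x_0$ given by $q_\eps^{2x_0}=(\phi_\alpha-1)/(\phi_\alpha+1)>0$; the hypothesis $\phi\in X_{\fssorb}^{\circ}(\C)$ translates to $(\phi_\alpha-1)/(\phi_\alpha+1)\notin q_\alpha^{2\Z}$, so $x_0\notin\Z$, and the choice $B=\lfloor x_0\rfloor$ makes $f(B)$ and $f(B+1)$ have opposite signs. For non-real $\phi_\alpha$, write $f(x)=u(x)+i\im(\phi_\alpha)v(x)$ with real $u,v$; a direct expansion yields $u(B)v(B+1)-u(B+1)v(B)=2(q_\eps-q_\eps^{-1})$, independent of $B$, so the imaginary part of the ratio is a nonzero real multiple of $\im(\phi_\alpha)$, and one checks that $f(B)$ never vanishes (the only $B\in\Z$ with $v(B)=0$ is $B=0$, where $u(0)=2$). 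Either case contradicts the non-negativity of the invariant coefficient in a unitarization.

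The main technical obstacle is the bookkeeping that reduces $(\nu+w^{-1}\cdot\lbd,\eps^\vee)$ to $(\lbd+\rho,w(\eps)^\vee)$ and verifies surjectivity of this pairing onto $\Z$; once that is in place, the two sign computations are elementary.
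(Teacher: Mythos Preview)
Your proof is correct and follows essentially the same approach as the paper's: both argue that unitarizability forces the invariant coefficients $c_{\mu,\nu;w,\eps}(\intO{\phi};\lbd)$ of Lemma~\ref{lemm:invariant coefficients at extremal} to be non-negative reals, and then show this fails when $\phi\notin X_{\fflag}^{\quot}$. The only differences are cosmetic: the paper takes $\mu=\nu=\rho$ rather than $\varpi_\eps$ (either choice gives $(\nu,\eps^\vee)=1$ and a positive prefactor), and the paper phrases the argument as a contrapositive, asserting without detail that non-negativity of all the ratios $[n+1;\chi_{2\alpha}]/[n;\chi_{2\alpha}]$ forces $\chi_{2\alpha}\in\R\cup\{\infty\}$ and then $\chi_{2\alpha}\notin(0,\infty)$; your explicit sign-change and imaginary-part computations fill in precisely those two steps.
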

\begin{proof}
Assume that $\intO{\chi}$ is unitarizable, where $\chi = \chi_{\phi}$.
By the discussion above and Lemma \ref{lemm:invariant coefficients at extremal},
we have
\begin{align*}
 c_{\rho,\rho;w,\eps}(\intO{\chi};\lbd) = \frac{[(\rho,\eps^{\vee})]_{q_{\eps}}}{[(2\rho,\eps^{\vee})]_{q_{\eps}}}\frac{[(2\rho + w^{-1}\cdot\lbd,\eps^{\vee});\chi_{w^{-1}(2\eps)}]_{q^{\eps}}}{[(\rho + w^{-1}\cdot \lbd,\eps^{\vee});\chi_{w^{-1}(2\eps)}]_{q^{\eps}}} \ge 0.
\end{align*}
for any $\lbd \in P$, $w \in W$, $\eps \in \simples$.
This implies $\chi_{2\alpha} \in \R\cup\{\infty\}$ for all $\alpha \in \roots$. Moreover the inequality above implies
\begin{align*}
 \frac{[n + 1;\chi_{2\alpha}]_{q^{\alpha}}}{[n;\chi_{2\alpha}]_{q^{\alpha}}} \ge 0 \quad \text{ for all }n \in \Z.
\end{align*}
This implies $\chi_{2\alpha} \not\in(0,\infty)$ for all $\alpha \in \roots$, which is equivalent to $\phi \in X_{\fflag}^{\quot}$.
\end{proof}

On the other hand, we cannot use the invariant coefficients to
see unitarizability of $\intO{\chi}$ for $\chi \in X_{\fflag}^{\mathrm{quot}}$. By Proposition \ref{prop:twisted catO}, we may assume
$\chi \in \Ch_{\R} 2Q^+$. In this case we have the following \star-strucutre on $U_{q,\chi}(\g)$ inherited
from $U_{q,\chi}(\k)$:
\begin{align*}
 \aF_{\eps}^* = \aE_{\eps},\quad
 \aK_{\lbd}^* = \aK_{\lbd},\quad
 \aE_{\eps}^* = \aF_{\eps},\quad \text{where }\lbd \in 2P,\,\eps \in \simples.
\end{align*}
This \star-algebra is denoted by $U_{q,\chi}(\k)$.

Note that $\aF_{\alpha}^* \neq \aE_{\alpha}$ for general $\alpha \in \roots^+$
since the braid group action does not preserves the \star-structure.

\begin{defn}
 A unitary $U_{q,\chi}(\k)$-module in the category $\catO{\chi}$ is a $U_{q,\chi}(\g)$-module $M$ equipped with an inner product satisfying the following conditions:
\begin{enumerate}
 \item $\ip{x^*m, m'} = \ip{m,xm'}$ for all $x \in U_{q,\chi}(\g)$
and $m,m' \in M$.
 \item The underlying $U_{q,\chi}(\g)$-module belongs to the category
$\catO{\chi}$.
\end{enumerate}

The category of unitary $U_{q,\chi}(\k)$-module in the category $\catO{\chi}$ is denoted by $\uniO{\chi}$. Its full subcategory consisting of unitary modules with integral weights is denoted by $\uniintO{\chi}$.
\end{defn}

Note that weight spaces of $M \in \uniO{\chi}$ are mutually orthogonal.
Also note that any submodule $N \subset M$ has an orthogonal complement $N^{\perp}$,
which is also a submodule of $M$. Then, since $M$ is of finite length,
any $M \in \uniO{\chi}$ is isomorphic to a finite direct sum of
simple objects. This implies that
$\uniO{\chi}$ and $\uniintO{\chi}$ have canonical structures of semisimple
\cstar-category. Moreover it is not difficult to see that these
are semisimple left $\rep_q^{\fin} K$-module \cstar-categories.

By definition we have the forgetful functor
$\uniintO{\chi}\longrightarrow \intO{\chi}$, which is fully faithful.

\begin{lemm} \label{lemm:sufficient condition for unitarizability}
Assume $\chi_{2\alpha} \le 0$ for all $\alpha \in \roots^+$.
Then the forgetful functor gives an equivalence
$\uniintO{\chi}\cong \intO{\chi}$.
\end{lemm}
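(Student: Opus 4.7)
My plan is to show essential surjectivity of the forgetful functor, full faithfulness being immediate. Since $\intO{\chi}$ is semisimple by Theorem \ref{thrm:characterization of semisimplicity} with simple objects the shifted Verma modules $M_\chi(\lbd)$ ($\lbd \in P$), and since orthogonal direct sums of objects in $\uniintO{\chi}$ again lie in $\uniintO{\chi}$, it suffices to equip each $M_\chi(\lbd)$ with a $U_{q,\chi}(\k)$-invariant inner product.

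The candidate I would use is the Shapovalov-type sesquilinear form, defined by $\ip{v_\lbd, v_\lbd} = 1$ on the highest weight vector and extended by $*$-invariance to
\[
\ip{xv_\lbd, yv_\lbd} := \chi_\lbd(\cl{S}(x^*, y)), \qquad x,y \in U_{q,\chi}(\n^-).
\]
Since $*$ swaps $U_{q,\chi}(\n^\pm)$ and fixes $U_q(\ah)$, which $\chi_\lbd$ evaluates by real scalars, the projection $P$ commutes with $*$, whence hermiticity follows. Invariance under $U_{q,\chi}(\k)$ is built in, and distinct weight spaces are automatically orthogonal; positive definiteness on $M_\chi(\lbd)$ thus reduces to positive definiteness of the Gram matrix on each finite-dimensional weight space $M_\chi(\lbd)_{\lbd - \nu}$.

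To establish positive definiteness I would deform along $\chi_t := t\chi$ for $t \in [0,1]$. Each $\chi_t$ satisfies $\chi_{t,2\alpha} \le 0$, and since $0 \notin q_\alpha^{2\Z}$, every $\chi_t$ lies in $X_\roots^\circ(\R)$; the Shapovalov form on $M_{\chi_t}(\lbd)$ is therefore defined throughout. By Proposition \ref{lemm:Shapovalov determinant}, the Gram determinant on the $\nu$-weight space is, up to an invertible monomial in the $\aK$'s, a product of factors $q_\beta^{2(\rho,\beta^\vee)}\chi_{t,2\beta} q^{(\lbd, 2\beta)} - q_\beta^{2m}$ over $\beta \in \roots^+$ and $m \ge 1$, each strictly negative since the first summand is $\le 0$ while $q_\beta^{2m} > 0$. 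The Gram matrix is therefore nonsingular for every $t$, and continuity forces its signature to be constant on $[0,1]$.

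It remains to verify positive definiteness at $t = 0$, i.e., for $\chi_0 = 0^+$; this is the hard part. Here I would invoke Corollary \ref{coro:induced action} with $S = \emptyset$, which identifies $\intO{0^+}$ with $(w_0)_*\rep_q^{\fin} H$, and the latter carries the standard unitary $\rep_q^{\fin} K$-module \cstar-structure inherited from $\rep_q^{\fin} H$. Consequently each simple module $M_{0^+}(\lbd)$ admits some $U_{q,0^+}(\k)$-invariant inner product; by Schur's lemma the space of invariant hermitian forms on $M_{0^+}(\lbd)$ is one-dimensional, so after normalizing on $v_\lbd$ this inner product must agree with the Shapovalov form, which is thus positive definite at $t = 0$. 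The deformation then propagates positive definiteness to $t = 1$. Alternatively, a direct computation at $\chi_0 = 0^+$ using the simplified commutation relations (\ref{eq:commutation in degenerate case}) would diagonalize the Shapovalov form on the PBW basis with manifestly positive entries, yielding the same conclusion.
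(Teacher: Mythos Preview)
Your overall strategy—construct the Shapovalov-type hermitian form, deform continuously to $\chi = 0^+$, and verify positive definiteness there—is exactly the paper's. One minor point: writing $\chi_t := t\chi$ is ambiguous since characters on $2Q^+$ are multiplicative, not additive; you presumably mean scaling the values on simple roots (so that $\chi_{t,2\alpha} = t^{\mathrm{ht}(\alpha)}\chi_{2\alpha}$), which does keep every $\chi_{t,2\alpha} \le 0$. The paper sidesteps this by appealing to connectedness of the whole region $\{\chi : \chi_{2\alpha} \le 0\}$.

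The genuine gap is in your Option~A. Corollary~\ref{coro:induced action} is an equivalence of left $\rep_q^{\fin} G$-module \emph{categories}; the \cstar-structure it inherits from $\rep_q^{\fin} T$ lives on morphism spaces, not on objects as vector spaces. Knowing that $\intO{0^+}$ is unitarizable as a module category does not produce a $U_{q,0^+}(\k)$-invariant inner product on the specific infinite-dimensional $U_{q,0^+}(\g)$-module $M_{0^+}(\lbd)$: the equivalence sends $M_{0^+}(\lbd)$ to a one-dimensional weight module, and there is no mechanism for pulling an inner product back along it. Your Schur argument would be fine once some invariant inner product on $M_{0^+}(\lbd)$ is in hand, but that is precisely what you must supply at $t=0$, so Option~A is circular.

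Your Option~B is the correct route and is what the paper actually carries out: Lemma~\ref{lemm:orthogonality at degenerate parameter} (prepared by Lemma~\ref{lemm:formula for adjoint}, which computes the $*$-adjoint of the quantum root vectors $F_{\bs{i},k}K_{\alpha^{\bs{i}}_k}$ in terms of the PBW basis) shows that at $0^+$ the PBW monomials $\aF_{\bs{i}}^{\Lambda}$ are orthogonal for the Shapovalov pairing with explicit positive squared norms. This computation, while elementary in spirit, does require tracking the leading term of $\aF_{\bs{i},k}^*$ and using the degenerate commutation relation~(\ref{eq:commutation in degenerate case}); you should treat it as a lemma rather than a remark.
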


What we have to prove is essential-surjectivity of this functor.
Let $0^+$ be a character on $2Q^+$ uniquely determined by
$0^+_{2\alpha} = 0$ for $\alpha \in \roots^+$. We also fix
a reduced expression $s_{\bs{i}}$ of $w_0$.

\begin{lemm}
For $x \in U_q(\g)$ and $\eps \in \Delta$,
$\cl{T}_{\eps}(x)^* = (-1)^{(\wt{x},\eps^{\vee})}\cl{T}_{\eps}^{-1}(x^*)$.
\end{lemm}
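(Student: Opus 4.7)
The plan is to reduce the identity to a verification on the Chevalley-type generators of $U_q(\g)$. Define $\sigma(x) := \cl{T}_{\eps}(x)^*$ and $\tau(x) := (-1)^{(\wt{x},\eps^{\vee})} \cl{T}_{\eps}^{-1}(x^*)$, interpreted on weight vectors and extended antilinearly to $U_q(\g)$. Both maps are antilinear and antimultiplicative: for $\sigma$ this is direct, since $\cl{T}_{\eps}(xy)^* = (\cl{T}_{\eps}(x)\cl{T}_{\eps}(y))^* = \cl{T}_{\eps}(y)^*\cl{T}_{\eps}(x)^* = \sigma(y)\sigma(x)$, while for $\tau$ one uses that the sign is multiplicative in the weight, $(-1)^{(\wt{(xy)},\eps^{\vee})} = (-1)^{(\wt{x},\eps^{\vee})}(-1)^{(\wt{y},\eps^{\vee})}$ (this uses $(P,Q^{\vee}) \subset \Z$), which gives $\tau(xy) = \tau(y)\tau(x)$. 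Consequently the identity $\sigma = \tau$ extends from any generating set, once verified on that set.

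On the Cartan generator $x = K_{\lambda}$ the claim is immediate: $\wt{K_{\lambda}} = 0$, both $\cl{T}_{\eps}^{\pm 1}$ send $K_{\lambda}$ to $K_{s_{\eps}(\lambda)}$, and $K_{s_{\eps}(\lambda)}^* = K_{s_{\eps}(\lambda)}$. For $x = E_{\eps}$ and $x = F_{\eps}$, I would verify by direct computation using the defining formulas $\cl{T}_{\eps}(E_{\eps}) = -F_{\eps}K_{\eps}$, $\cl{T}_{\eps}(F_{\eps}) = -K_{\eps}^{-1}E_{\eps}$, the corresponding expressions for $\cl{T}_{\eps}^{-1}$ from Jantzen~8.14, and the \star-relations $E_{\eps}^* = K_{\eps}F_{\eps}$, $F_{\eps}^* = E_{\eps}K_{\eps}^{-1}$. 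For $x = E_j$ or $F_j$ with $j \neq \eps$, one uses the divided-power formula from Jantzen~8.14,
\[
\cl{T}_{\eps}(E_j) = \sum_{r+s = -a_{\eps j}} (-1)^r q_{\eps}^{-r}\, E_{\eps}^{(s)} E_j E_{\eps}^{(r)},
\]
and its $F$-analogue, apply $*$ term by term, and regroup using the already-settled cases on $E_{\eps}, F_{\eps}$ together with the established antimultiplicativity. The relevant identification is $(\wt{E_j}, \eps^{\vee}) = a_{\eps j}$, which should match the parity produced by reversing the summation index $r$ under $*$.

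The main obstacle is this last step for $j \neq \eps$: one must carefully track the powers of $q_{\eps}$ and signs produced when rewriting the $*$-image of the divided-power sum, then check that the net parity is exactly the prefactor $(-1)^{a_{\eps j}} = (-1)^{(\wt{E_j},\eps^{\vee})}$ appearing on the right-hand side. This reduces to a routine but intricate $q$-binomial manipulation inside the rank-one subalgebra generated by $E_{\eps}, F_{\eps}, K_{\eps}$ and the weight-space components of $E_j$; once this is settled, antimultiplicativity propagates the identity from the generators to all of $U_q(\g)$.
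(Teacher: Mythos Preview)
Your proposal is correct and follows the same approach the paper implicitly uses: the paper's entire proof is the sentence ``This can be seen directly from \cite[8.14]{MR1359532}'', which amounts to checking the identity on the generators $K_{\lambda}, E_i, F_i$ using the explicit formulas for $\cl{T}_{\eps}$ and $\cl{T}_{\eps}^{-1}$ given there. Your added observation that both sides define antilinear antimultiplicative maps is the correct device for reducing to generators, and your identification of the $j\neq\eps$ case as the only nontrivial verification matches what the reader of the paper would have to do upon consulting Jantzen.

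One small clarification: the phrase ``regroup using the already-settled cases on $E_{\eps},F_{\eps}$'' is slightly misleading. Having $\sigma=\tau$ on $E_{\eps},F_{\eps}$ does not by itself shortcut the verification for $E_j$, since you are applying $\sigma,\tau$ to $E_j$ rather than to a product involving $E_{\eps}$. What actually happens is that you expand $\cl{T}_{\eps}(E_j)^*$ and $\cl{T}_{\eps}^{-1}(E_j^*)$ separately via the divided-power formulas, move the resulting $K_{\eps}$-factors through using the commutation relations, and compare; the sign $(-1)^{a_{\eps j}}$ then emerges from reversing the summation index. This is exactly the ``routine but intricate'' step you flagged.
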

\begin{proof}
This can be seen directly from \cite[8.14]{MR1359532}.
\end{proof}

\begin{lemm} \label{lemm:formula for adjoint}
The adjoint of $F_{\bs{i},k}K_{\alpha^{\bs{i}}_k}$ has the following expression:
\begin{align*}
(F_{\bs{i},k}K_{\alpha^{\bs{i}}_k})^*
= q^{-(\alpha^{\bs{i}}_k, \alpha^{\bs{i}}_1 + \alpha^{\bs{i}}_2 + \cdots + \alpha^{\bs{i}}_{k - 1})}E_{\bs{i},k} + \sum_{\Lambda\cdot\alpha^{\bs{i}} = \alpha^{\bs{i}}_k, \Lambda \neq \delta_k} C_{\Lambda}E_{\bs{i},N}^{\Lambda_N}E_{\bs{i}, N - 1}^{\Lambda_{N - 1}}\cdots E_{\bs{i},1}^{\Lambda_1}.
\end{align*}
\end{lemm}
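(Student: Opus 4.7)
The plan is to proceed by induction on $k$, using the preceding lemma $\cl{T}_{\eps}(x)^* = (-1)^{(\wt x, \eps^\vee)}\cl{T}_{\eps}^{-1}(x^*)$ as the main engine. I will work with the strengthened statement in which $\bs{i}$ is allowed to be a reduced expression of an arbitrary Weyl group element (not only $w_0$), so that the inductive step can reduce to a shorter reduced word.

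For the base case $k = 1$, direct computation from $F_i^* = E_iK_i^{-1}$ and $K_i^* = K_i$ gives $(F_{i_1} K_{i_1})^* = K_{i_1} E_{i_1} K_{i_1}^{-1}$, a scalar multiple of $E_{\bs{i},1}$. The weight space $U_q(\n^+)_{\eps_{i_1}}$ is one-dimensional, so no further PBW monomials appear, and the identity matches the claimed form under the empty-sum convention for $\alpha^{\bs{i}}_1 + \cdots + \alpha^{\bs{i}}_0$.

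For the inductive step $k > 1$, set $\bs{i}' = (i_2, \ldots, i_N)$ and use $s_{i_1}(\alpha^{\bs{i}}_k) = \alpha^{\bs{i}'}_{k-1}$ to factor
\[
F_{\bs{i},k}\,K_{\alpha^{\bs{i}}_k} \;=\; \cl{T}_{i_1}\bigl(F_{\bs{i}',k-1}\,K_{\alpha^{\bs{i}'}_{k-1}}\bigr).
\]
The preceding lemma then gives $(F_{\bs{i},k}K_{\alpha^{\bs{i}}_k})^* = \pm\,\cl{T}_{i_1}^{-1}\bigl((F_{\bs{i}',k-1}K_{\alpha^{\bs{i}'}_{k-1}})^*\bigr)$. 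By the induction hypothesis, the inner expression equals a scalar multiple of $E_{\bs{i}',k-1}$ plus a sum of reverse-ordered PBW monomials in the $E_{\bs{i}',j}$'s indexed by $\Lambda' \neq \delta_{k-1}$ with $\Lambda'\cdot\alpha^{\bs{i}'} = \alpha^{\bs{i}'}_{k-1}$. Applying $\cl{T}_{i_1}^{-1}$ and re-expanding in the reverse-ordered PBW basis of $\bs{i}$ via the Levendorskii--Soibelman quasi-commutation relations among the $E_{\bs{i},j}$'s converts the leading term into a scalar multiple of $E_{\bs{i},k}$, while all remaining contributions land on multi-indices $\Lambda \ne \delta_k$. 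The extra scalar produced by conjugation at this step is $q^{-(\alpha^{\bs{i}}_k,\eps_{i_1})}$, so telescoping over the induction yields the prefactor $q^{-(\alpha^{\bs{i}}_k, \alpha^{\bs{i}}_1 + \cdots + \alpha^{\bs{i}}_{k-1})}$.

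The main obstacle is the careful bookkeeping: one has to verify (i) that the $q$-scalars accumulated at each inductive step telescope exactly to the stated exponent, and (ii) that no contribution to the coefficient of $E_{\bs{i},k}$ itself (multi-index $\delta_k$) leaks in from the quasi-commutation rewriting or from the exceptional step where $\cl{T}_{i_1}^{-1}$ sends a positive root vector to a negative one (e.g.\ $\cl{T}_{i_1}^{-1}(E_{i_1}) = -K_{i_1}^{-1}F_{i_1}$). Both issues reduce to the convexity of the order $\alpha^{\bs{i}}_1 < \cdots < \alpha^{\bs{i}}_N$, which prevents any nontrivial decomposition $\Lambda \cdot \alpha^{\bs{i}} = \alpha^{\bs{i}}_k$ from collapsing to the single-letter multi-index $\delta_k$; all correction terms are therefore absorbed into the sum over $\Lambda \ne \delta_k$ with coefficients $C_\Lambda$ whose precise values are irrelevant for the statement.
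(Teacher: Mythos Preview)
Your inductive skeleton matches the paper's: both reduce from $\bs{i}$ at position $k$ to a shifted word at position $k-1$ via the identity $F_{\bs{i},k}K_{\alpha^{\bs{i}}_k} = \cl{T}_{i_1}(F_{\bs{i}',k-1}K_{\alpha^{\bs{i}'}_{k-1}})$ and the preceding lemma. The structural claim that the non-leading terms have $\Lambda_{<k}\neq 0$ is fine (it is purely combinatorial: applying $(s_{i_1}\cdots s_{i_k})^{-1}$ sends $\alpha^{\bs{i}}_k$ to $-\eps_{i_k}$ and each $\alpha^{\bs{i}}_l$ with $l>k$ to a positive root, so $\alpha^{\bs{i}}_k$ cannot be a nonnegative combination of the latter).

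The gap is in your extraction of the leading coefficient. After applying $\cl{T}_{i_1}^{-1}$ to the leading term $E_{\bs{i}',k-1}$ of the inductive expansion, you do \emph{not} get a scalar multiple of $E_{\bs{i},k}$: since $E_{\bs{i}',k-1}=\cl{T}_{i_1}^{-1}(E_{\bs{i},k})$, you land on $\cl{T}_{i_1}^{-2}(E_{\bs{i},k})$, and $\cl{T}_{i_1}^2$ is not the identity on $U_q(\n^+)$. The Levendorskii--Soibelman relations by themselves only reorder products of $E_{\bs{i},l}$'s; they do not tell you how the square of a braid generator acts, so your proposed ``re-expansion via LS'' has no mechanism to produce the specific power $q^{-(\alpha^{\bs{i}}_k,\eps_{i_1})}$ you assert. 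You also do not track the sign $(-1)^{(\alpha^{\bs{i}}_k,\eps_{i_1}^\vee)}$ coming from the preceding lemma, which must be shown to cancel.

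The paper supplies exactly this missing computation, but by a different device: rather than manipulating inside $U_q(\g)$, it evaluates both expressions on a weight vector $v$ in a finite-dimensional representation chosen so that $E_{\bs{i},1}v=\cdots=E_{\bs{i},k-1}v=0$. This kills all non-leading PBW terms, leaving $(F_{\bs{i},k}K_{\alpha^{\bs{i}}_k})^{*}v=C_{\delta_k}E_{\bs{i},k}v$. The condition $E_{\bs{i},1}v=E_{i_1}v=0$ makes $v$ a $U_q(\frl_{i_1})$-highest weight vector, so $\cl{T}_{i_1}v$ is a scalar multiple of $\cl{T}_{i_1}^{-1}v$, and one can invoke the explicit formula $\cl{T}_{i_1}^{\pm 2}v=(-1)^{(\wt v,\eps_{i_1}^{\vee})}q^{\pm(\wt v,\eps_{i_1})}v$ on such vectors. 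The LS relations enter only to verify that $E_{\bs{i},k}v$ is again $U_q(\frl_{i_1})$-highest, so the same formula applies to it. The two applications of $\cl{T}_{i_1}^{\pm 2}$ and the sign from the preceding lemma then combine to give precisely $q^{-(\alpha^{\bs{i}}_k,\alpha^{\bs{i}}_1)}$, completing the telescoping. This representation-theoretic trick is the idea your sketch is missing.
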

\begin{proof}
We use induction on $k$. If $k=1$, the statement follows from the definition of the braid group action.

For general cases, we consider
a reduced expression  $s_{\bs{j}} = s_{i_2}s_{i_3}\cdots$
of $w_0$.
Then we have $F_{\bs{i},k}K_{\alpha^{\bs{i}}_k} = \cl{T}_{i_1}(F_{\bs{j},k-1}K_{\alpha^{\bs{j}}_{k - 1}})$. As a consequence of the induction hypothesis and the previous lemma, we have
\begin{align*}
 (F_{\bs{i},k}K_{\alpha^{\bs{i}}_k})^*
&= (-1)^{(\alpha^{\bs{j}}_{k - 1}, \alpha^{\bs{i}\vee}_1)}\cl{T}_{i_1}^{-1}((F_{\bs{j}, k -1}K_{\alpha^{\bs{j}}_{k - 1}})^*) \\
&=(-1)^{(\alpha^{\bs{j}}_{k - 1}, \alpha^{\bs{i}\vee}_1)}
\cl{T}_{i_1}^{-1}\biggl(q^{-(\alpha^{\bs{j}}_{k - 1}, \alpha^{\bs{j}}_1 + \alpha^{\bs{j}}_2 + \cdots + \alpha^{\bs{j}}_{k - 2})}E_{\bs{j},k-1}\\
&\hspace{20mm}+ \sum_{\Lambda\cdot\alpha^{\bs{j}} = \alpha^{\bs{j}}_{k - 1}, \Lambda \neq \delta_{k - 1}} C'_{\Lambda}E_{\bs{j},N}^{\Lambda_N}E_{\bs{j}, N - 1}^{\Lambda_{N - 1}}\cdots E_{\bs{j},1}^{\Lambda_1}\biggr).
\end{align*}

Now determine the coefficient of $E_{\bs{i},k}$ in
$(F_{\bs{i},k}K_{\alpha^{\bs{i}}_k})^*$. Take a finite
dimensional representation $V$ and a weight vector $v$
such that $E_{\bs{i},1}v = E_{\bs{i},2}v = \cdots = E_{\bs{i}, k-1}v = 0$.
Since $C_{\Lambda} \neq 0$ implies that $\Lambda = \delta_k$ or
$\Lambda_{< k} \neq 0$, we have
\begin{align*}
 (F_{\bs{i},k}K_{\alpha^{\bs{i}}_k})^*v = C_{\delta_k}E_{\bs{i},k}v.
\end{align*}

On the other hand, we can use the expression above to calculate the
LHS. By $E_{\bs{i},1}v = 0$, $v$ is a highest weight vector with respect to $U_q(\frl_{i_1})$. Hence $\cl{T}_{i_1}v$
is a lowest weight vector. Moreover this satisfies
\begin{align*}
 E_{\bs{j},1}\cl{T}_{i_1}v
= E_{\bs{j},2}\cl{T}_{i_1}v
=\cdots
= E_{\bs{j},{k - 2}}\cl{T}_{i_1}v = 0
\end{align*}
since $\cl{T}_{i_1}v$ is a scalar multiple of $\cl{T}_{i_1}^{-1}v$
and $\cl{T}_{i_1}E_{\bs{j,l}}\cl{T}_{i_1}^{-1}v = E_{\bs{i},l + 1}v = 0$ for $1 \le l \le k - 2$. Hence we have
\begin{align*}
 (F_{\bs{i},k}K_{\alpha^{\bs{i}}_k})^*v
&= (-1)^{(\alpha_{k - 1}^{\bs{j}},\alpha^{\bs{i}\vee}_1)}
q^{-(\alpha^{\bs{j}}_{k - 1}, \alpha^{\bs{j}}_1 + \alpha^{\bs{j}}_2 + \cdots + \alpha^{\bs{j}}_{k - 2})}\cl{T}_{i_1}^{-1}E_{\bs{j}, k-1}\cl{T}_{i_1}v\\
&= (-1)^{(\alpha_{k}^{\bs{i}},\alpha^{\bs{i}\vee}_1)}
q^{-(\alpha^{\bs{i}}_{k}, \alpha^{\bs{i}}_2 + \alpha^{\bs{i}}_3 + \cdots + \alpha^{\bs{i}}_{k - 1})}\cl{T}_{i_1}^{-2}E_{\bs{i}, k}\cl{T}_{i_1}^2v.
\end{align*}
On the other hand, we have
$\cl{T}_{i_1}^2v = (-1)^{(\wt{v}, \alpha^{\bs{i}\vee}_1)}q^{(\wt{v},\alpha^{\bs{i}}_1)}v$.
Moreover the Levend\"{o}rskii-Soibelman relation implies $E_{\bs{i},k}v$ is also a highest weight vector with respect to $U_q(\fsl_{j_1})$, hence we also have
\begin{align*}
 \cl{T}_{j_1}^{-2}E_{\bs{i},k}v = (-1)^{(\wt{v} + \alpha^{\bs{i}}_k,\alpha^{\bs{i}\vee}_1)}q^{-(\wt{v} + \alpha^{\bs{i}}_k,\alpha^{\bs{i}}_1)}E_{\bs{i},k}v.
\end{align*}
We can see the statement from these facts.
\end{proof}

\begin{lemm} \label{lemm:orthogonality at degenerate parameter}
 Let $\map{P}{U_{q,0^+}(\g)}{U_{q}(\ah)}$ be the projection
along the triangular decomposition $U_{q,0^+}(\g)\cong U_{q,0^+}(\n^-)\tensor U_{q}(\ah)\tensor U_{q,0^+}(\n^+)$. Then $\{\aF^{\Lambda}\}_{\Lambda}$
is an orthogonal family with respect to the sesquilinear map
$(x,y)\longmapsto P(x^*y)$. Moreover we have
\begin{align*}
 P((\aF_{\bs{i}}^{\Lambda})^*\aF_{\bs{i}}^{\Lambda}) =
\prod_{k = 1}^N  (-1)^{\Lambda_k}q^{-(\Lambda_k\alpha^{\bs{i}}_k, \alpha^{\bs{i}}_1 + \cdots \alpha^{\bs{i}}_{k - 1})}\frac{q_{\alpha^{\bs{i}}_k}^{-\Lambda_k(\Lambda_k - 1)}[\Lambda_k]_{q_{\alpha^{\bs{i}}_k}}!}{(q_{\alpha^{\bs{i}}_k} - q_{\alpha^{\bs{i}}_k}^{-1})^{\Lambda_k}}.
\end{align*}
\end{lemm}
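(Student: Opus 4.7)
The proof is a direct computation exploiting the striking simplification of the commutation relations at $\chi=0^+$. As recorded in (\ref{eq:commutation in degenerate case}), off-diagonal root vectors satisfy the pure $q$-scalar relation $\aE_{\bs{i},k}\aF_{\bs{i},l} = q^{(\alpha^{\bs{i}}_k,\alpha^{\bs{i}}_l)}\aF_{\bs{i},l}\aE_{\bs{i},k}$ for $k<l$ with no correction term, while the diagonal relation
\begin{equation*}
\aE_{\bs{i},k}\aF_{\bs{i},k}^n - q_{\alpha^{\bs{i}}_k}^{-2n}\aF_{\bs{i},k}^n\aE_{\bs{i},k} = -\frac{q_{\alpha^{\bs{i}}_k}^{1-n}[n]_{q_{\alpha^{\bs{i}}_k}}}{q_{\alpha^{\bs{i}}_k}-q_{\alpha^{\bs{i}}_k}^{-1}}\aF_{\bs{i},k}^{n-1}
\end{equation*}
is a Heisenberg-type recursion, reducing the task to an iterated $\fsl_2$ computation.

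First I would transfer Lemma \ref{lemm:formula for adjoint} to $U_{q,0^+}(\k)$ by the same braid-action induction, obtaining
\begin{equation*}
\aF_{\bs{i},k}^* = q^{-(\alpha^{\bs{i}}_k,\alpha^{\bs{i}}_1+\cdots+\alpha^{\bs{i}}_{k-1})}\aE_{\bs{i},k} + \sum_{\substack{\Lambda\cdot\alpha^{\bs{i}}=\alpha^{\bs{i}}_k \\ \Lambda\neq\delta_k}}\tilde{C}_\Lambda \aE_{\bs{i},N}^{\Lambda_N}\cdots \aE_{\bs{i},1}^{\Lambda_1}.
\end{equation*}
The induction goes through because the braid operators on $U_{q,0^+}(\g)$ are inherited from those on $U_q(\g)$ via $t_w$, and the identity $\cl{T}_\eps(x)^* = (-1)^{(\wt x,\eps^\vee)}\cl{T}_\eps^{-1}(x^*)$ also holds in $U_{q,0^+}(\k)$ by direct verification on simple generators. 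Then I would expand $(\aF^{\Lambda})^* = (\aF_{\bs{i},N}^*)^{\Lambda_N}\cdots(\aF_{\bs{i},1}^*)^{\Lambda_1}$, multiply by $\aF^{\Lambda'}$, and push every $\aE$ past every $\aF$. Off-diagonal commutations produce pure $q$-scalars with no new $\aF$'s, so all new $\aF$'s come from diagonal contractions $\aE_{\bs{i},k}^{\Lambda_k}\aF_{\bs{i},k}^{\Lambda'_k}$. Iterating the $\fsl_2$ recursion yields $P(\aE_{\bs{i},k}^m\aF_{\bs{i},k}^n)=0$ for $m\neq n$ together with the claimed diagonal factor when $m=n$; multiplying across $k$ and combining with the leading scalars $q^{-(\Lambda_k\alpha^{\bs{i}}_k,\alpha^{\bs{i}}_1+\cdots+\alpha^{\bs{i}}_{k-1})}$ from the adjoint formula gives both the orthogonality and the norm.

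The main obstacle is controlling the higher-order terms $\tilde{C}_\Lambda\aE^{\Lambda}$ in the adjoint formula and showing they contribute zero to the projection. The natural approach is an induction on $\max\{k:\Lambda_k>0\}$, peeling off the top index: the off-diagonal relation for $k'<k$ prevents an $\aE$-monomial whose top index is strictly less than $k$ from reducing $\aF_{\bs{i},k}^{\Lambda_k}$, so after rearrangement such terms still carry residual $\aF_{\bs{i},k}$-factors that are killed by $P$ since the projection annihilates anything with nontrivial $\n^+$ or $\n^-$ content.
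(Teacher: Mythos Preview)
Your ingredients match the paper's—the adjoint formula from Lemma \ref{lemm:formula for adjoint}, the off-diagonal commutation (\ref{eq:commutation in degenerate case}), the diagonal $\fsl_2$ recursion—but the induction runs in the wrong direction, and this is not cosmetic. You propose to peel from the \emph{maximum} index; two things go wrong. First, relation (\ref{eq:commutation in degenerate case}) gives a pure scalar for $\aE_{\bs{i},k}\aF_{\bs{i},l}$ only when $k<l$; for $k>l$ the Levend\"orskii--Soibelman commutator has genuine lower-order terms, so ``off-diagonal commutations produce pure $q$-scalars'' is only half true. Second, Lemma \ref{lemm:formula for adjoint} guarantees the error terms in $\aF_{\bs{i},k}^*$ have $\Lambda'_{<k}\neq 0$—\emph{some} factor with index $<k$—not that their top index is $<k$. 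Indeed any decomposition $\alpha^{\bs{i}}_k=\alpha^{\bs{i}}_j+\alpha^{\bs{i}}_l$ forces $j<k<l$ by convexity of the order, so error terms typically carry a factor with index $>k$, and your mechanism for discarding them fails.

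The paper peels from the \emph{minimum} index instead. Since $\aF^\Lambda=\aF_{\bs{i},1}^{\Lambda_1}\cdots\aF_{\bs{i},N}^{\Lambda_N}$, the rightmost factor of $(\aF^\Lambda)^*$ is $(\aF_{\bs{i},k}^*)^{\Lambda_k}$ with $k=\min\Supp\Lambda$, and each error term, written in PBW order $\aE_{\bs{i},N}\cdots\aE_{\bs{i},1}$, ends on the right in some $\aE_{\bs{i},m}$ with $m<k$. Taking $l=\min\Supp\Gamma$ and using that $P$ is $*$-preserving to assume $k\le l$, every index appearing in $\aF^\Gamma$ is $\ge l\ge k>m$, so (\ref{eq:commutation in degenerate case}) pushes this $\aE_{\bs{i},m}$ all the way to the right, where $P$ kills it. The main term $\aE_{\bs{i},k}$ likewise either passes through (if $k<l$) or contracts with $\aF_{\bs{i},k}^{\Gamma_k}$ (if $k=l$); re-applying the adjoint trick then yields $\Lambda_k=\Gamma_k$, and one iterates. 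Your argument repairs immediately once you reverse the direction.
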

\begin{proof}
 Consider the following expression:
\begin{align*}
 \aF_{\bs{i},k}^* =
\sum_{\Lambda'\cdot\alpha^{\bs{i}} = \alpha^{\bs{i}}_k} C_{\Lambda'}\aE_{\bs{i},N}^{\lambda'_n}\aE_{\bs{i},N - 1}^{\lambda'_{n - 1}}\cdots\aE_{\bs{i},1}^{\lambda'_1}.
\end{align*}
Also note that $\Lambda'_{< k} \neq 0$ if $\Lambda' \neq \delta_k$. 

Now take $\Lambda, \Gamma$ and let $k, l$ be the minimum numbers such that $\lambda_k \neq 0$ and $\gamma_l \neq 0$. Since $P$ is \star-preserving, we may assume $k \le l$.
Then we can ignore the terms in the above sum with $\Lambda' \neq \delta_k$ in the computation of $P((\aF_{\bs{i}}^{\Lambda})^*\aF_{\bs{i}}^{\Gamma})$ since such a term is contained in
the left ideal generated by $U_{q,0^+}(\n^+)$ by the relation (\ref{eq:commutation in degenerate case}). By the same reason, we have $k = l$ and $\lambda_k \le \gamma_k$ if $P((\aF_{\bs{i}}^{\Lambda})^*\aF_{\bs{i}}^{\Gamma}) \neq 0$.
Now we take the ajoint again. Then the argument above implies
$\gamma_k \le \lambda_k$, hence we have $\lambda_k = \gamma_k$ combining with the other inequality.
Moreover we can see that $P((\aF_{\bs{i}}^{\Lambda})^*\aF_{\bs{i}}^{\Gamma}) \neq 0$ implies that $P((\aF_{\bs{i}}^{\Lambda_{k <}})^*\aF_{\bs{i}}^{\Gamma_{k <}}) \neq 0$.
Now the desired orthogonality can be seen by iterating this argument.
The formula also follows from this discussion and Lemma \ref{lemm:formula for adjoint}.
\end{proof}

\begin{proof}[Proof of Lemma \ref{lemm:sufficient condition for unitarizability}]
Fix $V \in \rep_q^{\fin} T$ and consider a sesquilinear form on $\dqind{V}$ such that
$\ip{(x\tensor v,y\tensor v'}_{\chi} = \ip{v,P(x^*y)v'}$, whose
existence can be seen by the usual discussion, e.g. \cite[Subsection 3.15]{MR2428237}.

Let $\{v_i\}_i$ be a basis of $V$.
Since $\dqind{V}$ has a basis $\{\aF_{\bs{i}}^{\Lambda}\tensor v_i\}_{\Lambda,i}$,
we can identify $\dqind{V}$ with $U_{q,0^+}(\n^-)\tensor V$
for all $\chi \in \Ch_{\R} 2Q^+$. Since $U_{q,\chi}(\k)$ is
a continuous family with respect to $\chi$, we obtain a continuous
family $\{\ip{\tend,\tend}_{\chi}\}_{\chi \in \Ch_{\R}2Q^+}$
of sesquilinear forms on $U_{q,0^+}\tensor V$. Moreover, on
the subspace of $\chi$ satisfying $\chi_{2\alpha} \le 0$ for all $\alpha \in \roots^+$, each sesquilinear form is non-degenerate.
Since this subspace is connected, positive-definiteness for some
$\ip{\cdot,\cdot}_{\chi}$ implies positive-definiteness for all $\chi$.
Now Lemma \ref{lemm:orthogonality at degenerate parameter}
implies that $\ip{\cdot,\cdot}_{0^+}$ is positive definite,
which completes the proof.
\end{proof}

Now we see the following no-go theorem on \emph{noncommutative}
compact full flag manifolds.

\begin{thrm} \label{prop:characterization of unitarizability}
 For $\phi \in X_{\fssorb}(\C)$, $\intO{\phi}$
is unitarizable if and only if $\phi \in X_{\fflag}^{\mathrm{quot}}$.
\end{thrm}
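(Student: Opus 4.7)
The plan is to combine the two lemmas just proved, using Proposition \ref{prop:twisted catO} to reduce the ``if'' direction to the case covered by Lemma \ref{lemm:sufficient condition for unitarizability}.

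For the ``only if'' direction, any unitarization of $\intO{\phi}$ is a semisimple \cstar-module category, so $\intO{\phi}$ itself must be semisimple; by Theorem \ref{thrm:characterization of semisimplicity} and Lemma \ref{lemm:identification of toric and moduli} this forces $\phi \in X_{\fssorb}^{\circ}(\C)$, after which Lemma \ref{lemm:sufficient condition for non-unitarizability} delivers the contrapositive. For $\phi \notin X_{\fssorb}^{\circ}(\C)$ both sides of the ``iff'' are already false, since $X_{\fflag}^{\quot} \subset X_{\fssorb}^{\circ}(\C)$.

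For the ``if'' direction, fix $\phi \in X_{\fflag}^{\quot}$. The goal is to find $w \in W$ such that $w\cdot\chi_{\phi}$ satisfies $\chi_{2\alpha} \leq 0$ for all $\alpha \in \roots^+$, so that Lemma \ref{lemm:sufficient condition for unitarizability} applies. The key combinatorial step is to observe that
\[
P' := \{\alpha \in \roots \mid \phi_{\alpha} \neq 1\}
\]
is a parabolic subset of $\roots$: the identity $\phi_{-\alpha} = -\phi_{\alpha}$ gives $\roots = P' \cup (-P')$, and the quadratic relation $\phi_{\alpha}\phi_{\beta} + 1 = \phi_{\alpha+\beta}(\phi_{\alpha} + \phi_{\beta})$ forces $(\phi_{\alpha} - 1)(\phi_{\beta} - 1) = 0$ whenever $\phi_{\alpha+\beta} = 1$, giving additive closure of $P'$ inside $\roots$. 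Choose a positive system $\roots_0^+ \subset P'$ and $w$ with $w^{-1}(\roots^+) = \roots_0^+$. For each $\beta \in \roots^+$ we have $w^{-1}(\beta) \in \roots_0^+ \subset P'$, so $\phi_{w^{-1}(\beta)} \in [-1, 1)$ and hence
\[
(w\cdot \chi_{\phi})_{2\beta} = q^{(w\rho - \rho,\, 2\beta)} \cdot \frac{\phi_{w^{-1}(\beta)} + 1}{\phi_{w^{-1}(\beta)} - 1} \leq 0,
\]
since the fraction is a non-positive real and the prefactor is positive. Lemma \ref{lemm:sufficient condition for unitarizability} then supplies a unitarization of $\intO{w\cdot \chi_{\phi}}$, and Proposition \ref{prop:twisted catO} together with the fact that unitarizability depends only on the underlying $\rep_q^{\fin} G$-module category (not on the identification $\Z_+(\cl{M}) \cong \Z_+(H)$ attached to a semisimple action of $\fssorb$-type) transports this back to a unitarization of $\intO{\phi}$.

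The main obstacle I anticipate is the parabolic-subset argument for $P'$; once that is in place, the rest is routine bookkeeping, and in particular the boundary values $\phi_{\alpha} = \pm 1$---at which $\chi_{\phi, 2\alpha}$ degenerates to $\infty$ or $0$---are absorbed automatically by the choice $\roots_0^+ \subset P'$ without any further case analysis.
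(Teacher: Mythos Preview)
Your proof is correct and follows the same route as the paper's: the ``only if'' direction via semisimplicity plus Lemma \ref{lemm:sufficient condition for non-unitarizability}, and the ``if'' direction via Lemma \ref{lemm:sufficient condition for unitarizability} together with Proposition \ref{prop:twisted catO}. Your parabolic-subset argument for $P' = \{\alpha \mid \phi_{\alpha} \neq 1\}$ makes explicit what the paper leaves as a one-line reduction (``the other cases reduce to this case by Proposition \ref{prop:twisted catO}''); the closure check via $(\phi_{\alpha}-1)(\phi_{\beta}-1)=0$ and the choice of $\roots_0^+ \subset P'$ are exactly what is needed to land in $\Ch_{\R} 2Q^+$ with non-positive values, and the same parabolic device already appears in the paper's proof of Proposition \ref{prop:characterization of quotient Poisson structure}.
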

\begin{proof}
If $\intO{\phi}$ is unitarizable, it must be semisimple since
each space of morphisms is finite dimensional. This implies
$\phi$ must belongs to $X_{\fssorb}^{\circ}(\C)$.
Then Lemma \ref{lemm:sufficient condition for non-unitarizability}
implies $\phi$ must belongs to $X_{\fflag}^{\quot}$.

On the other hand, if $\phi$ belongs to $X_{\fflag}^{\quot}$
and satisfies $\phi_{\alpha} \neq 1$ for all $\alpha \in \roots^+$,
the corresponding
element $\chi_{\phi} \in X_{\roots}(\C)$ is a character on $2Q^+$
satisfying $\chi_{2\alpha} \le 0$ for all $\alpha \in\roots^+$.
Hence Lemma \ref{lemm:sufficient condition for unitarizability}
implies unitarizability. The other cases reduces to this case
by Proposition \ref{prop:twisted catO}.
\end{proof}
\begin{rema} \label{rema:reduction to quantum spheres}
By Remark \ref{rema:operator algebraic duality}, we obtain
a unital \cstar-algebra $C_{q,\phi}(\fflag)$ with an action of $K_q$
from the action $\uniintO{\phi}$ of $\fflag$-type. Unfortunately,
this is an essentially known action.
To see this, we may assume that $\chi_{\phi}$
is a character on $2Q^+$ by Proposition \ref{prop:twisted catO}.
Then $\uniintO{\chi}$ is unitarily equivalent to $w^S_*\mathrm{C}^*\cl{O}_{q,\bar{\chi}}^{\bar{S},\mathrm{int}}$ by Corollary \ref{coro:induced action} and Lemma \ref{lemm:uniqueness of unitarization}, where
$S := \{\eps \in \mid \chi_{2\eps}\neq 0\}$. Hence
$C_{q,\phi}(\fflag)$ is isomorphic to the action induced from $C_{q,\bar{\phi_S}}(T\backslash K_{\bar{S}})$, the action of $K_{\bar{S},q}$
corresponding to $\mathrm{C}^*\cl{O}_{q,\bar{\chi}}^{\bar{S}}$, along $K_{S,q}\longrightarrow K_q$.

On the other hand, our assumption on $\chi$ implies that $\bar{S}$ is
discrete in the Dynkin diagram. Hence the semisimple part
of $\frl_{\bar{S}}$ is a product of $\fsl_2$. This fact allows us to
use the classification \cite[Example 3.12]{MR3420332}, which concludes that
$C_{q,\bar{\phi_S}}(T\backslash K_{\bar{S}})$ is isomorphic to the product
of Podle\'{s} spheres with the action induced by $K_{\bar{S},q}\longrightarrow \prod_{\eps \in \bar{S}} SU_{q_{\eps}}(2)$.

From the discussion above, we can also conclude that $C_{q,\phi}(\fflag)$
is isomorphic to a left coideal of $C_q(K)$ for any $\phi \in X_{\fflag}^{\quot}$ and that $C_{q,\phi}(\fflag)$ is type I.
In terms of module
categories, this is equivalent to the existence of a left
$\rep_q^{\fin} K$-module \star-functor
$\map{F}{\uniintO{\phi}}{\mathrm{Hilb}^{\fin}}$ satisfying
$\dim F(M_{\chi_{\phi}}(0)) = 1$.
\end{rema}

\section{Classification theorems for $\fssorba$ and $\fflaga$}
\label{sect:classification}
In this section, we classify
semisimple actions of $\fssorba$-type and actions of $\fflaga$-type.

\subsection{Generating morphisms in $\rep_q^{\fin}\SL_n$ and their relations}

At first we recall some concrete construction in $\rep_q^{\fin} \SL_n$.
We use the constructions in \cite{MR3263166} with modifications arising from
the difference of convension. Namely the coproduct of
$U_q(\fsl_n)$ in \cite{MR3263166} is the opposite of ours. Hence we need to
reverse the order of tensor factors.

We identify $\h_{\R}^*$ with
$\{x \in \R^n\mid x_1 + x_2 + \cdots + x_n = 0\} \cong \R^n/\R(1,1,\dots,1)$. Let $(e_i)_{i = 1}^n$ be the standard basis of $\R^n$.
Then we have $\roots = \{e_i - e_j\mid i\neq j\}$ and $Q = \Z^n \cap \h_{\R}^*$. The usual positive system is given by
$\roots^+ = \{e_i - e_j\mid i < j\}$ and $\simples = \{\eps_i\}_{i = 1}^{n - 1}$ is given by
$\eps_i := e_i - e_{i + 1}$.

Let $(\varpi_i)_i$ be the fundamental weights such that $\ip{\varpi_i, \eps_j} = \delta_{ij}$.
Namely $\varpi_i$ is given by
\begin{align*}
 \varpi_i = [e_1 + e_2 + \cdots + e_i] = \frac{n - i}{n}\sum_{j = 1}^ie_j - \frac{i}{n}\sum_{j = i + 1}^n e_j.
\end{align*}

Now we consider the following representation $\Lambda_q^1$ with a
basis $(x_i)_{i = 1}^n$:
\begin{align*}
 E_ix_j =
\begin{cases}
 x_{j - 1} & (j = i + 1) \\
 0 & (j \neq i + 1),
\end{cases}\quad
 K_{\lbd}x_j = q^{\lbd_j}x_j,\quad
 F_ix_j =
\begin{cases}
 x_{j + 1} & (j = i) \\
 0 & (j \neq i).
\end{cases}
\end{align*}
Then $\wt x_i = [e_i]$, which implies that this representation is
the irreducible representation with highest weight $\varpi_1$.

To obtain the other fundamental representations, we consider
the quotient of the tensor algebra $T(\Lambda_q^1)$ with the
following relations:
\begin{align*}
 x_i^2 &= 0 \quad (1 \le i \le n),\quad \\
x_jx_i + qx_ix_j &= 0 \quad (1 \le i < j \le n).
\end{align*}
This is called the \emph{quantum exterior algebra},
which is also a representation of $U_q(\fsl_n)$.
It has direct summands $(\Lambda_q^i)_i$ which are the images
of $(\Lambda_q^1)^{\tensor i}$. It is known that each $\Lambda_q^i$
is the irreducible representation with highest weight $\varpi_i$. In these
representations, the image of $v_1\tensor v_2 \tensor \cdots \tensor v_i$
is denoted by $v_1\wedge_q v_2\wedge_q \cdots \wedge_q v_i$.
Then we define $x_S$ for $S \subset \{1,2,\dots,n\}$ as
\begin{align*}
 x_S := x_{i_1}\wedge_q x_{i_2}\wedge_q \cdots \wedge_q x_{i_k}
\end{align*}
with $S = \{i_1,i_2,\dots,i_k\},\, i_1 < i_2 < \cdots < i_k$.

There are some distinguished morphisms in $\rep_q^{\fin} \SL_n$.
Since the quantum exterior algebra is $U_{q}(\fsl_n)$-algebra,
the multiplication is a $U_q(\fsl_n)$-homomorphism. In particular it
restricts to a morphism $\map{M_{k,l}}{\Lambda_q^k\tensor \Lambda_q^l}{\Lambda_q^{k + l}}$:
\begin{align*}
 M_{k,l}(x_T\tensor  x_S) =
\begin{cases}
(-q)^{\ell(S,T)} x_{S\cup T} & (S\cap T = \emptyset), \\
 0 & (S\cap T \neq \emptyset),
\end{cases}
\end{align*}
where $\ell(S,T) = \abs{\{(i,j) \in S\times T\mid i < j\}}$.

Similarly we also have a morphism $\map{M_{k,l}'}{\Lambda_q^{k + l}}{\Lambda_q^k\tensor \Lambda_q^l}$:
\begin{align*}
 M_{k,l}'(x_S) = (-1)^{kl}\sum_{T\subset S} (-q)^{-\ell(S\setminus T,T)}x_{S\setminus T}\tensor x_T.
\end{align*}
Additionally we also have evaluations and coevaluations:
\begin{align*}
 &\map{\eps^+_i}{(\Lambda_q^i)^*\tensor \Lambda_q^i}{k},\quad \eps^+_i(f\tensor v) = f(v),\\
&\map{\eta^+_i}{k}{(\Lambda_q^i)^*\tensor \Lambda_q^i},\quad \eta^+_i(1) = \sum_{i} e^i \tensor K_{-2\rho}e_i,\\
 &\map{\eps^-_i}{\Lambda_q^i\tensor (\Lambda_q^i)^*}{k},\quad \eps^-_i(v\tensor f) = f(K_{2\rho}v),\\
&\map{\eta^-_i}{k}{\Lambda_q^i \tensor (\Lambda_q^i)^*},\quad \eta^-_i(1) = \sum_{i} e_i \tensor e^i,
\end{align*}
where $V^*$ is regarded as $U_q(\fsl_n)$-module by $(xf)(v) := f(S(x)v)$
for any $V \in \rep_q^{\fin} \SL_n$.

Note $\Lambda_q^n \cong k$ via the morphism given by
$x_1\wedge_q x_2\wedge_q \cdots \wedge_q x_n \longmapsto q^{n(n + 1)/4}$.
Hence we regard $M_{k,n - k}$ and $M_{k,n - k}'$ as
morphisms between $\Lambda_q^{k}\tensor \Lambda_q^{n - k}$ and $k$.

\begin{prop} \label{prop:generating morphisms}
The tensor category $\rep_q^{\fin} \SL(n)$ is generated by
$\{\Lambda_q^i\}_i$, $\{(\Lambda_q^i)^*\}_i$ and $\{M_{k,l}\}_{k,l}$
$\{M_{k,l}'\}_{k,l}$, $\{\eps_i^{\pm}\}_{i,\pm}$, $\{\eta_{i}^{\pm}\}_{i,\pm}$ as an idempotent complete $k$-linear tensor category.
\end{prop}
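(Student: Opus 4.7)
The plan is to split the statement into two parts: first, that every simple object of $\rep_q^{\fin}\SL_n$ arises as a direct summand of a tensor product of the $\Lambda_q^i$ and $(\Lambda_q^i)^*$, and second, that every morphism between such tensor products is built from the listed generators by composition, tensor product and $k$-linear combination. Together with idempotent completeness and semisimplicity, these two assertions prove the statement.

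For the object part, I would observe that for any dominant weight $\lbd = \sum_i a_i\varpi_i \in P^+$ the tensor product $\bigotimes_i (\Lambda_q^i)^{\tensor a_i}$ has a one-dimensional highest-weight space of weight $\lbd$, so $L_{\lbd}$ occurs as a direct summand. Since $\rep_q^{\fin}\SL_n$ is semisimple, idempotent completeness yields $L_{\lbd}$ inside the full subcategory generated by the fundamentals; the duals $(\Lambda_q^i)^*$ are then redundant for objects but are needed so that the generated subcategory is closed under taking duals (via $\eps_i^{\pm},\eta_i^{\pm}$).

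For the morphism part, I would first use the rigidity morphisms $\eps_i^{\pm},\eta_i^{\pm}$ to bend duals to one side, reducing an arbitrary morphism between tensor products of $\Lambda_q^{i_j}$'s and $(\Lambda_q^{i_j})^*$'s to a morphism from the unit $k$ into a tensor product of $\Lambda_q^{i_j}$'s and their duals. The remaining task is then the assertion that $\Hom$-spaces between tensor products of fundamental representations are spanned by compositions and tensor products of the multiplications $M_{k,l}$ and comultiplications $M_{k,l}'$ (together with identities). This is exactly the content of the $\mathfrak{sl}_n$ web/spider theorem of Cautis\tend Kamnitzer\tend Morrison \cite{MR3263166}: their $\mathfrak{sl}_n$ web category, with merge and split webs as generating morphisms, is equivalent (via an essentially surjective, full functor) to the full subcategory of $\rep_q^{\fin}\SL_n$ on tensor products of exterior powers, and under that functor the merge and split webs correspond precisely to $M_{k,l}$ and $M_{k,l}'$.

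The main obstacle I expect is the bookkeeping needed to align conventions with \cite{MR3263166}: as remarked in the excerpt, their coproduct is opposite to ours, so one has to check that the maps $M_{k,l}, M_{k,l}'$ as defined here agree, up to harmless scalar renormalizations, with the generating webs in the spider, and that the rigidity data $\eps_i^{\pm},\eta_i^{\pm}$ used to fold duals agree with the cups and caps implicit in that formalism. Once these identifications are pinned down, the full subcategory generated by the listed morphisms agrees with the image of the web category, and the Cautis\tend Kamnitzer\tend Morrison result concludes the proof.
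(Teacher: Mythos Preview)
Your proposal is correct and matches the paper's treatment: the paper does not give a proof of this proposition at all, but states it as a known consequence of \cite{MR3263166} (Cautis--Kamnitzer--Morrison), which is precisely the input you invoke. Your outline of the object part via highest weights and the morphism part via the $\mathfrak{sl}_n$ web theorem, together with your remark on the convention mismatch (their $M_{k,l}$ is our $M_{l,k}$), simply makes explicit what the paper leaves implicit.
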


The following relations can be verified by comparing our construction
with the construction in \cite{MR3263166}, noting that
their $M_{k,l}$ is our $M_{l,k}$ and their $M'_{k,l}$ is our $M'_{l,k}$.

\begin{align}
(\eps_k^-\tensor\id)\circ(\id\tensor \eta^+_k) &= \id, \label{eq:conjugation equation 1}\\
(\id\tensor \eps_k^+)\circ(\eta_k^-\tensor\id) &=\id, \label{eq:conjugation equation 2}\\
(M_{k,n-k}\tensor \id)\circ (\id\tensor \eta^-_{n - k})
&= (-1)^{k(n - k)} (\id\tensor M_{n - k,k})\circ (\eta^+_{n - k}\tensor \id), \label{eq:rotation}\\
 M_{k,l + m}\circ (\id\tensor M_{l,m}) &= M_{k + l,m}\circ (M_{k,l}\tensor \id), \label{eq:associativity for m}\\
 (\id\tensor M'_{l,m})\circ M'_{k,l + m} &= (M'_{k,l}\tensor \id)\circ M'_{k + l,m}, \label{eq:associativity for m'}\\
 M_{k,l}\circ M'_{k,l} &= \qbin{k + l}{k}{q}\id, \label{eq:bubble popping}\\
 (M_{n - k,k}\tensor \id)\circ(\id\tensor M'_{k,l}) &= (-1)^{l(n - l)}(\id\tensor M_{n - k - l, k + l})\circ (M'_{k,n - k - l}\tensor \id). \label{eq:flipping n}
\end{align}
We also have the following relation, called the square switch relation.
\begin{align}
(\id&\tensor M_{r,k - s})\circ (M'_{l + s - r,r}\circ M_{l,s}\tensor \id)\circ (\id\tensor M'_{s, k - s}) \notag \\
&= \sum_t \qbin{k - l + r - s}{t}{q} (M_{l - r + t, s - t}\tensor \id)\circ (\id\tensor M'_{s - t, k - s + r}\circ M_{r - t, k}) \label{eq:square switching}\\
&\hspace{90mm}\circ (M'_{l - r + t, r - t}\tensor \id). \notag
\end{align}
Next we would like to take the \cstar-structure into account.

\begin{lemm}
For $1 \le k \le n$, $\Lambda_q^k$ is a unitary representation of $U_q(\fsl_n)$ with respect to the following inner product:
\begin{align*}
 \ip{x_S,x_T} = \delta_{S,T} q^{\sum S},
\end{align*}
where $\sum S$ is the sum of all elements of $S$.
\end{lemm}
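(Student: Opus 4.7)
The plan is to verify unitarity directly on the generators $E_i, F_i, K_\lambda$ of $U_q(\fsl_n)$, using the basis $\{x_S\}$ of $\Lambda_q^k$ and exploiting the fact that the given inner product is diagonal in this basis. Positive-definiteness is immediate since $q > 0$ and the inner product has diagonal values $q^{\sum S} > 0$. Since the vectors $x_S$ span $\Lambda_q^k$, it suffices to check $\ip{X^*x_S, x_T} = \ip{x_S, Xx_T}$ for $X \in \{K_\lambda, E_i, F_i\}$ on basis pairs.

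The Cartan case $K_\lambda^* = K_\lambda$ is automatic: each $x_S$ is a weight vector with weight $\sum_{j \in S} e_j$, so $K_\lambda x_S = q^{\sum_{j \in S} \lambda_j} x_S$ acts diagonally with real scalars, and distinct weight spaces are orthogonal.

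The main computation is for $E_i$ and $F_i$. Using the iterated coproduct
\begin{align*}
 \Delta^{(k-1)}(E_i) = \sum_{l=1}^k K_i^{\otimes(l-1)} \otimes E_i \otimes 1^{\otimes(k-l)},
\end{align*}
I would apply $E_i$ to $x_S = x_{j_1} \wedge_q \cdots \wedge_q x_{j_k}$. The only surviving term is the one where $E_i$ hits the $x_{i+1}$-factor (so $E_i x_S = 0$ unless $i+1 \in S$ and $i \notin S$), and in that case the preceding tensor factors live in $\{1, \ldots, i-1\}$, on which $K_i$ acts trivially. Writing $S = A \sqcup \{i+1\} \sqcup B$ with $A \subset \{1,\ldots,i-1\}$ and $B \subset \{i+2,\ldots,n\}$, one obtains $E_i x_S = x_A \wedge_q x_i \wedge_q x_B = x_{S'}$ with $S' = (S \setminus \{i+1\}) \cup \{i\}$, with no sign or scalar. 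An analogous calculation using $\Delta^{(k-1)}(F_i) = \sum_l 1^{\otimes(l-1)} \otimes F_i \otimes K_i^{-1\,\otimes(k-l)}$ gives $F_i x_T = x_{T''}$ with $T'' = (T \setminus \{i\}) \cup \{i+1\}$ when $i \in T, i+1 \notin T$, and zero otherwise.

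With these explicit formulas, the adjoint identities reduce to scalar bookkeeping: $\ip{K_iF_i x_S, x_T}$ is nonzero exactly when $T = S''$, in which case $\sum T = \sum S + 1$ and the factor $q^{-1}$ from $K_i$ acting on $x_{S''}$ (where $i \notin S'', i+1 \in S''$) produces $q^{-1} \cdot q^{\sum S + 1} = q^{\sum S}$, matching $\ip{x_S, E_i x_T}$. The computation for $F_i^* = E_iK_i^{-1}$ is symmetric. There is no real obstacle here; the only thing to watch is the cancellation between the weight factors from the $K_i, K_i^{-1}$ appearing in $E_i^*, F_i^*$ and the difference $\sum T - \sum S = \pm 1$ of weights under $E_i, F_i$. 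The fact that $E_i$ and $F_i$ send basis vectors to basis vectors (without extra scalars) is what makes the verification clean; this is specific to this choice of inner product, where the weights $q^{\sum S}$ are calibrated precisely to absorb the asymmetry in $\Delta(E_i), \Delta(F_i)$.
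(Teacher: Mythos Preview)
Your proof is correct but takes a different route from the paper's. The paper proceeds by induction on $k$: the case $k=1$ is a direct check, and for the inductive step it embeds $\Lambda_q^{k+1}$ into $\Lambda_q^k \otimes \Lambda_q^1$ via the $U_q(\fsl_n)$-morphism $M'_{k,1}$, so compatibility with the $*$-structure is inherited automatically from the tensor product; the only remaining work is to compute $\|M'_{k,1}(x_S)\|^2$ in the tensor product inner product and observe that it equals a uniform constant times $q^{\sum S}$. By contrast, you compute the action of $E_i$ and $F_i$ on the $x_S$ basis directly from the iterated coproduct and verify the adjoint identities $E_i^* = K_iF_i$, $F_i^* = E_iK_i^{-1}$ by hand. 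Your approach is more elementary and entirely self-contained; the paper's approach is more structural, trading the generator-by-generator check for a single norm computation by leveraging that $U_q(\g)$-morphisms automatically intertwine any invariant Hermitian form.
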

\begin{proof}
By induction on $k$. The case of $k = 1$ follows from direct calculation.
Assume the statement holds for $k$. Then we can embed $\Lambda_q^{k + 1}$
into $\Lambda_q^k\tensor \Lambda_q^1$ by $M_{k,1}'$. Fix $S = \{i_1,i_2,\dots, i_{k + 1}\}$ with $i_1 < i_2 < \cdots < i_{k + 1}$. Then we have
\begin{align*}
 M_{k,1}'(x_S) = (-1)^{k}\sum_{l = 1}^{k + 1} (-q)^{-(l - 1)} x_{S_l}\tensor x_{i_l},
\end{align*}
where $S_l = \{i_1,i_2,\cdots, i_{l - 1}, i_{l + 1},\cdots, i_{k + 1}\}$.
Now consider the inner product on $\Lambda_q^k\tensor \Lambda_q^1$.
It induces an inner product on $\Lambda_q^{k + 1}$ which is compatible
with the action of $U_q(\fsu(n))$. More concretely the square of $\nor{x_S}$ is calculated as follows:
\begin{align*}
 \nor{x_S}^2 = \nor{M_{k,1}'(x_S)}^2 = \sum_{l = 1}^{k + 1}q^{-2(l - 1)}\nor{x_{S_l}}^2\nor{x_{i_l}}^2
= q^{-k(k + 1)}q^{\sum S}.
\end{align*}
Then, by rescalling the inner product, we can see the statement.
\end{proof}

In the rest of this paper, each $\Lambda_q^k$ is regarded as
a unitary representation of $U_q(\fsu(n))$. Then we can consider
the adjoint of $M_{k,l}, M_{k,l}', \eps_i^{\pm}, \eta_i^{\pm}$.
It is not difficult to see the following relations:
\begin{align*}
 M_{k,l}^* = q^{kl}M_{k,l}',\quad
(\eps_i^{\pm})^* = \eta_i^{\pm}
\end{align*}
\subsection{Classification theorems}
The goal of this section is the following.
\begin{thrm} \label{thrm:algebraic classification}
Let $\cl{M}$ be a semisimple action of $\fssorba$-type. Then there is a unique
$\chi \in X_{\fssorba}^{\circ}$ such that
$\cl{M} \cong \intO{\chi}$.
\end{thrm}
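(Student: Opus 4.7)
The theorem splits into uniqueness and existence. Uniqueness is immediate from Proposition \ref{prop:distinction of actions}: if $\intO{\chi} \cong \intO{\chi'}$ as semisimple actions of $\fssorba$-type, then $\chi = \chi'$, so the parameter representing any given $\cl{M}$, once it exists, is uniquely determined.

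For existence, the plan is paragroup-style: attach combinatorial numerical data (a scalar system) to $\cl{M}$, classify such scalar systems by $X_{\fssorba}^{\circ}$, and confirm that $\intO{\chi_\phi}$ realises the scalar system associated to $\phi$. By Proposition \ref{prop:generating morphisms}, $\rep_q^{\fin}\SL_n$ is generated as an idempotent-complete $k$-linear tensor category by the fundamental representations $\Lambda_q^k$, their duals, and the morphisms $M_{k,l}, M'_{k,l}, \epsilon_i^\pm, \eta_i^\pm$. Under the fusion-rule identification $\Z_+(\cl{M}) \cong \Z_+(T)$, the Hom spaces $\cl{M}(X_\lambda, \Lambda_q^k \otimes X_\mu)$ have the classical dimensions $\dim(\Lambda_q^k)_{\lambda - \mu}$; in particular they are one-dimensional whenever $\lambda - \mu$ is an extremal weight of $\Lambda_q^k$. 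Choosing a normalisation of each one-dimensional Hom space, the action of each generating morphism on morphism spaces of $\cl{M}$ records a scalar, and the totality of these scalars is the scalar system of $\cl{M}$.

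The relations (\ref{eq:conjugation equation 1})--(\ref{eq:square switching}) among the generators translate into polynomial identities on the scalar system. The conjugation equations (\ref{eq:conjugation equation 1})--(\ref{eq:conjugation equation 2}), together with a choice of pivots for the $\epsilon^\pm_i, \eta^\pm_i$, pin down all unit/counit data once and for all; the associativity relations (\ref{eq:associativity for m}) and (\ref{eq:associativity for m'}) reduce the effective data to the scalars attached to $M_{1,k}$ and $M'_{1,k}$; the rotation and flipping relations (\ref{eq:rotation}), (\ref{eq:flipping n}) yield the antisymmetry $\phi_{-\alpha} = -\phi_\alpha$; and the bubble popping (\ref{eq:bubble popping}) fixes the $q$-integer normalisations. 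The key Plücker-type identity
\[
\phi_\alpha \phi_\beta + 1 = \phi_{\alpha+\beta}(\phi_\alpha + \phi_\beta)
\]
for $\alpha,\beta,\alpha+\beta \in \roots$ emerges from the square-switch relation (\ref{eq:square switching}), applied in the simplest nontrivial quadruple $(k,l,r,s)$ in which both sides collapse to a two-term identity. After this gauge-fixing, the remaining data is exactly a tuple $(\phi_\alpha)_{\alpha \in \roots} \in X_{\fssorba}(k)$, and the semisimplicity of $\cl{M}$ forces $\phi \in X_{\fssorba}^{\circ}$ by Theorem \ref{thrm:characterization of semisimplicity} and Lemma \ref{lemm:identification of toric and moduli}.

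It remains to identify $\cl{M}$ with $\intO{\phi}$. Computing the scalar system of $\intO{\chi_\phi}$ directly, using the explicit isomorphisms $f_{V,W}$ from Proposition \ref{prop:strong regularity} in the same vein as Lemma \ref{lemm:invariant coefficients at extremal}, produces the same parameter $\phi$; since a semisimple action of $\fssorba$-type is reconstructed from its scalar system by rebuilding each Hom space via the generating morphisms of $\rep_q^{\fin}\SL_n$, this yields the equivalence $\cl{M} \cong \intO{\phi}$. The principal obstacle is the classification step: extracting the quadratic relation cleanly from the square-switch relation, whose combinatorial complexity involves $q$-binomial coefficient sums, and ruling out parasitic gauge parameters that could, a priori, enlarge the moduli of scalar systems beyond $X_{\fssorba}^{\circ}$.
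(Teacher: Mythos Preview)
Your overall strategy matches the paper's: pass to an associator, extract scalar invariants from the generating morphisms of $\rep_q^{\fin}\SL_n$, show these form a scalar system, classify scalar systems by the parameter space, and verify that $\intO{\chi}$ produces the expected invariants. However, there is a genuine gap in how you extract the parameter and how you land in $X_{\fssorba}^\circ$.

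You write that after gauge-fixing ``the remaining data is exactly a tuple $(\phi_\alpha)_{\alpha\in\roots}\in X_{\fssorba}(k)$, and the semisimplicity of $\cl{M}$ forces $\phi\in X_{\fssorba}^\circ$ by Theorem~\ref{thrm:characterization of semisimplicity} and Lemma~\ref{lemm:identification of toric and moduli}.'' This is circular: Theorem~\ref{thrm:characterization of semisimplicity} tells you $\intO{\chi}$ is semisimple iff $\chi\in X_\roots^\circ$, but you cannot invoke it until you have already identified $\cl{M}$ with some $\intO{\chi}$, and that identification is precisely what you are trying to establish. The paper avoids this circularity by extracting the constraint $\chi_{2\alpha}\notin q_\alpha^{2\Z}$ \emph{directly from the scalar system}, via Lemma~\ref{lemm:quantum elementary fact}: the invariant $\gamma(i,j;\lambda)$ satisfies the recursion $z_n+z_{n+1}^{-1}=[2]_q$ (from relations (i) and (v) of Definition~\ref{defn:scalar system}), and the solutions of this recursion are parametrised by $\P^1(k)\setminus q^{2\Z}$. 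This is the mechanism that produces both the parameter and the open condition simultaneously, and it is absent from your outline.

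Two smaller points. First, your attribution of which web relation yields which identity is off: the Pl\"ucker-type relation on $\chi$ (equivalently $x_{ij}x_{jk}=x_{ik}$) comes in the paper from relation (vi), which is a consequence of bubble-popping~(\ref{eq:bubble popping}), combined with the factorisation Proposition~\ref{prop:properties of gamma}; the square-switch relation is used to derive relations (i)--(iii), which feed into the recursion above rather than directly producing the quadratic identity. Second, your claim that a semisimple action is ``reconstructed from its scalar system'' hides real work: the paper needs an explicit inductive construction on the weight lattice of the perturbation $b$ (Lemma~\ref{lemm:cell system} and the unnamed lemma following it) to show that $\gamma_{\cl{M}}=\gamma_{\cl{M}'}$ implies $\cl{M}\cong\cl{M}'$. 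Your outline acknowledges this as a ``principal obstacle'' but does not indicate how it is overcome.
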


It is convenient to consider the associator picture.
Actually we focus on some invariant coefficients and show that
they are complete invariants. In the following, we consider
$\{x_S\}_{\abs{S} = k}$ as a basis of each irreducible representation $\Lambda_q^k$.

Let $\Phi$ be an associator and consider
the following map:
\begin{align*}
 \Lambda_q^k\tensor (\Lambda_q^l\tensor k_{\lbd})
\xlongrightarrow{\Phi}
 (\Lambda_q^k\tensor \Lambda_q^l)\tensor k_{\lbd}
\xlongrightarrow{M_{k,l}}
 \Lambda_q^{k + l}\tensor k_{\lbd}.
\end{align*}
Then we obtain the matrix coefficient
$m_{S,T}(\Phi;\lbd) \in k$, which satisfies $M_{k,l}\circ \Phi(x_S\tensor x_T\tensor 1) = m_{S,T}(\Phi;\lbd)x_{S\cup T}\tensor 1$.
In a similar way we also obtain the following scalars from $M'_{k,l},\eps^{\pm}_k, \eta^{\pm}_k$ respectively:
\begin{align*}
 m_{S,T}'(\Phi;\lbd),\quad \eps_S^{\pm}(\Phi;\lbd),\quad
\eta_S^{\pm}(\Phi;\lbd).
\end{align*}
For $b = \{b^{\pm}_S(\lbd)\}_{\pm, S,\lbd}$, we define the perturbation of these scalars by $b$ as follows:
\begin{align*}
 m_{S,T}(\Phi;\lbd)_b &:= b^+_{S\cup T}(\lbd)^{-1}m_{S,T}(\Phi;\lbd)b^+_S([e_T] + \lbd)b^+_T(\lbd),\\
 m'_{S,T}(\Phi;\lbd)_b &:= b^+_S([e_T] + \lbd)^{-1}b^+_T(\lbd)^{-1}m'_{S,T}(\Phi;\lbd)b^+_{S\cup T}(\lbd),\\
 \eps_S^{\pm}(\Phi;\lbd)_b &:= \eps_S^{\pm}(\Phi;\lbd)b^{\mp}_S([e_S] + \lbd)b^{\pm}_S(\lbd),\\
 \eta_S^{\pm}(\Phi;\lbd)_b &:= b^{\mp}_S([e_S] + \lbd)^{-1}b^{\pm}_S(\lbd)^{-1}\eta_S^{\pm}(\Phi;\lbd).
\end{align*}

\begin{lemm} \label{lemm:cell system}
Let $\Phi$ and $\Phi'$ be associators.
The following are equivalent:
\begin{enumerate}
 \item There is an equivalence $\rep_{q,\Phi}^{\fin} H \cong \rep_{q,\Phi'}^{\fin} H$ of semisimple
$\fssorba$-type action.
 \item For some $b$, $(m_{S,T}(\Phi'), m'_{S,T}(\Phi'), \eps_S^{\pm}(\Phi'), \eta_S^{\pm}(\Phi'))_{S,T}$ is the 
$b$-perturbation of $(m_{S,T}(\Phi), m'_{S,T}(\Phi), \eps_S^{\pm}(\Phi), \eta_S^{\pm}(\Phi))_{S,T}$.
\end{enumerate}
\end{lemm}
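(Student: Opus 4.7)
The plan is to apply Lemma \ref{lemm:reduction to associator} and translate the equivalence of semisimple actions into the existence of a natural automorphism $b$ of the bifunctor $\tend \tensor \tend\colon \rep_q^{\fin} \SL_n \times \rep_q^{\fin} H \to \rep_q^{\fin} H$ satisfying the cocycle identity
\begin{align*}
\Phi_{V,V',W}\,b_{V,V'\tensor W}\,(\id\tensor b_{V',W}) \;=\; b_{V\tensor V',W}\,\Phi'_{V,V',W}.
\end{align*}
Since each weight space of $\Lambda_q^k$ is one-dimensional with basis $x_S$ indexed by subsets $S \subset \{1, \dots, n\}$ with $\abs{S} = k$, the restriction of $b_{\Lambda_q^k, k_\lbd}$ to the line spanned by $x_S \tensor 1$ is a scalar $b_S^+(\lbd)$, and similarly $b_{(\Lambda_q^k)^*, k_\lbd}$ defines $b_S^-(\lbd)$. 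These scalars will be the ones appearing in (ii).

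For (i)$\Longrightarrow$(ii), specialize the cocycle to $V_0 \tensor V_0' \tensor k_\lbd$ where $V_0$ and $V_0'$ are of the form $\Lambda_q^k$ or $(\Lambda_q^k)^*$, post-compose with one of the generating morphisms $M_{k,l}$, $M'_{k,l}$, $\eps_k^{\pm}$, $\eta_k^{\pm}$ of Proposition \ref{prop:generating morphisms}, and use naturality of $b$ with respect to it to move this generator past $b_{V\tensor V',W}$. Reading off coefficients on the one-dimensional weight component spanned by the appropriate tensor of basis vectors, the cocycle identity specializes to exactly one of the four stated perturbation formulas.

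For (ii)$\Longrightarrow$(i), conversely, given $(b_S^{\pm}(\lbd))$ matching the cell data by perturbation, first define $b_{V_0, k_\lbd}$ for $V_0$ a fundamental representation $\Lambda_q^k$ or its dual $(\Lambda_q^k)^*$ by multiplication by $b_S^+(\lbd)$, respectively $b_S^-(\lbd)$, on the weight line of $x_S$, and extend $k$-linearly and additively in the $\rep_q^{\fin} H$-argument. Extend inductively to tensor products $V_1 \tensor \cdots \tensor V_r$ of fundamentals and their duals by using the cocycle itself as a definition, and finally to all of $\rep_q^{\fin} \SL_n$ by $k$-linearity and idempotent completion, legitimate by Proposition \ref{prop:generating morphisms}. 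The expected main obstacle is coherence of this inductive extension: on a triple $V_0 \tensor V_0' \tensor V_0''$ the two natural bracketings yield two a priori distinct formulas for $b_{V_0\tensor V_0'\tensor V_0'', k_\lbd}$ in terms of $b_{V_0, *}$, $b_{V_0',*}$, $b_{V_0'',*}$ and the associators, and their equality amounts to the pentagon axiom for $\Phi$ and $\Phi'$ together with the matching perturbation identities on weight lines. Once coherence is established, naturality of $b$ against arbitrary morphisms reduces by Proposition \ref{prop:generating morphisms} to naturality against each of $M_{k,l}, M'_{k,l}, \eps_k^{\pm}, \eta_k^{\pm}$, which on one-dimensional weight components is once more the statement of the matching of perturbed cell data, and the equivalence of module categories follows from Lemma \ref{lemm:reduction to associator}.
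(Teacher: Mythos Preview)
Your approach is essentially the same as the paper's: for (i)$\Rightarrow$(ii) extract the scalars $b_S^{\pm}(\lbd)$ from the natural automorphism $b$ on one-dimensional weight lines, and for (ii)$\Rightarrow$(i) build $b$ inductively on tensor words of fundamentals and their duals via the cocycle identity, check naturality against the generating morphisms of Proposition \ref{prop:generating morphisms} (which is exactly where the perturbation identities enter), and extend by idempotent completion.

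Two minor corrections. First, the reference to Lemma \ref{lemm:reduction to associator} is not quite apt: that lemma produces an associator from an arbitrary action, whereas here both associators are already given, and what you actually need is the characterization of equivalence of associators in terms of a natural automorphism $b$ satisfying the cocycle identity, spelled out immediately after Definition \ref{defn:associator}. Second, the coherence of the inductive extension on triples follows from the pentagon axioms for $\Phi$ and $\Phi'$ alone, together with naturality of $b_{V_0,\tend}$ in the $\rep_q^{\fin} H$-argument; the perturbation identities play no role there and are used only in the separate naturality check against the generating morphisms. Apart from these points your outline matches the paper's proof.
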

\begin{proof}
If $\rep_{q,\Phi}^{\fin}H \cong \rep_{q,\Phi'}^{\fin} H$,
we can take an equivalence
$\map{(\id,b)}{\rep_{q,\Phi}^{\fin}H}{\rep_{q,\Phi'}^{\fin} H}$.
Consider a linear map $\map{b}{\Lambda_q^k\tensor k_{\lbd}}{\Lambda_q^k\tensor k_{\lbd}}$. Since this perserves the weight space decomposition of $\Lambda_q^k \tensor k_{\lbd}$, it naturally defines a scalar $b^+_S(\lbd)$
for any subset $S \subset \{1,2,\dots,n\}$. Similarly $b^-_S(\lbd)$
is also defined by replacing $\Lambda_q^k$ with $(\Lambda_q^k)^*$.
Then it is not difficut to see that $b = (b_S^{\pm}(\lbd))_{S,\lbd}$
satisfies the condition (ii).

To see the converse,
let $w = k_1k_2\dots k_l$ be a finite word of
$\{\pm 1, \pm 2, \dots, \pm(n - 1)\}$.
Set $\Lambda_q^w := \Lambda_q^{k_1}\tensor \Lambda_q^{k_2}\tensor \cdots \tensor \Lambda_q^{k_l}$, where $\Lambda_q^k = (\Lambda_q^{\abs{k}})^*$ when
$k < 0$. Then, by induction on $l$, we have a family
$\{\map{b_w}{\Lambda_q^w\tensor \tend}{\Lambda_q^w\tensor \tend}\}_w$
of natural transformaions satisfying the following conditions:
\begin{enumerate}
 \item If $w = k_1$, $b_w$ is the natural transformation
canonically induced by $b^{\pm}_{S}$ with $\abs{S} = \abs{k_1}$.
 \item For any finite words $w$ and $w'$, the following diagram is commutative:
\begin{align*}
 \xymatrix{
\Lambda_q^{ww'}\tensor \tend \ar[d]_-{b_{ww',\tend}} \ar[r]^-{\Phi'^{-1}} & \Lambda_q^w \tensor (\Lambda_q^{w'}\tensor \tend) \ar[d]^-{b_{w,\Lambda_q^{w'}\tensor \tend}\circ (\id\tensor b_{w',\tend})} \\
\Lambda_q^{ww'}\tensor \tend \ar[r]_-{\Phi^{-1}} & \Lambda_q^w \tensor (\Lambda_q^{w'}\tensor \tend).
}
\end{align*}
\end{enumerate}
Moreover we can check the naturarity of $\{b_w\}_w$ with respect to
$w$, which means commutativity of the following diagram
for any morphism $\map{T}{\Lambda_q^w}{\Lambda_q^{w'}}$:
\begin{align*}
 \xymatrix{
\Lambda_q^w\tensor \tend \ar[r]^-{T\tensor \id} \ar[d]_-{b_{w,\tend}} & \Lambda_q^{w'} \ar[d]^-{b_{w',\tend}}\tensor\tend \\
\Lambda_q^w\tensor \tend \ar[r]^-{T\tensor \id} & \Lambda_q^{w'}\tensor\tend.
}
\end{align*}
By Proposition \ref{prop:generating morphisms} and the property (ii) above, we may assume
$T$ is either of $M_{k,l}, M'_{k,l}, \eta^+_{k}, \eta^-_{k}$.
Here we consider the case of $T = M'_{k,l}$. Since
$m'_{S,T}(\Phi,\lbd)_b = m'_{S,T}(\Phi',\lbd)$ for all $S,T,\lbd$,
the following diagram commutes:
\begin{align*}
 \xymatrix{
\Lambda_q^{k + l}\tensor k_{\lbd} \ar[r]^-{M'_{k,l}\tensor \id} \ar[d]_-{b_{{k + l},k_{\lbd}}} & (\Lambda_q^k\tensor \Lambda_q^l)\tensor k_{\lbd} \ar[r]^-{\Phi'^{-1}} & \Lambda_q^k\tensor (\Lambda_q^l\tensor k_{\lambda}) \ar[d]^-{b_{k,\Lambda_q^l\tensor k_{\lbd}}(\id\tensor b_{l,k_{\lbd}})} \\
\Lambda_q^{k + l}\tensor k_{\lbd} \ar[r]_-{M'_{k,l}\tensor \id} & (\Lambda_q^k\tensor \Lambda_q^l)\tensor k_{\lbd} \ar[r]_{\Phi^{-1}} & \Lambda_q^k\tensor (\Lambda_q^l\tensor k_{\lambda}).
}
\end{align*}
On the other hand, the property (ii) implies that the right square of
the following diagram commutes:
\begin{align*}
 \xymatrix{
\Lambda_q^{k + l}\tensor k_{\lbd} \ar[r]^-{M'_{k,l}\tensor \id} \ar[d]_-{b_{{k + l},k_{\lbd}}} & (\Lambda_q^k\tensor \Lambda_q^l)\tensor k_{\lbd} \ar[r]^-{\Phi'^{-1}} \ar[d]^-{b_{kl,k_{\lbd}}} & \Lambda_q^k\tensor (\Lambda_q^l\tensor k_{\lambda}) \ar[d]^-{b_{k,\Lambda_q^l\tensor k_{\lbd}}(\id\tensor b_{l,k_{\lbd}})}\\
\Lambda_q^{k + l}\tensor k_{\lbd} \ar[r]_-{M'_{k,l}\tensor \id} & (\Lambda_q^k\tensor \Lambda_q^l)\tensor k_{\lbd} \ar[r]_{\Phi^{-1}} & \Lambda_q^k\tensor (\Lambda_q^l\tensor k_{\lambda}).
}
\end{align*}
Hence the left square also commutes.

The same argument works in the case of $T = M_{k,l},\eps_{S}^{\pm},\eta_S^{\pm}$.

Then we have an equivalence $\rep_{q,\Phi}^{\fin} H \cong \rep_{q,\Phi'}^{\fin} H$ which preserves the action of $\Lambda_q^w$ for all finite words $w$. By taking the idempotent completion, we see the condition (i).
\end{proof}

Since we have to consider equivalence classes with respect to
the perturbation, it is natural to look at a datum which does not
depend on the choice of the representative, i.e., invariant
coefficients. In the following we consider the invariant coefficient
$\gamma_{\Phi}(S,T;\lbd)$ arising from the projections onto
$\Lambda_q^{k + l} \subset \Lambda_q^k\tensor \Lambda_q^l$
and weight spaces $(\Lambda_q^k)_{[e_S]}$ and $(\Lambda_q^l)_{[e_T]}$.
By definition we have
\begin{align*}
 \gamma_{\Phi}(S,T;\lbd) &= m_{S,T}(\Phi;\lbd)m'_{S,T}(\Phi;\lbd).
\end{align*}
Then the family $\gamma_{\Phi} := \{\gamma_{\Phi}(S,T;\lbd)\}_{S,T,\lbd}$
only depends on the equivalence class of $\rep_{q,\Phi}^{\fin} H$. We also use
$\gamma_{\cl{M}}$ when $\cl{M}\cong \rep_{q,\Phi}^{\fin} H$.
Surprisingly, this datum contains enough information to distiguish
different semisimple actions of $\fssorba$-type.

\begin{lemm}
Let $\cl{M},\cl{M}'$ be semisimple $\fssorba$-type actions.
If $\gamma_{\cl{M}} = \gamma_{\cl{M}'}$ holds,
$\cl{M}$ is equivalent to $\cl{M}'$.
\end{lemm}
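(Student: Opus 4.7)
The plan is to apply Lemma \ref{lemm:cell system}. By Lemma \ref{lemm:reduction to associator} I may write $\cl{M} = \rep_{q,\Phi}^{\fin} H$ and $\cl{M}' = \rep_{q,\Phi'}^{\fin} H$ for associators $\Phi,\Phi'$, and the task reduces to constructing a perturbation $b = (b^{\pm}_S(\lbd))$ so that the four scalar families $m, m', \eps^{\pm}, \eta^{\pm}$ of $\Phi'$ arise as the $b$-perturbations of those of $\Phi$. A preliminary observation is that $\gamma(S,T;\lbd) = m_{S,T}\,m'_{S,T}$ is itself invariant under any perturbation, so the hypothesis $\gamma_{\cl{M}} = \gamma_{\cl{M}'}$ is both necessary and consistent with existence of such a $b$.

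I would define $b^+_S(\lbd)$ inductively on $|S|$: normalize $b^+_{\{i\}}(\lbd) := 1$, and for $|S|\ge 2$ write $i := \min S$ and $S' := S \setminus \{i\}$, then set
\[
b^+_S(\lbd) \; := \; b^+_{S'}(\lbd)\cdot\frac{m_{\{i\},S'}(\Phi';\lbd)}{m_{\{i\},S'}(\Phi;\lbd)},
\]
which is well-defined because invertibility of $\Phi$ forces $m_{\{i\},S'}(\Phi;\lbd) \neq 0$ whenever $i \notin S'$. By construction $m_{\{i\},S'}$ agrees between $(\Phi)_b$ and $\Phi'$, and the equality $\gamma_{\cl{M}} = \gamma_{\cl{M}'}$ then forces agreement of the companion scalars $m'_{\{i\},S'}$ automatically. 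Using the associativity relations (\ref{eq:associativity for m})-(\ref{eq:associativity for m'}), the agreement for arbitrary $(S,T)$ reduces to the single-element cases just arranged, since both $m_{S,T}$ and $m'_{S,T}$ can be expressed as iterated compositions of $m_{\{i\},\cdot}$ and $m'_{\{i\},\cdot}$.

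With $b^+$ fixed, I would define $b^-_S(\lbd)$ by transporting along the rotation identity (\ref{eq:rotation}), which after pairing with $\eps^{\pm},\eta^{\pm}$ expresses the duality scalars in terms of the already-constructed $b^+_T$ (with $T = \{1,\dots,n\}\setminus S$) together with the appropriate $m, m'$-data. The conjugation equations (\ref{eq:conjugation equation 1})-(\ref{eq:conjugation equation 2}) then become consistency checks: the composite $\eps^+\!\circ\eta^+$ is the quantum dimension of $\Lambda_q^k$, which is a universal scalar independent of $\Phi$, so each such check reduces to an identity already encoded in $\gamma_{\cl{M}} = \gamma_{\cl{M}'}$.

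The principal obstacle I expect is the simultaneous consistency of the construction: the same $b$-scalar appears in several relations at once, namely the two associativity identities, the bubble relation (\ref{eq:bubble popping}), the square switch (\ref{eq:square switching}), conjugation, and rotation. One must verify that every ``closed-cycle'' constraint reduces to a polynomial identity in the $\gamma$-values, so that it passes from $\Phi$ to $\Phi'$ automatically. The square switch (\ref{eq:square switching}) will likely be the technical heart, since its right-hand side is a $q$-binomial combination over an auxiliary index $t$; the strategy would be to expand each $M\circ M'$ factor into projectors onto irreducible summands and onto weight spaces, recognise that the universal quantum-binomial coefficients cancel, and so reduce the constraint to an identity whose only non-universal input is the $\gamma$-data equated by hypothesis.
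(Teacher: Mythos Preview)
Your overall strategy---reduce to Lemma \ref{lemm:cell system} and build $b$ so that the scalar data match---is the right one, and it is exactly how the paper proceeds. The gap is in your normalization $b^+_{\{i\}}(\lbd)=1$ for every $i$. With that choice, matching $m_{\{i\},\{j\}}$ and $m_{\{j\},\{i\}}$ forces the \emph{same} value of $b^+_{\{i,j\}}(\lbd)$ from two different equations, and these agree only if
\[
\frac{m_{\{i\},\{j\}}(\Phi';\lbd)}{m_{\{i\},\{j\}}(\Phi;\lbd)}
\;=\;
\frac{m_{\{j\},\{i\}}(\Phi';\lbd)}{m_{\{j\},\{i\}}(\Phi;\lbd)}
\quad\text{for all }\lbd.
\]
This symmetry of the ratio $f(i,j;\lbd)$ is \emph{not} a consequence of $\gamma_{\cl M}=\gamma_{\cl M'}$: the $\gamma$-identity only ties $m_{i,j}$ to $m'_{i,j}$, not $m_{i,j}$ to $m_{j,i}$. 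More generally, your associativity reduction expresses $m_{S,T}$ as a product of terms $m_{\{i\},*}$ with $i$ ranging over \emph{all} elements of $S$, while your inductive definition only controls the case where $i$ is the minimum of the combined set; whenever $T$ contains an element smaller than some $i\in S$, you land outside the cases you have arranged.

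The paper absorbs exactly this obstruction by letting the singleton values $b_k:=b^+_{\{k\}}$ be nontrivial. It sets only $b_1=1$ and then builds $b_2,\dots,b_n$ by a nested induction on a ``compatibility'' condition that is precisely the requirement that the two definitions of $b^+_{\{k,l\}}$ agree. The key input making this induction close is a cyclic symmetry of the ratios $f$ (the paper's Claim~2), which follows from the rotation identity (\ref{eq:rotation}) together with the observation that $\eps_S^+\eta_S^+=(-1)^{|S|(n-|S|)}\gamma(S^c,S;\lbd)$ is already determined by $\gamma$. Your worry about the square switch (\ref{eq:square switching}) is misplaced: that relation lives in $\rep_q^{\fin}\SL_n$ and holds automatically for every associator, so it imposes no constraint on $b$; the genuine coboundary problem is the ordering ambiguity above, and it is resolved by the rotation relation rather than the square switch.
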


In the rest of the present paper, we substitute $\{j\}$ by $j$ for ease to read. For example, $S\cup j = S\cup\{j\}$. Similarly
we substitute $ij$ for $\{i,j\}$ and so on.

\begin{proof}
Take associators $\Phi$ and $\Phi'$ so that $\cl{M}\cong \rep_{q,\Phi}^{\fin} H$ and $\cl{M'}\cong \rep_{q,\Phi}^{\fin} H$.
 It suffices to show Lemma \ref{lemm:cell system} (ii)
for $\Phi$ and $\Phi'$.

Set $f(S,T;\lbd) := m_{S,T}(\Phi';\lbd)/m_{S,T}(\Phi;\lbd)$. 
Moreover, for a mutually disjoint family $\{S_i\}_{i = 1}^l$,
we define $f(S_1,S_2,\cdots, S_l;\lbd)$ recurrsively as follows:
\begin{align*}
 f(S_1,S_2,\cdots, S_l;\lbd) := f(S_1,S_2,\cdots, S_{l - 1}\cup S_l;\lbd) f(S_{l - 1},S_l;\lbd).
\end{align*}
Then (\ref{eq:associativity for m}) and
(\ref{eq:associativity for m'})
imply that $f$ satisfies a kind of associativity,
which is of the following form for example:
\begin{align*}
 f(S_1\cup S_2, S_3,S_4,\lbd)&f(S_1,S_2,[e_{S_3\cup S_4}] + \lbd)\\
& = f(S_1,S_2\cup S_3,S_4,\lbd)f(S_2,S_3,[e_{S_4}] + \lbd).
\end{align*}
For $\sigma \in \S_n$ and $\lbd$, we define $f(\sigma, \lbd)$
as $f(\sigma(1), \sigma(2), \cdots, \sigma(n);\lbd)$.
We also introduce $m_{S_1,S_2,\dots,S_l}(\Phi;\lbd)$
and $m_{\sigma}(\Phi;\lbd)$
in the same way.

At first we show several claims:

\begin{description}[topsep = 2mm, parsep = 3mm]
 \item[Claim 1] For any $S$ and $\lbd$, we have
$\eps_S^+(\Phi;\lbd)\eta_S^+(\Phi;\lbd) = \eps_S^+(\Phi';\lbd)\eta_S^+(\Phi';\lbd)$ and $\eps_S^-(\Phi;\lbd)\eta_S^-(\Phi;\lbd) = \eps_S^-(\Phi';\lbd)\eta_S^-(\Phi';\lbd)$.
 \item[Claim 2] Let $\sigma \in \S_n$ be the cyclic permutation
$\sigma(k) \equiv k + 1\,\mathrm{mod}\,n$. Then we have
$f(\tau,\lbd) = f(\tau\sigma,\lbd - [e_{\tau(1)}])$ for all
$\tau \in \S_n$ and $\lbd \in P$.
 \item[Claim 3] Let $\sigma$ be an element of $\S_n$ identical on $\{1,2,\dots,k\}$. If $\tau,\tau' \in S_n$ have the same image of
$\{1,2,\dots,k\}$ and satisfy $\tau(i) = \tau'(i)$ for all $k + 1 \le i \le n$, we have
\begin{align*}
 \frac{f(\tau;\lbd)}{f(\tau';\lbd)} = \frac{f(\tau\sigma;\lbd)}{f(\tau'\sigma;\lbd)}.
\end{align*}
\end{description}

By (\ref{eq:flipping n}), we have $m_{S,S^c}(\Phi;\lbd - [e_{S^c}])m'_{S^c,S}(\Phi;\lbd) = 1$.
Hence
\begin{align}
 \eps_S^+(\Phi;\lbd)\eta_S^+(\Phi;\lbd) = \eps_S^+(\Phi;\lbd)m_{S,S^c}(\Phi;\lbd - [e_{S^c}])m'_{S^c,S}(\Phi;\lbd)\eta_S^+(\Phi;\lbd). \label{eq:intermediate}
\end{align}
On the other hand, by (\ref{eq:rotation}), we also have
\begin{align*}
 m_{S,S^c}(\Phi;\lbd - [e_{S^c}])\eta_S^+(\Phi;\lbd)
= (-1)^{\abs{S}(n - \abs{S})}m_{S^c,S}(\Phi;\lbd)\eta_S^-(\Phi;\lbd - [e_{S^c}]).
\end{align*}
Hence the RHS of (\ref{eq:intermediate}) is equal to
\begin{align*}
(-1)^{\abs{S}(n - \abs{S})} \eps_S^+(\Phi;\lbd)m_{S^c,S}(\Phi;\lbd)m'_{S^c,S}(\Phi;\lbd)&\eta_S^-(\Phi;\lbd - [e_{S_c}])\\
&= (-1)^{\abs{S}(n - \abs{S})}\gamma_{\Phi}(S^c,S;\lbd).
\end{align*}
This proves Claim 1 for $+$. The case of $-$ is similar.

To see Claim 2, note the following identity, which follows from
(\ref{eq:rotation}):
\begin{align*}
 m_{\tau}(\Phi;\lbd) = m_{\tau\sigma}(\Phi;\lbd - [e_{\tau(1)}])\eps_{\tau(1)}^+(\Phi;\lbd)\eta_{\tau(1)}^+(\Phi;\lbd).
\end{align*}
Since $f(\tau;\lbd) = m_{\tau}(\Phi;\lbd)/m_{\tau}(\Phi';\lbd)$,
the claim follows from Claim 1.

Claim 3 follows from
\begin{align*}
f(\tau;\lbd) = f(\tau(1),\tau(2),\dots,\tau(k)&;\lbd - [e_{\tau(1)\tau(2)\dots\tau(k)}])\\
&f(\tau(\{1,2,\dots,k\}),\tau(k + 1),\dots,\tau(n);\lbd).
\end{align*}

Next we find $b_k = \{b_k(\lbd)\}_{\lbd \in P}$ such that
\begin{align}
 f(\sigma;\lbd) = \prod_{i = 1}^n b_{\sigma(i)}([e_{\sigma(\{i + 1, i + 2,\dots,n\})}] + \lbd). \label{eq:coboundary}
\end{align}
for all $\sigma \in \S_n$ and $\lbd \in P$.
Let $\Gamma$ be a subset of $P$, invariant under the translation by $[e_k]$ and $[e_l]$. If $b_k$ and $b_l$
are defined on $\Gamma$ and satisfy
\begin{align*}
 \frac{f(k,l,i_3,\dots,i_n;\lbd)}{b_k(\lbd - [e_k])b_l(\lbd - [e_{kl}])} =
 \frac{f(l,k,i_3,\dots,i_n;\lbd)}{b_l(\lbd - [e_l])b_k(\lbd - [e_{lk}])}
\end{align*}
for $\lbd \in \Gamma$ and for all $(i_3,i_4,\dots,i_n)$,
we say that $b_k$ and $b_l$ are compatible.
Note that it suffices to check the equality for
some $i_3,i_4,\dots,i_n$ by Claim 3. Also note that
we have
\begin{align*}
 \frac{f(i_1,\dots,i_{m-1},k,l,i_{m+2},\dots,i_n;\lbd)}{b_k(\lbd - [e_{i_1\cdots i_{m - 1}k}])b_l(\lbd - [e_{i_1\cdots i_{m - 1}kl}])} =
 \frac{f(i_1,\dots,i_{m-1},l,k,i_{m + 2},\dots,i_n;\lbd)}{b_l(\lbd - [e_{i_1\cdots i_{m - 1} l}])b_k(\lbd - [e_{i_1\cdots i_{m - 1} lk}])}
\end{align*}
when $\lbd - [e_{i_1\cdots i_{m - 1}}] \in \Gamma$ by Claim 2.

We prove that there is a family $\{b_i\}_{1 \le i \le k}$
which is compatible on $P$ by induction on $k$. In the following
$P_k = \sum_{i = k}^n \Z[e_i]$.

When $k = 1$, we set $b_1(\lbd) := 1$
for all $\lbd$. Actually we can take $b_1(\lbd)$ arbitrary.

Next we assume that $b_1, b_2, \dots, b_k$
are mutually compatible on $P$. At first we set $b_{k + 1}(\lbd) = 1$
for $\lbd \in P_{k+1}$.
Then, in the following discussion, we enlarge the domain
of $b_{k+1}$ to $P_l$ with $l \le k+1$ so that
$b_{k+1}$ is compatible with $b_l,b_{l + 1},\dots,b_k$
on $P_l$ by downward induction on $l$.

If $l = k + 1$, there is nothing to prove. Assume $b_{k+1}$ is defined
on $P_l$ with the required property. Then, we can extend $b_{k+1}$
on $P_{l - 1}$ so that
$b_{k + 1}$ is compatible with $b_{l - 1}$ on $P_{l - 1}$.
To complete the induction step, we have to check the compatibility of
$b_{k + 1}$ and $b_i(\lbd)$ on $P_{l - 1}$
for $l \le i < k + 1$. Take $\lbd \in P_{l - 1}$.
Then we have
\begin{align*}
&\frac{f(i,k+1,l - 1,\dots ;\lbd)}{b_i(\lbd - [e_i])b_{k + 1}(\lbd - [e_{i,k+1}])b_{l - 1}(\lbd - [e_{i,k+1,l-1}])} \\
&= \frac{f(i,l-1,k + 1,\dots ;\lbd)}{b_i(\lbd - [e_i])b_{l - 1}(\lbd - [e_{i,l-1}])b_{k + 1}(\lbd - [e_{i,l-1,k+1}])} \\
&= \frac{f(l-1,i,k+1,\dots ;\lbd)}{b_{l - 1}(\lbd - [e_{l - 1}])b_i(\lbd - [e_{l-1,i}])b_{k + 1}(\lbd - [e_{l-1,i,k + 1}])},
\end{align*}
where the first equality follows from the compatibility
of $b_{k+1}$ and $b_{l - 1}$ on $P_{l - 1}$, and the second equality
follows from the compatibility of $b_i$ and $b_{l - 1}$ on $P$.
Similarly, we also have
\begin{align*}
& \frac{f(k+1,i,l - 1,\dots ;\lbd)}{b_{k + 1}(\lbd - [e_{k+1}])b_i(\lbd - [e_{k+1,i}])b_{l - 1}(\lbd - [e_{k+1,i,l-1}])}\\
&= \frac{f(k + 1,l-1,i,\dots ;\lbd)}{b_{k + 1}(\lbd - [e_{k+1}])b_{l - 1}(\lbd - [e_{k+1,l - 1}])b_i(\lbd - [e_{k+1,l-1,i}])}\\
&= \frac{f(l - 1,k+1,i,\dots ;\lbd)}{b_{l - 1}(\lbd - [e_{l-1}])b_{k + 1}(\lbd - [e_{l-1,k+1}])b_i(\lbd - [e_{l-1,k+1,i}])}.
\end{align*}
Hence we can deduce the compatibility of $b_i$ and $b_{k+1}$ on $P_{l-1}$
from that on $P_l$.

After the induction arguments above, we have
$b_1, b_2,\dots, b_{n-1}$ which are mutually compatible on $P$.
Then we define $b_n(\lbd)$ by
\begin{align*}
 b_n(\lbd) = \frac{f(1,2,\dots,n;\lbd)}{b_1(\lbd - [e_1])b_2(\lbd - [e_{12}])\dots b_{n-1}(\lbd - [e_{12\cdots (n-1)}])}.
\end{align*}
Then $b_1,b_2,\dots,b_n$ are mutually compatible on $P$
and satisfy the required condition (\ref{eq:coboundary}).
For a subset $S = \{i_1,i_2,\dots,i_k\}$, we also define $b_S^+(\lbd)$ by
\begin{align*}
 b_S(\lbd)^{-1} = \frac{f(i_1,i_2,\dots,i_k;\lbd)}{b_{i_1}(\lbd + [e_{i_1i_2\cdots i_k}])b_{i_2}(\lbd - [e_{i_2\cdots i_k}])\cdots b_{i_k}(\lbd)}.
\end{align*}

Using this $b$ as $b^+$,
we can see that $m_{S,T}(\Phi';\lbd) = m_{S,T}(\Phi';\lbd)_b$ for all $S,T$ and $\lbd$. Then $\gamma_{\Phi} = \gamma_{\Phi'}$ implies
$m'_{S,T}(\Phi';\lbd) = m'_{S,T}(\Phi;\lbd)_b$.
We take $b_S^-$ so that $\eps^+_S(\Phi';\lbd) = \eps^+_S(\Phi;\eps)_b$. Then we can check $\eta^+_S(\Phi';\lbd) = \eta^+_S(\Phi;\lbd)_b$
by Claim 1. By (\ref{eq:conjugation equation 1}) and (\ref{eq:conjugation equation 2}), we see
$\eps^-_S(\Phi';\lbd) = \eps^-_S(\Phi;\eps)_b$ and
$\eta^+_S(\Phi';\lbd) = \eta^+_S(\Phi;\lbd)_b$.

Hence we see Lemma \ref{lemm:cell system} (ii).
\end{proof}

Next we consider the following generalization of $\gamma_{\cl{M}}$.

\begin{defn} \label{defn:scalar system}
A \emph{scalar system of $\fssorba$-type} is a family
$\gamma = \{\gamma(S,T;\lbd)\}_{S,T,\lbd}$ of scalars
satisfying the following conditions.
\begin{enumerate}
 \item $\gamma(S,T,\lbd)\gamma(T,S,\lbd - [e_S]) = 1$.
 \item $\gamma(S\cup i, j;\lbd) \gamma(j,S;\lbd - [e_S]) + \gamma(S\cup j, i;\lbd) \gamma(i,S;\lbd - [e_S]) = [2]_q$.
 \item $\gamma(S,i;\lbd)\gamma(i,S\cup i;\lbd - [e_{S\cup j}])
= \gamma(S\cup i, j;\lbd - [e_j])\gamma(j,S;\lbd - [e_{S\cup j}])$.
 \item $\gamma(S,T;\lbd + [e_U])\gamma(S\cup T, U;\lbd) = \gamma(S, T\cup U;\lbd)\gamma(T,U;\lbd)$.
 \item $\gamma(i,j;\lbd) + \gamma(j,i;\lbd) = [2]_q$.
 \item $\gamma(i,jk;\lbd) + \gamma(j,ki;\lbd) + \gamma(k,ij;\lbd) = [3]_q$.
\end{enumerate}
\end{defn}

The following lemma is repeatedly used to check that
$\gamma_{\cl{M}}$ for a semisimple action $\cl{M}$ of $\fssorba$-type is
actually a scalar system of $\fssorba$-type.

\begin{lemm} \label{lemm:sum of weights}
Let $S,S',T,T'$ be subsets of $\{1,2,\dots,n\}$ such that
$S \subsetneq S', T\subsetneq T'$ and $\abs{S} + \abs{S'} = \abs{T} + \abs{T'}$.
If $[e_S] + [e_{S'}] = [e_T] + [e_{T'}]$ holds,
$S = T$ and $S' = T'$.
\end{lemm}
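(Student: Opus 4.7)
The plan is to lift the identity from the quotient $\h_{\R}^* \cong \R^n/\R(1,1,\dots,1)$ back to $\R^n$ and then argue coordinatewise.

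First I would note that $[e_S] + [e_{S'}] = [e_T] + [e_{T'}]$ in $\h_{\R}^*$ means there exists a scalar $c \in \R$ with
\[
\sum_{i \in S} e_i + \sum_{i \in S'} e_i - \sum_{i \in T} e_i - \sum_{i \in T'} e_i = c\,(1,1,\dots,1)
\]
in $\R^n$. Summing coordinates gives $\abs{S} + \abs{S'} - \abs{T} - \abs{T'} = nc$, which forces $c = 0$ thanks to the hypothesis $\abs{S} + \abs{S'} = \abs{T} + \abs{T'}$. Hence the identity already holds in $\R^n$, or equivalently, as functions on $\{1,2,\dots,n\}$,
\[
\mathbf{1}_S + \mathbf{1}_{S'} = \mathbf{1}_T + \mathbf{1}_{T'}.
\]

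Next I would read off $S$ and $T$ from this equality by looking at where both sides take the value $2$. Because $S \subsetneq S'$ we have $S \cap S' = S$, so $(\mathbf{1}_S + \mathbf{1}_{S'})(i) = 2$ exactly when $i \in S$; likewise the value $2$ of the right-hand side occurs exactly on $T$. Comparing these index sets yields $S = T$. Subtracting then gives $\mathbf{1}_{S'} = \mathbf{1}_{T'}$ and hence $S' = T'$.

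There is no serious obstacle here: once the quotient identity is lifted to $\R^n$ using the cardinality assumption, the proper containments $S \subsetneq S'$ and $T \subsetneq T'$ do all the work, since they pin down $S$ (resp.\ $T$) as the locus where the indicator sum equals its maximum. The strictness of the inclusions is in fact not even needed for the conclusion $S = T$, $S' = T'$; what matters is $S \subseteq S'$ and $T \subseteq T'$, so the proof should be essentially a two-line argument once the lift is set up.
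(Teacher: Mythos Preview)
Your argument is correct. The paper actually states this lemma without proof, presumably regarding it as an elementary observation; your write-up supplies exactly the kind of two-line verification one would expect, and your remark that only the (non-strict) inclusions $S\subseteq S'$, $T\subseteq T'$ are needed is accurate.
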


\begin{prop}
For a semisimple action $\cl{M}$ of $\fssorba$-type,
$\gamma_{\cl{M}}$ is
a scalar system of $\fssorba$-type.
\end{prop}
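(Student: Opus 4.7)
By Lemma \ref{lemm:reduction to associator}, it suffices to prove the claim for $\cl{M}=\rep_{q,\Phi}^{\fin}H$ for a fixed associator $\Phi$, i.e., to verify that $\gamma_\Phi(S,T;\lambda)=m_{S,T}(\Phi;\lambda)m'_{S,T}(\Phi;\lambda)$ satisfies conditions (i)--(vi). The strategy is to realize each condition as a scalar identity on matrix coefficients extracted from a specific commutative diagram in $\cl{M}$ built from the generating morphisms $M_{k,l}, M'_{k,l}$ of $\rep_q^{\fin}\SL_n$ (Proposition \ref{prop:generating morphisms}), together with the pentagon equation for $\Phi$.

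Condition (iv) is obtained as follows: work inside $\Lambda_q^k\otimes\Lambda_q^l\otimes\Lambda_q^m\otimes X_\lambda$ with pairwise disjoint $S,T,U$ of sizes $k,l,m$, and compute the coefficient of $x_{S\cup T\cup U}\otimes x_\lambda$ in the image of $x_S\otimes x_T\otimes x_U\otimes x_\lambda$ under the two ways of combining to $\Lambda_q^{k+l+m}\otimes X_\lambda$. Using (\ref{eq:associativity for m}) and the pentagon for $\Phi$ one gets
\begin{align*}
m_{S,T}(\Phi;\lambda+[e_U])\,m_{S\cup T,U}(\Phi;\lambda) = m_{T,U}(\Phi;\lambda)\,m_{S,T\cup U}(\Phi;\lambda),
\end{align*}
and the analogous identity for $m'$ follows from (\ref{eq:associativity for m'}); multiplying yields (iv). Conditions (v) and (vi) come from a trace computation: the endomorphism $\Phi^{-1}(M'_{k,l}M_{k,l}\otimes\id)\Phi$ on the $[e_R]$-weight subspace $(\Lambda_q^k\otimes\Lambda_q^l)_{[e_R]}\otimes X_\lambda$ is conjugate to $M'_{k,l}M_{k,l}\otimes\id$ and hence has trace $\qbin{k+l}{k}{q}\cdot\dim(\Lambda_q^{k+l})_{[e_R]}=\qbin{k+l}{k}{q}$ by (\ref{eq:bubble popping}). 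Expanding in the basis $\{x_S\otimes x_T\otimes x_\lambda\}_{S\sqcup T=R}$ gives $\sum_{S\sqcup T=R}\gamma(S,T;\lambda)=\qbin{k+l}{k}{q}$, which is (v) for $k=l=1$, $|R|=2$, and (vi) for $k=1, l=2$, $|R|=3$.

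For condition (i), the two embeddings $\Lambda_q^{k+l}\hookrightarrow\Lambda_q^k\otimes\Lambda_q^l$ and $\Lambda_q^{k+l}\hookrightarrow\Lambda_q^l\otimes\Lambda_q^k$ together with their retractions yield a closed loop that reduces to $\qbin{k+l}{k}{q}\id$ by (\ref{eq:bubble popping}); extracting the coefficient on the paired weight lines $x_S\otimes x_T\otimes x_\lambda$ and $x_T\otimes x_S\otimes x_{\lambda-[e_S]}$ (which correspond under the change of base) produces the inverse relation $\gamma(S,T;\lambda)\gamma(T,S;\lambda-[e_S])=1$. Conditions (ii) and (iii) are switching identities obtained by applying the pentagon to $\Lambda_q^k\otimes\Lambda_q^1\otimes\Lambda_q^1\otimes X_\lambda$ and tracking matrix coefficients on the $[e_{S\cup\{i,j\}}]$ weight line: for (ii), the relation $M_{1,1}M'_{1,1}=[2]_q\id$ produces the $[2]_q$ on the right; for (iii), a direct pentagon comparison of two re-bracketings ending in $\Lambda_q^{k+2}\otimes X_\lambda$ relates the $\gamma$'s at base weights $\lambda$ and $\lambda-[e_{S\cup j}]$. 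The main obstacle is the bookkeeping in (ii) and (iii): verifying that signs, quantum factors, and associator-induced weight shifts all line up requires a careful paragroup-type calculus (c.f.\ \cite{MR996454}), with Lemma \ref{lemm:sum of weights} ensuring that only the listed terms contribute to each chosen weight line.
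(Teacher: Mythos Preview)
Your treatment of (iv), (v), (vi) is correct and essentially matches the paper: (iv) comes from the associativity relations (\ref{eq:associativity for m}), (\ref{eq:associativity for m'}) together with the pentagon for $\Phi$, and (v), (vi) from the bubble popping relation (\ref{eq:bubble popping}).

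The gap is in (i), (ii), (iii). The paper derives all three from the \emph{square switch relation} (\ref{eq:square switching}), and this is not a cosmetic choice: these identities genuinely require a relation in $\rep_q^{\fin}\SL_n$ that mixes $M$ and $M'$ across different tensor factorizations, which neither associativity nor bubble popping provides. Concretely, for (i) the paper specializes (\ref{eq:square switching}) to the case $r=l,\,s=l$ and evaluates both sides on $x_S\otimes(x_{S\cup T}\otimes 1)$; the left side produces $\gamma(S,T;\lambda)\gamma(T,S;\lambda-[e_S])$, while on the right Lemma \ref{lemm:sum of weights} forces only the $t=l$ term to survive, yielding exactly $1$. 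Your ``closed loop via bubble popping'' argument cannot produce this: bubble popping gives $M_{k,l}M'_{k,l}=\qbin{k+l}{k}{q}\id$ on $\Lambda_q^{k+l}$, which at best yields $\sum_{A}\gamma(A,R\setminus A;\lambda)=\qbin{k+l}{k}{q}$ on a fixed weight line, not the pairwise inverse relation $\gamma(S,T;\lambda)\gamma(T,S;\lambda-[e_S])=1$. There is no natural ``change of base'' identifying $x_S\otimes x_T\otimes x_\lambda$ with $x_T\otimes x_S\otimes x_{\lambda-[e_S]}$ that would make your loop close to the right scalar.

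Likewise, (ii) and (iii) are obtained in the paper from two other specializations of (\ref{eq:square switching}) (with $r=1,s=1,l=k+2$ and $r=1,s=1,l=s+1$ respectively), again evaluated on carefully chosen weight vectors with Lemma \ref{lemm:sum of weights} killing spurious terms. Your description (``pentagon on $\Lambda_q^k\otimes\Lambda_q^1\otimes\Lambda_q^1\otimes X_\lambda$'' plus $M_{1,1}M'_{1,1}=[2]_q\id$) would at most recover (v) with an extra spectator $S$, not the cross terms $\gamma(S\cup i,j;\lambda)\gamma(j,S;\lambda-[e_S])$ appearing in (ii); similarly (iii) compares maps $\Lambda_q^{s+1}\otimes\Lambda_q^{s+1}\to\Lambda_q^{s+1}\otimes\Lambda_q^{s+1}$ and does not arise from a pentagon identity alone. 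You should replace these three arguments by the appropriate instances of (\ref{eq:square switching}).
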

\begin{proof}
We may assume $\cl{M} = \rep_{q,\Phi}^{\fin} H$
for some associator $H$.

The relation (iv) follows from (\ref{eq:associativity for m})
and (\ref{eq:associativity for m'}).

The relation (v) and (vi) follow from (\ref{eq:bubble popping}).

The other relations follow from the square switch relation (\ref{eq:square switching}).
To obtain the relation (i), we consider the following special
case of the relation:
\begin{align*}
 (\id&\tensor M_{l,k})\circ (M'_{k,l}\circ M_{k,l}\tensor \id)\circ (\id\tensor M'_{l, k}) \notag \\
&= \sum_t \qbin{l}{t}{q} (M_{k - l + t, l - t}\tensor \id)\circ (\id\tensor M'_{l - t, k + l}\circ M_{l - t, k + l})\circ (M'_{k - l + t, l - t}\tensor \id). \notag
\end{align*}
On $\Lambda_q^k\tensor (\Lambda_q^{k + l}\tensor k_{\lbd - [e_S]})$,
we have
\begin{align*}
&(\id\tensor (M_{l,k}\tensor \id)\circ\Phi)\circ \Phi^{-1}\circ(M'_{k,l}\circ M_{k,l}\tensor \id \tensor \id)\circ \Phi\circ (\id\tensor \Phi^{-1}\circ (M'_{l, k}\tensor \id)) \notag \\
&= \sum_t \qbin{l}{t}{q} (M_{k - l + t, l - t}\tensor \id\tensor\id)\circ \Phi \circ (\id\tensor \Phi^{-1}\circ (M'_{l - t, k + l}\circ M_{l - t, k + l}\tensor \id)\circ \Phi) \\
&\hspace{90mm}\circ \Phi^{-1}\circ (M'_{k - l + t, l - t}\tensor \id\tensor \id). \notag
\end{align*}
Then take disjoint subsets $S,T \subset \{1,2,\dots,n\}$ such that $\abs{S} = k$
and $\abs{T} = l$ and consider the image of $x_S\tensor (x_{S\cup T}\tensor 1) \in \Lambda_q^k\tensor (\Lambda_q^{k + l}\tensor k_{\lbd - [e_S]})$
under the map in the LHS.
Since $M_{k,l}(x_S\tensor x_{T'}) = 0$ if $S\cap T' \neq \emptyset$,
we can see
\begin{align*}
 (M_{k,l}\tensor \id\tensor \id)\circ&\Phi\circ (\id\tensor \Phi^{-1}\circ (M'_{l, k}\tensor \id))(x_S\tensor (x_{S\cup T}\tensor 1))\\
&= m_{S,T}(\Phi;\lbd)m_{T,S}(\Phi;\lbd - [e_S])x_{S\cup T} \tensor (x_S \tensor 1).
\end{align*}
Hence we can see that the image of $x_S\tensor (x_{S\cup T}\tensor 1)$
under the LHS is
\begin{align*}
 \gamma_{\Phi}(S,T;\lbd)\gamma_{\Phi}(T,S;\lbd - [e_S])x_S\tensor (x_{S\cup T}\tensor 1).
\end{align*}
On the other hand, we have
\begin{align*}
 ((M_{l - t, k + l}\tensor \id)\circ \Phi)&\circ \Phi^{-1}\circ (M'_{k - l + t, l - t}\tensor \id\tensor \id)(x_S\tensor (x_{S\cup T}\tensor 1)) \\
&\in \bigoplus_{\substack{A\subset S, S\cup T\subset B\\ [e_A] + [e_B] = [e_S] + [e_{S\cup T}] \\ \abs{A} = k - l + t \\ \abs{B} = k + 2l - t}} (\Lambda_q^{k -l + t})_{[e_A]}\tensor ((\Lambda_q^{k + 2l - t})_{[e_B]}\tensor k_{\lbd - [e_S]}).
\end{align*}
If the image is non-zero, the condition on $A, B$ implies that $A =S, B = S\cup T$
and $t = 0$ by Lemma \ref{lemm:sum of weights}. Hence the image
of $x_S\tensor (x_{S\cup T}\tensor 1)$ is
$x_S\tensor (x_{S\cup T}\tensor 1)$, which implies the relation (i).

To obtain the relation (ii), we consider the following special case
of the square switch relation:
\begin{align*}
 (\id&\tensor M_{1,k-1})\circ (M'_{k + 2,1}\circ M_{k + 2,1}\tensor \id)\circ (\id\tensor M'_{1,k-1}) \notag \\
&= (M_{k+1, 1}\tensor \id)\circ (\id\tensor M'_{1, k}\circ M_{1, k})\circ (M'_{k+1, 1}\tensor \id)  - [2]_q\id\notag
\end{align*}
Then take a subset $S \subset \{1,2,\dots,n\}$ such that $\abs{S} = k$
and also take $i,j \in S^c$. Then, by looking at the image
of $x_{S\cup ij}\tensor (x_S\tensor 1) \in \Lambda_q^{k + 1}\tensor (\Lambda_q^k\tensor k_{\lbd})$, a similar argument shows the relation (ii).

To obtain the relation (iii), we consider the following
special case of the square switch relation:
\begin{align*}
(\id\tensor M_{1,s})\circ (M'_{s + 1,1}&\circ M_{s + 1,1}\tensor \id)\circ (\id\tensor M'_{1,s}) \notag \\
&= (M_{s, 1}\tensor \id)\circ (\id\tensor M'_{1, s + 1}\circ M_{1, s + 1})\circ (M'_{s, 1}\tensor \id). \notag
\end{align*}
Then, by looking at
$x_{S\cup i}\tensor (x_{S\cup j}\tensor 1) \in \Lambda_q^{s + 1}\tensor (\Lambda_q^{s + 1}\tensor k_{\lbd - [e_{S\cup j}]})$, a similar
argument shows the relation (iii).
\end{proof}

In the following, we fix a scalar system $\gamma$ of $\fssorba$-type.

\begin{lemm} \label{lemm:decomposition of gamma with single right}
For any $S \subset \{1,2,\dots,n\}$ and different elements $i,j \subset S^c$, we have $\gamma(S\cup i, j,\lbd) = \gamma(S,j;\lbd)\gamma(i,j;\lbd)$.
\end{lemm}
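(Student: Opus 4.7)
The plan is to prove the lemma by induction on $|S|$. The base case $|S|=0$ reduces to the tautology $\gamma(i,j;\lbd)=\gamma(\emptyset,j;\lbd)\gamma(i,j;\lbd)$ once we note the convention $\gamma(\emptyset,T;\lbd)=1$, which can be extracted from axiom (iv) applied at $S=T=\emptyset$ (together with axiom (i) to fix the sign).

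For the inductive step I will first apply axiom (ii) to $S$ with the pair $i,j\in S^c$ and then rewrite both of the factors $\gamma(j,S;\lbd-[e_S])$ and $\gamma(i,S;\lbd-[e_S])$ using axiom (i), to obtain
\[
\frac{\gamma(S\cup i,j;\lbd)}{\gamma(S,j;\lbd)}+\frac{\gamma(S\cup j,i;\lbd)}{\gamma(S,i;\lbd)}=[2]_q.
\]
The right-hand side is exactly $\gamma(i,j;\lbd)+\gamma(j,i;\lbd)$ by axiom (v), so the desired identity and its $i\leftrightarrow j$ counterpart are jointly consistent with this sum equation, but the equation alone does not isolate either term.

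To break the remaining ambiguity I will use axiom (iii). After applying axiom (i) to each of its $\gamma$-factors with shifted weight to bring them into the standard ``$S$-first'' form, axiom (iii) becomes an asymmetric multiplicative relation between $\gamma(S\cup i,j;\cdot)$ and the $\gamma$-values $\gamma(S,j;\cdot),\gamma(S,i;\cdot)$ at shifted weights. The inductive hypothesis, applied at the weight $\lbd-[e_j]$ (where $|S|$ has not increased), supplies the missing factorization of $\gamma(S\cup i,j;\lbd-[e_j])$, and feeding this into the relation fixes $\gamma(S\cup i,j;\lbd)/\gamma(S,j;\lbd)$ to be $\gamma(i,j;\lbd)$. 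The companion ratio then follows from the sum equation and axiom (v), completing the inductive step.

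The main obstacle is the weight-shift bookkeeping in axiom (iii): the non-trivial shift $[e_{S\cup j}]$ there must be systematically unfolded via axiom (i) into the form used by the inductive hypothesis, so that the induction closes at the correct weight. Once this reduction is carried out, the identity of the lemma drops out as the unique solution of the two equations singled out by axioms (iii) and (ii) that is consistent with the singleton case (v).
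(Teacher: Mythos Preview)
Your induction on $|S|$ does not close. After unwinding axiom (iii) via (i), the input you call ``the inductive hypothesis, applied at the weight $\lbd-[e_j]$'' is the factorization
\[
\gamma(S\cup i,j;\lbd-[e_j])=\gamma(S,j;\lbd-[e_j])\,\gamma(i,j;\lbd-[e_j]),
\]
which is the very statement being proved, for the \emph{same} set $S$ (hence the same $|S|$), merely at a shifted weight. That is not the induction hypothesis; the step is circular. Put differently, setting $a(\lbd)=\gamma(S\cup i,j;\lbd)/\gamma(S,j;\lbd)$ and $b(\lbd)=\gamma(S\cup j,i;\lbd)/\gamma(S,i;\lbd)$, axioms (ii)+(i) give $a(\lbd)+b(\lbd)=[2]_q$, and axiom (iii)+(i) give $a(\lbd-[e_j])\,b(\lbd)=1$. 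These are exactly the relations that $\gamma(i,j;\lbd)$ and $\gamma(j,i;\lbd)$ satisfy via (v) and (i), but they do \emph{not} determine $a,b$ pointwise: the recursion in the $[e_j]$-direction has a free initial value, so nothing forces $a(\lbd)=\gamma(i,j;\lbd)$.

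The paper closes this gap by using axiom (iv) rather than (iii). Two applications of (iv) (with $(S,T,U)=(i,j,S)$ and $(S,T,U)=(S,i,j)$) together with (i) yield the \emph{pointwise} squared identity
\[
\bigl(\gamma(S\cup i,j;\lbd)\,\gamma(j,S;\lbd-[e_S])\bigr)^2=\gamma(i,j;\lbd)^2,
\]
and similarly with $i\leftrightarrow j$. Then (ii) and (v), combined with the elementary observation that $a^2=a'^2$, $b^2=b'^2$ and $a+b=a'+b'\neq 0$ force $(a,b)=(a',b')$, pin down the sign; a final use of (i) gives $\gamma(S\cup i,j;\lbd)=\gamma(S,j;\lbd)\gamma(i,j;\lbd)$. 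No induction is needed.
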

\begin{proof}
 Note the following relations:
\begin{itemize}
\item Using (iv) with $S = i, T= j, U = S$,
\begin{align*} 
 \gamma(j,S;\lbd - [e_S])\gamma(i, j\cup S;\lbd - [e_S]) = \gamma(i,j;\lbd)\gamma(ij, S;\lbd - [e_S]).
\end{align*}
\item Using (i) with $S = S, T = ij$,
\begin{align*}
 \gamma(S,ij;\lbd)\gamma(S,ij;\lbd - [e_S]) = 1
\end{align*}

\end{itemize}
Applying these relations, we obtain
\begin{align*}
\gamma(S\cup i,j;\lbd)^2\gamma(j,S;\lbd - [e_S])^2 = \gamma(i,j;\lbd)^2.
\end{align*}
By switching $i$ and $j$, we also obtain
$\gamma(S\cup j,i;\lbd)^2\gamma(i,S;\lbd - [e_S])^2 = \gamma(j,i;\lbd)^2$.
Then (ii) and (v) imply $\gamma(S\cup j,i;\lbd)\gamma(i,S;\lbd - [e_S]) = \gamma(j,i;\lbd)$ as a consequence of
the following elementary fact:
\begin{itemize}
 \item For $(a,b), (a',b') \in k^2$, $a^2 = a'^2, b^2 = b'^2$ and
$a + b = a' + b' \neq 0$ imply $(a,b) = (a',b')$.
\end{itemize}
Now we obtain the statement since
\begin{align*}
 \gamma(S\cup i,j;\lbd) = \gamma(S,j;\lbd)\gamma(j,S;\lbd - [e_S])\gamma(S\cup i,j;\lbd) = \gamma(S,j;\lbd)\gamma(i,j;\lbd),
\end{align*}
where we use (i) at the first equality.
\end{proof}

\begin{prop} \label{prop:properties of gamma}
The following identities hold:
\begin{enumerate}
 \item $\gamma(S,T;\lbd - [e_T]) = \prod_{i \in S, j \in T} \gamma(i,j;\lbd - [e_j])$.
 \item $\gamma(i,j;\lbd) = \gamma(i,j;\lbd - [e_S])$ when $i,j \not \in S$.
\end{enumerate}
\end{prop}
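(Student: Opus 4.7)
The plan is to prove (ii) first and then derive (i) by induction on $|T|$, invoking (ii) to absorb spurious weight shifts that appear in the induction step.

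The key preliminary observation is a decomposition dual to Lemma \ref{lemm:decomposition of gamma with single right}: for disjoint $\{i\}\sqcup T$,
\[
 \gamma(i, T; \lbd) = \prod_{j \in T} \gamma(i, j; \lbd).
\]
This follows by using Definition \ref{defn:scalar system}(i) to rewrite the left side as $\gamma(T, i; \lbd - [e_i])^{-1}$, iterating Lemma \ref{lemm:decomposition of gamma with single right} to expand this as $\prod_{j \in T} \gamma(j, i; \lbd - [e_i])^{-1}$, and re-applying Definition \ref{defn:scalar system}(i) to each factor; the weight shifts cancel. For (ii), I reduce to the case $|S| = 1$, say $S = \{k\}$ with $k \notin \{i, j\}$, by iteration. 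Applying Definition \ref{defn:scalar system}(iv) with $S = \{i\}$, $T = \{j\}$, $U = \{k\}$ gives
\[
 \gamma(i, j; \lbd + [e_k])\,\gamma(\{i, j\}, k; \lbd) = \gamma(i, \{j, k\}; \lbd)\,\gamma(j, k; \lbd).
\]
Expanding $\gamma(\{i, j\}, k; \lbd) = \gamma(i, k; \lbd)\gamma(j, k; \lbd)$ by Lemma \ref{lemm:decomposition of gamma with single right} and $\gamma(i, \{j, k\}; \lbd) = \gamma(i, j; \lbd)\gamma(i, k; \lbd)$ by the dual just derived, and cancelling the invertible factor $\gamma(i, k; \lbd)\gamma(j, k; \lbd)$, I obtain $\gamma(i, j; \lbd + [e_k]) = \gamma(i, j; \lbd)$, which is (ii).

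For (i), I induct on $|T|$. The base case $|T| = 1$ is iterated Lemma \ref{lemm:decomposition of gamma with single right}. For $|T| \geq 2$, write $T = T' \sqcup \{j_0\}$ and apply Definition \ref{defn:scalar system}(iv) with $(S, T', \{j_0\})$ at the weight $\mu := \lbd - [e_T]$:
\[
 \gamma(S, T'; \mu + [e_{j_0}])\,\gamma(S \cup T', j_0; \mu) = \gamma(S, T; \mu)\,\gamma(T', j_0; \mu).
\]
Iterated Lemma \ref{lemm:decomposition of gamma with single right} gives $\gamma(S \cup T', j_0; \mu) = \gamma(T', j_0; \mu)\prod_{i \in S}\gamma(i, j_0; \mu)$; part (ii) then replaces each $\gamma(i, j_0; \mu)$ by $\gamma(i, j_0; \lbd - [e_{j_0}])$, since $\mu = \lbd - [e_{j_0}] - [e_{T'}]$ differs from $\lbd - [e_{j_0}]$ only by $-[e_{T'}]$ and $i, j_0 \notin T'$. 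Finally, $\gamma(S, T'; \mu + [e_{j_0}]) = \gamma(S, T'; \lbd - [e_{T'}])$ is handled by the induction hypothesis. Assembling the factors yields the claimed product formula. The only real obstacle is keeping track of the weight shifts; once (ii) is established they become inert within each fixed block of indices, so the induction is essentially pure bookkeeping.
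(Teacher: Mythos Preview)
Your proof is correct and follows essentially the same route as the paper's: both arguments hinge on first deriving the ``left-singleton'' analogue of Lemma~\ref{lemm:decomposition of gamma with single right} via the inversion relation Definition~\ref{defn:scalar system}(i), then using the associativity relation~(iv) to extract (ii), and finally assembling (i) by induction. The only organizational difference is that you isolate (ii) from the minimal instance $(i,j,k)$ of~(iv) before running the induction on $|T|$, whereas the paper threads general $S,T$ through~(iv) first; your ordering is arguably cleaner and makes the role of the weight-shift cancellations more transparent.
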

\begin{proof}
If $\abs{T} = 1$, (i) follows from Lemma \ref{lemm:decomposition of gamma with single right} by induction on $\abs{S}$. Then the case of $\abs{S} = 1$ also follows by the relation (ii) in Definition \ref{defn:scalar system}.

To prove (i) in general and (ii), we consider $\gamma(i,S;\lbd + [e_T])\gamma(S\cup i, T;\lbd)$. Then, using (iv), we have
\begin{align*}
 \gamma(i,S;\lbd + [e_T])\gamma(S\cup i, T;\lbd)
&=\gamma(i,S\cup T;\lbd)\gamma(S,T;\lbd) \\
&=\gamma(i,S;\lbd + [e_T])\gamma(i,T;\lbd + [e_S])\gamma(S,T;\lbd).
\end{align*}
Hence we have
$\gamma(S\cup i,T;\lbd) = \gamma(i, T;\lbd + [e_S])\gamma(S,T;\lbd)$.
In particular we have
\begin{align*}
 \gamma(S\cup i, j;\lbd) = \gamma(i,j;\lbd + [e_S])\gamma(S,j;\lbd)
\end{align*}
On the other hand we have
\begin{align*}
 \gamma(S\cup i, j;\lbd) = \gamma(S,j;\lbd)\gamma(i,j;\lbd).
\end{align*}
Combining these identities, we obtain (ii). Then we also obtain
\begin{align*}
 \gamma(S\cup i,T;\lbd) = \gamma(i, T;\lbd)\gamma(S,T;\lbd),
\end{align*}
which implies (i) in general.
\end{proof}

The following is an immediate corollary of Proposition \ref{prop:properties of gamma} (i).

\begin{coro} \label{coro:identification by partial data}
Let $\gamma$ and $\gamma'$ be $\lsbg$-type data.
If $\gamma(i,j;\lbd) = \gamma'(i,j;\lbd)$ for all $i,j,\lbd$,
we have $\gamma = \gamma'$.
\end{coro}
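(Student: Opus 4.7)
The plan is to obtain the corollary directly by reducing the equality of the two scalar systems to their equality on singleton pairs, using the explicit product formula in Proposition \ref{prop:properties of gamma} (i).

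First I would fix arbitrary subsets $S, T \subset \{1,2,\dots,n\}$ and a weight $\lambda \in P$, and write $\mu := \lambda + [e_T]$. Applying Proposition \ref{prop:properties of gamma} (i) to both $\gamma$ and $\gamma'$ at $\mu$ gives the two identities
\begin{align*}
 \gamma(S,T;\lambda) &= \prod_{i \in S,\, j \in T} \gamma(i,j;\mu - [e_j]),\\
 \gamma'(S,T;\lambda) &= \prod_{i \in S,\, j \in T} \gamma'(i,j;\mu - [e_j]).
\end{align*}
By hypothesis, the two families of factors on the right-hand sides coincide term by term, so the two left-hand sides are equal. Since $S, T, \lambda$ were arbitrary, this establishes $\gamma = \gamma'$.

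There is essentially no obstacle: the heavy lifting has already been done in establishing Proposition \ref{prop:properties of gamma} (i), whose proof used the cocycle-type relation (iv) together with Lemma \ref{lemm:decomposition of gamma with single right}. Once that factorization through rank-one data is in hand, the corollary is a one-line observation. The only thing I would double-check while writing is that the factorization formula genuinely reduces every $\gamma(S,T;\lambda)$ to singleton values $\gamma(i,j;\nu)$ --- i.e., that the weight $\mu - [e_j]$ appearing in the factors depends only on quantities determined by $T$ and $\lambda$, not on $S$ --- which is visible directly from the statement of Proposition \ref{prop:properties of gamma} (i).
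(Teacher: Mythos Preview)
Your proposal is correct and matches the paper's approach exactly: the paper states the corollary as an immediate consequence of Proposition \ref{prop:properties of gamma} (i), and you have simply unpacked what ``immediate'' means by substituting $\mu = \lambda + [e_T]$ into the factorization formula. There is nothing to add.
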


The following lemma can be seen by an elementary argument.

\begin{lemm} \label{lemm:quantum elementary fact}
Let $\{z_n\}_{n \in \Z}$ be a sequence in $k^{\times}$ satisfying the following:
\begin{align*}
 z_n + z_{n + 1}^{-1} = [2]_q.
\end{align*}
Then there is $x \in \mathbb{P}^1_k\setminus q^{2\Z}$ such that
\begin{align*}
 z_n = \frac{[n - 1;x]_q}{[n;x]_q}.
\end{align*}
\end{lemm}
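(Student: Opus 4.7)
The plan is to reduce the lemma to a one-index matching problem. The identity $z_n + z_{n+1}^{-1} = [2]_q$ rewrites as a first-order M\"obius-type recursion that propagates a single value both forward and backward, so it suffices to exhibit one $\chi \in \P^1(k) \setminus q^{2\Z}$ whose associated formal sequence $\tilde z_n := [n-1;\chi]_q/[n;\chi]_q$ agrees with $z_n$ at $n = 0$ and satisfies the same recurrence.

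First I would verify the recurrence for $\tilde z_n$ by direct computation. Writing $\chi = [x:y]$, a short manipulation over the common denominator $xq^n - yq^{-n}$ shows
\[
\tilde z_n + \tilde z_{n+1}^{-1} = \frac{x(q^{n-1}+q^{n+1}) - y(q^{1-n}+q^{-n-1})}{xq^n - yq^{-n}} = q + q^{-1} = [2]_q.
\]
In the same breath, the condition that $\tilde z_n \in k^{\times}$ for every $n \in \Z$ is equivalent to $xq^n \neq yq^{-n}$ for every $n$, i.e.\ to $\chi \notin q^{2\Z}$ under the identification $c \leftrightarrow [c:1]$ together with $\infty = [1:0]$.

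Next I would set $\chi := [z_0 - q : z_0 - q^{-1}]$. Since $q$ is not a root of unity we have $q \neq q^{-1}$, so the two coordinates cannot vanish simultaneously and $\chi$ is a bona fide point of $\P^1(k)$. A direct substitution then confirms $\tilde z_0 = z_0$; the degenerate case $z_0 = q^{-1}$ yields $\chi = [1:0] = \infty$, which gives the constant solution $\tilde z_n \equiv q^{-1}$.

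Finally I would close by a uniqueness argument: the recurrence rewrites as $z_{n+1} = ([2]_q - z_n)^{-1}$ going forward and $z_{n-1} = [2]_q - z_n^{-1}$ going backward, so any $k^{\times}$-valued solution is rigidly determined by its value at any single index. Hence $\tilde z_n = z_n$ for all $n \in \Z$; in particular each $\tilde z_n$ lies in $k^{\times}$, which by the criterion established above places $\chi$ in $\P^1(k) \setminus q^{2\Z}$. I do not anticipate any serious obstacle; the only point that warrants mild care is the degeneration of the defining coordinates at $z_0 = q^{-1}$, but this is cleanly resolved by sending $\chi$ to $\infty$.
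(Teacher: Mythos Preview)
Your argument is correct and supplies explicitly the ``elementary argument'' the paper asserts without writing out a proof. One minor circularity to tidy in the final paragraph: you invoke uniqueness of $k^\times$-valued solutions to conclude $\tilde z_n = z_n$ before knowing that $\tilde z_n$ is itself $k^\times$-valued. The clean fix is to run the induction directly---from $\tilde z_n = z_n$ one has $[2]_q - \tilde z_n = z_{n+1}^{-1} \neq 0$, whence the three-term identity $(xq^{n+1} - yq^{-n-1}) = [2]_q(xq^n - yq^{-n}) - (xq^{n-1} - yq^{1-n})$ shows $\tilde z_{n+1}$ is well-defined and equals $z_{n+1}$---or, equivalently, to view the recursion as the M\"obius transformation $z \mapsto ([2]_q - z)^{-1}$ on $\P^1(k)$, where $\tilde z_n := [xq^{n-1} - yq^{1-n} : xq^n - yq^{-n}]$ is always a valid point (the two coordinates cannot vanish simultaneously since $q^2 \neq 1$) and the two sequences agree everywhere once they agree at $n=0$.
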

\begin{lemm} \label{lemm:surjectivity}
Let $\gamma$ be a scalar system of $\fssorba$-type. Then there is a unique $\chi \in X_{\roots}^{\circ}(k)$ such that
\begin{align*}
 \gamma(i,j;\lbd) = \frac{[(\lbd, e_i - e_j) - 1;\chi_{2(e_i - e_j)}]}{[(\lbd, e_i - e_j);\chi_{2(e_i - e_j)}]}
\end{align*}
\end{lemm}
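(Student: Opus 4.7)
The plan is to reduce the lemma to a one-variable problem parametrized by pairs $i \neq j$, apply Lemma \ref{lemm:quantum elementary fact} to produce candidate values $\chi_{2(e_i - e_j)}$, and then verify that the family so obtained satisfies the compatibility relations cutting out $X_{\roots}^{\circ}(k)$ inside $(\P^1(k))^{\roots}$. First I would observe that Proposition \ref{prop:properties of gamma} (ii) together with the identity $[e_1] + [e_2] + \cdots + [e_n] = 0$ in the weight lattice of $\fsl_n$ forces $\gamma(i,j;\lambda)$ to depend on $\lambda$ only through the integer $m := (\lambda, e_i - e_j)$. Indeed, $\gamma(i,j;\lambda)$ is invariant under translations by $[e_k]$ for $k \neq i,j$, and after quotienting $P$ by the span of these generators, what remains of $\lambda$ is precisely the class of $m\,[e_i]$. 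So I get a well-defined function $g_{ij}\colon \Z \longrightarrow k^{\times}$.

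Next, relation (i) of Definition \ref{defn:scalar system}, applied at $S = \{i\}$, $T = \{j\}$, gives $\gamma(i,j;\lambda)\gamma(j,i;\lambda - [e_i]) = 1$, while relation (v) at $\lambda - [e_i]$ gives $\gamma(i,j;\lambda - [e_i]) + \gamma(j,i;\lambda - [e_i]) = [2]_q$. Translating these into the $g_{ij}$ notation and eliminating $g_{ji}$ yields the recurrence $g_{ij}(m-1) + g_{ij}(m)^{-1} = [2]_q$ for every $m$. Lemma \ref{lemm:quantum elementary fact} (after the trivial reindexing $n \mapsto m-1$) then produces a unique $x_{ij} \in \P^1(k) \setminus q^{2\Z}$ with
\[
 g_{ij}(m) = \frac{[m - 1; x_{ij}]_q}{[m; x_{ij}]_q}.
\]
I then set $\chi_{2(e_i - e_j)} := x_{ij}$; this already delivers the claimed formula, and the defining condition $\chi_{2\alpha} \notin q_\alpha^{2\Z}$ of $X_{\roots}^{\circ}(k)$ comes for free from Lemma \ref{lemm:quantum elementary fact}.

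It remains to check that $\{\chi_{2\alpha}\}_{\alpha \in \roots}$ lies in the image of $X_{\roots}(k) \hookrightarrow (\P^1(k))^{\roots}$ described in Subsection 3.3. The first requirement, $\chi_{-2\alpha} = [y:x]$ when $\chi_{2\alpha} = [x:y]$, falls out of a direct rational computation: substituting the explicit formulas for $g_{ij}(m)$ and $g_{ji}(-m)$ into $g_{ij}(m) + g_{ji}(-m) = [2]_q$ (relation (v)) forces the coordinates of $\chi_{2(e_j-e_i)}$ to be the swap of those of $\chi_{2(e_i-e_j)}$. The second requirement, the multiplicative cocycle identity $x_\alpha x_\beta y_{\alpha+\beta} = y_\alpha y_\beta x_{\alpha+\beta}$, is what I expect to be the main obstacle: I would derive it from relation (vi) of Definition \ref{defn:scalar system} expanded via Proposition \ref{prop:properties of gamma} (i) and (ii), which (with $a,b$ the relevant integer coordinates of $\lambda$) reads
\[
 \gamma(i,j;\lambda)\gamma(i,k;\lambda) + \gamma(j,k;\lambda)\gamma(j,i;\lambda) + \gamma(k,i;\lambda)\gamma(k,j;\lambda) = [3]_q,
\]
and then clear denominators and reorganize the resulting rational identity in $[a]_q, [b]_q, [a+b]_q$ so that the coefficients on both sides match precisely when $(x_{ij}, x_{jk}, x_{ik})$ satisfies the cocycle relation.

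Finally, uniqueness of $\chi$ is immediate: the function $m \mapsto [m-1;x]_q/[m;x]_q$ determines $x \in \P^1(k) \setminus q^{2\Z}$, so each $\chi_{2(e_i - e_j)}$ is forced by $\gamma(i,j;\cdot)$, and $\roots = \{e_i - e_j : i \neq j\}$ exhausts all roots. (Note that Corollary \ref{coro:identification by partial data} confirms in parallel that the data $\{\gamma(i,j;\lambda)\}$ already determines the full scalar system, so no further checks on higher $\gamma(S,T;\lambda)$ are needed.) The other relations of Definition \ref{defn:scalar system} not used explicitly above are consequences of those already invoked, as will be clear from the preceding reductions.
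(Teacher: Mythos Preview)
Your proof is correct and follows essentially the same route as the paper's: both use Proposition \ref{prop:properties of gamma} (ii) to reduce $\gamma(i,j;\lambda)$ to a function of the single integer $(\lambda,e_i-e_j)$, derive the recurrence $z_n + z_{n+1}^{-1} = [2]_q$ from relations (i) and (v), invoke Lemma \ref{lemm:quantum elementary fact} to produce $x_{ij}$, and then extract the multiplicative compatibility from relation (vi) expanded via Proposition \ref{prop:properties of gamma} (i). You are more explicit than the paper about the inversion condition $\chi_{-2\alpha} = [y:x]$ and about uniqueness, but the logical structure is the same.
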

\begin{proof}
Fix $i,j$ and set $z_n = \gamma(i,j;n[e_i])$. Then
\begin{align*}
 z_n + z_{n + 1}^{-1} = \gamma(j,i;n[e_i]) + \gamma(j,i;(n + 1)[e_i]) = [2]_q.
\end{align*}
Hence we can find $x_{ij} \in \mathbb{P}_k^1\setminus q^{2\Z}$ such that
\begin{align*}
 \gamma(i,j;n[e_j]) = \frac{[n - 1;x_{ij}]_q}{[n;x_{ij}]_q}.
\end{align*}
By Proposition \ref{prop:properties of gamma} (ii), we also have
\begin{align*}
  \gamma(i,j;\lbd) = \frac{[(\lbd, e_i - e_j) - 1;x_{ij}]}{[(\lbd, e_i - e_j);x_{ij}]}.
\end{align*}
Hence it suffices to check $x_{ij}x_{jk} = x_{ik}$.
This follows from the relation (vi)
in Definition \ref{defn:scalar system} and Proposition \ref{prop:properties of gamma} (i).
\end{proof}

Finally we prove Theorem \ref{thrm:algebraic classification}.
\begin{proof}[Proof of Theorem \ref{thrm:algebraic classification}]
At first we show that $\gamma_{\intO{\chi}}$ corresponds to $\chi$
when $\chi$ is a character on $2Q^+$.

Take $1 \le i < n$. Then we have the following highest weight vector in $\Lambda_q^1\tensor M_{\chi}(\lbd)$:
\begin{align*}
  x_i\tensor (1\tensor 1) - q^{-1}\frac{q - q^{-1}}{\chi_{2(e_{i - 1} - e_i)}q^{(\lbd,2(e_{i - 1} - e_i))} - 1}x_{i - 1}\tensor (\aF_{i-1}\tensor 1),
\end{align*}
where $\aF_0 = 0$. These define the following maps:
\begin{align*}
 M_{\chi}(\lbd + [e_i] + [e_{i + 1}]) \longrightarrow \Lambda_q^1\tensor M_{\chi}(\lbd + [e_{i + 1}]) \longrightarrow \Lambda_q^1\tensor \Lambda_q^1\tensor M_{\chi}(\lbd),\\
 M_{\chi}(\lbd + [e_i] + [e_{i + 1}]) \longrightarrow \Lambda_q^1\tensor M_{\chi}(\lbd + [e_i]) \longrightarrow \Lambda_q^1\tensor \Lambda_q^1\tensor M_{\chi}(\lbd).
\end{align*}
Then the image of $1\tensor 1$ under these maps are given as follows
respectively:
\begin{align*}
 &x_i\tensor x_{i + 1}\tensor (1\tensor 1) + \cdots,\\
 &x_{i + 1}\tensor x_i\tensor (1\tensor 1) - \frac{q - q^{-1}}{\chi_{2(e_i - e_{i + 1})}q^{(\lbd,2(e_i - e_{i + 1}))} - 1}x_i\tensor x_{i + 1}\tensor (1\tensor 1) + \cdots.
\end{align*}
On the other hand, we have
\begin{align*}
 M_{1,1}'M_{1,1}(x_{i+1}\tensor x_i) &= qx_{i + 1}\tensor x_i - x_i\tensor x_{i + 1},\\
 M_{1,1}'M_{1,1}(x_i\tensor x_{i + 1}) &= -x_{i + 1}\tensor x_i+ q^{-1}x_i\tensor x_{i + 1}.
\end{align*}
Hence we can see that
\begin{align*}
 \gamma_{\intO{\chi}}(i + 1, i;\lbd)
&= \frac{q\chi_{2(e_i - e_{i + 1})}q^{(\lbd,2(e_i - e_{i + 1}))} - q^{-1}}{\chi_{2(e_i - e_{i + 1})}q^{(\lbd,2(e_i - e_{i + 1}))} - 1}\\
&= \frac{[(\lbd, e_{i + 1} - e_i) - 1;\chi_{2(e_{i + 1} - e_i)}]_q}{[(\lbd,e_{i + 1} - e_i);\chi_{2(e_{i + 1} - e_i)}]_q}.
\end{align*}
Combining with the assumption $\chi \in \Ch_k 2Q^+$,
we see that $\gamma_{\intO{\chi}}$ corresponds to $\chi$.

Now we can see that $\gamma_{\intO{\chi}}$ corresponds
to $\chi$ in general since
$\intO{w\cdot\chi} \cong w_*\intO{\chi}$ by Proposition \ref{prop:twisted catO}.

Finally Corollary \ref{coro:identification by partial data}
and Lemma \ref{lemm:surjectivity} imply the statement.
\end{proof}

As a corollary of Theorem \ref{thrm:algebraic classification}
and Theorem \ref{prop:characterization of unitarizability},
we also obtain a classification of actions of $\fflaga$-type.

\begin{coro} \label{coro:operator algebraic classification}
Let $\cl{M}$ be actions of $\fflaga$-type.
Then there is a unique $\phi \in X_{\fflaga}^{\quot}$
such that $\cl{M} \cong \uniintO{\phi}$.
\end{coro}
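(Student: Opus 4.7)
The plan is to deduce the $\mathrm{C}^*$-algebraic classification from three results already established: the algebraic classification (Theorem \ref{thrm:algebraic classification}), the unitarizability criterion (Theorem \ref{prop:characterization of unitarizability}), and the uniqueness of unitarization (Lemma \ref{lemm:uniqueness of unitarization}). The whole argument is a forgetful-functor sandwich.

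First I would forget the $\mathrm{C}^*$-structure on $\cl{M}$. The underlying $\C$-linear semisimple left $\rep_q^{\fin}\SL_n$-module category, together with the inherited identification $\Z_+(\cl{M})\cong \Z_+(H)$, is a semisimple action of $\fssorba$-type. Theorem \ref{thrm:algebraic classification} produces a unique $\phi \in X_{\fssorba}^{\circ}(\C)$ with $\cl{M}\cong \intO{\phi}$ as semisimple actions of $\fssorba$-type. Because the starting $\cl{M}$ is a $\mathrm{C}^*$-category, $\intO{\phi}$ is unitarizable, so Theorem \ref{prop:characterization of unitarizability} forces $\phi \in X_{\fflaga}^{\mathrm{quot}}$. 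At this point I would record the elementary inclusion $X_{\fflaga}^{\mathrm{quot}}\subset X_{\fssorba}^{\circ}(\C)$: if $\phi_{\alpha} = \pm 1$ then one of $\phi_{\alpha}\pm 1$ vanishes while the other does not, and if $-1 < \phi_{\alpha} < 1$ then the ratio $(\phi_{\alpha} - 1)/(\phi_{\alpha} + 1)$ is a strictly negative real and so never lies in the positive set $q_{\alpha}^{2\Z}$.

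Next, the proof of Theorem \ref{prop:characterization of unitarizability} exhibits $\uniintO{\phi}$ as a unitarization of $\intO{\phi}$ for $\phi \in X_{\fflaga}^{\mathrm{quot}}$. Thus both $\cl{M}$ and $\uniintO{\phi}$ are unitarizations of the same left $\rep_q^{\fin}\SL_n$-module category $\intO{\phi}$, and Lemma \ref{lemm:uniqueness of unitarization} upgrades the algebraic equivalence to a unitary equivalence $\cl{M}\cong \uniintO{\phi}$ of left $\rep_q^{\fin}\SU(n)$-module $\mathrm{C}^*$-categories respecting the identification with $\Z_+(T)$.

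For uniqueness, suppose $\uniintO{\phi}\cong \uniintO{\phi'}$ as actions of $\fflaga$-type. Forgetting the $\mathrm{C}^*$-structure gives $\intO{\phi}\cong \intO{\phi'}$ as semisimple actions of $\fssorba$-type, and the uniqueness clause in Theorem \ref{thrm:algebraic classification} yields $\phi = \phi'$. There is no real obstacle here; the only place where one must be slightly careful is checking that the forgetful passage genuinely respects the $\Z_+$-module identifications (which is immediate from the definitions) and that $\uniintO{\phi}$ really is the unitarization constructed in Section \ref{sect:actions}, so that Lemma \ref{lemm:uniqueness of unitarization} applies with $\cl{C} = \rep_q^{\fin}\SU(n)$.
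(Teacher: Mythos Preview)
Your proof is correct and follows precisely the approach the paper intends: the paper merely says the corollary follows from Theorem \ref{thrm:algebraic classification} and Theorem \ref{prop:characterization of unitarizability}, and you have spelled out the forgetful passage that the paper already set up in the paragraph after Lemma \ref{lemm:uniqueness of unitarization} (``there is a natural bijection between the unitary equivalence classes of actions of $\fflag$-type and the equivalence classes of unitarizable semisimple actions of $\fssorb$-type''). The inclusion $X_{\fflaga}^{\quot}\subset X_{\fssorba}^{\circ}(\C)$ you verify is also implicit in the proof of Theorem \ref{prop:characterization of unitarizability}, which begins by observing that unitarizability forces semisimplicity and hence $\phi\in X_{\fssorb}^{\circ}(\C)$.
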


By Remark \ref{rema:reduction to quantum spheres},
we also have the following corollary.

\begin{coro}
Let $A$ be a unital \cstar-algebra equipped with an ergodic action
of $SU_q(n)$. If the corresponding $\rep_q^{\fin} K$-module \cstar-category $\cl{M}$ has the same fusion rule with $\rep_q^{\fin} T$, i.e. satisfies
$\Z_+(\cl{M})\cong \Z_+(T)$, $A$ is isomorphic to a product of Podle\'{s} spheres. In particular, $A$ is isomorphic to a left coideal and
type I.
\end{coro}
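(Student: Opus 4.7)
The plan is to combine the main classification Corollary~\ref{coro:operator algebraic classification} with the structural analysis carried out in Remark~\ref{rema:reduction to quantum spheres}. By Tannaka--Krein duality (Remark~\ref{rema:operator algebraic duality}), an ergodic action of $SU_q(n)$ on $A$ corresponds to a connected semisimple left $\rep_q^{\fin} K$-module \cstar-category $\cl{M}$ together with a pointed irreducible object. The hypothesis $\Z_+(\cl{M})\cong \Z_+(T)$ as $\Z_+(K)$-modules exhibits $\cl{M}$ as an action of $\fflaga$-type, so Corollary~\ref{coro:operator algebraic classification} yields a unique $\phi \in X_{\fflaga}^{\quot}$ with $\cl{M}\cong \uniintO{\phi}$.

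Next, I would invoke Proposition~\ref{prop:twisted catO} to replace $\phi$ by an element of its $W$-orbit for which the associated $\chi_\phi$ is a character on $2Q^+$ satisfying $\chi_{\phi,2\alpha}\le 0$ for every $\alpha \in \roots^+$. Setting $S := \{\eps \in \simples \mid \chi_{\phi,2\eps} \ne 0\}$ and $\bar{S} := -w_0(S)$, Corollary~\ref{coro:induced action} identifies $\cl{M}$ with $w^S_*\,\mathrm{C}^*\cl{O}_{q,\bar{\chi}}^{\bar{S},\mathrm{int}}$, and Lemma~\ref{lemm:uniqueness of unitarization} promotes this algebraic equivalence to a unitary equivalence of left $\rep_q^{\fin} K$-module \cstar-categories. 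The combinatorial crux is then to observe that, because $\chi_{\phi,2\alpha}=0$ for $\alpha \in \roots^+\setminus\roots_S^+$, the set $\bar{S}$ contains no two adjacent nodes in the type-$A$ Dynkin diagram. Consequently the semisimple part of $\frl_{\bar{S}}$ splits as a direct product of copies of $\fsl_2$, and $\mathrm{C}^*\cl{O}_{q,\bar{\chi}}^{\bar{S},\mathrm{int}}$ factors accordingly as an exterior tensor product of rank-one ergodic actions. The rank-one classification \cite[Example~3.12]{MR3420332} identifies each factor with a Podle\'{s} sphere.

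Finally, translating back through Tannaka--Krein duality shows $A$ is isomorphic to a product of Podle\'{s} spheres carrying the $SU_q(n)$-action induced from $K_{\bar{S},q}\hookrightarrow K_q$. Since each Podle\'{s} sphere is realized as a left coideal of $C_q(SU(2))$ and is of type I, the induced product inherits both properties inside $C_q(SU(n))$, establishing the ``in particular'' assertions. The main obstacle is less a novel argument than the careful bookkeeping of the reduction: the heavy lifting is done by the earlier classification of $\fflaga$-type actions, while the remaining work is the Dynkin-diagram verification that $\bar{S}$ is discrete and the assembly of known rank-one results through Corollary~\ref{coro:induced action}.
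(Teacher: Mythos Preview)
Your outline is correct and matches the paper's own argument, which is not given as a separate proof but is precisely the content of Remark~\ref{rema:reduction to quantum spheres} applied after Corollary~\ref{coro:operator algebraic classification}. One small correction: the discreteness of $\bar{S}$ (equivalently of $S$) does not follow from ``$\chi_{\phi,2\alpha}=0$ for $\alpha \in \roots^+\setminus\roots_S^+$''---that vanishing is tautological from the definition of $S$---but rather from the normalization $\chi_{2\alpha}\le 0$ you already imposed, since adjacent $\eps,\eps'\in S$ would force $\chi_{2(\eps+\eps')}=\chi_{2\eps}\chi_{2\eps'}>0$.
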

\begin{rema}
This corollary can be thought as a higher rank analogue of
\cite[Example 3.12]{MR3420332}.
\end{rema}

\section{The non-quantum case}
\label{sect:non-quantum}
It is natural to expect results for the genuine groups $K$ and $G$
analogous to the results for the quantum groups $K_q$ and $G_q$.
Since $K$ and $G$ can be thought as quantizations of
the Poisson groups $K^{\zero}$ and $G^{\zero}$,
whose Poisson-Lie structures are trivial,
the parameter space for the classification in the algebraic setting
shoule relate with the space of Poisson $G^{\zero}$-structures. We have the following description for this space,
which is similar to Proposition \ref{prop:classification of equiv Poisson var}.
\begin{align*}
 X_{\fssorb,0}(k) := \{\phi = (\phi_{\alpha})_{\alpha \in \roots} \in k^{\roots}\mid \phi_{-\alpha} = -\phi_{\alpha}, \phi_{\alpha}\phi_{\beta} = \phi_{\alpha + \beta}(\phi_{\alpha} + \phi_{\beta})\}.
\end{align*}
Then the parameter space might be the following subset
of $X_{\fssorb,0}(k)$:
\begin{align*}
 X_{\fssorb,0}^{\circ}(k) := \{\phi \in X_{\fssorb}(k)\mid \phi_{\alpha} \in k\setminus\{1/nd_{\alpha}\}_{n \in \Z\setminus\{0\}}\}.
\end{align*}
For the classification in the \cstar-algebraic setting,
the parameter space should be the space of Poisson
$K^{\zero}$-structures on $\fflag$ admitting a $0$-dimensional
symplectic leaf. Since $K^{\zero}$-equivariance is $K$-invariance,
the space consists of only the trivial Poisson structure.

Unlike the case of quantum groups, we do not have a unified approach to construct
semisimple actions of $\fssorb$-type
corresponding to all Poisson structures.
On the other hand, the construction
using the category $\cl{O}$ and the induction of actions still work.
For $\chi \in \h^*$, we define $\cl{O}_{\chi}^{\mathrm{int}}$ as the full
subcategory of the category $\cl{O}$ consisting of all modules
whose weights are contained in $\chi + P$. It carries
a canonical structure of left $\rep^{\fin} G$-module category.

The following can be seen using some fundamental
results on the category $\cl{O}$.

\begin{prop}
For $\chi \in \h^*$, the category $\cl{O}_{\chi}^{\mathrm{int}}$
is semisimple if and only if $\chi(\alpha) \not\in d_{\alpha}\Z$
for all $\alpha \in \roots$. In this case, $\cl{O}_{\chi}^{\mathrm{int}}$
has a canonical structure of semisimple actions of $\fssorb$-type
given by the Verma modules with highest weights in $\chi + P$.
\end{prop}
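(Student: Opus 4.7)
The plan is to mimic the proof of Theorem \ref{thrm:characterization of semisimplicity} from the quantum setting, now invoking the classical BGG category $\cl{O}$ theory in place of its deformed analogue. The arithmetic trick is that for $\lambda = \chi + \mu$ with $\mu \in P$, the scalar $\langle \lambda + \rho, \alpha^\vee \rangle$ lies in $\Z$ if and only if $\chi(\alpha^\vee) \in \Z$, i.e.\ $\chi(\alpha) \in d_\alpha \Z$, since $\mu + \rho \in P$ always pairs integrally with $\alpha^\vee$. This reduces the analysis of integrality to a condition on the single parameter $\chi$.

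For the forward direction I would argue by contrapositive. If $\chi(\alpha) \in d_\alpha \Z$ for some $\alpha \in \roots^+$, then one can choose $\mu \in P$ such that $n := \langle \chi + \mu + \rho, \alpha^\vee \rangle$ is a positive integer. Then $s_\alpha \cdot (\chi + \mu) = (\chi+\mu) - n\alpha$ still lies in $\chi + P$, and Verma's theorem produces a nontrivial embedding $M(s_\alpha \cdot (\chi+\mu)) \hookrightarrow M(\chi+\mu)$. Hence $M(\chi+\mu)$ is indecomposable but not simple, and lies in $\cl{O}_\chi^{\mathrm{int}}$, contradicting semisimplicity.

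For the backward direction, assume $\chi(\alpha) \notin d_\alpha \Z$ for every $\alpha \in \roots$. Then for each $\lambda \in \chi + P$ and each $\alpha$, $\langle \lambda + \rho, \alpha^\vee \rangle \notin \Z$, so the integral Weyl group $W_{[\lambda]}$ of $\lambda$ is trivial. Consequently $\lambda$ is simultaneously dominant and antidominant in the shifted-action sense: by Verma's theorem $M(\lambda)$ is simple, and by the classical projectivity criterion for dominant Verma modules (the undeformed analogue of Proposition \ref{prop:dominancy implies projectivity}, cf.\ \cite[Proposition 3.8]{MR2428237}) $M(\lambda)$ is projective in $\cl{O}_\chi^{\mathrm{int}}$. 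Thus every simple object of $\cl{O}_\chi^{\mathrm{int}}$ is projective, every short exact sequence splits, and the category is semisimple with $\{M(\lambda)\}_{\lambda \in \chi + P}$ as a complete list of simples.

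Finally, to upgrade $\cl{O}_\chi^{\mathrm{int}}$ to a semisimple action of $\fssorb$-type, I would repeat the filtration argument of Proposition \ref{prop:semisimple action arising from the category O}: for $V \in \rep^{\fin} G$ and $\lambda \in \chi + P$, the tensor product $V \otimes M(\lambda)$ carries a standard filtration whose subquotients are $M(\lambda + \nu)$ for $\nu \in \wt V$ (with multiplicities $\dim V_\nu$). Semisimplicity forces this filtration to split, giving a natural isomorphism $V \otimes M(\lambda) \cong \bigoplus_\nu V_\nu \otimes M(\lambda+\nu)$, and identifying $M(\lambda)$ with $k_{\lambda - \chi} \in \rep^{\fin} H$ produces a $\Z_+(G)$-module isomorphism $\Z_+(\cl{O}_\chi^{\mathrm{int}}) \cong \Z_+(H)$. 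The only mildly delicate point is the triviality of $W_{[\lambda]}$ throughout $\chi + P$, but this is immediate from the defining condition on $\chi$; no step requires any quantum-specific input, so the main obstacle is purely bookkeeping rather than conceptual.
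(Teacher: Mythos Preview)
Your proposal is correct and matches the paper's intent: the paper does not supply a proof of this proposition, remarking only that it ``can be seen using some fundamental results on the category $\cl{O}$,'' and your argument is precisely the classical-category-$\cl{O}$ transcription of the proofs of Theorem \ref{thrm:characterization of semisimplicity} and Proposition \ref{prop:semisimple action arising from the category O} that the paper has in mind. The key observation --- that the integral Weyl group $W_{[\lambda]}$ is trivial for every $\lambda \in \chi + P$ exactly when $\chi(\alpha) \notin d_{\alpha}\Z$ for all $\alpha$ --- together with the standard simplicity/projectivity criteria for Verma modules and the tensor-product filtration, is exactly the expected route.
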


It would be natural to regard $\cl{O}_{\chi}^{\mathrm{int}}$
as a semisimple action corresponding to $\chi^{-1} := \{\chi(\alpha)^{-1}\}_{\alpha \in \roots} \in X_{\fssorb,0}^{\circ}$. For general $\phi \in X_{\fssorb,0}^{\circ}(k)$,
note that $\roots_{\phi}:= \{\alpha \in \roots\mid \phi_{\alpha} \neq 0\}$
forms a closed subsystem of $\roots$. Then it defines a subalgebra $\g' \subset \g$ containing $\h$ as a Cartan subalgebra. Moreover we have $\chi \in \h^*$ such that $\chi(\alpha) = \phi_{\alpha}^{-1}$ for $\alpha \in \roots_{\phi}$, which is not unique in general.
Then consider the shifted integral part $\cl{O}_{\g',\chi}^{\mathrm{int}}$ of the category $\cl{O}_{\g'}$ for the subalgebra $\g'$. This also carries a
natural structure of semisimple actions of $\fssorb$-type. Moreover
its equivalence class does not depend on the choice of $\chi$. This
action is denoted by $\cl{O}_{\phi}^{\mathrm{int}}$.
We can see that these actions are mutually inequivalent and gives
a family parametrized by $X_{\fssorb,0}^{\circ}(k)$.
On unitarizability, we have the following criteria:

\begin{prop}
 For $\phi \in X_{\fssorb,0}^{\circ}(\C)$, the category $\cl{O}_{\phi}^{\mathrm{int}}$ is unitarizable if and only if $\phi = 0$.
\end{prop}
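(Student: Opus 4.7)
The plan is to handle the two directions separately: the ``if'' direction by identifying $\cl{O}_0^{\mathrm{int}}$ with the obviously unitarizable category $\rep^{\fin} T$, and the ``only if'' direction by importing the invariant-coefficient obstruction from Subsection 4.4 and computing it in the classical setting.

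For the ``if'' direction, when $\phi=0$ we have $\roots_\phi=\emptyset$, so $\g'=\h$. The shifted integral category $\cl{O}_{\g',\chi}^{\mathrm{int}}$ is therefore just the category of finite-dimensional semisimple $\h$-modules with weights in $\chi+P$, and any choice of $\chi$ gives the same class; this is equivalent to $\rep^{\fin} H$ as a $\C$-linear category with the $\rep^{\fin} G$-module structure $V\tensor M := V|_{H}\tensor M$. This carries the obvious unitarization $\rep^{\fin} T$, so $\cl{O}_0^{\mathrm{int}}$ is unitarizable.

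For the ``only if'' direction, suppose $\cl{O}_\phi^{\mathrm{int}}$ is unitarizable. The construction of the invariant coefficients $c_{\mu,\nu;w,\eps}(\cl{M};\lbd)$ from Subsection 4.4 does not use any specifically quantum input; it carries over verbatim to the non-quantum semisimple $\fssorb$-type actions, and unitarizability still forces each $c_{\rho,\rho;w,\eps}(\cl{O}_\phi^{\mathrm{int}};\lbd)$ to be a non-negative real number (as it arises as a diagonal matrix entry of the positive morphism $P_\eps^{\rho,\rho}\circ(\cdot)$ in a chosen orthonormal basis). The classical analog of Lemma \ref{lemm:invariant coefficients at extremal}, proven by the same highest-weight-vector computation in $V\tensor M(\chi+\lbd)$ for $V=L_\rho$ but using the classical (rather than $q$-deformed) $\fsl_2$-triplets associated to each $\eps\in\simples$, yields
\[
c_{\rho,\rho;1,\eps}(\cl{O}_\phi^{\mathrm{int}};\lbd) \;=\;
\begin{cases}
\dfrac{1}{2}\cdot\dfrac{(\lbd,\eps^\vee)+2-d_\eps\phi_\eps^{-1}}{(\lbd,\eps^\vee)+1-d_\eps\phi_\eps^{-1}} & (\eps\in\roots_\phi),\\[2mm]
\dfrac{1}{2} & (\eps\notin\roots_\phi),
\end{cases}
\]
with the analogous formula at $w\ne 1$ obtained via the classical twist (the classical analog of Proposition \ref{prop:twisted catO} replaces $\phi_\eps$ by $\phi_{w^{-1}\eps}$ and $\lbd$ by $w^{-1}\lbd$).

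The main obstacle—which is actually straightforward once the formula above is in hand—is to turn non-negativity into a constraint on $\phi$. If $\eps\in\roots_\phi$, set $a:=d_\eps\phi_\eps^{-1}-1\in\C$; non-negativity of the ratio $\frac{n+1-a}{n-a}$ for every $n\in\Z$ (realized by $n=(\lbd,\eps^\vee)+1$ as $\lbd$ ranges over $P$) forces $a\in\R$, and then forces every open interval $(n-1,n)$, $n\in\Z$, to avoid $a$; hence $a\in\Z$, i.e.\ $\phi_\eps^{-1}\in d_\eps^{-1}\Z$, which by definition of $X_{\fssorb,0}^\circ(\C)$ is possible only if $\phi_\eps=0$—contradicting $\eps\in\roots_\phi$. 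Running the twisted version for each $w\in W$ and each $\eps\in\simples$ shows $\phi_\alpha=0$ for every $\alpha\in\roots$, and hence $\phi=0$, completing the proof.
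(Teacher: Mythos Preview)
The paper states this proposition without proof, so there is nothing to compare against directly; your approach—transporting the invariant-coefficient obstruction of Lemma~\ref{lemm:sufficient condition for non-unitarizability} to the classical setting—is the natural one and is essentially sound. The ``if'' direction is fine.

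There is, however, a genuine slip in the ``only if'' direction. The classical computation parallel to Lemma~\ref{lemm:invariant coefficients at extremal} produces
\[
c_{\rho,\rho;1,\eps}(\cl{O}_\phi^{\mathrm{int}};\lbd)
=\frac{1}{2}\cdot\frac{(\chi+\lbd,\eps^\vee)+2}{(\chi+\lbd,\eps^\vee)+1},
\]
and $(\chi,\eps^\vee)=d_\eps^{-1}(\chi,\eps)=d_\eps^{-1}\phi_\eps^{-1}$, not $d_\eps\phi_\eps^{-1}$. With your formula the conclusion $a\in\Z$ gives $\phi_\eps^{-1}\in d_\eps^{-1}\Z$, whereas the excluded set coming from $X_{\fssorb,0}^\circ$ is $\phi_\eps^{-1}\in d_\eps\Z$; for $d_\eps>1$ these do not match, and the sentence ``which by definition of $X_{\fssorb,0}^\circ(\C)$ is possible only if $\phi_\eps=0$'' is false as written (e.g.\ $d_\eps=2$, $\phi_\eps^{-1}=1$). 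With the corrected factor $d_\eps^{-1}$, non-negativity for all $\lbd$ forces $(\chi,\eps^\vee)\in\Z$, which \emph{is} exactly the condition excluded by $X_{\fssorb,0}^\circ$, and the contradiction goes through.

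A second point to tighten: you invoke ``the classical analog of Proposition~\ref{prop:twisted catO}'' to pass from simple roots to arbitrary roots, but this analog is neither stated nor proved in the paper. You need it precisely when $\roots_\phi\cap\simples=\emptyset$ (which can happen, e.g.\ $\roots_\phi=\{\pm(\eps_1+\eps_2)\}$ in type $A_2$). Either establish the classical twisting equivalence $w_*\cl{O}_\phi^{\mathrm{int}}\cong\cl{O}_{w_*\phi}^{\mathrm{int}}$, or compute $c_{\rho,\rho;w,\eps}$ directly from its matrix-entry definition for the required $w$; the latter again involves locating highest weight vectors in $L_\rho\otimes M_{\g'}(\chi+\lbd)$ at extremal (not highest) weights, and deserves a line of justification.
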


We also have a classification result in the case of $G = \SL_n$.

\begin{thrm}
Any semisimple action of $\fssorba$-type is
equivalent to $\cl{O}_{\phi}^{\mathrm{int}}$ for unique $\phi \in X_{\fssorb,0}^{\circ}(k)$. Any action of $\fflaga$-type is equivalent to $\rep^{\fin} T$.
\end{thrm}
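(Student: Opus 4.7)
The plan is to mirror the strategy of Theorem \ref{thrm:algebraic classification} and Corollary \ref{coro:operator algebraic classification}, setting $q=1$ throughout and replacing $q$-integers by ordinary integers. First I would introduce the notion of a \emph{scalar system of $\fssorba$-type at $q=1$}: a family $\gamma = \{\gamma(S,T;\lambda)\}$ satisfying the relations in Definition \ref{defn:scalar system} with $[2]_q, [3]_q$ replaced by $2,3$. The generating morphisms $M_{k,l}, M'_{k,l}, \eps^{\pm}_k, \eta^{\pm}_k$ of $\rep^{\fin} \SL_n$ and the relations (\ref{eq:conjugation equation 1})--(\ref{eq:square switching}) all descend from $\rep_q^{\fin} \SL_n$ by specialization at $q=1$, so the proof of Lemma \ref{lemm:cell system} and the subsequent fact that $\gamma_{\cl{M}}$ is a complete invariant go through verbatim, yielding a classification of semisimple actions of $\fssorba$-type by scalar systems at $q=1$.

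Next I would carry out the analogue of Lemma \ref{lemm:surjectivity}. The recursion $z_n + z_{n+1}^{-1} = 2$ at $q=1$ admits the general solution $z_n = (n-1-x)/(n-x)$ for some $x \in \mathbb{P}^1(k) \setminus \Z$, so each pair $(i,j)$ gives a parameter $x_{ij} \in \mathbb{P}^1(k)\setminus\Z$. The associativity relation (vi) becomes \emph{additive}, namely $x_{ij} + x_{jk} = x_{ik}$ (in a suitable sense on $\mathbb{P}^1$), which together with antisymmetry encodes exactly an element of $X_{\fssorba,0}^{\circ}(k)$ via $\phi_{e_i-e_j} = x_{ij}^{-1}$. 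The bijection between scalar systems at $q=1$ and $X_{\fssorba,0}^{\circ}(k)$ is then clear.

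Then I would compute $\gamma_{\cl{O}_\phi^{\mathrm{int}}}(i,j;\lambda)$ explicitly, either by direct calculation in $\cl{O}_{\g',\chi}^{\mathrm{int}}$ using the classical Verma module construction, or by degenerating the formula obtained in the proof of Theorem \ref{thrm:algebraic classification}. The result should match the parameter $\phi$, giving the algebraic classification. Uniqueness follows from the non-quantum analogue of Proposition \ref{prop:distinction of actions}, whose proof carries over unchanged.

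For the operator-algebraic part, since the unitarization of a semisimple $\fssorba$-type action is unique up to unitary equivalence (Lemma \ref{lemm:uniqueness of unitarization}), it suffices to determine which $\phi \in X_{\fssorba,0}^{\circ}(\C)$ give unitarizable $\cl{O}_\phi^{\mathrm{int}}$. The invariant-coefficient argument of Lemma \ref{lemm:sufficient condition for non-unitarizability} then forces $\phi_\alpha \in \R \cup \{\infty\}$ and further $\phi_\alpha = 0$ or $\infty$ for all $\alpha$; since $X_{\fssorba,0}^{\circ}$ has no $q$-shifted rescaling freedom, the only surviving parameter is $\phi \equiv 0$, which corresponds to $\cl{O}_0^{\mathrm{int}} \cong \rep^{\fin} T$ (with $\g' = \h$ and the Verma modules reducing to characters). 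The main obstacle I anticipate is the explicit computation of $\gamma_{\cl{O}_\phi^{\mathrm{int}}}$ when $\phi$ has degenerate coordinates (so that $\g' \subsetneq \g$ and induction across $\roots \setminus \roots_\phi$ is needed), where one must combine a non-quantum analogue of the parabolic-induction comparison (Corollary \ref{coro:induced action}) with the Verma-module computation for the reductive subalgebra $\g'$.
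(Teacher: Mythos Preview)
Your proposal is correct and follows essentially the same approach as the paper. The paper's own proof is extremely terse: it simply states that the only point requiring modification from the quantum case is Lemma \ref{lemm:quantum elementary fact}, and records the $q=1$ solution $z_n = (x+n-1)/(x+n)$ with $x \in \P^1(k)\setminus\Z$, leaving everything else implicit. Your write-up expands this into the detailed steps (scalar systems at $q=1$, additivity of the $x_{ij}$ from relation (vi), the unitarizability argument), all of which are the correct specializations. The ``main obstacle'' you anticipate for degenerate $\phi$ is not a genuine difficulty: the construction of $\cl{O}_{\phi}^{\mathrm{int}}$ via the subalgebra $\g'$ already plays the role of the parabolic induction of Corollary \ref{coro:induced action}, and since $\gamma(i,j;\lbd)$ only involves the single root $e_i - e_j$, its value can be read off from the rank-one piece regardless of whether that root lies in $\roots_{\phi}$.
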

\begin{proof}
The only different point of the proof is Lemma \ref{lemm:quantum elementary fact}. In the case
of $q = 1$, a sequence $\{z_n\}_n \in \C^{\times}$ satisfying
$z_n + z_{n + 1}^{-1} = 2$ is of the following form:
\begin{align*}
 z_n = \frac{x + n - 1}{x + n}\quad (x \in \P^1(\C)\setminus \Z).
\end{align*}
\end{proof}
\vspace{10pt}
\noindent
{\bf Acknowlegements.}
The author is grateful to Yasuyuki Kawahigashi for his invaluable supports. He also thanks Yuki Arano for fruitful discussion.
He also thanks Kenny De Commer and staffs in Vriji Universiteit Brussel
for their support and hospitality
during the author's stay in Brussels.



\begin{bibdiv}
\begin{biblist}

\bib{MR1686551}{article}{
      author={Asaeda, M.},
      author={Haagerup, U.},
       title={Exotic subfactors of finite depth with {J}ones indices
  {$(5+\sqrt{13})/2$} and {$(5+\sqrt{17})/2$}},
        date={1999},
        ISSN={0010-3616,1432-0916},
     journal={Comm. Math. Phys.},
      volume={202},
      number={1},
       pages={1\ndash 63},
         url={https://doi.org/10.1007/s002200050574},
      review={\MR{1686551}},
}

\bib{MR1890629}{book}{
      author={Bourbaki, Nicolas},
       title={Lie groups and {L}ie algebras. {C}hapters 4--6},
      series={Elements of Mathematics (Berlin)},
   publisher={Springer-Verlag, Berlin},
        date={2002},
        ISBN={3-540-42650-7},
         url={https://doi.org/10.1007/978-3-540-89394-3},
        note={Translated from the 1968 French original by Andrew Pressley},
      review={\MR{1890629}},
}

\bib{MR3847209}{article}{
      author={Ben-Zvi, David},
      author={Brochier, Adrien},
      author={Jordan, David},
       title={Integrating quantum groups over surfaces},
        date={2018},
        ISSN={1753-8416,1753-8424},
     journal={J. Topol.},
      volume={11},
      number={4},
       pages={874\ndash 917},
         url={https://doi.org/10.1112/topo.12072},
      review={\MR{3847209}},
}

\bib{MR3263166}{article}{
      author={Cautis, Sabin},
      author={Kamnitzer, Joel},
      author={Morrison, Scott},
       title={Webs and quantum skew {H}owe duality},
        date={2014},
        ISSN={0025-5831,1432-1807},
     journal={Math. Ann.},
      volume={360},
      number={1-2},
       pages={351\ndash 390},
         url={https://doi.org/10.1007/s00208-013-0984-4},
      review={\MR{3263166}},
}

\bib{decommermoore1}{misc}{
      author={De~Commer, Kenny},
      author={Moore, Stephen~T.},
       title={Representation theory of the reflection equation algebra i: A
  quantization of sylvester's law of inertia},
        date={2024},
         url={https://arxiv.org/abs/2404.03640},
}

\bib{MR4822588}{article}{
      author={De~Commer, Kenny},
      author={Moore, Stephen~T.},
       title={Representation theory of the reflection equation algebra {II}:
  {T}heory of shapes},
        date={2025},
        ISSN={0021-8693,1090-266X},
     journal={J. Algebra},
      volume={664},
       pages={261\ndash 288},
         url={https://doi.org/10.1016/j.jalgebra.2024.09.035},
      review={\MR{4822588}},
}

\bib{MR3376147}{article}{
      author={De~Commer, Kenny},
      author={Neshveyev, Sergey},
       title={Quantum flag manifolds as quotients of degenerate quantized
  universal enveloping algebras},
        date={2015},
        ISSN={1083-4362,1531-586X},
     journal={Transform. Groups},
      volume={20},
      number={3},
       pages={725\ndash 742},
         url={https://doi.org/10.1007/s00031-015-9324-y},
      review={\MR{3376147}},
}

\bib{MR4585468}{article}{
      author={De~Commer, Kenny},
      author={Neshveyev, Sergey},
      author={Tuset, Lars},
      author={Yamashita, Makoto},
       title={Comparison of quantizations of symmetric spaces: cyclotomic
  {K}nizhnik-{Z}amolodchikov equations and {L}etzter-{K}olb coideals},
        date={2023},
        ISSN={2050-5086},
     journal={Forum Math. Pi},
      volume={11},
       pages={Paper No. e14, 79},
         url={https://doi.org/10.1017/fmp.2023.11},
      review={\MR{4585468}},
}

\bib{MR3121622}{article}{
      author={De~Commer, Kenny},
      author={Yamashita, Makoto},
       title={Tannaka-{K}re\u{\i}n duality for compact quantum homogeneous
  spaces. {I}. {G}eneral theory},
        date={2013},
        ISSN={1201-561X},
     journal={Theory Appl. Categ.},
      volume={28},
       pages={No. 31, 1099\ndash 1138},
      review={\MR{3121622}},
}

\bib{MR3420332}{article}{
      author={De~Commer, Kenny},
      author={Yamashita, Makoto},
       title={Tannaka-{K}re\u in duality for compact quantum homogeneous spaces
  {II}. {C}lassification of quantum homogeneous spaces for quantum {$\rm
  SU(2)$}},
        date={2015},
        ISSN={0075-4102,1435-5345},
     journal={J. Reine Angew. Math.},
      volume={708},
       pages={143\ndash 171},
         url={https://doi.org/10.1515/crelle-2013-0074},
      review={\MR{3420332}},
}

\bib{MR1817512}{article}{
      author={Donin, Joseph},
       title={{$U_h(\germ g)$} invariant quantization of coadjoint orbits and
  vector bundles over them},
        date={2001},
        ISSN={0393-0440,1879-1662},
     journal={J. Geom. Phys.},
      volume={38},
      number={1},
       pages={54\ndash 80},
         url={https://doi.org/10.1016/S0393-0440(00)00057-7},
      review={\MR{1817512}},
}

\bib{MR3242743}{book}{
      author={Etingof, Pavel},
      author={Gelaki, Shlomo},
      author={Nikshych, Dmitri},
      author={Ostrik, Victor},
       title={Tensor categories},
      series={Mathematical Surveys and Monographs},
   publisher={American Mathematical Society, Providence, RI},
        date={2015},
      volume={205},
        ISBN={978-1-4704-2024-6},
         url={https://doi.org/10.1090/surv/205},
      review={\MR{3242743}},
}

\bib{MR2545609}{article}{
      author={Evans, David~E.},
      author={Pugh, Mathew},
       title={Ocneanu cells and {B}oltzmann weights for the {$\rm SU(3)$}
  {$\scr{ADE}$} graphs},
        date={2009},
        ISSN={1867-5778,1867-5786},
     journal={M\"unster J. Math.},
      volume={2},
       pages={95\ndash 142},
      review={\MR{2545609}},
}

\bib{MR1317352}{incollection}{
      author={Haagerup, Uffe},
       title={Principal graphs of subfactors in the index range
  {$4<[M:N]<3+\sqrt2$}},
        date={1994},
   booktitle={Subfactors ({K}yuzeso, 1993)},
   publisher={World Sci. Publ., River Edge, NJ},
       pages={1\ndash 38},
      review={\MR{1317352}},
}

\bib{MR4837934}{article}{
      author={Hoshino, Mao},
       title={Polynomial families of quantum semisimple coajoint orbits via
  deformed quantum enveloping algebras},
        date={2025},
        ISSN={0010-3616,1432-0916},
     journal={Comm. Math. Phys.},
      volume={406},
      number={1},
       pages={Paper No. 8, 36},
         url={https://doi.org/10.1007/s00220-024-05172-7},
      review={\MR{4837934}},
}

\bib{MR2428237}{book}{
      author={Humphreys, James~E.},
       title={Representations of semisimple {L}ie algebras in the {BGG}
  category {$\scr{O}$}},
      series={Graduate Studies in Mathematics},
   publisher={American Mathematical Society, Providence, RI},
        date={2008},
      volume={94},
        ISBN={978-0-8218-4678-0},
         url={https://doi.org/10.1090/gsm/094},
      review={\MR{2428237}},
}

\bib{MR552943}{book}{
      author={Jantzen, Jens~Carsten},
       title={Moduln mit einem h\"ochsten {G}ewicht},
      series={Lecture Notes in Mathematics},
   publisher={Springer, Berlin},
        date={1979},
      volume={750},
        ISBN={3-540-09558-6},
      review={\MR{552943}},
}

\bib{MR1359532}{book}{
      author={Jantzen, Jens~Carsten},
       title={Lectures on quantum groups},
      series={Graduate Studies in Mathematics},
   publisher={American Mathematical Society, Providence, RI},
        date={1996},
      volume={6},
        ISBN={0-8218-0478-2},
         url={https://doi.org/10.1090/gsm/006},
      review={\MR{1359532}},
}

\bib{MR1198203}{article}{
      author={Joseph, Anthony},
      author={Letzter, Gail},
       title={Local finiteness of the adjoint action for quantized enveloping
  algebras},
        date={1992},
        ISSN={0021-8693,1090-266X},
     journal={J. Algebra},
      volume={153},
      number={2},
       pages={289\ndash 318},
         url={https://doi.org/10.1016/0021-8693(92)90157-H},
      review={\MR{1198203}},
}

\bib{MR3266525}{article}{
      author={Jordans, Bas P.~A.},
       title={A classification of {$SU(d)$}-type {$\rm C^*$}-tensor
  categories},
        date={2014},
        ISSN={0129-167X,1793-6519},
     journal={Internat. J. Math.},
      volume={25},
      number={9},
       pages={1450081, 40},
         url={https://doi.org/10.1142/S0129167X14500815},
      review={\MR{3266525}},
}

\bib{MR2062626}{article}{
      author={Kontsevich, Maxim},
       title={Deformation quantization of {P}oisson manifolds},
        date={2003},
        ISSN={0377-9017,1573-0530},
     journal={Lett. Math. Phys.},
      volume={66},
      number={3},
       pages={157\ndash 216},
         url={https://doi.org/10.1023/B:MATH.0000027508.00421.bf},
      review={\MR{2062626}},
}

\bib{MR1492989}{book}{
      author={Klimyk, Anatoli},
      author={Schm\"{u}dgen, Konrad},
       title={Quantum groups and their representations},
      series={Texts and Monographs in Physics},
   publisher={Springer-Verlag, Berlin},
        date={1997},
        ISBN={3-540-63452-5},
         url={https://doi.org/10.1007/978-3-642-60896-4},
      review={\MR{1492989}},
}

\bib{MR1403861}{article}{
      author={Kuperberg, Greg},
       title={Spiders for rank {$2$} {L}ie algebras},
        date={1996},
        ISSN={0010-3616,1432-0916},
     journal={Comm. Math. Phys.},
      volume={180},
      number={1},
       pages={109\ndash 151},
         url={http://projecteuclid.org/euclid.cmp/1104287237},
      review={\MR{1403861}},
}

\bib{MR1237835}{incollection}{
      author={Kazhdan, David},
      author={Wenzl, Hans},
       title={Reconstructing monoidal categories},
        date={1993},
   booktitle={I. {M}. {G}el\cprime fand {S}eminar},
      series={Adv. Soviet Math.},
      volume={16, Part 2},
   publisher={Amer. Math. Soc., Providence, RI},
       pages={111\ndash 136},
      review={\MR{1237835}},
}

\bib{MR1116413}{article}{
      author={Levendorski\u{\i}, Serge},
      author={Soibelman, Yan},
       title={Algebras of functions on compact quantum groups, {S}chubert cells
  and quantum tori},
        date={1991},
        ISSN={0010-3616,1432-0916},
     journal={Comm. Math. Phys.},
      volume={139},
      number={1},
       pages={141\ndash 170},
         url={http://projecteuclid.org/euclid.cmp/1104203139},
      review={\MR{1116413}},
}

\bib{moore3}{misc}{
      author={Moore, Stephen~T.},
       title={Representation theory of the reflection equation algebra iii:
  Classification of irreducible representations},
        date={2025},
         url={https://arxiv.org/abs/2506.16866},
}

\bib{murata2025}{misc}{
      author={Murata, Haruto},
       title={A diagrammatic approach to reflection functors},
        date={2025},
         url={https://arxiv.org/abs/2506.17070},
}

\bib{MR1289324}{article}{
      author={Nakashima, Toshiki},
       title={Quantum {$R$}-matrix and intertwiners for the {K}ashiwara
  algebra},
        date={1994},
        ISSN={0010-3616,1432-0916},
     journal={Comm. Math. Phys.},
      volume={164},
      number={2},
       pages={239\ndash 258},
         url={http://projecteuclid.org/euclid.cmp/1104270831},
      review={\MR{1289324}},
}

\bib{MR3426224}{article}{
      author={Neshveyev, Sergey},
       title={Duality theory for nonergodic actions},
        date={2014},
        ISSN={1867-5778,1867-5786},
     journal={M\"unster J. Math.},
      volume={7},
      number={2},
       pages={413\ndash 437},
         url={https://doi.org/10.17879/58269764511},
      review={\MR{3426224}},
}

\bib{MR2914062}{article}{
      author={Neshveyev, Sergey},
      author={Tuset, Lars},
       title={Quantized algebras of functions on homogeneous spaces with
  {P}oisson stabilizers},
        date={2012},
        ISSN={0010-3616,1432-0916},
     journal={Comm. Math. Phys.},
      volume={312},
      number={1},
       pages={223\ndash 250},
         url={https://doi.org/10.1007/s00220-012-1455-6},
      review={\MR{2914062}},
}

\bib{MR3556413}{article}{
      author={Neshveyev, Sergey},
      author={Yamashita, Makoto},
       title={Classification of non-{K}ac compact quantum groups of {${\rm
  SU}(n)$} type},
        date={2016},
        ISSN={1073-7928,1687-0247},
     journal={Int. Math. Res. Not. IMRN},
      number={11},
       pages={3356\ndash 3391},
         url={https://doi.org/10.1093/imrn/rnv241},
      review={\MR{3556413}},
}

\bib{MR1907188}{incollection}{
      author={Ocneanu, Adrian},
       title={The classification of subgroups of quantum {${\rm SU}(N)$}},
        date={2002},
   booktitle={Quantum symmetries in theoretical physics and mathematics
  ({B}ariloche, 2000)},
      series={Contemp. Math.},
      volume={294},
   publisher={Amer. Math. Soc., Providence, RI},
       pages={133\ndash 159},
         url={https://doi.org/10.1090/conm/294/04972},
      review={\MR{1907188}},
}

\bib{MR996454}{incollection}{
      author={Ocneanu, Adrian},
       title={Quantized groups, string algebras and {G}alois theory for
  algebras},
        date={1988},
   booktitle={Operator algebras and applications, {V}ol.\ 2},
      series={London Math. Soc. Lecture Note Ser.},
      volume={136},
   publisher={Cambridge Univ. Press, Cambridge},
       pages={119\ndash 172},
      review={\MR{996454}},
}

\bib{MR0919322}{article}{
      author={Podle\'{s}, P.},
       title={Quantum spheres},
        date={1987},
        ISSN={0377-9017},
     journal={Lett. Math. Phys.},
      volume={14},
      number={3},
       pages={193\ndash 202},
         url={https://doi.org/10.1007/BF00416848},
      review={\MR{919322}},
}

\bib{MR4538281}{article}{
      author={Reutter, David},
       title={Uniqueness of unitary structure for unitarizable fusion
  categories},
        date={2023},
        ISSN={0010-3616,1432-0916},
     journal={Comm. Math. Phys.},
      volume={397},
      number={1},
       pages={37\ndash 52},
         url={https://doi.org/10.1007/s00220-022-04425-7},
      review={\MR{4538281}},
}

\bib{MR1697598}{article}{
      author={Stokman, Jasper~V.},
      author={Dijkhuizen, Mathijs~S.},
       title={Quantized flag manifolds and irreducible {$*$}-representations},
        date={1999},
        ISSN={0010-3616,1432-0916},
     journal={Comm. Math. Phys.},
      volume={203},
      number={2},
       pages={297\ndash 324},
         url={https://doi.org/10.1007/s002200050613},
      review={\MR{1697598}},
}

\bib{MR2132671}{article}{
      author={Tuba, Imre},
      author={Wenzl, Hans},
       title={On braided tensor categories of type {$BCD$}},
        date={2005},
        ISSN={0075-4102,1435-5345},
     journal={J. Reine Angew. Math.},
      volume={581},
       pages={31\ndash 69},
         url={https://doi.org/10.1515/crll.2005.2005.581.31},
      review={\MR{2132671}},
}

\bib{MR4162277}{book}{
      author={Voigt, Christian},
      author={Yuncken, Robert},
       title={Complex semisimple quantum groups and representation theory},
      series={Lecture Notes in Mathematics},
   publisher={Springer, Cham},
        date={2020},
      volume={2264},
        ISBN={978-3-030-52463-0; 978-3-030-52462-3},
         url={https://doi.org/10.1007/978-3-030-52463-0},
      review={\MR{4162277}},
}

\bib{MR1096123}{article}{
      author={Woronowicz, S.~L.},
       title={Unbounded elements affiliated with {$C^*$}-algebras and
  noncompact quantum groups},
        date={1991},
        ISSN={0010-3616,1432-0916},
     journal={Comm. Math. Phys.},
      volume={136},
      number={2},
       pages={399\ndash 432},
         url={http://projecteuclid.org/euclid.cmp/1104202358},
      review={\MR{1096123}},
}

\end{biblist}
\end{bibdiv}


\end{document}